\newcommand{\rar}{\rightarrow}
\newcommand{\lar}{\longrightarrow}
\newcommand{\llar}{-\kern-5pt-\kern-5pt\longrightarrow}
\newcommand{\surjects}{\twoheadrightarrow}
\newtheorem{Theorem}{Theorem}[section]
\newtheorem{Lemma}[Theorem]{Lemma}
\newtheorem{Corollary}[Theorem]{Corollary}
\newtheorem{Proposition}[Theorem]{Proposition}
\newtheorem{Remark}[Theorem]{Remark}
\newtheorem{Example}[Theorem]{Example}
\newtheorem{Conjecture}[Theorem]{Conjecture}
\newtheorem{Definition}[Theorem]{Definition}
\newtheorem{Question}[Theorem]{Question}
\newtheorem{Questions}[Theorem]{Questions}
\newtheorem{Setup}[Theorem]{Setup}
\def\sqr#1#2{{\vcenter{\hrule height.#2pt
        \hbox{\vrule width.#2pt height#1pt \kern#1pt
            \vrule width.#2pt}
        \hrule height.#2pt}}}
\def\phi{\varphi}
\DeclareMathOperator{\Image}{Im}
\DeclareMathOperator{\coker}{Coker}
\DeclareMathOperator{\Proj}{Proj}
\DeclareMathOperator{\depth}{depth}
\DeclareMathOperator{\rank}{rank}
\DeclareMathOperator{\Ht}{ht}
\DeclareMathOperator{\grade}{grade}
\def\xx{{\bf x}}
\def\yy{{\bf y}}
\def\TT{{\bf T}}
\def\tt{{\bf t}}
\def\XX{{\bf X}}
\def\YY{{\bf Y}}
\def\uu{{\bf u}}
\def\fm{{\mathfrak m}}
\def\fn{{\mathfrak n}}
\def\Ht{{\rm ht}\,}
\def\depth{{\rm depth}\,}
\def\codim{{\rm codim}\,}
\def\ker{{\rm ker}\,}
\def\grade{{\rm grade}\,}
\def\rk{{\rm rank}\,}
\def\syz{\mbox{\rm Syz}}
\def\spec#1{{\rm Spec}\, (#1)}
\def\restr{{\kern-1pt\restriction\kern-1pt}}
\def\pp{{\mathbb P}}
\begin{document}
\begin{center}
	{\large{\bf\sc  Matrices over polynomial rings approached by commutative algebra}}
	\footnotetext{AMS Mathematics
		Subject Classification (2010   Revision). Primary 13A02, 13A30, 13D02, 13H10, 13H15; Secondary  14E05, 14M07, 14M10,  14M12.} 
	\footnotetext{	{\em Key Words and Phrases}: linear matrices, determinantal polynomials, Hessian, Hankel matrices, catalecticants, plane reduced points,  perfect ideal of codimension two, Rees algebra, associated graded ring.}
	
\medskip

{To the memory of J\"urgen Herzog}

\medskip
	
		{\sc Zaqueu Ramos}\footnote{Partially supported by a CNPq grant (304122/2022-0)}  \quad\quad
	{\large\sc Aron  Simis}\footnote{Partially
		supported by a CNPq grant (301131/2019-8).}

\end{center}



\begin{abstract}
The main goal of the paper is the discussion of a deeper interaction between matrix theory over polynomial rings over a field and typical methods of commutative algebra and related algebraic geometry. This is intended in the sense of bringing numerical algebraic invariants into the picture of determinantal ideals, with an emphasis on non-generic ones. In particular, there is a strong focus on square sparse matrices and features of the dual variety to a determinantal hypersurface. Though the overall goal is not exhausted here, one provides several environments where the present treatment has a degree of success.
	
\end{abstract}

\section*{Introduction}

It is somewhat surprising  how little has been published, in greater depth,  about matrices and their minors over arbitrary commutative rings. We quote from a fairly recent paper by well-known experts (\cite{KodLamSwan}): ``a general algebraic study of the properties of
the determinantal ideals of a symmetric or skew-symmetric matrix over an arbitrary
commutative ring has remained relatively lacking".
It is not altogether absurd to note that such a study is also relatively lacking even over the familiar  rings of commutative algebra and their  avatars in algebraic geometry.
The gross amount of matrix based theorems in  commutative algebra relate to the case where the entries are indeterminates over a field or closely derived. 

On the other hand, there has been a good amount of work when the entries of the matrix are homogeneous linear forms over a field -- such matrices being  named {\em linear sections} of the generic matrix. Important contribution along this line has been done in \cite{Eisenbud2}, based on the assumption of $k$-genericity, a property of linear sections that partially replace the absence of fully indeterminate entries.
At a special corner, some of the current work has focused on {\em coordinate} linear sections, whereby the entries are either indeterminates or zeros (see, e.g., \cite{HT}, \cite{Mer-Gius},  \cite{CRS}, \cite{Maral}, \cite{Degen-Gen}, \cite{Deg_sym}). By common acceptance, if zeros actually come up, one informally call such matrices  {\em sparse}.

Throughout, the objective is a discussion of what could be named {\em algebraic matrix theory} based upon a substantial use of some of the successful invariants of commutative algebra, sidelong, algebraic geometry.
Determinantal theory (ideals, varieties) is the first thing that comes up to our minds as related to the title of this work. Although typical of this relation, it is not exclusive. Some of the topics over here were inspired by parts of the book \cite{DetBook2024}, but conversely, quite a bit there was an expansion of the first ideas here.
Of the four sections of this paper, the last three are entirely new research, while the first is an assemblage of some algebraic results scattered in \cite{DetBook2024} and elsewhere before, making strong use of the adjugate of a square matrix.

A brief word on the sections.

As said, Section~\ref{Sec1} is an invitation to the use of the adjugate in typical problems arising from the algebraic and geometric aspects of matrices and their ideals of minors. It covers the role of $2\times 2$-minors upon the dual variety of a hypersurface to an encore of the dimension of the Grassmann algebra in arbitrary characteristic to the structure of submaximal minors of a certain ladder to the homaloidal properties of certain sparse symmetric matrices.

The second section deals with certain square matrices of non-maximal rank, which forces us to focus upon their ideals of lower minors. The gist of the theme is that such matrices often appear as syzygy matrices of non-perfect ideals of height $2$.
We isolate several environments in which these matrices naturally appear, where their relevance lies on the difficult nature of the structure of the ideals of which they are syzygies. Current examples of this environment are provided by the nearly-free hypersurfaces introduced in \cite[Definition 2.6]{Dimca_et_al2021}.

Section~\ref{Sec3} deals with square matrices with a particular sparse structure. The main results cover the structure of two main related rational maps, the first defined by the cofactors, and the second, by the partial derivatives of the determinant (polar map). As usual, for non-generic square matrices, there is a ``balanced'' case, which is fully developed, including the birationality of both the cofactor map and the polar map, along with the precise structure of the image of the former -- in particular, as it turns, an arithmetically  Gorenstein variety.

Section~\ref{HB} sort of stands alone regarding the previous sections. Here we deal permanently with $n\times (n-1)$ matrices -- often, as in here, called Hilbert--Burch matrices due to the well-known structural theorem associated with these names.
In particular, in contrast to the theme of Section~\ref{Sec2}, here the main ideal is still of codimension two but this time around perfect.
Under this assumption, ideals in two variables (called {\em binary} at times) are primary for the maximal ideals, so their theory is very special, but not trivial at all (see\cite{BuJou}, \cite{Cox07}, \cite{CHW}, \cite{Syl1}, a few among many others).
We assume, perhaps irresponsibly, that this case is well-understood, and basically assume dimension at least three.

The section is then divided in two subsections.

In the first subsection we focus on the case of dimension three and assume that the ideal of maximal minors is equigenerated.
Some of the results in this part are reminiscent of earlier discussions held by the senior author with W. Vasconcelos and J. Hong,   as a sequel to their joint work on  aspects concerning ternary elimination theory (\cite{Syl1}, \cite{Syl2}).
It is crucial that we are not assuming the entries of the matrices to be all linear, as otherwise, quite a bit has been said recently in this dimension (\cite{Lan}, \cite{linpres2018}).
We only bring out the discussion when $n=3$, accessing completely the case of one linear syzygy.
The case where both syzygies have degrees $\geq 2$ is contemplated via examples. The general expectation is that the homogeneous defining ideal of the associated Rees algebra is generated by Sylvester forms.
A couple of questions are posed at the end of the subsection in the belief that they are still open.

The second subsection takes the turn of an $n\times (n-1)$ matrix whose transpose is generated in two degrees $\epsilon_i, i=1,2$. Then, accordingly, the ideal $I$ of maximal minors is generated in two degrees.
The focus in on the subideal $J\subset I$ generated by the maximal minors fixing, say, the rows of degree $\epsilon_2$. 
This sort of ideal having been discussed before in \cite{AnSi1986} in ``abstract'',  is revisited here in this particular graded case by focusing upon the inclusion $J^{\rm sat}\subset I$. As such, the graded module $I/J$ plays a central role, so we mind to look at its graded free resolution by way of the Buchsbaum--Rim complex.
Two cases receive particular attention, the one where the number of rows of degree $\epsilon_1$ is  least (interesting) possible, namely, $3$, and the other where this degree is maximum, namely, the dimension of the ground polynomial ring.
In the latter case, assuming moreover that $\epsilon_1=\epsilon_2=1$ (linear case), we prove that $J$ is a reduction of $I$, a result that may be known in some disguised way but which we have not been able to find in this form.

Some of the foremost theorems in the paper are Theorem~\ref{adjugate_vs_2minors},
 Theorem~\ref{2-minors_vs_dual_variety}, 
 Theorem~\ref{rank_and_dimension},
 Theorem~\ref{inversion_factors_resolution},
 Theorem~\ref{ratmapminorssubmax},                                Theorem~\ref{Hessian_and_homaloid},
Theorem~\ref{is_a_domain}.

The standing reference here for basic commutative algebra is \cite{SimisBook}, while for steps into determinantal ideals we refer to \cite{DetBook2024}. 

\section{The ubiquity of the adjugate}\label{Sec1}

\subsection{Preamble}

By a matrix  { \em over a commutative ring $R$} is meant a matrix whose entries are elements of $R$, in which case we call $R$ the {\em ground ring} of the matrix. We have no particular notation for a matrix, as it will vary according to the appropriate section.
Often it will denoted $(a_{ij})_{1\leq i\leq m;1\leq j\leq n}$ as to emphasise its entries and size.

All rings in this work are commutative, so we will refrain from repeating it.
We hereby take for granted the basic properties of determinants.

Given a matrix $\mathcal{M}=(a_{ij})_{1\leq i\leq m;1\leq j\leq n}$ over a commutative ring $R$, picking two sets of $t\leq \min\{m,n\}$ indices each, $\{i_1,\ldots,i_t\}$ and $\{j_1,\ldots,j_t\}$, the determinant
$$\det
\left[\begin{matrix}
	a_{i_1j_1} & \ldots &a_{i_1j_t}\\
	\vdots & \ldots & \vdots\\
	a_{i_tj_1} & \ldots &a_{i_tj_t}\\
\end{matrix}
\right]\in R
$$
is called a {\em {\rm (}determinantal{\rm )} $t$-minor}, thus adhering to the established abuse of calling both the $t\times t$ submatrix and its determinant by the same name.
Following a quite standard notation in commutative algebra, we let $I_t(\mathcal{M})\subset R$ denote the ideal generated by the $t$-minors -- again, often informally called a {\em determinantal ideal}.

Quite clearly, the full nature of this ideal would not come up when taking the ground ring to be a field, except to decide whether it vanishes or not.
And yet, regardless of the nature of the ground ring, one of the beautiful properties of the ideal of minors is its invariance under conjugation, namely,
$I_t(\mathcal{M})=I_t(\mathcal{U}\mathcal{M}\mathcal{V}),$ where $\mathcal{U}, \mathcal{V}$ are invertible matrices over $R$ of order $m\times m$ and $n\times n$, respectively.

This implies that the $R$-linear map $\psi: R^n \rar R^m$  of free $R$-modules induced by $\mathcal{M}$ is well-defined independently of the chosen bases.
Hans Fitting then proved a second result to the effect that,  for any $t$, $I_t(\mathcal{M})$ depends only on the $R$-module $E:=\coker \psi$.
However, to make it work one has to focus instead on the $(e-t)$-minors of the matrix that comes up by choosing a set of $e$ generators of $E$.
As such,  these ideals of minors are  called {\em Fitting ideals} (of $E$) -- see \cite[Chapter 3, Section 3.3]{SimisBook} for a fuller account on this invariant.

A fundamental subsidiary concept to minors is that of a cofactor: given  indices $i,j$, the $(i,j)$-{\em cofactor} of $\mathcal{M}$ is the minor whose rows and columns are complementary to the $i$th rows and $j$th columns, respectively.
The cofactor is traditionally attributed a sign given by $(-1)^{i+j} $.
We will usually denote the $(i,j)$-cofactor  by $\Delta_{i,j}$, whenever the matrix is self-understood.

The notion of the adjugate to a square matrix goes back to the origins of the well-established theory of determinants, having been introduced by Cauchy in the early nineteenth century.
Yet, its usefulness goes well beyond the traditional uses in linear algebra. In commutative algebra it is a tool to approach various questions concerning matrices with homogeneous linear entries, such as the structure of the linear syzygies of ideals of cofactors, features of Hessian matrices and inverse maps to birational maps.
Some of these aspects will be dealt with in this paper.

Let $\mathcal{M}$ denote an $m\times m$ matrix over a ring $R$.

The {\em adjugate matrix} of $\mathcal{M}$ is the transpose of the $m\times m$ matrix whose entries are the (signed) cofactors of $\mathcal{M}$.
We denote it by ${\rm adj}(\mathcal{M})$.
Its basic structure property is the relation established by Cauchy:
\begin{equation}\label{adjugate_relation}
	\mathcal{M}\,{\rm adj}(\mathcal{M})={\rm adj}(\mathcal{M})\, \mathcal{M}=(\det \mathcal{M})\, \mathbb{I}_m,
\end{equation}
where $\mathbb{I}_m$ denotes the $m\times m$ identity matrix.

The adjugate is often called the {\em classical adjoint} or the {\em adjunct}, not to be confused with the adjoint operator, the latter not defined over an arbitrary ring.

One has in addition the following equality, which is well-known when $R$ is a field and $\mathcal{M}$ is invertible.
 
\begin{Proposition}\label{adjugate_twice}
	Suppose that the determinant of an $m\times m$ matrix $\mathcal{M}$ is a regular element in the ground ring. Then
	\begin{equation}\label{adjugate_returns_matrix}
		{\rm adj} ({\rm adj}(\mathcal{M}))=(\det\mathcal{M})^{m-2}\, \mathcal{M}.
	\end{equation}
\end{Proposition}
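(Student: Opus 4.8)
\medskip

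The plan is to use the Cauchy relation \eqref{adjugate_relation} twice --- once for $\mathcal{M}$ itself and once for ${\rm adj}(\mathcal{M})$ --- and to invoke the regularity of $\det\mathcal{M}$ only at the cancellation steps. We may assume $m\geq 2$, so that the exponent $m-2$ is nonnegative (for $m=1$ the asserted formula is only meaningful when $\det\mathcal{M}$ is a unit, a trivial case). First, taking determinants in \eqref{adjugate_relation} yields $(\det\mathcal{M})\,(\det{\rm adj}(\mathcal{M}))=(\det\mathcal{M})^m$; since $\det\mathcal{M}$ is a non-zerodivisor of $R$ we may cancel one factor to obtain
\[
\det{\rm adj}(\mathcal{M})=(\det\mathcal{M})^{m-1}.
\]

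Now apply \eqref{adjugate_relation} with $\mathcal{M}$ replaced by ${\rm adj}(\mathcal{M})$:
\[
{\rm adj}(\mathcal{M})\,{\rm adj}\bigl({\rm adj}(\mathcal{M})\bigr)=\bigl(\det{\rm adj}(\mathcal{M})\bigr)\,\mathbb{I}_m=(\det\mathcal{M})^{m-1}\,\mathbb{I}_m.
\]
Multiplying this identity on the left by $\mathcal{M}$ and using \eqref{adjugate_relation} once more in the form $\mathcal{M}\,{\rm adj}(\mathcal{M})=(\det\mathcal{M})\,\mathbb{I}_m$, the left-hand side collapses to $(\det\mathcal{M})\,{\rm adj}\bigl({\rm adj}(\mathcal{M})\bigr)$, whence
\[
(\det\mathcal{M})\,{\rm adj}\bigl({\rm adj}(\mathcal{M})\bigr)=(\det\mathcal{M})^{m-1}\,\mathcal{M}.
\]
Reading this entry by entry, for each pair $(i,j)$ one has an equation $(\det\mathcal{M})\,b_{ij}=(\det\mathcal{M})^{m-1}\,a_{ij}$ in $R$, where $a_{ij}$ and $b_{ij}$ are the $(i,j)$ entries of $\mathcal{M}$ and of ${\rm adj}({\rm adj}(\mathcal{M}))$ respectively; cancelling the non-zerodivisor $\det\mathcal{M}$ gives $b_{ij}=(\det\mathcal{M})^{m-2}\,a_{ij}$, which is precisely \eqref{adjugate_returns_matrix}.

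There is essentially no hard step here; the only point deserving care is that $R$ is an arbitrary commutative ring, so one cannot literally divide by $\det\mathcal{M}$, and both cancellations must be phrased as cancellation of a non-zerodivisor. An alternative, equally short route sidesteps even the first cancellation: over the ``generic'' ring $\mathbb{Z}[x_{ij}]$, where the determinant of the generic matrix $\mathcal{X}=(x_{ij})$ is visibly a non-zerodivisor, one establishes the polynomial identity $(\det\mathcal{X})\,{\rm adj}({\rm adj}(\mathcal{X}))=(\det\mathcal{X})^{m-1}\,\mathcal{X}$, specializes $x_{ij}\mapsto a_{ij}$ to get the same identity over $R$, and only then cancels $\det\mathcal{M}$ using the hypothesis. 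I would present the direct argument as the main proof and mention the specialization variant only in passing.
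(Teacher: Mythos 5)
Your argument is correct and is essentially the paper's own proof: take determinants in the Cauchy relation to get $\det{\rm adj}(\mathcal{M})=(\det\mathcal{M})^{m-1}$, apply the Cauchy relation to ${\rm adj}(\mathcal{M})$, left-multiply by $\mathcal{M}$, and cancel the regular element $\det\mathcal{M}$. You have merely spelled out the cancellation steps and noted the standard specialization-from-$\mathbb{Z}[x_{ij}]$ alternative; neither adds a genuinely different route.
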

\begin{proof}
	From the basic adjugate relation, since $\det\mathcal{M}$ is regular, $\det {\rm adj}(\mathcal{M})=(\det \mathcal{M})^{m-1}$.
	Applying to ${\rm adj}(\mathcal{M})$ yields, 
	$${\rm adj}(\mathcal{M}) \,{\rm adj} ({\rm adj}(\mathcal{M})) = (\det\mathcal{M})^{m-1} \mathbb{I}_m.$$
	Now multiply both members of the last relation by $\mathcal{M}$, then apply   the basic adjugate relation once more to $\mathcal{M}$, and finally cancel one copy of  $\det\mathcal{M}$ everywhere.
\end{proof}

\subsection{Resiliency of $2\times 2$ minors}

The following result gives a useful relationship between the determinant of a square matrix and the $2$-minors of its adjugate.

\begin{Theorem}\label{adjugate_vs_2minors}
	Let $L$ denote an $m\times m \, (m\geq 2)$ matrix over a  ground ring $R$,  satisfying the following conditions$:$
	\begin{enumerate}
		\item[{\rm (1)}]  $f:=\det L$ is a regular element in $R$.
		\item[{\rm (2)}]  The determinant of some $(m-1)\times (m-1)$ submatrix of $L$ is a regular element over $R/(f)$.
	\end{enumerate}
	Then, for any $2\times 2$ submatrix $T$ of   ${\rm adj}(L)$, $f$ is a factor of $\det T$.
\end{Theorem}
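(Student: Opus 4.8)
The plan is to work over the total ring of fractions (or after localizing), where $L$ becomes invertible, and to exploit the identity from Proposition~\ref{adjugate_twice}. Since $f=\det L$ is regular, we have $\mathrm{adj}(L)\,L = f\,\mathbb{I}_m$, so modulo $f$ the matrix $\mathrm{adj}(L)$ annihilates the columns of $L$; equivalently, over $R/(f)$ the reduction $\overline{\mathrm{adj}(L)}$ has all its $2$-minors vanishing once we know its rank is at most $1$. So the real content is: the image of $\mathrm{adj}(L)$ in matrices over $R/(f)$ has rank $\leq 1$, and hypothesis (2) is exactly what guarantees that it is not the zero matrix and, more importantly, that the rank is controlled. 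I would phrase this via the classical fact that for a matrix over a (Noetherian) ring, if $\det L$ lies in a prime $\mathfrak p$ but some $(m-1)$-minor does not, then $L$ has rank exactly $m-1$ at $\mathfrak p$, hence $\mathrm{adj}(L)$ has rank exactly $1$ at $\mathfrak p$, so all its $2$-minors lie in $\mathfrak p$.

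Concretely, here is the step-by-step approach. First, pick an arbitrary $2\times 2$ submatrix $T$ of $\mathrm{adj}(L)$ and let $\delta=\det T$; the goal is $\delta\in (f)$. Since $f$ is regular, it suffices to show $\delta$ maps to $0$ in $R/(f)$, and for that (again using that we only need membership in the ideal $(f)$, and $R/(f)$ embeds appropriately) it suffices to show $\bar\delta$ is nilpotent, i.e.\ $\bar\delta$ lies in every minimal prime of $R/(f)$ — actually I would aim for the cleaner statement that $\bar\delta$ lies in every associated prime, or just directly show $\bar\delta=0$ using hypothesis (2) via a Cramer-type computation rather than a prime-by-prime argument. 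The cleanest route: use $\mathrm{adj}(L)\cdot L = f\mathbb{I}_m$ to write, over $R/(f)$, that $\overline{\mathrm{adj}(L)}\cdot \bar L = 0$; combined with hypothesis (2) (an $(m-1)$-minor $g$ of $L$ is regular mod $f$), a standard argument shows that each row of $\overline{\mathrm{adj}(L)}$ is, after multiplying by $\bar g$, a scalar multiple of one fixed row — which forces $\bar g^2 \cdot \bar\delta = 0$, and regularity of $\bar g$ then gives $\bar\delta=0$.

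The second, complementary step handles the possibility that the chosen $2\times 2$ submatrix $T$ involves rows/columns of $\mathrm{adj}(L)$ in a way not directly covered by the fixed $(m-1)\times(m-1)$ submatrix from hypothesis (2); here one uses that the $2$-minors of $\mathrm{adj}(L)$ are, up to sign and up to a factor of $f$, the complementary $(m-2)$-minors of $L$ — this is the classical Jacobi / Sylvester identity on minors of the adjugate, $\det T = f^{\,?}\cdot(\text{a minor of }L)$ for $2\times 2$ $T$ the relevant exponent being $1$ when $m\geq 3$ and the statement being immediate when $m=2$ since then $\mathrm{adj}(L)$ has no proper $2\times2$ submatrix and $\det\mathrm{adj}(L)=f$ itself. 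I would invoke (or quickly prove) Jacobi's identity: for complementary index sets, a $2\times2$ minor of $\mathrm{adj}(L)$ equals $\pm f\cdot M$ where $M$ is the complementary $(m-2)\times(m-2)$ minor of $L$ — this gives the result in one line, and hypotheses (1), (2) are then only needed to ensure $f$ is regular so that ``$f$ is a factor'' is a meaningful, cancellable statement (and hypothesis (2) guarantees we are not in a degenerate situation where, e.g., $\mathrm{adj}(L)=0$).

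The main obstacle I anticipate is reconciling these two viewpoints cleanly: the Jacobi-identity route (Step three) is slick and essentially makes hypothesis (2) unnecessary except for regularity/nondegeneracy, whereas the rank-drop route (Step one--two) is the ``commutative algebra'' argument the paper presumably wants, and it genuinely uses (2). I expect the honest difficulty is proving Jacobi's minor identity for the adjugate over an arbitrary commutative ring with $\det L$ merely regular (not a unit) — the usual proof multiplies by $L$ and cancels powers of $f$, which is legitimate precisely because $f$ is regular, so hypothesis (1) is doing real work there. I would therefore structure the proof as: (i) reduce to showing $\det T \in (f)$; (ii) establish Jacobi's identity $\det T = \pm f\cdot(\text{$(m-2)$-minor of }L)$ by multiplying the relevant $2\times m$ block of $\mathrm{adj}(L)$ against $L$, using \eqref{adjugate_relation}, and cancelling $f$ using regularity; (iii) conclude. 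Hypothesis (2) then enters only to guarantee the conclusion is non-vacuous (otherwise $\mathrm{adj}(L)$ could be nilpotent mod $f$ and the statement, while true, carries no information); if the authors instead want (2) to be essential, the rank-drop argument of Step one--two is the alternative I would present.
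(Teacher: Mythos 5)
Your Jacobi-identity route (the ``Step 3'' you sketch) is correct and is a genuinely \emph{different}, and in fact cleaner, argument than the one the paper gives. The paper's proof works by embedding the given $2\times 2$ submatrix $T$ into the $2\times m$ block $M_{i,i'}$ of $\mathrm{adj}(L)$, multiplying against the $m\times(m-1)$ submatrix $N$ of $L$ containing the regular $(m-1)$-minor $\tilde N$ supplied by hypothesis (2), then against $\mathrm{adj}(\tilde N)$, and finally against $\mathrm{adj}(T)$, to arrive at $\det T\cdot\det\tilde N\in(f)$; regularity of $\det\tilde N$ over $R/(f)$ is then used to strip $\det\tilde N$. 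This is a careful Cramer-style computation, and hypothesis (2) is genuinely doing work at the last step. Your route instead invokes the Jacobi complementary-minor identity for the adjugate: for any $k$, a $k\times k$ minor of $\mathrm{adj}(L)$ equals $\pm (\det L)^{k-1}$ times the complementary $(m-k)$-minor of $L$, which for $k=2$ gives $\det T=\pm f\cdot(\text{an $(m-2)$-minor of }L)$ and hence $\det T\in(f)$ in one line. This identity is a universal polynomial identity: one verifies it for the generic matrix over $\mathbb{Z}[x_{i,j}]$ (where $\det$ is a nonzerodivisor in a domain, so the usual proof via $\mathrm{adj}(A)=(\det A)A^{-1}$ over the fraction field is legitimate) and then specializes. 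Consequently, your observation that hypothesis (2) becomes unnecessary is right, and in fact even hypothesis (1) is not needed for the bare membership $\det T\in(f)$ --- the statement holds over an arbitrary commutative ring. (You suggest hypothesis (1) is needed ``to cancel $f$'', but that is only if one proves Jacobi's identity in $R$ directly by multiplying and cancelling; the specialization route bypasses this.) The paper's argument is more self-contained and more in the spirit of the adjugate-as-a-tool theme, while your approach buys greater generality and exposes that the hypotheses of the theorem are not essential for the conclusion, only for the subsequent applications where cancellation in $R/(f)$ is needed.

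One caution: your ``Step 1--2'' (rank-drop via ``each row of $\overline{\mathrm{adj}(L)}$ is a scalar multiple of one fixed row, forcing $\bar g^2\bar\delta=0$'') is not yet a proof --- over a non-domain $R/(f)$ the passage from ``$\overline{\mathrm{adj}(L)}\cdot\bar L=0$ with a regular $(m-1)$-minor of $\bar L$'' to ``all $2$-minors of $\overline{\mathrm{adj}(L)}$ vanish'' requires exactly the explicit computation the paper carries out (multiply by $N$, then by $\mathrm{adj}(\tilde N)$, then by $\mathrm{adj}(T)$), not a prime-by-prime or rank heuristic. But since your Step 3 closes the argument cleanly, this gap is moot.
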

\begin{proof}
	Say, 
	$$T=\left[\begin{matrix}
		t_{i,j} & t_{i,j'}\\
		t_{i',j} & t_{i',j'}
	\end{matrix}\right],
	$$
	and write
	\begin{equation}\label{full_T}
M_{i,i'}=\left[\begin{matrix}
		t_{i,1}&\ldots & t_{i,j} &\ldots & t_{i,j'}& \ldots & t_{i,m}\\
		t_{i',1}&\ldots &t_{i',j} &\ldots & t_{i',j'}& \ldots & t_{i',m}
\end{matrix}\right],
	\end{equation}
for the full $2\times m$ submatrix $M_{i,i'}$ of ${\rm adj}(L)$, with $i$th and $i'$th rows, so that $T$ is a submatrix thereof.

	As guaranteed by assumption (2), let $\tilde{N}$  be an $(m-1)\times (m-1)$ submatrix of $L$ such $\det \tilde{N}\in I_{m-1}(L)$  is regular over $R/(f)$. 
	Let $N$ denote the unique $m\times (m-1)$ submatrix of $L$ containing $\tilde{N}$ as a submatrix.

The following relation
$M_{i,i'}N\equiv \boldsymbol{0}\,({\rm mod\,}f)$
stems from the adjugate formula (\ref{adjugate_relation}), 
where $\boldsymbol{0}$ denotes the null matrix.	
	For even more reason, 
	\begin{equation}\label{sub_adjugate}
		M_{i,i'}\, N\,{\rm adj}(\tilde{N})\equiv \boldsymbol{0}\,({\rm mod\,}f).
	\end{equation}
	Up to a row permutation  we can assume that $\tilde{N}$ is the  upper $(m-1)\times (m-1)$ submatrix of $N.$
	Indeed, for an arbitrary matrix $A\in {\rm GL}(m,R)$ we have $M_{i,i'}\, N=M_{i,i'}\,A^{-1}A\, N$ and $I_{2}(M_{i,i'})=I_{2}(M_{i,i'}A^{-1}).$
	
	Then one has
$$N\,{\rm adj}(\tilde{N})=\left(\begin{matrix}
	\det \tilde{N} &&& \\
	&  \det \tilde{N} &&\\
	& & \ddots & \\
	&&&  \det \tilde{N}\\
	\hline
	p_1 &p_2 &\cdots & p_{m-1}
\end{matrix}\right),
$$
for suitable elements $p_i\in R$, where the empty slots have nullentries.
	
	Therefore, (\ref{full_T}) yields
	$$M_{i,i'}\, N\,{\rm adj}(\tilde{N})=
	\left[\begin{matrix}
		\ldots & t_{i,j}\det \tilde{N}+ t_{i,j'}\gamma & \ldots\\
		\ldots & t_{i',j}\det \tilde{N}+ t_{i',j'}\gamma& \ldots
	\end{matrix}\right],
	$$
	where $\gamma=0$  if $j'\neq m$, and $\gamma=p_j$ if $j'=m.$ 
	
	Then, by \eqref{sub_adjugate}, 
	$$T\left[\begin{matrix} \det \tilde{N}\\ 
		\gamma\end{matrix}\right]
	\equiv \boldsymbol{0}\,({\rm mod\,}f).$$

	In particular,  
	$${\rm adj}(T)\, T\left[\begin{matrix} \det \tilde{N}\\ 
		\gamma\end{matrix}\right]
	\equiv \boldsymbol{0}\,({\rm mod\,}f),$$
	hence,
	$$\left[\begin{matrix} \det T&0\\
		0&\det T
	\end{matrix}\right]
	\left[\begin{matrix}
		\det \tilde{N}\\
		\gamma\end{matrix}\right]
	\equiv \boldsymbol{0}\,({\rm mod\,}f).$$
	Thus,
	$$\det T \det \tilde{N}\in (f)$$ 
	Since $\det \tilde{N}$ is a regular element over $R/(f)$,
	then $\det T\in (f).$
\end{proof}

\subsection{The role of the cofactor}

So far the ground ring $R$ has been fairly general.
Heretofore, it will be typically a polynomial ring over a field $k$, assumed to be graded by ascribing degree $1$ to the indeterminates (standard grading). 
Let $f\in R:=k[x_1,\ldots,x_n]$ be a  homogeneous polynomial in this grading. Introduce  the partial derivatives  $\partial f/\partial x_i, \; 1\leq i\leq n$ of $f$ with respect to the indeterminates of $R$ over $k$.  
Quite often such indeterminates will be double-indexed for convenience.

The next result is customarily quoted as a consequence of \cite{Golberg}. An independent proof appeared in \cite[Proposition 5.3.1]{Maral}. We restate it noting that a certain hypothesis in the latter is superfluous and give a short proof for the reader's convenience.

	Let $\XX=(x_{i,j})_{1\leq i,j\leq m}$ denote an $m\times m$ generic  matrix over the field $k$.
If no confusion arises, we will denote  $k[\XX]:=k[x_{i,j}\,|\,1\leq i,j\leq m]$ the ground polynomial ring on the entries of $\XX$.

\begin{Proposition}\label{GolMar}
	Let $\mathcal{M}$ denote a square matrix over a polynomial ring $R=k[z_1,\ldots,z_n]$  such that  every entry is either 0 or $z_i$ for some $i=1,\ldots,n$. 
	Then, for each $i=1,\ldots, n$, the partial derivative of  $f=\det(\mathcal{M})$ with respect to $z_i$ is the sum
	of the {\rm (}signed{\rm )} cofactors of $z_i$ in all its slots as an entry of $\mathcal{M}$.
\end{Proposition}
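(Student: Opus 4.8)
The plan is to expand the determinant along the standard Leibniz formula and track which terms survive differentiation with respect to a fixed $z_i$. Write $f=\det(\mathcal{M})=\sum_{\sigma\in S_m}\mathrm{sgn}(\sigma)\prod_{k=1}^m a_{k,\sigma(k)}$, where each $a_{k,l}\in\{0,z_1,\ldots,z_n\}$. The key observation is that because every entry is either $0$ or a single indeterminate, each monomial $\prod_k a_{k,\sigma(k)}$ is (up to being zero) a squarefree-in-structure product of indeterminates, but possibly with repetitions among the $z_i$'s coming from different slots. Differentiating such a product $\partial/\partial z_i$ by the product rule gives a sum over those factors $a_{k,\sigma(k)}$ that equal $z_i$, each contributing the product of the remaining factors. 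So $\partial f/\partial z_i=\sum_{\sigma}\mathrm{sgn}(\sigma)\sum_{k:\,a_{k,\sigma(k)}=z_i}\prod_{l\neq k}a_{l,\sigma(l)}$.

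Next I would reorganize this double sum by grouping first over the slot $(k,\sigma(k))$ in which $z_i$ sits. Fix a position $(p,q)$ with $a_{p,q}=z_i$. The contribution of this position to $\partial f/\partial z_i$ is $\sum_{\sigma:\,\sigma(p)=q}\mathrm{sgn}(\sigma)\prod_{l\neq p}a_{l,\sigma(l)}$. By the classical cofactor (Laplace) expansion, this is exactly $(-1)^{p+q}$ times the determinant of the submatrix of $\mathcal{M}$ obtained by deleting row $p$ and column $q$ — that is, the signed $(p,q)$-cofactor $\Delta_{p,q}$ of $\mathcal{M}$. Summing over all positions $(p,q)$ where $z_i$ actually occurs as an entry then yields $\partial f/\partial z_i=\sum_{(p,q):\,a_{p,q}=z_i}\Delta_{p,q}$, which is the assertion.

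The one point requiring a little care — and the main obstacle, though a mild one — is the bookkeeping when the \emph{same} indeterminate $z_i$ appears in several slots of a single term $\prod_k a_{k,\sigma(k)}$ (equivalently, in several positions along the diagonal picked out by $\sigma$). Then that term contributes to $\partial f/\partial z_i$ once for each such slot, with multiplicity; I must check that this is consistent with the grouping-by-position argument, in which each position $(p,q)$ is treated independently and the term is counted once per position $(p,q)$ with $a_{p,q}=z_i$ on the chosen diagonal. These two countings agree precisely because the product rule for $\partial/\partial z_i$ of a monomial $z_i^e\cdot(\text{rest})$ produces $e$ copies of $z_i^{e-1}\cdot(\text{rest})$, matching the $e$ slots. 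A symmetric subtlety is that a position $(p,q)$ with $a_{p,q}=z_i$ contributes to the cofactor sum even if, for a given $\sigma$ through that position, no other slot on that diagonal equals $z_i$; this is automatically handled since $\Delta_{p,q}$ is computed after deleting row $p$ and column $q$, so it simply collects all such $\sigma$ with the correct sign. Thus the only real work is to state the Leibniz expansion, apply the product rule termwise, and re-index by position, invoking Laplace expansion; no genericity or regularity hypothesis on $\mathcal{M}$ is needed, which is the superfluous-hypothesis remark alluded to before the statement.
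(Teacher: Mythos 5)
Your proof is correct, but it takes a genuinely different route from the paper's. You expand $\det(\mathcal{M})$ directly by the Leibniz formula, differentiate term by term via the product rule, and then re-index by the slot $(p,q)$ that contributes $z_i$, identifying each group of surviving terms as the signed $(p,q)$-cofactor via the Laplace/cofactor formula. The paper instead argues by the chain rule through the generic matrix: writing any linear section $\mathcal{N}=(l_{i,j})$ as $\epsilon(\XX)$ for the specialization $\epsilon:x_{i,j}\mapsto l_{i,j}$, one gets $\partial f/\partial z_r=\sum_{i,j}c^{i,j}_r\,\epsilon(\partial g/\partial x_{i,j})$, where $g=\det\XX$ and $c^{i,j}_r$ is the coefficient of $z_r$ in $l_{i,j}$; the coordinate-sparse case is then the specialization where each $c^{i,j}_r\in\{0,1\}$, and one invokes the standard fact that $\partial g/\partial x_{i,j}$ is the signed cofactor of the generic matrix. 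Your argument is more elementary and self-contained (you effectively reprove that generic-cofactor fact inline), and it highlights clearly why no genericity hypothesis is needed. The paper's version is shorter at the cost of quoting the generic fact, and it buys a formula valid for arbitrary linear sections $\mathcal{N}$, not just coordinate-sparse ones — a generality that is actually exploited later in the paper. Both are valid; the difference is combinatorial bookkeeping versus chain-rule specialization.
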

\begin{proof}
	More generally, let $\mathcal{N}=(l_{i,j})$ denote an $m\times m$ matrix with linear  entries $l_{i,j}$ in the polynomial ring $R=k[z_1,\ldots,z_n]$. Let $\XX=(x_{i,j})$ stand for the  $m\times m$ generic matrix over $k$, and write $f:=\det \mathcal{N}\subset R$, $g:=\det \XX\subset S:=k[x_{i,j}| 1\leq i\leq j\leq m]$.
	Finally, let $\epsilon: S\surjects R$ denote the surjective $k$-homomorphism given by $x_{i,j}\mapsto l_{i,j}$.
	The ordinary chain rule yields for $1\leq r\leq n$:
	\begin{eqnarray*}
		\partial f/ \partial z_r &=& \sum_{1\leq i,j\leq m} (\partial l_{i,j} / \partial z_r)\, \epsilon ( \partial g/ \partial x_{i,j} )\\
		&=&\sum_{1\leq i,j\leq m} {c}^{i,j}_r \,\epsilon ( \partial g/ \partial x_{i,j} ),
	\end{eqnarray*}
	where ${c}^{i,j}_r $ is the coefficient of $z_r$ in $l_{i,j}$.
	
	Now taking $\mathcal{N}=\mathcal{M}$, the only non-vanishing terms on the right-hand side of the above expression correspond to the entries $l_{i,j}=z_r$.
	On the other hand,  it is well-known or easy to see that  $\partial g/ \partial x_{i,j}$ is the signed cofactor of $x_{i,j}$ in the generic matrix $\XX$. Therefore, whenever an entry $l_{i,j}$ equals some $z_r$, the summand
	${c}^{i,j}_r \epsilon ( \partial g/ \partial x_{i,j} )=\epsilon ( \partial g/ \partial x_{i,j} )$ is the signed cofactor of the entry $z_r$ in slot $(i,j)$ of $\mathcal{M}$.
\end{proof}

Coming back to our first notation, let  $R=k[x_1,\ldots,x_n]$ be a standard graded polynomial ring over the field $k$. 
Let $f\in R$ be a  homogeneous polynomial in this grading, and write $S_f\subset R/(f)$  for the $k$-subalgebra  generated by the residues of the partial derivatives  $\partial f/\partial x_i, \; 1\leq i\leq r$.  
Consider the $k$-algebra map
\begin{equation}\label{dual_variety_map}
	\partial: k[y_1,\ldots,y_n] \surjects S_f, \; y_i\mapsto \partial f/\partial x_i (\bmod f),
\end{equation}
where the $y_i$'s are new indeterminates (often called ``dual'' in the geometric convenience).
From this definition follows immediately that if $f$ is irreducible (respectively, reduced) then $\ker(\mathfrak{\partial})$ is a prime ideal (respectively, a radical ideal).

\begin{Remark}\label{definition_dual_variety}\rm
Though this map is purely algebraic, it plays a vast role in algebraic geometry, residing on the fact that, when $f$ is reduced, $\ker(\partial)$ is identified with the homogeneous defining ideal of the dual variety $V(f)^{\ast}\subset (\mathbb{P}^{n-1})^{\ast}$ to the variety $V(f)\subset \mathbb{P}^{n-1}$ -- in other words,  the homogeneous defining ideal of the image of the Gauss map of  $V(f)$ under the  Pl\"ucker embedding.
\end{Remark}

The following is a well-known device to approach the dual variety of the  generic determinant:

\begin{Proposition}\label{2-minors_vs_dual_variety_generic}
	Let $\XX$ denote an $m\times m$ generic  matrix over the field $k$ and let $f=\det\XX$.
	Let $\YY=(y_{i,j})_{1\leq i, j\leq m}$ denote the matrix in the `dual' variables.
	Then every  $2\times 2$ minor of  $\YY$  vanishes  modulo $(f)$ via the map $y_{i,j}\mapsto \partial f/\partial x_{i,j}$.
\end{Proposition}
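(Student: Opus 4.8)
The plan is to deduce this from Theorem~\ref{adjugate_vs_2minors} applied to $L=\XX$, the generic $m\times m$ matrix, together with Proposition~\ref{GolMar} (or rather, its elementary content: $\partial f/\partial x_{i,j}$ equals the signed $(i,j)$-cofactor of $\XX$). First I would observe that the map $y_{i,j}\mapsto \partial f/\partial x_{i,j} = \Delta_{i,j}$ sends the matrix $\YY$ of dual variables to the transpose of $\mathrm{adj}(\XX)$ (up to the transpose, which does not affect $2\times 2$ minors up to sign — any $2\times 2$ minor of a matrix is, up to sign, a $2\times 2$ minor of its transpose). Hence a $2\times 2$ minor of $\YY$ is carried to a $2\times 2$ minor of $\mathrm{adj}(\XX)$, i.e. to $\det T$ for some $2\times 2$ submatrix $T$ of $\mathrm{adj}(\XX)$.

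Next I would verify the two hypotheses of Theorem~\ref{adjugate_vs_2minors} for $L=\XX$ over $R=k[\XX]$. Condition (1): $f=\det\XX$ is irreducible in $k[\XX]$ (a classical fact about the generic determinant), hence in particular a nonzerodivisor. Condition (2): pick the $(m-1)\times(m-1)$ submatrix $\tilde N$ of $\XX$ obtained by deleting, say, the last row and last column; then $\det\tilde N$ is again a generic determinant in a disjoint set of variables $\{x_{i,j}: 1\le i,j\le m-1\}$, so it is irreducible, and it is not a multiple of $f$ (it involves none of the variables $x_{m,j}$ or $x_{i,m}$ while $f$ does), hence its image in $R/(f)$ is nonzero; since $R/(f)$ is a domain (as $f$ is prime), $\det\tilde N$ is a regular element over $R/(f)$. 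With both hypotheses in place, Theorem~\ref{adjugate_vs_2minors} gives that $f$ divides $\det T$ in $k[\XX]$, which is exactly the assertion that every $2\times 2$ minor of $\YY$ vanishes modulo $(f)$ under the substitution.

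The only real point requiring care — and the one I would flag as the main (though minor) obstacle — is the bookkeeping in the first step: making sure that the passage through the transpose and through the signs $(-1)^{i+j}$ attached to cofactors is harmless, i.e. that the image of an arbitrary $2\times 2$ minor of $\YY$ genuinely is $\pm\det T$ for a $2\times 2$ submatrix $T$ of $\mathrm{adj}(\XX)$, with no stray sign obstructing divisibility (divisibility by $f$ is of course sign-insensitive, so in the end even this is a non-issue). Everything else is a direct invocation of results already established: irreducibility of the generic determinant, and Theorem~\ref{adjugate_vs_2minors}.
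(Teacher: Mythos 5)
Your proof is correct and follows the same route as the paper: invoke Theorem~\ref{adjugate_vs_2minors} together with the identification (from Proposition~\ref{GolMar}) of $\partial f/\partial x_{i,j}$ with the signed $(i,j)$-cofactor, so that $\YY$ is carried to ${\rm adj}(\XX)^t$ and its $2\times2$ minors to $2\times2$ minors of ${\rm adj}(\XX)$. You are, if anything, more careful than the paper's terse two-line proof, which invokes Theorem~\ref{adjugate_vs_2minors} without explicitly verifying its hypotheses (1) and (2) — your check via irreducibility of the generic determinant and of a submaximal minor in a disjoint set of variables is exactly what makes that step rigorous, and the sign/transpose bookkeeping you flag is indeed harmless for the reason you give.
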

\begin{proof}
By Theorem~\ref{adjugate_vs_2minors}, every $2\times 2$ minor of ${\rm adj}(\XX)$ vanishes modulo $(f)$.
By Proposition~\ref{GolMar},  the image of any $2\times 2$ minor of the  matrix $\YY$ by the map  $y_{i,j}\mapsto \partial f/\partial x_{i,j}$   vanishes  modulo $(f)$.
\end{proof}

Proposition~\ref{2-minors_vs_dual_variety_generic} admits convenient extensions to some non-generic matrices encapsulated in the following definition.

\begin{Definition}\label{section_defined}\rm
A {\em section} of the $m\times m$ square generic matrix $\XX$ is a matrix  $L:=(g_{i,j})_{1\leq i, j\leq m}$  whose entries $g_{i,j}$ are forms in the ground polynomial ring $k[\XX]:=k[x_{i,j}| 1\leq i, j\leq m]$ of  $\XX$ (note the continuing abuse of calling both the matrix and the set of its entries by the same symbol $\XX$). 
The section $L$ is moreover said to be {\em linear} if its entries are $1$-forms in $k[\XX]$.
Finally, the section is {\em coordinate sparse linear}  if, for every $i,j$, either $g_{i,j}=x_{i,j}$ or else $g_{i,j}=0$.
\end{Definition}

Consider the generic matrix in the dual variables $\YY$. Read in the dual variables,  $L$ and $L_1$ will be written $L(\YY)$ and $L_1(\YY)$, respectively.

\begin{Theorem} \label{2-minors_vs_dual_variety}
Let there be given a decomposition	${\XX}={\XX}_1\cup {\XX}_2$ as {\rm (}disjoint{\rm )} sets, with the induced decomposition $\YY=\YY_1\cup\YY_2$ in dual variables.
	Let $L=(\ell_{i,j})$ denote a linear section of the $m\times m$ generic matrix $\XX$ such that, either $\ell_{i,j}\in \XX_1$ or else $\ell_{i,j}$ is a linear form with entries in the polynomial ring $k[{\XX}_2]$.
	Assume that $L$ 
	satisfies the following conditions$:$
	\begin{enumerate} 
		\item[{\rm (a)}]  $f:=\det L\neq 0$.
		\item[{\rm (b)}]  Some $(m-1)\times (m-1)$ minor of $L$ is regular over $k[L_1]/(f)$.
		\item[{\rm (c)}] For any two entries $\ell_{i,j},\ell_{i',j'}$ of $L$ contained in $\XX_1$, such that $(i,j)\neq (i',j')$,  one has $\ell_{i,j}\neq \ell_{i',j'}$.
	\end{enumerate}
	Then, up to a change of variables, the ideal of $2\times 2$ minors of $L(\YY)$ with entries in $\YY_2$   is contained in the kernel of the map $\mathfrak{\partial}$ as in {\rm (\ref{dual_variety_map})}, with $\{x_1,\ldots,x_r\}$ {\rm (}respectively, $\{y_1,\ldots,y_r\}${\rm )} replaced by $L_1$  {\rm (}respectively, $L_1(\YY)${\rm )}.
\end{Theorem}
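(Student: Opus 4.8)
The strategy is to reduce Theorem~\ref{2-minors_vs_dual_variety} to the combination of Theorem~\ref{adjugate_vs_2minors} and Proposition~\ref{GolMar}, exactly as was done for the generic case in Proposition~\ref{2-minors_vs_dual_variety_generic}, but now keeping careful track of which variables actually occur in a given cofactor. First I would apply Theorem~\ref{adjugate_vs_2minors} to the matrix $L$ over the ring $R=k[\XX]$: hypotheses (a) and (b) of the present theorem say precisely that $f=\det L$ is a regular element (it is a nonzero form in a polynomial domain) and that some $(m-1)\times(m-1)$ minor of $L$ is regular modulo $(f)$ over the relevant subring $k[L_1]$, which is what conditions (1) and (2) of Theorem~\ref{adjugate_vs_2minors} require (after checking that regularity over $k[L_1]/(f)$ suffices, i.e. that one may work inside the subring generated by the partials). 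The conclusion is that $f$ divides $\det T$ for every $2\times 2$ submatrix $T$ of ${\rm adj}(L)$; equivalently every $2$-minor of ${\rm adj}(L)$ lies in $(f)$.

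Next I would translate the cofactors of $L$ into partial derivatives of $f$. Here the role of Proposition~\ref{GolMar} is played by the chain-rule computation in its proof: writing $L=(\ell_{i,j})$ with each $\ell_{i,j}$ linear in $\XX$, one has $\partial f/\partial x = \sum_{i,j} (\partial \ell_{i,j}/\partial x)\,\Delta_{i,j}$, where $\Delta_{i,j}$ is the signed $(i,j)$-cofactor of $L$ — i.e. the $(j,i)$-entry of ${\rm adj}(L)$. The point of hypotheses (c) and of the splitting $\XX=\XX_1\cup\XX_2$ is that for a variable $x\in\XX_1$ occurring as the entry $\ell_{i_0,j_0}$ (and, by (c), in exactly one slot), the sum collapses to a single cofactor: $\partial f/\partial x = \Delta_{i_0,j_0}$. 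Thus, after the change of variables identifying each relevant dual variable $y$ with the slot of the $\XX_1$-entry it names, the image of the entry $L(\YY)_{i_0,j_0}$ under $\partial$ is exactly the $(j_0,i_0)$-entry of ${\rm adj}(L)$ modulo $(f)$; more generally each entry of $L(\YY)$ lying in $\YY_2$ maps to the corresponding linear combination of cofactors, i.e. to a linear combination of entries of ${\rm adj}(L)$ with the same structural pattern. Hence the submatrix of $L(\YY)$ formed by the $\YY_2$-entries maps, under $\partial$, to a submatrix of ${\rm adj}(L)$ (up to invertible row/column operations, which do not change the ideal of $2$-minors). Combining with the previous paragraph, every $2\times 2$ minor of $L(\YY)$ with entries in $\YY_2$ maps into $(f)$, i.e. lies in $\ker(\partial)$, which is the assertion.

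The main obstacle, and the place where care is needed, is the bookkeeping in the second step: showing that the partial-derivative map $\partial$ sends the $2$-minors of the $\YY_2$-part of $L(\YY)$ into the $2$-minors of ${\rm adj}(L)$, not merely into the ideal $I_2({\rm adj}(L))$ plus lower-order junk. One has to argue that the entries of $L$ lying in $\XX_2$ contribute cofactors that, via the chain rule, assemble (after the stated change of variables) into genuine $2\times 2$ submatrices of ${\rm adj}(L)$ up to $\mathrm{GL}$-action — this is where hypothesis (c) (no repeated $\XX_1$-entry, so distinct dual variables really do correspond to distinct slots) and the hypothesis that $L$'s entries are either in $\XX_1$ or linear in $k[\XX_2]$ are both essential, and one should verify that the invertible row/column operations used to pass to $\tilde N$ being the upper block in the proof of Theorem~\ref{adjugate_vs_2minors} are compatible with this identification. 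A secondary, more routine point is checking that ``regular over $k[L_1]/(f)$'' in (b) is the correct avatar of condition~(2) of Theorem~\ref{adjugate_vs_2minors} when one restricts attention to the subalgebra $S_f$ generated by the partials; this is handled by noting $k[L_1]\hookrightarrow R$ is a polynomial subring and regularity ascends. Once these identifications are in place, the result follows formally, paralleling Proposition~\ref{2-minors_vs_dual_variety_generic}.
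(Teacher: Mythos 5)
Your proposal is correct and follows essentially the same route as the paper's proof: apply Theorem~\ref{adjugate_vs_2minors} to $L$, use the chain-rule computation of Proposition~\ref{GolMar} together with condition (c) to identify the partial derivative of $f$ with respect to an $\XX_1$-entry with the corresponding cofactor (equivalently, entry of ${\rm adj}(L)$), and conclude that a $2\times 2$ minor of $L(\YY)$ on single-dual-variable entries maps under $\partial$ to a $2\times 2$ minor of ${\rm adj}(L)$, hence into $(f)$. The paper states this in one compressed paragraph (starting from the minor of $L(\YY)$, noting $\partial$ commutes with determinants, then invoking GolMar/(c) and Theorem~\ref{adjugate_vs_2minors}), while you run the same chain in the opposite order and flag the bookkeeping caveats explicitly, but the logical content coincides.
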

\begin{proof} 
	Since $\mathfrak{\partial}$ commutes with determinants, up to a change of variables any minor in question maps to
	$$\left[\begin{matrix}
		\partial f/ \partial x_{i,j} & \partial f/ \partial x_{i,j'}\\ 
		\partial f/ \partial x_{i',j}& \partial f/ \partial x_{i',j'}
	\end{matrix}\right] ({\rm mod\,}f).$$	
By Proposition~\ref{GolMar} and the assumption in item (c), the entries of the above matrix are cofactors of $L$.
Thus, , the matrix is a submatrix of the adjugate of $L$. Therefore,  the result follows from Theorem~\ref{adjugate_vs_2minors}.
\end{proof}

\begin{Remark}\label{2-minors_vs_dual_variety-sym} \rm
	(1) A similar result can be stated in the case of a symmetric linear section of the generic symmetric matrix.
	
	(2) The proposition is about linear sections because of the use of  Proposition~\ref{GolMar}. One wonders what can be said for sections of fixed degree $\geq 2$.
	
(3) The moral of Theorem~\ref{2-minors_vs_dual_variety}  is that, when $f$ is reduced,  the ideal of $2\times 2$-minors in the proposition  is contained in the homogeneous defining ideal of the dual variety $V(f)^{\ast}$.
	This has an immediate impact to the nature of $V(f)^{\ast}$ since, e.g., it typically implies that the latter is deficient (i.e., not a hypersurface), such as is the case of generic and  symmetric generic matrices.
\end{Remark}

\subsection{Higher minors and cofactor relations}

In this part we focus on relations of the  linear system of the cofactors given by minors of possibly higher order than $2$. 

As before, $\YY=(y_{i,j})_{1\leq i,j\leq m}$ denotes  the $m\times m$ generic matrix over $k$ in the ``dual'' variables, and  $S:=k[y_{i,j}\,|\,1\leq i,j\leq m].$

Let $L$ denote an $m\times m \, (m\geq 2)$ matrix over a $k$-algebra $R$ which is a domain. Consider the $k$-algebra map
\begin{equation}\label{cofactor_map}
	\delta: S \to R, \; y_{i,j}\mapsto \Delta_{i,j},
\end{equation}
where $\Delta_{i,j}=\Delta(L)_{i,j}$ denotes the $(i,j)$-cofactor of $L,$ considered as an element of $R$. 

We are  interested in  the generators of $\ker(\delta)$, and will see that many of these  arise as minors of certain sizes of the matrix $\YY.$ To this end we introduce some further notation.

\begin{Setup}\label{Setup_cofactor}\rm 
As a matter of notation, given an $m\times m$ matrix $M$ with entries in a ring, integers
$1\leq u,v\leq m-1$ and sequences of integers
${\bf i}=\{1\leq i_1< \ldots<i_u\leq m\}\quad  \mbox{and} \quad {\bf j}=\{1\leq j_1< \ldots<j_v\leq m\}$, we denote by $M_{{\bf i},{\bf j}}$ the $u\times v$ submatrix of $M$ with rows (respectively, columns) indexed by the elements of ${\bf i}$ (respectively, of $\bf j$).
Also, for a sequence of integers ${\bf i}\subset \{1,\ldots,m\}$, we denote  by $\mathfrak{c} ({\bf i})$ the complementary set to ${\bf i}$ in $\{1,\ldots,m\}$. 
\end{Setup}

With the above notation, drawing upon the adjugate formula ${\rm adj}(M)\, M=\det M I_{m}$ and matrix block multiplication, yields  the following relation:

\begin{equation}\label{adj_block}
	{\rm adj}(M)_{{\bf i},{\bf j}} M_{{\bf j},\mathfrak{c}({\bf i})}+{\rm adj}(M)_{{\bf i},\mathfrak{c}({\bf j})} M_{\mathfrak{c}({\bf j}),\mathfrak{c}({\bf i})}=\boldsymbol0.
\end{equation}


\begin{Proposition}\label{Image_map_minors}
	Let $L$ and $\delta$ be as in {\rm (\ref{cofactor_map})}.
	Let 
	${\bf i}=\{1\leq i_1< \ldots<i_u\leq m\}$ and  ${\bf j}=\{1\leq j_1< \ldots<j_v\leq m\}$ be  sequences of integers for which the submatrix  $L_{\mathfrak{c}({\bf j}),\mathfrak{c}({\bf i})}$ of $L$ is null. Then
	 the ideal  $I_{v-p+1}(\YY_{{\bf i},{\bf j}})\subset S$  is contained in the kernel of $\delta$.
\end{Proposition}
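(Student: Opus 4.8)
The plan is to exploit the block relation (\ref{adj_block}) exactly as Theorem~\ref{adjugate_vs_2minors} exploited the $2\times 2$ case, but now keeping track of ranks of rectangular submatrices. Under the hypothesis that $L_{\mathfrak{c}(\mathbf{j}),\mathfrak{c}(\mathbf{i})}=\boldsymbol0$, the second summand in (\ref{adj_block}) vanishes, so we are left with
\[
{\rm adj}(L)_{\mathbf{i},\mathbf{j}}\; L_{\mathbf{j},\mathfrak{c}(\mathbf{i})}=\boldsymbol0.
\]
Here ${\rm adj}(L)_{\mathbf{i},\mathbf{j}}$ is a $u\times v$ matrix whose $(i_a,j_b)$ entry is, up to sign, the cofactor $\Delta_{i_a,j_b}$ of $L$; in other words it is $\pm$ the image under $\delta$ of the generic submatrix $\YY_{\mathbf{i},\mathbf{j}}$. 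The matrix $L_{\mathbf{j},\mathfrak{c}(\mathbf{i})}$ has $v$ rows and $m-u$ columns, and the displayed equation says the rows of ${\rm adj}(L)_{\mathbf{i},\mathbf{j}}$ are orthogonal (over $R$, which is a domain) to the columns of $L_{\mathbf{j},\mathfrak{c}(\mathbf{i})}$.

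First I would pin down the parameter $p$ — it must be $p=\rank L_{\mathbf{j},\mathfrak{c}(\mathbf{i})}$ (or at least a lower bound for it), since that is the only reading that makes the index $v-p+1$ meaningful. The key linear-algebra step is then: if a $u\times v$ matrix $C$ over a domain $R$ (here $C={\rm adj}(L)_{\mathbf{i},\mathbf{j}}$) satisfies $CB=\boldsymbol0$ for a $v\times w$ matrix $B$ of rank $p$, then the column space of $C^{\mathsf t}$ lies in the rank-$(v-p)$ space $\{x\in R^{v}: B^{\mathsf t}x\in(\text{torsion})\}$ — more precisely, over the fraction field $K$ of $R$ the rows of $C$ lie in the $(v-p)$-dimensional space $(\Image B)^{\perp}\subset K^{v}$, hence $\rank C\le v-p$, and therefore every $(v-p+1)$-minor of $C$ vanishes. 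Translating back, every $(v-p+1)$-minor of ${\rm adj}(L)_{\mathbf{i},\mathbf{j}}$ is zero in $R$; and since $\delta$ commutes with taking minors (it is a ring homomorphism applied entrywise, up to the fixed sign pattern of the cofactors), every $(v-p+1)$-minor of $\YY_{\mathbf{i},\mathbf{j}}$ maps to zero, i.e.\ lies in $\ker\delta$. That gives $I_{v-p+1}(\YY_{\mathbf{i},\mathbf{j}})\subset\ker\delta$.

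The step I expect to be the main obstacle is making the rank inequality $\rank {\rm adj}(L)_{\mathbf{i},\mathbf{j}}\le v-p$ airtight over a general domain rather than a field: the cleanest route is to tensor up to the fraction field $K=\mathrm{Frac}(R)$, where "$CB=0$ with $\rank B=p$" forces $\rank C\le v-\rank B = v-p$ by the rank–nullity theorem applied to $B^{\mathsf t}$, and where vanishing of all $(v-p+1)\times(v-p+1)$ minors is equivalent to $\rank\le v-p$. Since $R\hookrightarrow K$ is injective, a minor that vanishes in $K$ already vanishes in $R$, so no information is lost. One should also be slightly careful that the sign factors $(-1)^{i+j}$ attached to the cofactors do not interfere: they only multiply rows and columns of $\YY_{\mathbf{i},\mathbf{j}}$ by units $\pm1$, which changes each minor by a unit and hence does not affect the ideal it generates, so $\delta\big(I_{v-p+1}(\YY_{\mathbf{i},\mathbf{j}})\big)=I_{v-p+1}\big({\rm adj}(L)_{\mathbf{i},\mathbf{j}}\big)=0$, as required.
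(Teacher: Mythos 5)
Your proposal is correct and takes essentially the same route as the paper: kill the second summand of \eqref{adj_block} using the hypothesis $L_{\mathfrak{c}(\mathbf{j}),\mathfrak{c}(\mathbf{i})}=\boldsymbol0$, read off $\rank {\rm adj}(L)_{\mathbf{i},\mathbf{j}}\le v-p$ from the resulting relation ${\rm adj}(L)_{\mathbf{i},\mathbf{j}}\,L_{\mathbf{j},\mathfrak{c}(\mathbf{i})}=\boldsymbol0$ with $p=\rank L_{\mathbf{j},\mathfrak{c}(\mathbf{i})}$, and then transport the vanishing of $(v-p+1)$-minors back through $\delta$. The only added value over the paper's wording is that you make explicit the pass to the fraction field for the rank–nullity step and the harmlessness of the sign conventions — both of which the paper leaves tacit.
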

\begin{proof} Applying \eqref{adj_block} gives
\begin{equation}
	{\rm adj}(L)_{{\bf i},{\bf j}} \, L_{{\bf j},\mathfrak{c}({\bf i})}=\boldsymbol0_{u\times(m-u)}.
\end{equation}
As $p=\rk L_{{\bf j},\mathfrak{c}({\bf i})},$ this implies that $\rk {\rm adj}(L)_{{\bf i},{\bf j}} < v-p+1.$ Hence, $$I_{v-p+1}({\rm adj}(L)_{{\bf i},{\bf j}})=0.$$
Since ${\rm adj}(L)_{{\bf i},{\bf j}}$ is obtained from $\YY_{{\bf i},{\bf j}}$ via the map  (\ref{cofactor_map}), it follows  that $I_{v-p+1}(\YY_{{\bf i},{\bf j}})$ is contained in its kernel, as claimed.
\end{proof}

\begin{Remark}\rm
The proof of the above proposition actually reduces to the case where $L=(a_{i,j})$ is a coordinate sparse linear section of the generic matrix over $k$.

Namely,   consider  the coordinate sparse linear section $G=(g_{i,j})$ such that
$g_{i,j}:= x_{i,j}$,  if $a_{i,j} \neq 0$,  and  $g_{i,j}=0$ otherwise. 
Let $P$ denote the polynomial ring over $k$ generated by the entries of $G$, and consider the $k$-algebra map $P \to R, x_{i,j}\mapsto a_{i,j}$.
Since, by this map, the images of the cofactors of $G$ are cofactors of $L$, then the relations of the former are also relations of the latter.

Thus, examples of such matrices in the coordinate sparse case become relevant.
\end{Remark}

\begin{Example}\rm
	Let $L$ be the following coordinate sparse linear section of the $4\times 4$ generic matrix:
	$$
		\left[\begin{matrix}
		x_{1,1}&x_{1,2}&x_{1,3}&x_{1,4}\\
		x_{2,1}&0&0&x_{2,4}\\
		x_{3,1}&0&x_{3,3}&0\\
		x_{4,1}&x_{4,2}&0&0
	\end{matrix}\right].$$
Consider the following two sequences of integer pairs:
	$${\bf i}_1=\{1,2\},\,\, {\bf i}_2 =\{1,2,3\},\,\, {\bf i}_3=\{1,3\},\,\, {\bf i}_4=\{1,3,4\},\,\, {\bf i}_5=\{1,2,4\},\,\, {\bf i}_6=\{1,4\}$$ 
	$${\bf j}_1=\{1,2,3\},\,\, {\bf j}_2=\{1,2\},\,\, {\bf j}_3=\{1,2,4\},\,\, {\bf j}_4=\{1,4\},\,\, {\bf j}_5=\{1,3\},\,\, {\bf j}_6=\{1,3,4\}.$$
	The  null matrices corresponding to the pairs ${\bf i}_t, {\bf j}_t\, (1\leq t\leq 6)$ are
	$$L_{\mathfrak{c}({\bf j}_1),\mathfrak{c}({\bf i}_1)}=
	\left[\begin{matrix}
		0&0
\end{matrix}\right]
	\,\,
	L_{\mathfrak{c}({\bf j}_2),\mathfrak{c}({\bf i}_2)}=
			\left[\begin{matrix}
		0\\
		0
\end{matrix}\right],
	\,\,
	L_{\mathfrak{c}({\bf j}_3),\mathfrak{c}({\bf i}_3)}=
	\left[\begin{matrix}
		0&0
\end{matrix}\right], 
	$$
	$$
	L_{\mathfrak{c}({\bf j}_4),\mathfrak{c}({\bf i}_4)}=
	\left[\begin{matrix}
		0\\
		0
	\end{matrix}\right],
	\,\,
	L_{\mathfrak{c}({\bf j}_5),\mathfrak{c}({\bf i}_5)}=
	\left[\begin{matrix}
		0\\0
	\end{matrix}\right],
	\,\,
	L_{\mathfrak{c}({\bf j}_6),\mathfrak{c}({\bf i}_6)}=
	\left[\begin{matrix}
		0&0
	\end{matrix}\right] 
	$$
	and the ones we need  their rank are

	$$L_{{\bf j}_1,\mathfrak{c}({\bf i}_1)}=
	\left[\begin{matrix}
		x_{1,3}&x_{1,4}\\
		0&x_{2,4}\\
		x_{3,3}&0
	\end{matrix}\right],\,\,L_{{\bf j}_2,\mathfrak{c}({\bf i}_2)}=
	\left[\begin{matrix}
		x_{1,4}\\
		x_{2,4}
	\end{matrix}\right],\,\, 
	L_{{\bf j}_3,\mathfrak{c}({\bf i}_3)}=
	\left[\begin{matrix}
		x_{1,2}&x_{1,4}\\
		0&x_{2,4}\\
		x_{4,2}&0
	\end{matrix}\right]
	$$
	$$
	L_{{\bf j}_4,\mathfrak{c}({\bf i}_4)}=
	\left[\begin{matrix}
		x_{1,2}\\
		x_{4,2}
	\end{matrix}\right], L_{{\bf j}_5,\mathfrak{c}({\bf i}_5)}=
	\left[\begin{matrix}
		x_{1,3}\\
		x_{3,3}
	\end{matrix}\right],\,\,
	L_{{\bf j}_6,\mathfrak{c}({\bf i}_6)}=
	\left[\begin{matrix}
		x_{1,2}&x_{1,3}\\
		0&x_{3,3}\\
		x_{4,2}&0
	\end{matrix}\right].$$
	Finally, trading dual variables for corresponding cofactors, the following matrices are obtained:
	$$\YY_{{\bf i}_{1},{\bf j}_1}=
	\left[\begin{matrix}
		y_{1,1}&y_{1,2}&y_{1,3}\\
		y_{2,1}&y_{2,2}&y_{2,3}
	\end{matrix}\right],
	\,\, 
	\YY_{{\bf i}_{2},{\bf j}_2}=
	\left[\begin{matrix}
		y_{1,1}&y_{1,2}\\
		y_{2,1}&y_{2,2}\\
		y_{3,1}&y_{3,2}
	\end{matrix}\right],
	\,\,
	\YY_{{\bf i}_{3},{\bf j}_3}=
	\left[\begin{matrix}
		y_{1,1}&y_{1,2}&y_{1,4}\\
		y_{3,1}&y_{3,2}&y_{3,4}
	\end{matrix}\right],
	$$
	$$\YY_{{\bf i}_{4},{\bf j}_4}=
	\left[\begin{matrix}
		y_{1,1}&y_{1,4}\\
		y_{3,1}&y_{3,4}\\
		y_{4,1}&y_{4,4}
	\end{matrix}\right],\,\,
	\YY_{{\bf i}_{5},{\bf j}_5}=
	\left[\begin{matrix}
		y_{1,1}&y_{1,3}\\
		y_{2,1}&y_{2,3}\\
		y_{4,1}&y_{4,3}
	\end{matrix}\right],\,\,
	\YY_{{\bf i}_{6},{\bf j}_6}=
	\left[\begin{matrix}
		y_{1,1}&y_{1,3}&y_{1,4}\\
		y_{4,1}&y_{4,3}&y_{4,4}
	\end{matrix}\right].$$
	By  Proposition~\ref{Image_map_minors}, the ideal
	
	$$(I_2(\YY_{{\bf i}_{1},{\bf j}_1}),I_2(\YY_{{\bf i}_{2},{\bf j}_2}),I_2(\YY_{{\bf i}_{3},{\bf j}_3}),I_2(\YY_{{\bf i}_{4},{\bf j}_4}),I_2(\YY_{{\bf i}_{5},{\bf j}_5}),I_2(\YY_{{\bf i}_{6},{\bf j}_6}))$$
	is contained in the kernel of (\ref{cofactor_map}).
\end{Example}

\subsection{The Grassmann algebra in arbitrary characteristic}

\begin{Definition}\rm
	Let $\XX=(x_{i,j})$ denote an $m\times n (m\leq n)$ generic matrix over an arbitrary field $k$.
	The $k$-subalgebra $G(n,m)$ of the polynomial ring $k[\XX]$, generated by the $m$-minors of $\XX$, is called the {\em Grassmann algebra} of size $(n,m)$.
\end{Definition}

We also record the following well-known result.

\begin{Proposition}\label{dim_Grassman} {\rm (\cite[Corollary 5.12]{BrunsVett})}
 $\dim G(n,m)=m(n-m)+1.$
\end{Proposition}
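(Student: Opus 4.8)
The statement is the classical computation of the dimension of the Grassmann algebra $G(n,m)$, i.e. the homogeneous coordinate ring of the Grassmannian $\mathbb{G}(m-1,n-1)$ under its Pl\"ucker embedding, and the plan is to identify this ring with a well-understood algebra and read off its Krull dimension. First I would observe that $G(n,m)$ is, by definition, the image of the $k$-algebra homomorphism $k[T_{\bf a}] \to k[\XX]$ sending a Pl\"ucker variable $T_{\bf a}$ (indexed by an $m$-subset ${\bf a}\subset\{1,\dots,n\}$) to the corresponding maximal minor $[{\bf a}]$ of $\XX$. Hence $G(n,m)\cong k[T_{\bf a}]/P$, where $P$ is the ideal of Pl\"ucker relations; it is a standard fact (e.g. from the theory of algebras with straightening law, or \cite{BrunsVett}) that $P$ is exactly the homogeneous prime ideal defining the Grassmannian, so that $\operatorname{Proj} G(n,m) = \mathbb{G}(m-1,n-1)$.

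The key step is then purely a dimension count. Since $G(n,m)$ is a graded domain finitely generated over $k$, its Krull dimension equals $\dim \mathbb{G}(m-1,n-1) + 1$, the $+1$ accounting for the affine cone. The Grassmannian of $(m-1)$-planes in $\mathbb{P}^{n-1}$ — equivalently of $m$-dimensional subspaces of an $n$-dimensional vector space — has dimension $m(n-m)$, as one sees from the standard affine chart: fixing a complementary decomposition, an open neighbourhood of a point is parametrized by the space of $m\times(n-m)$ matrices (the ``big cell''), giving $m(n-m)$ free parameters. Therefore $\dim G(n,m) = m(n-m)+1$.

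Alternatively, and perhaps more in the spirit of keeping the argument self-contained over an arbitrary field, I would compute the dimension directly as the transcendence degree over $k$ of the fraction field of $G(n,m)$, i.e. the transcendence degree of $k(\,[{\bf a}] : {\bf a}\,)$ inside $k(\XX)$. Here one uses the localization trick: invert the minor $[\,1,2,\dots,m\,]$; after the change of coordinates given by right-multiplying $\XX$ by the inverse of its first $m\times m$ block, the matrix becomes $(\,I_m \mid Z\,)$ with $Z$ an $m\times(n-m)$ generic matrix, and every maximal minor of the new matrix is (up to sign) a minor of $Z$ of some size, these minors generating $k[Z]$ after further localization. This exhibits the function field of $G(n,m)$ as $k(Z)$, of transcendence degree $m(n-m)$, whence the Krull dimension of the domain $G(n,m)$ is $m(n-m)+1$.

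The main obstacle is really just bookkeeping: making the localization/change-of-variables argument precise — checking that inverting one maximal minor does produce the normalized form $(I_m\mid Z)$, that the ratios of the remaining Pl\"ucker coordinates recover all the entries of $Z$ (hence all of $k(Z)$), and that none of this uses characteristic zero — so that the transcendence degree computation is airtight over an arbitrary $k$. Since the result is quoted from \cite[Corollary 5.12]{BrunsVett}, the cleanest write-up simply cites that reference; if a proof is wanted, the localization argument above is the shortest honest route and contains no genuinely hard point beyond this linear-algebra normalization.
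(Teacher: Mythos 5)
The paper offers no proof of this statement; it simply cites \cite[Corollary 5.12]{BrunsVett}. Your first argument --- identify $\operatorname{Proj} G(n,m)$ with the Grassmannian $\mathbb{G}(m-1,n-1)$, read off its dimension $m(n-m)$ from the big cell, and add $1$ for the affine cone --- is correct and is the standard proof, so the proposition is established by it.

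Your second, ``self-contained'' argument has a genuine bookkeeping slip. Inverting $[1,\dots,m]$ and normalizing $\XX$ to $(I_m\mid Z)$ shows that the \emph{ratios} of Pl\"ucker coordinates generate $k(Z)$; but the fraction field of $G(n,m)$ also contains $[1,\dots,m]=\det A$ itself, which is transcendental over $k(Z)$ inside $k(\XX)$ (writing $\XX=(A\mid B)$ and $Z=A^{-1}B$, one has $k(\XX)=k(A,Z)$ with the entries of $A$ and $Z$ jointly algebraically independent over $k$). Hence the function field of $G(n,m)$ is $k(Z)\big([1,\dots,m]\big)$, of transcendence degree $m(n-m)+1$, and the Krull dimension of the affine domain $G(n,m)$ equals this transcendence degree, with no further $+1$ to add. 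As you wrote it --- ``function field $=k(Z)$ of transcendence degree $m(n-m)$, whence Krull dimension $m(n-m)+1$'' --- the numbers do not cohere: if the function field really were $k(Z)$, the Krull dimension would be $m(n-m)$. You dropped the extra transcendental generator $\det A$ while simultaneously carrying over the $+1$ that belongs to the Proj/cone picture; fix one or the other, not both.
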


We give a characteristic-free proof of the next well-known equality in characteristic zero.
The insertion of this argument over here is justified by being largely matrix-theoretic and drawing quite a bit on the adjugate. Besides, we were not able to find the argument  in the literature.

\begin{Theorem}\label{rank_and_dimension}
	Let $\Theta(n,m)$ denote the \index{matrix ! Jacobian} Jacobian matrix of the $m$-minors of $\XX$ as above over an arbitrary field. Then \index{matrix ! Jacobian ! rank} $\rank \Theta(n,m)=\dim G(n,m)$.
\end{Theorem}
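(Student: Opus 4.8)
The plan is to establish both inequalities relating the rank of the Jacobian matrix $\Theta(n,m)$ of the $m$-minors of $\XX$ to $\dim G(n,m)$, working over an arbitrary field $k$. The rank of the Jacobian is the maximal generic rank, i.e.\ the rank over the fraction field $K=k(\XX)$ of $k[\XX]$, so throughout I would evaluate at the generic point of $\Spec k[\XX]$. Write $N=m(n-m)+1$, which by Proposition~\ref{dim_Grassman} equals $\dim G(n,m)$.

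First, the inequality $\rank\Theta(n,m)\leq \dim G(n,m)$. The rows of $\Theta(n,m)$ record the differentials of the minors $\Delta_{\bf j}$ (the $m$-minors, indexed by column sets $\bf j$) as elements of the cotangent space; the rank of $\Theta$ over $K$ is the dimension of the $K$-span of these differentials. Since $G(n,m)$ is a domain of Krull dimension $N$, its module of Kähler differentials $\Omega_{G(n,m)/k}$ has rank $N$ over the fraction field of $G(n,m)$ (by the Jacobian criterion / the fact that a finitely generated domain over a perfect-enough... but $k$ need not be perfect, so I would instead argue via transcendence degree: $\operatorname{trdeg}_k \operatorname{Frac} G(n,m)=N$, and the differentials $d\Delta_{\bf j}$, viewed inside $\Omega_{K/k}$ after the inclusion $G(n,m)\hookrightarrow k[\XX]$, span a subspace of dimension at most $\operatorname{trdeg}_k$ of the subfield they generate over $k$, which is $\operatorname{Frac}G(n,m)$, hence at most $N$). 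This gives $\rank\Theta(n,m)\leq N$. Care is needed here in the imperfect-field case: the clean statement is that if $t_1,\dots,t_r\in K$ with $r>\operatorname{trdeg}_k k(t_1,\dots,t_r)$ then $dt_1,\dots,dt_r$ are $K$-linearly dependent in $\Omega_{K/k}$ — this is standard and holds over any field.

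Second, and this is where the adjugate enters and where I expect the main work to lie, the reverse inequality $\rank\Theta(n,m)\geq N$. Here I would exhibit $N$ of the minors whose differentials are $K$-linearly independent, equivalently, a subset $\mathcal S$ of column-index sets of size $N$ such that the corresponding rows of $\Theta$ are independent at the generic point. A natural choice: fix the first $m$ columns, use $\Delta_0:=\det \XX_{[m],[m]}$ (the leading $m\times m$ minor) together with, for each pair $(i,\ell)$ with $1\leq i\leq m$, $m+1\leq \ell\leq n$, the minor $\Delta_{i,\ell}$ obtained from columns $\{1,\dots,m\}$ by replacing column $i$ with column $\ell$; there are $m(n-m)$ such, plus $\Delta_0$, giving $N$ minors. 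To see the differentials are independent, I would localize by inverting $\Delta_0$: over $k[\XX][\Delta_0^{-1}]$ one may perform column operations (multiply the $m\times n$ matrix on the left by $\operatorname{adj}(\XX_{[m],[m]})$, or rather reduce the first $m\times m$ block to a scalar matrix $\Delta_0 \mathbb I_m$), which turns the relevant $\Delta_{i,\ell}$, up to the unit $\Delta_0$ and lower-order corrections, into the individual entries of the transformed matrix in columns $m+1,\dots,n$. More precisely, using Cramer/adjugate relations as in~\eqref{adjugate_relation}, one shows that after this change of coordinates the chosen minors become a ``triangular'' system in the $m(n-m)$ coordinate functions $x_{i,\ell}$ ($i\leq m<\ell$) together with $\Delta_0$, so their Jacobian with respect to a suitably chosen set of $N$ of the original variables is invertible over $K$. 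The delicate point — the expected main obstacle — is controlling the ``lower-order corrections'' (the entries $p_i$ analogous to those appearing in the proof of Theorem~\ref{adjugate_vs_2minors}) well enough to guarantee the triangularity and hence nonvanishing of the relevant $N\times N$ sub-Jacobian in \emph{every} characteristic; this is exactly where the characteristic-zero proofs take a shortcut (using that the minors are algebraically independent modulo the one relation, via representation theory or generic smoothness) that is unavailable here, and where the explicit adjugate bookkeeping has to do the job.

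Finally I would assemble the two inequalities: $N\leq \rank\Theta(n,m)\leq N$ forces equality, and by Proposition~\ref{dim_Grassman} the right side is $\dim G(n,m)$, completing the proof. As a sanity check one can note that in characteristic zero this recovers the classical fact that the Grassmannian (in its Plücker embedding, affine cone) is smooth, so the Jacobian of its defining equations has the expected corank and the Jacobian of a minimal generating set of coordinates realizing the parametrization has full rank; the content of the theorem is that the rank statement survives in positive characteristic even though smoothness arguments via generic smoothness do not directly apply.
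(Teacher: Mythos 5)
Your two-sided strategy matches the paper's, but the two halves deserve separate scrutiny, and the lower bound is genuinely incomplete as written.

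For the upper bound $\rank\Theta(n,m)\le m(n-m)+1$ you take a route that is genuinely different from the paper's: the paper exhibits an explicit $nm\times(m^2-1)$ matrix $\mathbb{K}(n,m)$ whose columns are syzygies of the Jacobian map (built out of the columns of $\XX^t$ via the adjugate identity for $m\times m$ submatrices) and then shows $\Ht I_{m^2-1}(\mathbb{K}(n,m))\ge 1$, giving kernel rank $\ge m^2-1$ and hence $\rank\Theta(n,m)\le nm-(m^2-1)$. Your K\"ahler-differential route is shorter and more conceptual, but the auxiliary lemma you state is \emph{false} over an arbitrary field: with $k=\mathbb{F}_p(s)$, $K=k(s^{1/p})$, $t_1=s^{1/p}$, one has ${\rm trdeg}_k\, k(t_1)=0<1$ yet $dt_1\neq 0$ in $\Omega_{K/k}$. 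What rescues your argument in this application is that $K=k(\XX)$ is purely transcendental, hence separable over $k$; by Mac Lane's criterion the intermediate field $L$ generated by the minors is then also separable over $k$, so $\dim_L\Omega_{L/k}={\rm trdeg}_k\,L=N$ and the $K$-span of the $d\Delta_{\mathbf{j}}$, being a quotient of $\Omega_{L/k}\otimes_LK$, has dimension at most $N$. State the lemma with the separability hypothesis and cite Mac Lane rather than claiming it over any field.

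For the lower bound you pick exactly the paper's $N$ minors (the leading $m$-minor $f=\det M$ together with the $m(n-m)$ minors $f_{i,j}$ obtained by replacing one of the first $m$ columns by one of the others) and reach for the adjugate, so you have the right ingredients, but you stall at what you call the delicate point of controlling lower-order corrections after column reduction by ${\rm adj}(M)$. There are in fact no corrections to control, provided one differentiates with respect to the right $N$ variables: take the $m(n-m)$ entries of the columns $L_1,\dots,L_{n-m}$ of $\XX$ beyond the first $m$, together with the single entry $x_{m,m}$ of $M$. The subclaim $L_i\,{\rm adj}(M)=(-f_{i,1},\dots,-f_{i,m})$ shows each $f_{i,j}$ is linear in the entries of $L_i$ with coefficients coming only from $M$; the $L_i$ are pairwise disjoint from one another and from $M$; and $\det M$ involves only $M$. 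So the corresponding $N\times N$ sub-Jacobian $H$ is block triangular, with $n-m$ diagonal blocks $-{\rm adj}(M)$ and corner entry $\Delta(M)_{m,m}=\partial(\det M)/\partial x_{m,m}$, hence $\det H=\pm(\det M)^{(n-m)(m-1)}\Delta(M)_{m,m}\ne 0$ in every characteristic. Without this choice of variables the triangularity you want is not established and your lower bound does not yet close.
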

\begin{proof}
	Throughout we list the $m$-minors $\Delta_{j_1,\ldots,j_m}$ according to the ordering of their vector of indices in the lexicographic order.
	Likewise, we assume that $\Theta(n,m)$ is written with respect to this ordering.
	
	By Proposition~\ref{dim_Grassman}, we are to show that $\rank \Theta(n,m) =m(n-m)+1$.

	{\sc Claim 1.} $\rank \Theta(n,m) \geq m(n-m)+1$.
	
	Decompose the transpose of $\XX$ in the following way
	
	$$\XX^t=\left[ \begin{matrix} 
		M\\
		L_1\\
		\vdots\\
		L_{n-m}
	\end{matrix}\right]$$
	where $M$ is an $m\times m$ matrix and $L_i$ ($i=1,\ldots,n-m$) are the subsequent row matrices. 
	
	For every index $1\leq i\leq n-m$, denote by $f_{i,1},\ldots,f_{i,m+1}$ the ordered signed $m$-minors of the  $(m+1)\times m$ matrix $\left[\begin{matrix}
	M\\ L_i\end{matrix}\right].$ 
	Here, for any two distinct indices $1\leq i,i'\leq n-m,$ the only common $m$-minor of the respective sets of $m$-minors is $f:=\det M=f_{i,m+1}=f_{i',m+1}$.
	
	\smallskip
	
	{\sc Subclaim.} 
	$L_i {\rm adj}(M)= (-f_{i,1}\cdots -f_{i,m}).$

	In fact, 
		$$\left[\begin{matrix} M\\ L_i\end{matrix} \right] {\rm adj}(M) =
		\left[\begin{matrix}
			f&\cdots&0\\
			\vdots&\ddots&\vdots\\
			0&\cdots&f\\
			\hline
			g_1&\cdots&g_m
		\end{matrix}	\right]
	.$$
	Thus, 
	$$\boldsymbol0=
		[f_{i,1}\cdots f_{i,m+1}]
		\left[\begin{matrix}
			M\\ L_i\end{matrix}\right] {\rm adj}(M)= 
		[f_{i,1}f+g_1f\cdots f_{i,m}f+g_mf]
		$$
	and hence, $g_i=-f_{i,j}$, for  $j=1,\ldots,m,$ thus confirming the subclaim.
	
	Now, let $H$ denote the $(m(n-m)+1)\times (m(n-m)+1)$ Jacobian matrix of  the forms
	$$\{f_{1,1},\ldots,f_{1,m}\}\cup\cdots\cup\{f_{n-m,1},\ldots,f_{n-m,m}\} \cup\{f\}$$ with respect to the variables of the set $\{x_{i,j}\in L_1\}\cup\cdots\cup\{x_{i,j}\in L_{n-m}\}\cup\{x_{m,m}\}.$ 
	
	By the subclaim, for any given  $1\leq i\leq n-m$ and $1\leq j\leq m$, one has:
	$$f_{i,j}=-L_{i}\cdot (\mbox{$j$th column of ${\rm adj}(M)$}).$$
	Since $L_{i}$ and $M$ have no entries in common and neither do $L_i$ and $L_{i'}$ for $i\neq i'$,  we conclude that 
	$H$ has the following shape:
	
	$$\left[\begin{array}{cccc|ccccc}
		-{\rm adj}(M)&\boldsymbol0&\cdots&\boldsymbol0&0\\
		\boldsymbol0&-{\rm adj}(M)&\cdots&\boldsymbol0&0\\
		\vdots&\vdots&\ddots&\vdots&\vdots\\
		\boldsymbol0&\boldsymbol0&\cdots&-{\rm adj}(M)&0\\
		\hline
		\ast&\ast&\cdots&\ast&\Delta(M)_{m,m}
	\end{array}\right]$$
	where $\Delta(M)_{m,m}$ is the cofactor of $M$ with respect to the $(m,m)$th entry, in particular a non-vanishing form.
	Since $ \det ({\rm adj}(M))=(\det M)^{m-1}=f^{m-1}\neq 0$, then $\det H\neq 0$.
	But $H$ is up to elementary row/column operations a submatrix of the Jacobian matrix of $I_{m}(\XX),$ thus wraping up the proof of Claim 1.
	
	\smallskip
	
	{\sc Claim 2.} $\rank \Theta(n,m) \leq m(n-m)+1$.
	
	Write $R:=k[\XX]$ and consider the $R$-homomorphism
	$R^{nm} \stackrel{\Theta(n,m)}{\lar} R^{{n\choose m}}$ defined by $\Theta(n,m)$ in the canonical basis of $R^{nm} $ and $R^{{n\choose m}}$ -- which we call informally the {\em Jacobian map}.
	It suffices to show that the kernel of the Jacobian map  has rank at least $m^2-1$. 
	
	To see  this, for every $1\leq i\leq m$ let $\ell_i$ denote the $i$th column of the transpose of  $\XX$. Now, 
	for every $1\leq i\leq m-1$, consider the following  $(nm)\times m$  matrix
		$$\mathfrak{s}(i)=\left[\begin{matrix}
			\ell_i&\boldsymbol0&\boldsymbol0&\cdots&\boldsymbol0&\cdots&\boldsymbol0&\boldsymbol0\\
			\boldsymbol0&\ell_i&\boldsymbol0&\cdots&\boldsymbol0&\cdots&\boldsymbol0&\boldsymbol0\\
			\vdots&\vdots&\vdots&\ddots&\vdots&\cdots&\vdots&\vdots\\
			\boldsymbol0&\boldsymbol0&\boldsymbol0&\cdots&\ell_i&\cdots&\boldsymbol0&\boldsymbol0\\
			\boldsymbol0&\boldsymbol0&\boldsymbol0&\cdots&\boldsymbol 0&\cdots&\boldsymbol0&\boldsymbol0\\
			\vdots&\vdots&\vdots&\ddots&\vdots&\cdots&\vdots&\vdots\\
			\boldsymbol0&\boldsymbol0&\boldsymbol0&\cdots&\boldsymbol0&\cdots&\ell_i&\boldsymbol0\\
			\boldsymbol0&\boldsymbol0&\boldsymbol0&\cdots&-\ell_m&\cdots&\boldsymbol0&\ell_i
		\end{matrix}\right],
		$$
	where  the column with the entry $\ell_m$ is the $i$th column, and in addition consider the $(nm)\times (m-1)$ matrix
		$$\mathfrak{s}(m)=\left[\begin{matrix}
			\ell_m&\boldsymbol0&\boldsymbol0&\cdots&\boldsymbol0&\cdots&\boldsymbol0\\
			\boldsymbol0&\ell_m&\boldsymbol0&\cdots&\boldsymbol0&\cdots&\boldsymbol0\\
			\vdots&\vdots&\vdots&\ddots&\vdots&\cdots&\vdots\\
			\boldsymbol0&\boldsymbol0&\boldsymbol0&\cdots&\ell_m&\cdots&\boldsymbol0\\
			\boldsymbol0&\boldsymbol0&\boldsymbol0&\cdots&\boldsymbol 0&\cdots&\boldsymbol0\\
			\vdots&\vdots&\vdots&\ddots&\vdots&\cdots&\vdots\\
			\boldsymbol0&\boldsymbol0&\boldsymbol0&\cdots&\boldsymbol0&\cdots&\ell_m\\
			\boldsymbol0&\boldsymbol0&\boldsymbol0&\cdots&\boldsymbol0&\cdots&\boldsymbol0
		\end{matrix}\right].$$
	{\sc Subclaim.} Any among  the $m(m-1)+m-1=m^2-1$ columns of these matrices belongs to the kernel of the Jacobian map.
	
	To prove it,  consider the $m\times m$ submatrix of $\XX$ with column indices $j_1,\ldots,j_m$ 
	$$M_{j_1,\ldots,j_m}=\left[\begin{matrix}
		x_{1,j_1}&\cdots&x_{1,j_m}\\
		\vdots&\ddots&\vdots\\
		x_{m,j_1}&\cdots&x_{m,j_m}
	\end{matrix}\right]$$
	and set $D_{j_1\cdots j_m}:=\det M_{j_1,\ldots,j_m}$.
	For $1\leq r\leq m,$  let $\partial^{(r)}_{j_1,\ldots,j_m}$ denote the vector of the  partial derivatives of  $D_{j_1\cdots j_m}$ with respect to the variables which are the ordered entries of $\ell_r.$ Since $\frac{\partial D_{j_1\cdots j_m}}{\partial x_{i,j_k} }=\Delta(M_{j_1,\ldots,j_m})_{i,k}$ and $\frac{\partial D_{j_1\cdots j_m}}{\partial x_{i,j} }=0$ if $j\notin \{j_1,\ldots,j_m\}$ then, by the adjugate relation 
	$${\rm adj}(M_{j_1,\ldots,j_m})\, M_{j_1,\ldots,j_m}= M_{j_1,\ldots,j_m}\, {\rm adj}(M_{j_1,\ldots,j_m})=D_{j_1\cdots j_m}\, \mathbb{I}_m,$$
	one gets
	$$\partial^{(r)}_{j_1,\ldots,j_m}\, \ell_i=
	\left\{\begin{array}{cc}
		0,&\mbox{if $i\neq r$}\\
		\mbox{$D_{j_1\cdots j_m}$},&\mbox{if $i=r$}
	\end{array}\right.$$
	Hence,
	\begin{equation}\label{syzthetamn1}
		\partial^{(r)}_{j_1,\ldots,j_m}\, \ell_i= 0,\quad \mbox{if $i\neq r$}
	\end{equation}
	and
	\begin{equation} \label{syzthetamn2}
		\partial^{(i)}_{j_1,\ldots,j_m}\, \ell_{i}-\partial^{(m)}_{j_1,\ldots,j_m}\cdot \ell_{m}=0	\quad\mbox{for $1\leq i\leq m-1$}.
	\end{equation}
	
	The relations \eqref{syzthetamn1} and  \eqref{syzthetamn2} prove the stated subclaim.

	To wrap up the proof of Claim 2, let  $\mathbb{K}(n,m)$ stand for the concatenation of the  matrices  $\mathfrak{s}(i)$ ($1\leq i\leq m-1$) and $\mathfrak{s}(m)$, whose columns as just seen belong to the kernel of the Jacobian map.
	We now show that $I_{m^2-1}(\mathbb{K}(n,m))$ has positive height, which implies in particular that $\mathbb{K}(n,m)$ has rank $m^2-1$.
	For this, note that up to column permutation we can write the $nm\times (m^2-1)$ matrix $\mathbb{K}(n,m)$ in the form
	$$\left[\begin{matrix}
		\XX^t&\\
		&\XX^t\\
		&&\ddots\\
		&&&\XX^t\\
		\ast&\ast&\cdots&\ast&\overline{\XX}^t
	\end{matrix}\right]$$
	where the empty slots are null entries and $\overline{\XX}$ denotes the $(m-1)\times n$ submatrix obtained from $\XX$ omitting the $m$th row. Thus, for every $m\times m$ submatrix $A$ of $\XX$ and every $(m-1)\times(m-1)$ submatrix $B$ of $\overline{\XX}^t$ we have the following $(m^2-1)\times(m^2-1)$ submatrix of $\mathbb{K}(n.m)$ 
	$$\left[\begin{matrix}
		A^t&\\
		&A^t\\
		&&\ddots\\
		&&&A^t\\
		\ast&\ast&\cdots&\ast&B^t
	\end{matrix}\right]$$
	whose determinant is $\det A^{m-1}\det B.$ Thus, a minimal prime of $I_{m^2-1}(\mathbb{K}(n,m))$ contains $\det A\det B$ for all choices of $A$ and $B$. It follows that such a prime
	contains either $I_{m}(\XX)$ or $I_{m-1}(\overline{\XX})$, in particular, it has height $\geq n-m+1$ or $\geq n-m+2.$ Thus, $\Ht I_{m^2-1}(\mathbb{K}(n,m))\geq 1.$
\end{proof}

\subsection{Linear syzygies I: Gorenstein ladder}

For the notion of syzygies and minimal graded free resolutions see \cite[Section 3.5 and Section 7.1]{SimisBook}.
In this and next subsection we exploit quite strongly the idea of collecting many syzygies via the adjugate.

Consider  the following kind of one-sided ladder of the $m\times m$ generic matrix $\XX$:

\begin{center}
	\begin{tikzpicture}
		\draw (1.9,1.2) node {i};
		\draw (2.9,2.2) node {i};
		\draw (3.9,3.2) node {i};
		\draw (2,-0.2) node {o};
		\draw (2.8,1.2) node{o};
		\draw (3.8,2.2) node{o};
		\draw (5.2, 3) node {o};
		\draw (0,0)--(0,5);
		\draw (0,0)--(2,0);
		\draw (5,3)--(5,5);
		\draw (0,5)--(5,5);
		\draw (2,0)--(2,1);
		\draw (2,1)--(3,1);
		\draw (3,1)--(3,2);
		\draw (3,2)--(4,2);
		\draw (4,2)--(4,3);
		\draw (4,3)--(5,3);
	\end{tikzpicture}
\end{center}
In the above picture, the number of inside corners is three. 
One is interested in the ideals of minors confined to the (left side of the) ladder.
If the ladder is denoted $\mathcal{L}$, then such ideals are denoted $I_t(\mathcal{L})$, for various values of $t$.

We will focus in the case where there is only one inside corner.
It is well known that the ideal of submaximal minors of this ladder is a codimension three Gorenstein ideal (\cite[Theorem 4.9 (c)]{Conca1995}), hence it is generated by the maximal Pfaffians of a certain alternating matrix.
We give next the shape of such a matrix as based on the adjugate ${\rm adj}(\XX)$.

\begin{Proposition}
Let $\mathcal{L}$ denote the above ladder of the $m\times m$ generic matrix $\XX=(x_{i,j})$, with one single inside corner.
Then $I_{m-1}(\mathcal{L})$ is the ideal of maximal  Pfaffians of the $(2m-1)\times (2m-1)$ alternating matrix
\begin{equation*}
	\Phi_m=	\left(\begin{array}{cccccccccc}
		0&\cdots&0&x_{m,1}&x_{m-1,1}&\cdots&x_{1,1}\\
		\vdots&&\vdots&\vdots&\vdots&&\vdots\\
		0&\cdots&0&x_{m,m-1}&x_{m-1,m-1}&\cdots&x_{1,m-1}\\
		-x_{m,1}&\cdots& -x_{m,m-1}&0&-x_{m-1,m}&\cdots&-x_{1,m}\\
		-x_{m-1,1}&\cdots&-x_{m-1,m-1}&x_{m-1,m}&0&\cdots&0\\
		\vdots&&\vdots&\vdots&\vdots&&\vdots\\
		-x_{1,1}&\cdots& -x_{1,m-1}&x_{1,m}&0&\cdots&0
	\end{array}\right).
\end{equation*}	
\end{Proposition}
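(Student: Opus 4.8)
\medskip
\noindent\emph{How I would prove it.}
First pin down the two objects being compared. The one-inside-corner ladder $\mathcal{L}$ of $\XX$ is the whole $m\times m$ array with the single corner slot $(m,m)$ deleted, and an $(m-1)\times(m-1)$ submatrix of $\XX$ is confined to $\mathcal{L}$ precisely when its omitted row is $m$ or its omitted column is $m$. Hence $I_{m-1}(\mathcal{L})$ is generated by the $2m-1$ cofactors
$$\Delta_{m,1},\ \dots,\ \Delta_{m,m},\ \Delta_{1,m},\ \dots,\ \Delta_{m-1,m},$$
i.e.\ by the entries of the $m$th row and $m$th column of ${\rm adj}(\XX)$ (which is the sense in which the description is governed by the adjugate). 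Since $\Phi_m$ has odd size $2m-1$, it has exactly $2m-1$ maximal (order $2m-2$) Pfaffians, one for each deleted index. The plan is to compute all of them and show that, up to sign, they are exactly these $2m-1$ cofactors; this yields the ideal equality directly, with no recourse to free resolutions. (Afterwards one may add, for context, that by Conca's codimension-three Gorenstein statement together with the Buchsbaum--Eisenbud structure theorem the Pfaffian complex built on $\Phi_m$ is then automatically the minimal graded free resolution of $R/I_{m-1}(\mathcal{L})$.)

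For the computation, partition $\{1,\dots,2m-1\}$ into $\mathrm{I}=\{1,\dots,m-1\}$, $\mathrm{II}=\{m\}$ and $\mathrm{III}=\{m+1,\dots,2m-1\}$, identifying $m+k\in\mathrm{III}$ with $k$. Reading off $\Phi_m$: all three diagonal blocks vanish; the $\mathrm{I}$--$\mathrm{III}$ block $Q=(Q_{i,k})$ has $Q_{i,k}=x_{m-k,i}$, so after reversing the order of the $\mathrm{III}$-indices it is the transpose of the upper-left $(m-1)\times(m-1)$ block of $\XX$; the $\mathrm{I}$--$\mathrm{II}$ block is the column $(x_{m,1},\dots,x_{m,m-1})^{t}$, and the $\mathrm{II}$--$\mathrm{III}$ block is the row $(-x_{m-1,m},\dots,-x_{1,m})$. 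Two standard facts are used: (i) for a $p\times q$ matrix $C$ with $p+q$ even, $\mathrm{Pf}\!\left(\begin{smallmatrix}0&C\\-C^{t}&0\end{smallmatrix}\right)$ equals $\pm\det C$ when $p=q$ and is $0$ when $p\neq q$ (in the unequal case the rank is $\le 2\min\{p,q\}<p+q$); and (ii) the Laplace-type expansion of a Pfaffian along a fixed row.

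Now run the three cases. Deleting the index $m$ leaves $\left(\begin{smallmatrix}0&Q\\-Q^{t}&0\end{smallmatrix}\right)$ with $Q$ square, so the Pfaffian is $\pm\det Q=\pm\Delta_{m,m}$. Deleting an index $a\in\mathrm{I}$ leaves a $(2m-2)\times(2m-2)$ alternating matrix whose Pfaffian I expand along its unique $\mathrm{II}$-row: the entries of that row over block-$\mathrm{I}$ columns give terms whose Pfaffian factor is that of a $\left(\begin{smallmatrix}0&\ast\\-\ast^{t}&0\end{smallmatrix}\right)$ with blocks of the unequal sizes $m-3$ and $m-1$, hence $0$; the entries over block-$\mathrm{III}$ columns give $\pm x_{m-k,m}\det Q^{\widehat a,\widehat k}$, where $Q^{\widehat a,\widehat k}$ deletes row $a$ and column $k$ of $Q$. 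Summing over $k$ is exactly the Laplace expansion, along the bottom row, of the $(m-1)\times(m-1)$ determinant obtained by placing the row $(-x_{m-1,m},\dots,-x_{1,m})$ beneath $Q$ with its $a$th row removed; undoing the index reversal identifies this with $\pm\Delta_{m,a}$. Deleting an index $m+b\in\mathrm{III}$ is handled by the mirror-image argument (now the block-$\mathrm{III}$ terms vanish for size reasons and the block-$\mathrm{I}$ terms survive) and yields $\pm\Delta_{m-b,m}$. Letting $a,b$ range over $1,\dots,m-1$ and adding the case of the deleted index $m$, the $2m-1$ maximal Pfaffians of $\Phi_m$ are, up to sign, precisely $\Delta_{m,1},\dots,\Delta_{m,m},\Delta_{1,m},\dots,\Delta_{m-1,m}$; hence $\mathrm{Pf}(\Phi_m)=I_{m-1}(\mathcal{L})$.

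The only place needing genuine care is the sign bookkeeping: one must check that the signs coming out of the Pfaffian row-expansion match those of the ordinary Laplace expansion of the auxiliary $(m-1)\times(m-1)$ determinants, and that $\pm\det$ of the relevant submatrix of $Q$ equals the correctly signed cofactor of $\XX$ after the reversal $k\mapsto m-k$. Since only the generated ideal is at stake, every such sign is absorbed into a global $\pm$, so this is bookkeeping rather than a real obstruction; the one structural point driving everything is that expanding along the central row collapses each maximal Pfaffian to a single determinant, because all the ``wrong-size'' bipartite Pfaffians vanish.
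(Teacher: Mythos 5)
Your proof is correct, but it takes a genuinely different route from the paper's. The paper proceeds homologically: it observes that the relations coming from the adjugate identity $\XX\,{\rm adj}(\XX)={\rm adj}(\XX)\,\XX=(\det\XX)\mathbb I_m$ assemble into the columns of $\Phi_m$, so $\Phi_m$ is a syzygy matrix of the $2m-1$ cofactors; it then writes the expected self-dual three-term complex and verifies exactness via the Buchsbaum--Eisenbud acyclicity criterion (the key input being $\grade I_{m-1}(\mathcal L)=3$, from Stanley's formula), concluding that $\Phi_m$ is the full alternating syzygy matrix and re-deriving Gorensteinness, with the passage from ``alternating syzygy matrix with Pfaffian-shaped resolution'' to ``ideal of maximal Pfaffians'' then invoked via the Buchsbaum--Eisenbud structure theorem. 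You instead compute the $2m-1$ maximal Pfaffians of $\Phi_m$ head-on, using the tripartite block structure and the two standard Pfaffian facts (vanishing of bipartite Pfaffians with unequal block sizes, and row-expansion), and show each one equals a cofactor up to sign; this yields the ideal equality directly and elementarily, without any appeal to resolutions, the acyclicity criterion, or the structure theorem. What the paper's route buys is a simultaneous proof that the Pfaffian complex is a resolution (and hence an independent proof of Gorensteinness); what your route buys is a transparent, self-contained identification, from which the resolution statement can be added afterwards as a corollary, exactly as you note. One small cosmetic point: you invoke Conca's ladder result and Buchsbaum--Eisenbud at the end ``for context,'' but your ideal-equality argument does not actually need them, which is worth emphasizing since it is precisely the feature that distinguishes your proof. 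The sign bookkeeping you flag is genuinely just bookkeeping, and since only the generated ideal is at stake the global $\pm$ absorbs it, as you say.
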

\begin{proof}
	Let  $\Delta_{i,j}$ denote the $(i,j)$th entry of the adjugate matrix ${\rm adj}(\XX)$. \index{matrix ! adjugate} Then the canonical generators of $I_{m-1}(\mathcal{L})$ are identified with the $2m-1$ cofactors
	$$\Delta_{i,m} (1\leq i\leq m),  \Delta_{m,j} (1\leq j\leq m-1).$$ 
	Among the relations obtained from the adjugate formula   (\ref{adjugate_relation})    of $\XX$,   we single out the following ones  
	$$\sum_{j=1}^{m}x_{i,j}\Delta_{j,m}=0 \quad (1\leq i\leq m-1),$$
	$$\sum_{j=1}^{m-1}x_{m,j}\Delta_{j,m}-\sum_{i=1}^{m-1}x_{i,m}\Delta_{m,i}=0,$$
	$$\sum_{i=1}^{m}x_{i,j}\Delta_{m,i}=0 \quad (1\leq j\leq m-1).$$
	
	Rewriting the canonical generators as
	\begin{equation}\label{funny_gens}
		\{\Delta_{1,m},\Delta_{2,m},\cdots,\Delta_{m-1,m},-\Delta_{m,m},-\Delta_{m,m-1},\cdots,-\Delta_{m,2},-\Delta_{m,1}\},
	\end{equation} 
	these relations give the columns of $\Phi_m$, hence the latter becomes a  matrix of syzygies of the ideal $I_{m-1}(\mathcal{L})$.
	
	In order to show that  is the full matrix of syzygies of $I_{m-1}(\mathcal{L})$, one may resort to the following complex of free $R$-modules:
	\begin{equation}\label{resolution_Gor_cod3}
		0\to R(-(2m-1))\stackrel{\boldsymbol\Delta^t}\longrightarrow R(-(m))^{2m-1}\stackrel{\Phi}\longrightarrow R(-(m-1))^{2m-1}\stackrel{\boldsymbol\Delta}\longrightarrow R
	\end{equation}
	where $\Phi$,  $\boldsymbol{\Delta}$  and $\boldsymbol{\Delta}^t$  respectively denote the maps, in the canonical bases, associated respectively to $\Phi_m$, the $1\times (2m-1)$ matrix whose entries are the cofactors in (\ref{funny_gens}), and its transpose. 
	We now apply the \index{Buchsbaum, D. ! Buchsbaum--Eisenbud criterion} 
	Buchsbaum--Eisenbud criterion (\cite[Section 1, Theorem]{BE_Criterion1973}).
	
	Obviously, the ranks add up correctly.
	In addition, 
	$\grade  I_1(\boldsymbol{\Delta})= \grade  I_{m-1}(\mathcal{L})= 3$ -- this value alone can be derived as a particular case of the formula in \cite[Theorem 5.3 and remarks on p. 243)]{Stanley1977}. Finally,
	$\grade  I_{2m-2}(\Phi_m)\geq 2$, as is easily checked.
	Thus, the above complex is exact, hence $\Phi_m$ is indeed the full syzygy matrix -- as a bonus, this proves again that $I_{m-1}(\mathcal{L})$ is Gorenstein.
\end{proof}

\subsection{Linear syzygies II: sparse symmetric section}\label{lin_syz_II}

For the encompassing theory of birational maps we refer to \cite[Section 3.2]{GradedBook}, where the notion of a homaloidal form can be found in  Subsection 3.2.5.

Among the symmetric sparse linear sections of the symmetric matrix, the following coordinate sparse one has particular properties:
\begin{equation}\label{hollow_symmetric_size_m}
	\mathfrak{H}=\mathfrak{H}_m:=\left(\begin{array}{ccccc}
		x_{1,1}&x_{1,2}&\cdots &x_{1,m-1}&x_{1,m}\\
		x_{1,2}&0&\cdots&0&x_{2,m}\\
		x_{1,3}&0&\cdots&x_{3,m-1}&0\\
		\vdots & \vdots & \iddots& \vdots &\vdots\\
		x_{1,m-1}&0&x_{3,m-1}&0&0\\
		x_{1,m}&x_{2,m}&0&0&0
	\end{array}\right).
\end{equation}
Set $\mathfrak{f}:=\det \mathfrak{H}$.

\begin{Proposition}\label{determinant_is_homaloidal} {\rm (char$(k)\neq 2$)}
$\mathfrak{f}$ is homaloidal.
\end{Proposition}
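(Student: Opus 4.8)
The plan is to show that the polar map defined by the partial derivatives of $\mathfrak{f}=\det\mathfrak{H}_m$ is birational onto its image $\mathbb{P}^{m-1}$; by the standard theory referenced in Subsection~\ref{lin_syz_II}, homaloidalness of $\mathfrak{f}$ means precisely that the rational map $\mathbb{P}^{m-1}\dashrightarrow\mathbb{P}^{m-1}$ given by $(x_{i,j})\mapsto(\partial\mathfrak{f}/\partial x_{i,j})$ is birational. Since $\mathfrak{H}_m$ is a coordinate sparse linear section of the generic symmetric matrix, Proposition~\ref{GolMar} (in its symmetric avatar) tells us that each partial derivative $\partial\mathfrak{f}/\partial x_{i,j}$ is, up to a multiplicity coming from the symmetric positioning, the signed cofactor $\Delta_{i,j}(\mathfrak{H}_m)$ of $\mathfrak{H}_m$ sitting in the corresponding slot(s). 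So the polar map is, up to the invertible diagonal rescaling, the map given by the nonzero entries of $\operatorname{adj}(\mathfrak{H}_m)$ located in the ``hollow'' pattern positions (the first row/column and the anti-diagonal).

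First I would write down explicitly which cofactors occur: the variables are $x_{1,j}$ for $1\le j\le m$ and $x_{i,m+2-i}$ for $2\le i\le m$ (the anti-diagonal, off the first row). That is exactly $m+(m-1)=2m-1$ variables, so $R=k[x_{i,j}]$ has $2m-1$ indeterminates and the target is indeed $\mathbb{P}^{2m-2}$ — wait, here one must be careful that the polynomial ring for this section is the one on the $2m-1$ entries actually appearing in $\mathfrak{H}_m$, so the polar map goes $\mathbb{P}^{2m-2}\dashrightarrow\mathbb{P}^{2m-2}$. The key structural input is the Cauchy relation $\mathfrak{H}_m\operatorname{adj}(\mathfrak{H}_m)=(\det\mathfrak{H}_m)\mathbb{I}_m$, which gives a large supply of linear syzygies among the cofactors with coefficients that are themselves entries of $\mathfrak{H}_m$. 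The idea is to exploit Proposition~\ref{adjugate_twice}: $\operatorname{adj}(\operatorname{adj}(\mathfrak{H}_m))=(\det\mathfrak{H}_m)^{m-2}\mathfrak{H}_m$, so that one recovers the entries $x_{i,j}$ of $\mathfrak{H}_m$ — i.e. the source coordinates — as cofactors (polynomials) in the cofactors $\Delta_{i,j}$, modulo a power of $\mathfrak{f}$. This is exactly the mechanism that produces an explicit inverse rational map, built out of Sylvester/cofactor forms in the dual variables.

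Concretely, I would set $\mathbf{y}=(y_{i,j})$ dual variables indexed by the same hollow pattern, let $\delta\colon k[\mathbf y]\to R/(\mathfrak f)$ (or better, to the fraction field) be $y_{i,j}\mapsto\Delta_{i,j}$, and construct the candidate inverse by taking the appropriate entries of $\operatorname{adj}$ applied to the ``hollow symmetric matrix in the $y$'s'', call it $\mathfrak{H}_m(\mathbf y)$ — one must check that the sparse symmetric shape is preserved well enough by the adjugate operation, which follows because $\operatorname{adj}$ of a symmetric matrix is symmetric and the $2\times2$-minors that would-be nonzero off the pattern are forced into $(\mathfrak f)$ by Theorem~\ref{adjugate_vs_2minors} (after checking hypotheses (1) and (2): $\mathfrak f\ne0$ and regularity of some $(m-1)$-minor mod $\mathfrak f$, which one verifies by exhibiting a convenient submatrix of $\mathfrak{H}_m$). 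Then Proposition~\ref{adjugate_twice} gives the composite of the polar map with this candidate equals the identity up to the scalar $\mathfrak f^{m-2}$, hence birationality.

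The main obstacle I expect is twofold. First, the bookkeeping: unlike the generic symmetric matrix, here many cofactors $\Delta_{i,j}$ for $(i,j)$ outside the hollow pattern are \emph{not} zero but only lie in $(\mathfrak f)$, so ``$\operatorname{adj}$ of $\mathfrak{H}_m(\mathbf y)$'' is not literally a hollow symmetric matrix and one has to argue that the off-pattern entries contribute nothing modulo the defining ideal of the image — this is where Theorem~\ref{adjugate_vs_2minors} and Proposition~\ref{2-minors_vs_dual_variety_generic}-style reasoning must be invoked carefully. Second, one must confirm $\mathfrak f\ne0$ and that it is reduced (indeed irreducible) so that the polar/Gauss map is genuinely defined on a dense open set and Proposition~\ref{adjugate_twice}'s hypothesis (regularity of $\det\mathfrak{H}_m$) applies; this should follow by induction on $m$ using the Laplace expansion along the last row (whose only nonzero entries are $x_{1,m}$ and $x_{2,m}$), exhibiting $\mathfrak f$ as a nonzero polynomial whose structure one can track. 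Once these two points are settled, the rest is the formal chain: polar map $=$ cofactor map up to rescaling (Proposition~\ref{GolMar}), cofactor map inverted by a cofactor map (Proposition~\ref{adjugate_twice}), hence birational, hence $\mathfrak f$ homaloidal.
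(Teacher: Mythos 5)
There is a genuine gap in your inversion strategy. Your plan is to recover the source coordinates from $\operatorname{adj}(\operatorname{adj}(\mathfrak{H}_m))=\mathfrak{f}^{\,m-2}\mathfrak{H}_m$, but the left-hand side is built from \emph{all} $m^2$ cofactors of $\mathfrak{H}_m$, whereas the polar map only provides the cofactors indexed by the hollow pattern (first row and anti-diagonal) — about $m+\lfloor m/2\rfloor$ of them, not $2m-1$ as you counted (the symmetry $x_{i,j}=x_{j,i}$ collapses your anti-diagonal count roughly in half). The off-pattern cofactors are simply not in the image of the target coordinate ring, so the formula cannot be read off as a rational inverse without first expressing those cofactors rationally in the hollow ones. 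Your attempt to dismiss them, namely that "cofactors $\Delta_{i,j}$ for $(i,j)$ outside the hollow pattern … lie in $(\mathfrak{f})$", is false: already for $m=3$ with $\mathfrak{H}_3=\bigl(\begin{smallmatrix}a&b&c\\ b&0&d\\ c&d&0\end{smallmatrix}\bigr)$ one has $\mathfrak{f}=-ad^2+2bcd$ while $\Delta_{2,2}=-c^2\notin(\mathfrak{f})$. Theorem~\ref{adjugate_vs_2minors} controls the $2\times2$ minors of $\operatorname{adj}(\mathfrak{H}_m)$ modulo $\mathfrak{f}$, not the individual off-pattern entries, and in any case one is working in the function field, not modulo $\mathfrak{f}$, when verifying birationality.

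The paper's actual argument sidesteps this by never trying to build an explicit inverse from the double adjugate. Instead it extracts from the Cauchy relation $\operatorname{adj}(\mathfrak{H}_m)\,\mathfrak{H}_m=\mathfrak{f}\,\mathbb{I}$ a specific collection of $x$-linear syzygies on the hollow cofactors $\boldsymbol\Delta$, assembles them into a submatrix $\mathcal{Z}$ of the syzygy matrix, passes to the Jacobian dual matrix $B$, and then invokes the Jacobian-dual birationality criterion \cite[Theorem 2.18]{GradedBook}: it suffices that some $(m+t-2)\times(m+t-2)$ submatrix $B_1$ of $B$ have nonvanishing determinant after the substitution $y_{i,j}\mapsto\Delta_{i,j}$. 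That nonvanishing is checked directly by isolating the monomial $\Delta_{1,1}^{m-1}\prod\Delta_{i,m+2-i}$ and comparing with the other terms via a membership argument in a power of $(x_{1,2},\ldots,x_{1,m})$. Your recognition that the adjugate relation and Proposition~\ref{GolMar} are the right inputs is on target, but the inversion mechanism needed is syzygies-plus-Jacobian-dual, not $\operatorname{adj}$-of-$\operatorname{adj}$.
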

\begin{proof}
Write $\mathfrak{f}_{i,j}:=\partial \mathfrak{f}/\partial x_{i,j}$,  and $\Delta_{i,j}$ for the $(i,j)$th cofactor of $\mathfrak{H}$. 
Since $\mathfrak{H}$ is a symmetric matrix, $\Delta_{i,j}=\Delta_{j,i}$ for any $(i,j)$. By Proposition~\ref{GolMar}, $\mathfrak{f}_{i,j}=\Delta_{i,j}$ possibly up to multiplication by $2$ (recall that char$(k)\neq 2$). Therefore, the corresponding  fields of fractions coincide. Thus, it is equivalent to show that the rational map defined by the set $\boldsymbol\Delta=\{\Delta_{i,j}\,|\, (i,j)\in\{(1,1), (1,2), \ldots, (1,m),(i,m+2-i)\}$, for $2\leq i\leq \lfloor m/2\rfloor+1$, is birational.
	
	Now, the adjugate matrix ${\rm adj}(\mathfrak{H}_m)$ has the following shape
	\begin{equation}\label{adjugate_of_hollow_symmetric}
		{\rm adj}(\mathfrak{H}_m)=\left(\begin{array}{cccccc}
			\Delta_{1,1}&\Delta_{1,2}&\cdots&\Delta_{1,m-1}&\Delta_{1,m}\\
			\Delta_{1,2}&\ast&\cdots&\ast&\Delta_{2,m}\\
			\Delta_{1,3}&\ast&\cdots&\Delta_{3,m-1}&\ast\\
			\vdots&\vdots&\iddots&\vdots&\vdots\\
			\Delta_{1,m}&\Delta_{2,m}&\cdots&\ast&\ast
		\end{array}\right),
	\end{equation}
	where $\ast$ denotes some  unspecified entry.
	From  the  \index{matrix ! adjugate} adjugate formula (\ref{adjugate_relation}), one gets the following  linear  relations among members of the set $\boldsymbol\Delta$:
	\begin{eqnarray*}
		x_{1j}\Delta_{1,1}+  x_{j,m+2-j}\Delta_{1,m+2-j}=0,\quad 2\leq j\leq t\\
		x_{1j}\Delta_{1,1}+  x_{m+2-j,j}\Delta_{1,m+2-j}=0,\quad t+1\leq j\leq m,
	\end{eqnarray*}
	as a result of the zero entries in $\mathfrak{f}\, \mathbb{I}_{m\times m}$, and 
	\begin{equation*}
		\left(\sum_{j=1}^{m} x_{1,j}\Delta_{1,j}\right)- x_{1,i}\Delta_{1,i}-  x_{i,m+2-i}\Delta_{i,m+2-i}=0,\quad 2\leq i\leq t,
	\end{equation*}
	as the expression of equating two different terms that repeat the same  entry along the diagonal of $\mathfrak{f}\, \mathbb{I}_{m\times m}$.
	
	These relations give rise to the following $(m+t-1)\times (m+t-2)$ submatrix of the syzygy matrix of the set $\boldsymbol\Delta:$
	{\small \begin{equation*}
			\mathcal{Z}=
			\left(\begin{array}{ccccccc|cccccccc}
				x_{1,2}&  x_{1,3}&\cdots& x_{1,t}& x_{1,t+1}&\cdots& x_{1,m}& x_{1,1}&\cdots& x_{1,1}\\
				0&0&\cdots&0&0&\cdots& x_{2,m}&0&\cdots& x_{1,2}\\
				\vdots&\vdots&&\vdots&\vdots&&\vdots&\vdots&&\vdots\\
				0&0&\cdots&0& x_{t,m+2-t}&\cdots&0& x_{1,t}&\cdots&0\\
				0&0&\cdots& x_{t,m+2-t}&0&\cdots&0& x_{1,t+1}&\cdots& x_{1,t+1}\\
				\vdots&\vdots&&\vdots&\vdots&&\vdots&\vdots&&\vdots\\
				0& x_{3,m-1}&\cdots&0&0&\cdots&0& x_{1,m-1}&\cdots& x_{1,m-1}\\
				x_{2,m}&0&\cdots&0&0&\cdots&0& x_{1,m}&\cdots& x_{1,m}\\
				\hline
				0&0&\cdots&0&0&\cdots&0&- x_{2,m}&\cdots&0\\
				\vdots&\vdots&&\vdots&\vdots&&\vdots&\vdots&&\vdots\\
				0&0&\cdots&0&0&\cdots&0&0&\cdots&- x_{t,m+2-t}\\
			\end{array}\right)	
		\end{equation*}
	}
	
	Though it can be shown that $\mathcal{Z}$ has full rank, we don't as yet know whether the set $\boldsymbol\Delta$ is algebraically independent over $k$, so that we could conclude by the general theory of \cite{AHA}.
	We will instead argue directly upon the submatrix of the Jacobian dual matrix induced by $\mathcal{Z}$.
	Namely, introducing `dual' variables 
	$$\YY= \{y_{1,j} \,(1\leq j\leq m ), y_{i,m+2-i}\,(2\leq i\leq t)\}$$
	the transpose of the following matrix is a submatrix of the Jacobian dual matrix  of the set $\boldsymbol\Delta$
	{\small 
		\begin{equation}
			B=	\left(\begin{array}{cccccccc|ccccccccccc}
				0&0&\cdots&0&0&\cdots&0&0& y_{1,1}&\cdots& y_{1,1}\\
				y_{1,1}&0&\cdots&0&0&\cdots&0&0&0&\cdots& y_{1,2}\\
				0& y_{1,1}&\cdots&0&0&\cdots&0&0& y_{1,3}&\cdots& y_{1,3}\\
				\vdots&\vdots&&\vdots&\vdots&&\vdots&\vdots&\vdots&&\vdots\\
				0&0&\cdots& y_{1,1}&0&\cdots&0&0& y_{1,t}&\cdots&0\\
				0&0&\cdots&0& y_{1,1}&\cdots&0&0& y_{1,t+1}&\cdots& y_{1,t+1}\\
				\vdots&\vdots&&\vdots&\vdots&&\vdots&\vdots&\vdots&&\vdots\\
				0&0&&0&0&\cdots& y_{1,1}&0& y_{1,m-1}&\cdots& y_{1,m-1}\\
				0&0&&0&0&\cdots&0& y_{1,1}& y_{1,m}&\cdots& y_{1,m}\\
				\hline
				y_{1,m}&0&\cdots&0&0&\cdots&0& y_{1,2}&- y_{2,m}&\cdots&0\\
				0& y_{1,m-1}&\cdots&0&0&\cdots& y_{1,3}&0&0&\cdots&0\\
				\vdots&\vdots&&\vdots&\vdots&&\vdots&\vdots&\vdots&&\vdots\\
				0&0&\cdots& y_{1,t+1}& y_{1,t}&\cdots&0&0&0&\cdots&- y_{t,m+2-t}\\
			\end{array}\right)
		\end{equation}
	}
	
	Let $B_1$ be the $(m+t-2)\times (m+t-2)$ submatrix of $B$  obtained by omitting the  first row. By \cite[Theorem 2.18]{GradedBook}, it suffices to show that $\det B_1$ does not vanish when the entries of $B_1$ are evaluated via $y_{i,j}\mapsto \Delta_{i,j}$. We have that 
	$$\det B_1=y_{1,1}^{m-1}\prod_{i=2}^{t} y_{i,m+2-i}+T$$
	where $T$  is a polynomial in $k[\yy]$ that does not depend on $y_{1,1}.$
	Evaluating as indicated gives the expression
	\begin{equation*}\label{detB1_ev}
		\Delta_{1,1}^{m-1}\prod_{i=2}^{t} \Delta_{i,m+2-i}+T(\Delta_{1,2},\ldots,\Delta_{1,m},\Delta_{2,m},\ldots,\Delta_{t,m+2-t})
	\end{equation*}
	which is non-vanishing because, e.g.,  while its second summand  belongs to the ideal  $(x_{1,2},\ldots,x_{1,m})^{m+t-1}$, the first does not.
	\end{proof}

\section{Square syzygy matrices of codimension two ideals}\label{Sec2}

What are the `natural' ideals related to an $m\times m$ matrix $\mathcal{M}$ over a polynomial ring $k[x_1,\ldots, x_n]$ ($k$ a field)?
The first such examples are the Fitting ideals of $\mathcal{M}$. 
A good window for discussion is the codimension of these ideals. For example, if we require that this is at most $2$, then natural ideals are $\det \mathcal{M}$ and $ I_{m-1}(\mathcal{N})$, for any submatrix $\mathcal{N}$ of $\mathcal{M}$ upon omitting one row or column.
These tend to be well-structured, for reasonably structured matrices.


Since the case of $\det \mathcal{M}$ itself sits on the theory of hypersurface rings,  we aim in this section at  square matrices closely related to ideals of codimension two, say, in terms of the corresponding syzygies of the ideal in question.
The main bulk of work has so far focused in codimension two ideals which happen to be perfect.
We will instead focus on the case of a non-perfect ideal with a square syzygy matrix -- so to say, the next non Cohen-Macaulay case.

\subsection{Sub-Hankel syzygies}
Consider the following sparse symmetric matrix

\begin{equation*}\label{subHankel_matrix}
	\mathcal{H}=\left(
	\begin{matrix}
		x_0&x_1&x_2&x_3&\ldots &x_{m-2} & x_{m-1}\\
		x_1&x_2& x_3&x_4 &\ldots &x_{m-1}& x_{m}\\
		x_2&x_3& x_4&x_5&\ldots  &x_{m}& 0\\
		\vdots &\vdots & \vdots&\vdots  &\ldots &\vdots &\vdots \\
		x_{m-2}&x_{m-1}& x_m &0&\ldots  & 0 &0\\
		x_{m-1}&x_{m}& 0 &0& \ldots  & 0 &0
	\end{matrix}
	\right)
\end{equation*}
over a polynomial ring $R:=k[x_0, \ldots, x_m]$ ($k$ a field).
It has been dubbed a {\it sub--Hankel matrix} of order $m$ (see \cite{CRS}). 
Let $f:=\det \mathcal{H}\subset R$.

Consider the partial derivatives $f_{x_i:=}\partial f/\partial x_i, 0\leq i\leq m$ of $f$ and let $J=J_f\subset R$ denote the gradient ideal of $f$.
Next,  a collection of the main properties of $J$ which first appeared in \cite{CRS} (see also \cite{Maral}).

\begin{Theorem}
	With the above notation, the following holds$:$
	\begin{enumerate}
		\item[{\rm (a)}] For $0\leq i\leq m-1$, one has		
		\begin{equation}\label{g.c.d.s}
			\{f_0,\ldots,f_i\} \subset k[x_{m-i},\ldots,x_{m}].
		\end{equation}
		and the g.c.d. of these partial derivatives is $x_m^{m-i-1}$.
		\item[{\rm (b)}] The ideal $J$ has maximal linear rank.
		More precisely, the following linear syzygies of $J$ generate an $R$-submodule of rank $m$:
		\begin{equation*}\label{perfect_linear_relation} 
			\sum_{l=0}^{m-2} (m-l-1)\,
			x_l\, f_l-x_m\,f_m=0,
		\end{equation*}
		and
		\begin{equation*}\label{basic_linear_relation}
			2ix_{m-i}f_0+\cdots+(2i-l)x_{m-i+l}f_l+\cdots+ix_mf_i=0, {\rm (}1\leq i\leq m-1{\rm )},
		\end{equation*}
		for {\rm (}$1\leq i\leq m-1${\rm )}.
	\end{enumerate}
\end{Theorem}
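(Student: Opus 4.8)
The statement has two parts, and I would tackle them in sequence, computing directly with the sub-Hankel determinant $f=\det\mathcal{H}$ and its gradient. For part (a), the containment $\{f_0,\ldots,f_i\}\subset k[x_{m-i},\ldots,x_m]$ should follow by inspection of which variables actually occur in the relevant cofactors. Recall (Proposition~\ref{GolMar}) that $f_j=\partial f/\partial x_j$ is, up to sign, a sum of cofactors of $\mathcal{H}$ attached to the slots carrying $x_j$; since $\mathcal{H}$ is symmetric and ``anti-triangular'' (the entry in position $(r,s)$ is $x_{r+s-2}$ when $r+s-2\le m$ and $0$ otherwise), the slot $(r,s)$ contributes the complementary $(m-1)\times(m-1)$ minor, whose entries are $x_{r'+s'-2}$ for $r'\ne r$, $s'\ne s$. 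Deleting one row and one column from an anti-triangular matrix whose support is $\{r+s\le m+2\}$ leaves a submatrix whose entries involve only $x_t$ with $t$ large when $j$ is small; a careful bookkeeping of the largest and smallest indices that survive gives exactly $\{f_0,\ldots,f_i\}\subset k[x_{m-i},\ldots,x_m]$. For the g.c.d.\ claim, the lower bound $x_m^{m-i-1}\mid f_j$ for $j\le i$ again comes from counting how many $x_m$'s appear in every surviving term of the cofactor expansion (the anti-triangular shape forces each nonzero term of these cofactors to use the anti-diagonal entries, each equal to $x_m$, at least $m-i-1$ times), and the reverse inequality follows by exhibiting one partial derivative, say $f_i$, in which $x_m$ appears to exactly the power $m-i-1$ — equivalently, by specializing all variables except $x_m$ and $x_{m-i}$ and checking the resulting univariate polynomial in $x_m$ has the claimed order of vanishing.

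**The syzygies.** For part (b) I would verify the two displayed linear relations by a direct dot-product computation using the adjugate identity $\mathcal{H}\cdot\mathrm{adj}(\mathcal{H})=(\det\mathcal{H})\,\mathbb{I}_m=f\,\mathbb{I}_m$ together with Proposition~\ref{GolMar}, which identifies each $f_\ell$ (up to a known nonzero scalar, since this is a symmetric matrix and we may assume $\mathrm{char}\,k$ behaves well, or simply track coefficients) with a weighted sum of entries $\Delta_{r,s}$ of $\mathrm{adj}(\mathcal{H})$. Concretely: reading off the rows of $\mathcal{H}\,\mathrm{adj}(\mathcal{H})=f\,\mathbb{I}_m$ gives, for each row index, a relation $\sum_s (\text{entry of }\mathcal{H})\,\Delta_{s,\bullet}=0$ or $=f$; re-grouping the cofactors according to which diagonal/anti-diagonal of $\mathcal{H}$ they belong to, and using that $f_\ell$ is a multiplicity-weighted sum of the $\Delta_{r,s}$ with $r+s=\ell+2$, turns these into relations among the $f_\ell$. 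The combinatorial coefficients $2i,2i-1,\ldots$ and $(m-l-1)$ in the statement are precisely the multiplicities with which each $x_t$ occurs in $\mathcal{H}$ along the corresponding anti-diagonal; so matching them is a bookkeeping exercise rather than a genuine difficulty. Once the $m$ relations are written down, I would check they are $R$-linearly independent — e.g.\ the syzygy matrix they form has an $m\times m$ submatrix whose determinant is (up to sign) a nonzero monomial in $x_m$ and the $x_{m-i}$, visible from the ``staircase'' pattern of the coefficients — giving rank exactly $m$, which is maximal for a square ($(m+1)\times(m+1)$) syzygy matrix of an ideal of $m+1$ generators that is not a complete intersection.

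**Main obstacle.** The genuinely delicate point is the sharpness of the g.c.d.\ statement in (a): proving $x_m^{m-i-1}\mid f_j$ for all $j\le i$ is a divisibility bound obtained by counting, but proving that $x_m^{m-i-1}$ is exactly the g.c.d.\ (no higher power, and no other variable divides) requires producing explicit specializations or explicit leading terms witnessing that the bound is attained and that, after dividing out $x_m^{m-i-1}$, the resulting forms share no common factor. I would handle this by choosing a convenient monomial order and extracting, for two of the $f_j$'s (say $j=0$ and $j=i$), their initial terms after stripping $x_m^{m-i-1}$, and checking these initial terms are coprime monomials; this forces the g.c.d.\ down to $x_m^{m-i-1}$. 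The syzygy computations in (b), by contrast, are mechanical once one commits to the adjugate identity and tracks the anti-diagonal multiplicities carefully, so I expect no conceptual trouble there — only the risk of sign/coefficient errors, which the symmetry of $\mathcal{H}$ and the explicit small cases ($m=2,3$) can be used to calibrate.
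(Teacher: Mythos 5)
The paper does not actually prove this theorem: it is quoted as a result that ``first appeared in \cite{CRS} (see also \cite{Maral})'' and is stated without a proof environment, so there is no in-paper argument against which to compare. Judged on its own terms, your strategy — Proposition~\ref{GolMar} to express each $f_\ell$ as a sum of cofactors, the anti-triangular support of $\mathcal{H}$ to control variables and $x_m$-powers, and the adjugate identity for the syzygies — is the natural direct attack and most of it can be carried through. But two steps are not merely vague; as written they are wrong.

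In (a), the claim that deleting a row and a column ``leaves a submatrix whose entries involve only $x_t$ with $t$ large'' is false: for $m=3$, deleting row $1$ and column $2$ leaves $\left(\begin{smallmatrix} x_1 & x_3 \\ x_2 & 0\end{smallmatrix}\right)$, which still contains $x_1$. What is true, and what must be made explicit, is that in any nonzero permutation term of the cofactor at a slot $(r,s)$ with $r+s=\ell+2$, the sum of the $m-1$ entry indices is the fixed number $m^2-m-\ell$, with each index at most $m$; this forces every index to be at least $m-\ell$ (the containment) and forces at least $m-\ell-1$ of them to equal $m$ (so $x_m^{m-\ell-1}\mid f_\ell$). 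More seriously, your proposed proof of sharpness — ``specializing all variables except $x_m$ and $x_{m-i}$'' — kills exactly the term you need. In the case $m=4$ one has $f_3 = 2x_1x_4^2 - 6x_2x_3x_4 + 4x_3^3$, and the term of minimal $x_4$-order is $4x_3^3$, supported on $x_{m-1}$, not on $x_{m-i}=x_1$; your specialization returns $2x_1x_4^2$, of $x_4$-order $2$ instead of $0$. The monomial that attains the bound is the near-anti-diagonal one $\pm c\,x_{m-1}^{\ell}x_m^{m-\ell-1}$, so one should retain $x_{m-1}$ and $x_m$, identify the unique permutation(s) contributing to that monomial, and check that the resulting integer coefficient $c$ is nonzero — this also exposes the implicit characteristic restriction (already $f_1=2\Delta_{1,2}$ vanishes in characteristic $2$). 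For (b) your outline is sound in spirit, but note that a single row of $\mathcal{H}\,\mathrm{adj}(\mathcal{H})=f\,\mathbb{I}_m$ is a relation among cofactors $\Delta_{r,s}$, not among the $f_\ell$; each displayed syzygy is a specific $\mathbb{Z}$-linear combination of several such rows (already for $m=3$, $i=2$ one needs two of them), and to complete the argument these combinations must be written down, not merely described as ``re-grouping.''
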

Since $f\in (x_{m-1},x_m)$, then $J\subset (x_{m-1},x_m)$ as well.
In addition, $\Delta_{1,1}=x_m^{m-1}\in J$ as is easily verified.
Therefore, $J$ has codimension two.
We now display its homological behavior.

\begin{Proposition}\label{resolution of J}
	The minimal graded resolution of $R/J$ has the form
	\begin{equation*}
		0\rar R(-(2m-1))\stackrel{\psi^t}{\lar} R(-m)^m\oplus
		R(-2(m-1))\stackrel{\phi}{\lar} R^{m+1}(-(m-1))\rar R,
	\end{equation*}
	where
	\begin{equation*}\phi=
		\left(\begin{matrix}
			(m-1)x_0 & (2m-2)x_1 &  (2m-4)x_2 &\ldots &2x_{m-1} & 0\\
			(m-2)x_1 &  (2m-3) x_2&  (2m-5) x_3 &\ldots &x_m & 0 \\
			(m-3)x_2 &   (2m-4)x_3&   (2m-6)x_4 & \ldots & 0 & 0\\
			\vdots & \vdots & \vdots & \vdots  & \vdots & 0\\
			x_{m-2} & mx_{m-1} &  (m-2)x_m & \ldots &0 & 0\\
			0& (m-1)x_m & 0 & \ldots & 0  & -f_{x_m}\\
			-x_m & 0 & 0 & \ldots & 0 & f_{x_{m-1}}
		\end{matrix}\right)
	\end{equation*}
	and 
	$$\psi= (\psi_1 \cdots \psi_m), \: \psi_i=\Delta_i/\mathcal{D},$$  
	with $\Delta_i$ denoting the $i$th $m$-minor of an $m\times (m+1)$ submatrix of $\phi$ of rank $m$, and $\mathcal{D}=\gcd (\Delta_0, \ldots,\Delta_m)$.
\end{Proposition}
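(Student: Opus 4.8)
The plan is to verify that the displayed sequence is a minimal graded free resolution of $R/J$ by combining the linear-syzygy information from the previous theorem with the Hilbert--Burch-type structure for codimension-two ideals and the Buchsbaum--Eisenbud acyclicity criterion. First I would set up $\phi$ concretely: the first $m-1$ columns encode the ``basic linear relations'' $2ix_{m-i}f_0+\cdots+ix_mf_i=0$ (for $1\le i\le m-1$) together with the relation $\sum_{l=0}^{m-2}(m-l-1)x_lf_l-x_mf_m=0$, arranged so that the coefficient matrix of $f_0,\dots,f_m$ has the stated sparse ``staircase'' shape; the last column records the Koszul-type relation $f_{x_{m-1}}f_m-f_{x_m}f_{m-1}=0$ coming from $J\subset(x_{m-1},x_m)$ (after clearing, using part (a), that $f_m$ and $f_{m-1}$ share no common factor beyond what is already visible). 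I would check directly that $\phi$ is a syzygy matrix, i.e. $(f_0,\dots,f_m)\cdot\phi=0$, which is exactly the content of the relations listed in the preceding theorem plus the obvious $2\times 2$ Koszul relation.

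Next I would establish exactness. Since $J$ has codimension two (already proved: $f\in(x_{m-1},x_m)$ forces $J\subset(x_{m-1},x_m)$, and $\Delta_{1,1}=x_m^{m-1}\in J$), and since $R/J$ is not Cohen--Macaulay (projective dimension $3$ rather than $2$), the resolution must have length $3$; the ranks $1,m+1,m+1,1$ add up correctly for a complex resolving a module of rank zero. To invoke Buchsbaum--Eisenbud I must check: $\grade I_{m+1}(\phi)\ge 2$ and $\grade I_1(\psi)\ge 3$. For the first, note $I_{m+1}(\phi)$ contains the $m$-minors $\Delta_i$ of the generic-rank-$m$ submatrix times entries $f_{x_{m-1}},f_{x_m}$, and more usefully one identifies $I_{m+1}(\phi)$ up to radical with $J$ itself (the $\Delta_i$ are, up to the gcd $\mathcal D$, proportional to the $f_i$ via the Hilbert--Burch mechanism applied to the first $m+1$ columns), so $\grade I_{m+1}(\phi)=\grade J=2$. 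For $\grade I_1(\psi)\ge 3$: $I_1(\psi)$ is generated by the $\psi_i=\Delta_i/\mathcal D$, and one shows these have no common zero of codimension $\le 2$ — equivalently $\mathcal D=\gcd(\Delta_0,\dots,\Delta_m)$ strips out exactly the codimension-one excess, which can be read off from the explicit $x_m$-powers appearing in part (a) of the theorem (the gcd computation parallels the statement there that $\gcd(f_0,\dots,f_i)=x_m^{m-i-1}$).

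The degree bookkeeping is then routine: $f_i$ has degree $m-1$, giving the source $R^{m+1}(-(m-1))$; the linear syzygies live in degree $m$ hence $R(-m)^m$, while the Koszul syzygy involving the $f_{x_j}$ (of degree $m-1$) against degree-$(m-1)$ generators sits in degree $2(m-1)$, giving the summand $R(-2(m-1))$; and the last free module is $R(-(2m-1))$ since $2m-1=m+(m-1)$ matches the self-dual shift forced by exactness (the resolution is ``almost'' self-dual, as one expects for an almost-Cohen--Macaulay codimension-two ideal with one non-linear syzygy). Finally, minimality is automatic because every nonzero entry of $\phi$ and $\psi$ lies in the maximal ideal $(x_0,\dots,x_m)$ — the $\phi$ entries are linear forms or the $f_{x_j}$ (degree $m-1\ge 1$), and the $\psi_i$ have positive degree $m-1$.

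The main obstacle I anticipate is the precise gcd/rank analysis needed to certify $\grade I_1(\psi)\ge 3$ and to pin down $\mathcal D$ and the resulting twist $2m-1$: one must show that after dividing the maximal minors $\Delta_i$ of the chosen $m\times(m+1)$ block by their gcd, the resulting ideal $(\psi_0,\dots,\psi_m)$ has height $3$ (not $2$), which is what separates this non-perfect case from the classical Hilbert--Burch situation. This is exactly where the sparse sub-Hankel structure — encoded in the nested containments $\{f_0,\dots,f_i\}\subset k[x_{m-i},\dots,x_m]$ of part (a) — does the work, and verifying it carefully is the technical heart of the argument; the rest is Buchsbaum--Eisenbud plus degree counting.
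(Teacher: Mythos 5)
The paper does not actually give a self-contained proof of this Proposition: after the statement it records that the $3$-syzygy format was established in \cite{MarSim2016} (also \cite{Maral}), that the matrix $\phi$ goes back to \cite{CRS}, and that $\psi^t$ then follows from generalities about free resolutions of codimension-two ideals. So the paper's ``proof'' is essentially a set of references; your plan of a from-scratch verification via Buchsbaum--Eisenbud is a genuinely different route. In principle it is a sensible one, but as written it has several real gaps beyond the point you flag at the end.

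First, there is an index error that matters. Since $\phi$ is $(m+1)\times(m+1)$ and $\coker\phi\cong J$ has rank one, one has $\rank\phi=m$, so $I_{m+1}(\phi)=(\det\phi)=0$. The Buchsbaum--Eisenbud condition on the middle map is therefore $\grade I_{m}(\phi)\ge 2$, not $\grade I_{m+1}(\phi)\ge 2$. Second, the justification you offer for this grade bound is not sound: you invoke the ``Hilbert--Burch mechanism'' to identify $I_m(\phi)$ up to radical with $J$, but Hilbert--Burch is precisely the theorem for \emph{perfect} codimension-two ideals, and the whole point of this example is that $R/J$ is not Cohen--Macaulay ($\pd R/J=3$). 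Moreover, the identification is simply false as stated: if the complex is exact then $V(I_m(\phi))$ is the locus where $J$ fails to be free, which a priori has nothing to do with $V(J)$; what one can say a priori is only that this locus avoids height-one primes, and that is circular (it presupposes exactness). The containment $(\mathcal D)(\psi_0,\dots,\psi_m)\subset I_m(\phi)$ that you gesture at gives grade $\ge 1$, not $\ge 2$, because the factor $(\mathcal D)$ is principal. Third, and as you yourself concede, the condition $\grade I_1(\psi)\ge 3$ -- which is where the sub-Hankel sparsity and the $x_m$-power bookkeeping from part (a) of the preceding theorem would actually have to enter -- is never verified; it is named as the ``technical heart'' and then left aside. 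Finally, the assertion that ``the resolution must have length $3$'' because $R/J$ is not Cohen--Macaulay only gives $\pd\ge 3$; the upper bound $\pd\le 3$ is exactly what \cite{MarSim2016} proves and cannot be assumed.

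In short: the overall strategy (exhibit the complex, check ranks and the two grade conditions, then Buchsbaum--Eisenbud, then minimality by degree reasons) is the natural self-contained route, and the degree bookkeeping and minimality parts of your proposal are fine. But the two grade verifications, which carry all the content, are either misplaced ($I_{m+1}$ vs.\ $I_m$), argued via an inapplicable theorem (Hilbert--Burch on a non-perfect ideal), or explicitly postponed. As it stands the proposal is an outline of the right kind of argument rather than a proof; to complete it you would need to compute $I_m(\phi)$ and $(\psi_0,\dots,\psi_m)$ directly from the sparse shape of $\phi$ and the explicit formulas in part (a), which is essentially the technical content carried out in \cite{MarSim2016}.
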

Here are some comments on this example.

The $3$-syzygy format of this resolution was established in \cite{MarSim2016} (also \cite{Maral}).
The details of $\phi$ were known to \cite{CRS}, while those of $\psi^t$ are common knowledge derived from the generalities of free resolutions plus the fact that $J$ has codimension two -- by taking an explicit submatrix the details of $\psi^t$ can be listed down in terms of the syzygies of subideals of $J$.

One can also show that $J$ is an ideal of linear type, but this is way harder (again see \cite{MarSim2016}).
We note en passant that \cite[Theorem 3.1]{dual1993} is not applicable: although condition (b) there is satisfied, one of the standing assumptions fails as, for $m\geq 3$,  $J$ is  not perfect locally on the punctured spectrum because $I_1(\psi^t)$ has codimension three. 
But, see next subsection for an encompassing class of ideals $I$ with the standing assumptions of  \cite[Theorem 3.1]{dual1993}, but with ${\rm indeg} \,\syz(I)=\dim R$.

Finally, it should be pointed out that this example is a linear case of the more recent (determinantal) nearly-free hypersurface according to \cite[Definition 2.6]{Dimca_et_al2021}.

\subsection{Inversion factors of a general hypersurface embedding}

{\sc Setup.} Let $L$ denote an $m\times n\, (m\geq n)$ matrix whose entries are linear forms in a polynomial ring $R:=k[x_1,\ldots,x_d] \,(d\geq 2)$.

\begin{enumerate} 
	\item[{\rm (a)}]
	Let $\mathfrak{M}:\pp^{d-1}\dasharrow \pp^{{m \choose n}-1}$ denote the rational map defined by the linear span of the $n\times n$ minors of $L$.
	The determinantal ideal $I_n(L)$ (or its associated determinantal variety) is {\em homaloidal} if $\mathfrak{M}$ is birational onto its (closed) image.
	By abuse, we  then say that $L$ is {\em homaloidal-like}.
	
	Note that this property forces the inequality ${m \choose n}\geq d$, which is no restriction for a square matrix, but for an $m\times (m-1)$ matrix it requires $m\geq d$.
	In this regard, for $d=3$, every $m\times (m-1)$ linear matrix $L$ is homaloidal-like provided $\codim I_{m-1}(L)=2$ and $\codim I_1(L)=3$ (\cite{linpres2018}).
	For higher $d$, one could first consider the case of `general' linear matrices in the sense of the next two items.
	
	\item[{\rm (b)}] Set $\pp^{d-1}={\rm Proj}_k(R)$. Recall that the parameter space of linear forms in $R$ is the dual space ${\pp^{d-1}}^*$ in dual coordinates $y_0, \ldots,y_d$.
	It comes up with the tautological map $\tau: R_1\lar {\pp^{d-1}}^*$ which associates to a linear form  the point of ${\pp^{d-1}}^*$ whose coordinates are the  coefficients of $\ell$ in $k$. 
	One says that the {\em general} linear form $\ell\in R$ has a property $\mathcal{P}$ if the set of points $\tau(\ell)$ such that $\ell$ has this property contains a dense open set of ${\pp^{d-1}}^*${\rm (}i.e.,  avoids a proper closed set of ${\pp^{d-1}}^*${\rm )}.
	There is also a notion for a finite set of linear forms $\{\ell_1,\ldots,\ell_r\}\subset R$. For this, we consider the product space
	$${\pp^{d-1}}^*\times_{k}\cdots \times_{k} {\pp^{d-1}}^*$$
	along with the  tautological map
	$$\tau_r:=\underbrace{\tau\times \cdots \times \tau}_r: R_1\times_{k}\cdots \times_{k} R_1\lar {\pp^{d-1}}^*\times_{k}\cdots \times_{k} {\pp^{d-1}}^*.$$
	
	Then the {\em general} set of $r$ linear forms  in $R$ has a certain property  if the set of points $\tau_r(\ell_1,\ldots,\ell_r)$ such that $\{\ell_1,\ldots,\ell_r\}$ has this property contains a dense open set.
	Note that such an open set is complementary to a proper closed set defined by multi-homogeneous forms.
	\item[{\rm (c)}] Finally, we will say that the {\em general linear matrix} of order $m\times n$ has a certain property if the set $\mathcal{L}$ of $m\times n$ linear matrices having this property is such that the set of parameter points corresponding to its nonzero entries contains a dense open set.
\end{enumerate}

The following result is the base of the family discussed in the present part:
\begin{Proposition}{\rm (\cite[Proposition 2.9 (a)]{SymbPowBir2014})}
	Let $R=k[x_1,\ldots,x_d]\, (d\geq 3)$. For any integer $m\geq d$, the general linear matrix of order $m\times (m-1)$ is homaloidal-like.
\end{Proposition}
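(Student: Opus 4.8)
The plan is to reduce the birationality question for the rational map $\mathfrak{M}\colon \mathbb{P}^{d-1}\dasharrow \mathbb{P}^{\binom{m}{m-1}-1}=\mathbb{P}^{m-1}$ defined by the $(m-1)\times(m-1)$ minors of a general $m\times(m-1)$ linear matrix $L$ to the construction of an explicit inverse map, exhibited (up to a common factor) by forms that are themselves minors — or Sylvester-type forms — associated to a naturally constructed companion matrix. The opening step is to record what the source side of $\mathfrak{M}$ looks like: by the Hilbert--Burch theorem, since $\codim I_{m-1}(L)=2$ for general $L$ (a genericity statement one must verify by an explicit specialization, e.g. to a matrix whose minors form a regular sequence-like pattern), the $(m-1)\times(m-1)$ minors $\Delta_1,\dots,\Delta_m$ generate a perfect ideal of codimension two with $L$ as its Hilbert--Burch matrix, and $L^t$ encodes the linear syzygies: $L^t\cdot(\Delta_1,\dots,\Delta_m)^t=0$. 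This is the algebraic backbone of the inverse construction.

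Next I would pass to the Rees algebra / Jacobian dual formalism of \cite{GradedBook} (Theorem 2.18 there, already invoked in the proof of Proposition~\ref{determinant_is_homaloidal}). Introduce dual variables $y_1,\dots,y_m$ for the target $\mathbb{P}^{m-1}$; because every entry $\ell_{ij}$ of $L$ is linear in $x_1,\dots,x_d$, the syzygy relations $\sum_j \ell_{ij}(x)\,y_j=0$ can be rewritten as $\sum_s x_s\,B_{is}(y)=0$, producing the Jacobian dual matrix $B(y)=(B_{is})$ of size $m\times d$ with linear entries in $y$. The criterion for $\mathfrak{M}$ to be birational is that $B(y)$, when its entries are evaluated at $y_j\mapsto \Delta_j(x)$, drops rank to exactly $d-1$ with the (nonzero) kernel vector giving the coordinates $x_1:\dots:x_d$ of the inverse map; equivalently one must show a suitable maximal minor of $B$ does not vanish under this evaluation. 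So the task becomes: for the general choice of coefficients, exhibit a $(d-1)\times(d-1)$ minor (or $d-1$ of the rows) of $B$ whose determinant, a polynomial in the $y$'s, does not lie in the kernel of the substitution $y_j\mapsto\Delta_j$.

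The main obstacle — and where the ``general'' hypothesis is essential — is precisely this non-vanishing of the Jacobian-dual minor after evaluation. Following the technique in Proposition~\ref{determinant_is_homaloidal}, I would argue it by a degeneration: specialize the linear forms $\ell_{ij}$ to a convenient sparse/monomial pattern for which the Hilbert--Burch matrix is transparent (for instance a ``staircase'' linear matrix), compute the corresponding $B$ explicitly, isolate a leading monomial of the relevant minor of $B$ under a monomial order, and check that this leading term survives evaluation while the remaining terms land in a strictly larger power of some coordinate ideal (an argument of exactly the flavor of the closing paragraph of Proposition~\ref{determinant_is_homaloidal}, where a term is shown to lie in $(x_{1,2},\dots,x_{1,m})^{m+t-1}$ while the distinguished term does not). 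Since non-vanishing is an open condition on the parameter space ${\mathbb{P}^{d-1}}^\ast\times\cdots\times{\mathbb{P}^{d-1}}^\ast$, a single successful specialization propagates to the general linear matrix, giving homaloidal-likeness. I would also need the preliminary reduction $\codim I_{m-1}(L)=2$ and $\codim I_1(L)=d$ for general $L$ (so that $L$ is genuinely a Hilbert--Burch matrix and $\mathfrak{M}$ is a morphism in codimension one); both are open conditions verified by the same kind of explicit specialization, and the hypothesis $m\geq d$ enters to make $\binom{m}{m-1}=m\geq d$, i.e. to leave room for the image to dominate $\mathbb{P}^{d-1}$ birationally rather than be forced into lower dimension.
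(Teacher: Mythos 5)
The paper does not prove this proposition: it is quoted verbatim from \cite[Proposition 2.9 (a)]{SymbPowBir2014} and used as a black box (the text immediately specializes to $m=d+1$), so there is no in-paper argument to compare against. Your proposal is nonetheless in the right spirit: reduce birationality to the Jacobian-dual criterion of \cite{AHA}/\cite{GradedBook}, turn that into a nonvanishing statement for a single minor after evaluating the dual variables at the signed maximal minors, and exploit the openness built into the word ``general'' to pass from a convenient specialization to the general matrix. This is precisely the toolkit the authors deploy elsewhere in the same section and in Section~\ref{lin_syz_II} (see the proofs of Proposition~\ref{determinant_is_homaloidal} and Theorem~\ref{ratmapminorssubmax}(a)).

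The proposal, however, stops at a strategy outline exactly where the content lives: the explicit specialization (your ``staircase'' matrix) and the resulting nonvanishing computation of a $(d-1)\times(d-1)$ Jacobian-dual minor are announced but not carried out, and without them no nonempty open locus has actually been exhibited. A few concrete slips would also need repair. The Jacobian dual matrix $B$ defined by $(\yy)\,L=(\xx)\,B$, with $L$ of size $m\times(m-1)$ and $\xx$ of length $d$, has size $d\times(m-1)$, not $m\times d$; and the syzygy equations coming from $L$ read $\sum_i \ell_{ij}(x)\,y_i=0$, one for each column $j$, not $\sum_j \ell_{ij}(x)\,y_j=0$. Finally, when $m>d$ the image is a proper subvariety of $\mathbb{P}^{m-1}$, and the cleanest matching criterion is \cite[Theorem 3.2]{AHA} as used in the proof of Theorem~\ref{ratmapminorssubmax}(a): one checks that the linear rank is maximal (automatic here, since all $m-1$ columns of $L$ are linear syzygies and $m-1\ge d-1$) and that $\dim k[\boldsymbol{\Delta}]=d$; the latter is equivalent to the nonvanishing you assert but should be stated and verified explicitly rather than folded into the rank-$(d-1)$ claim.
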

Next assume that $m=d+1$.
(Note that, in this case, the (closed) image of the birational map in question is a sufficiently general irreducible hypersurface, justifying the subsection title.)

Let $\yy=\{y_1,\ldots,y_{d+1}\}$ denote the target coordinates of the map defined by the $d\times d$ minors.
Write $(\yy) L= (x_1 \dots x_d) B$, where $B$ is a $d\times d$ matrix over $k[\yy]$ -- the transpose of $B$ is called a {\em Jacobian dual} matrix to $L$ (\cite{dual1993}).

By the birational theory attached to the Jacobian dual, if $L$ is homaloidal-like, the $(d-1)\times (d-1)$ minors of any $(d-1)\times d$ submatrix of $B$ of rank $d-1$ define the inverse map (\cite{AHA}) -- called a {\em representative} of the inverse map.
Since the property of maximal rank is open in the parameter space, if $L$ is general then any $(d-1)\times d$ submatrix of $B$ will do.
This gives $d$ representatives of the inverse map.
Thus, when $L$ is general, we call these representatives {\em basic representatives}.

Now, each such a representative gives rise to a so-called  inversion factor (\cite[Section 4.2.1]{GradedBook}).
Namely, if $\{g_1, \dots ,g_d\}\subset k[\yy]$ is such a representative, there is a unique form $D\in R$ such that $g_i(\boldsymbol{\Delta})=Dx_i$, for every $1\leq i\leq d$ -- here $\boldsymbol{\Delta}$ denotes the set of $d\times d$ minors of $L$.
This form $D$ is called the {\em inversion factor} associated to the given representative.

We are ready for the main case of this subsection.

\begin{Theorem}\label{inversion_factors_resolution}
	With the above notation, the general linear matrix of order $(d+1)\times d$ satisfies the property that the set $\{D_1, \ldots, D_d\}\subset R$ of inversion factors associated to the basic representatives generate a codimension two ideal with minimal free resolution
	\begin{equation}\label{resolucaodosDs}
		0\rar\; R(-d^2)\;\stackrel{\xx^t}{\rar}\; R(-(d^2-1))^d\;\stackrel{\Psi}{\rar}R(-(d(d-1)-1))^{d}\;\rar R,
	\end{equation}
	where $\Psi$ denotes the resulting matrix of evaluating every entry of the Jacobian dual matrix to $L$ at
	the {\rm (}signed{\rm )} minors, while $\xx^t$ stands for the transpose of the vector $\xx$.
\end{Theorem}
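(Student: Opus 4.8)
The plan is to reduce the computation of a minimal free resolution of the ideal $D:=(D_1,\dots,D_d)$ to the well-understood Hilbert--Burch resolution of $I_d(L)$, transported to the dual side via the Jacobian dual matrix $B$. First I would record the defining identity of the inversion factors: if $B=(b_{ij})$ with $b_{ij}\in k[\yy]$ and $(\yy)L=(x_1\cdots x_d)B$, then for each column index $j$ the $(d-1)\times(d-1)$ minors of the submatrix $B^{(j)}$ (obtained from $B$ by deleting row $j$, say) form a basic representative $\{g_1^{(j)},\dots,g_d^{(j)}\}$ of the inverse map, and the associated inversion factor $D_j$ is characterized by $g_i^{(j)}(\boldsymbol{\Delta})=D_j x_i$ for all $i$. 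The key structural input is that, because $L$ is general, $B$ itself has \emph{generic-like} behaviour: $I_d(B)$ is generated by a single form (the image of $f=\det$ of the square situation, up to the general position hypothesis) and, more to the point, the matrix $\Psi$ obtained by evaluating the entries $b_{ij}$ at the signed $d\times d$ minors $\boldsymbol{\Delta}$ of $L$ is, by construction, an $R$-linear presentation matrix whose $d$ maximal (i.e.\ $(d-1)$-st, after the right bookkeeping) minors are precisely the $D_j$'s up to sign. So the first real step is to show that the complex \eqref{resolucaodosDs} is in fact a complex: that $\Psi$ kills the row vector $(D_1\cdots D_d)$ and that $\xx^t$ (the transpose of $(x_1\cdots x_d)$) generates the kernel of $\Psi$ in the appropriate degree. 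The former follows from the Cramer-type relations among the $(d-1)\times(d-1)$ minors of $B^{(j)}$ (adjugate relations internal to $B$), pushed through the substitution $\yy\mapsto\boldsymbol{\Delta}$; the latter from the identity $(\yy)L=(x_1\cdots x_d)B$ read modulo the image of the map defined by $\boldsymbol{\Delta}$, which says exactly that $(x_1\cdots x_d)$ annihilates $\Psi$.

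Next I would check exactness of \eqref{resolucaodosDs} by the Buchsbaum--Eisenbud acyclicity criterion, exactly as in the proof of the Gorenstein-ladder Proposition earlier in the paper. The ranks add up: $1 - d + d - 1 = 0$ in the alternating sum of ranks of the free modules (with the cokernel being $R/D$ of rank one as it should be for a codimension-two ideal, i.e.\ we really resolve $R/D$ with an extra $R$ on the right that I am suppressing in the display as written). One must verify $\grade I_1(\xx^t)=\grade(x_1,\dots,x_d)=d\geq 2$, which is immediate, and $\grade I_{d-1}(\Psi)\geq 2$, i.e.\ that the ideal of submaximal minors of the evaluated Jacobian dual has codimension at least two. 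Granting these grade bounds, acyclicity gives that \eqref{resolucaodosDs} is a free resolution; minimality is clear since all entries of $\xx^t$ and of $\Psi$ lie in the maximal ideal (the $b_{ij}$ are linear in $\yy$, hence their evaluations at the $\boldsymbol{\Delta}$'s are forms of positive degree). The graded shifts are then forced: $D_j$ has degree $d(d-1)-1$ because each $g_i^{(j)}$ is a product of $d-1$ of the minors (degree $d(d-1)$ in the $\yy$, hence degree $d\cdot d(d-1)$ when evaluated... ) — here I would be careful with the degree arithmetic: $g_i^{(j)}(\boldsymbol\Delta)$ is a $(d-1)\times(d-1)$ minor of $B$ evaluated, each $y$ becoming a form of degree $d$ in $\xx$, giving degree $d(d-1)$, and dividing by $x_i$ (degree $1$) yields $\deg D_j=d(d-1)-1$, consistent with the display. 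The shift $-(d^2-1)$ on the middle free module and $-d^2$ at the top then follow from the linearity of $\Psi$ (one step of degree $1$) and of $\xx^t$ (one more step of degree $1$): $d(d-1)-1+1 = d^2-d$, wait — so I would double-check that the entries of $\Psi$ actually have degree $d$, not $1$, since they are $b_{ij}(\boldsymbol\Delta)$; this makes the resolution non-linear in the naive sense but the shifts in \eqref{resolucaodosDs} are then exactly $d(d-1)-1$, then $+d$ gives $d^2-1$, then $+1$ gives $d^2$, matching the display. That degree-bookkeeping is worth doing explicitly but is routine.

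The part I expect to be the main obstacle is verifying that $\grade I_{d-1}(\Psi)\geq 2$ for the \emph{general} linear matrix $L$ — equivalently, that the codimension-two claim for $D=(D_1,\dots,D_d)$ holds and that $D$ is actually generated minimally by $d$ elements (no unexpected collapse). Over the generic matrix this would be a statement about the Jacobian dual of a generic linear map, which one expects to inherit enough genericity; the honest argument is a semicontinuity / openness argument: the locus of parameter points for which $I_{d-1}(\Psi)$ has codimension $\geq 2$ is open, so it suffices to exhibit \emph{one} linear matrix $L_0$ of order $(d+1)\times d$ in the family for which it holds, e.g.\ a sufficiently generic coordinate or ``staircase'' choice, and compute (or cite from the birational-maps literature, \cite{AHA}, \cite{GradedBook}, \cite{SymbPowBir2014}) that the inversion factors there generate a codimension-two perfect ideal with the stated resolution. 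One also needs that $D$ is unmixed of codimension exactly two (not more), which follows once we know the resolution has length two via Auslander--Buchsbaum, so this is not an independent worry. A secondary technical point is to make sure the ``basic representatives'' are genuinely well-defined — i.e.\ that each $(d-1)\times d$ submatrix of $B$ has rank $d-1$ — but this is precisely the openness built into the definition of \emph{general} linear matrix in item (c) of the Setup, combined with homaloidality from the cited Proposition of \cite{SymbPowBir2014}, so it may be invoked rather than reproved.
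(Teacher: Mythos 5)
Your plan and the paper's proof are essentially the same: establish the complex via adjugate relations and then apply the Buchsbaum--Eisenbud acyclicity criterion, with the grade check $\grade I_{d-1}(\Psi)\geq 2$ deferred to an external fact (the paper cites \cite{SymbPowBir2014}; you propose an openness argument plus one explicit example, which is reasonable but not carried out).

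One imprecision in your proposal, though, is worth flagging because it is exactly the hinge the paper turns on. You write that the $(d-1)$-minors of $\Psi$ (i.e.\ its cofactors) "are precisely the $D_j$'s up to sign"; that is not correct, and indeed cannot be, since $\Psi$ is a square $d\times d$ matrix and $D$ is \emph{not} a Hilbert--Burch ideal (it has projective dimension three). The right statement, which the paper makes explicit and which does all the work at once, is
\[
{\rm adj}(\Psi)=\bigl(x_i D_j\bigr)_{1\leq i,j\leq d}.
\]
From this single identity, ${\rm adj}(\Psi)\,\Psi=\boldsymbol 0$ gives that $\Psi$ is a syzygy matrix of $(D_1,\ldots,D_d)$, $\Psi\,{\rm adj}(\Psi)=\boldsymbol 0$ gives that $\xx^t$ is a second syzygy, and moreover $I_{d-1}(\Psi)=I_1({\rm adj}(\Psi))=(\xx)(D_1,\ldots,D_d)$, which converts the Buchsbaum--Eisenbud grade requirement at the middle spot into precisely the statement $\codim(D_1,\ldots,D_d)\geq 2$. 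In your version, this reduction is asserted only parenthetically ("equivalently, that the codimension-two claim \dots holds"), and the $x_i$-factor is the ingredient that makes the equivalence correct; without noticing it, the complex and the grade computation do not close up cleanly. Apart from this, your outline tracks the paper's argument.
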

\begin{proof}
	We first analyze $\Psi$.
	Although its structure is fairly simple, its entries are sort of wild. However, the $(d-1)$-minors are pretty manageable because they can be thought of as cofactors. In fact, letting $\delta_{i,j}$ denote a typical minor of any $(d-1)\times d$ submatrix of the Jacobian dual matrix, we know it is a coordinate of a basic inverse representative. Evaluating at $\bf \Delta$, by defintion of inverse factor, one has $x_i D_j=\delta_{i,j}(\bf \Delta)$.
	It follows that the adjugate matrix of $\Psi$ is
	\begin{equation}\label{cofatores}
		{\rm adj}(\Psi)=\left(\begin{array}{cccccccccccc}
			x_1D_1&x_1D_2&\ldots&x_1D_d\\
			x_2D_1&x_2D_2&\ldots&x_2D_d\\
			\vdots&\vdots&\ldots&\vdots\\
			x_dD_1&x_dD_2&\ldots&x_dD_d
		\end{array}\right)
	\end{equation}
	Since $\Psi$ has rank $d-1$, the adjugate relation gives
	\begin{equation}\label{idcofatores1}
		{\rm adj}(\Psi)\,\Psi=\bf{0}
	\end{equation}
	and
	\begin{equation}\label{idcofatores2}
		\Psi\,{\rm adj}(\Psi)={\bf 0}
	\end{equation}
	But, (\ref{idcofatores1}) implies that $\Psi$ is a matrix of syzygies of $\bf D$, while (\ref{idcofatores2})
	gives that $\XX^t$ is a second syzygy thereof.
	This shows that one has indeed a complex.
	
	To finish we check the required Fitting codimension by the Buchsbaum--Eisenbud acyclicity criterion.
	The verification at the tail of the complex is immediate, while at the middle one has
	$$I_{d-1}(\Psi)=I_1({\rm adj}(\Psi))=(\xx)(D_1,\ldots, D_d).$$
	Thus, we only need to prove that $(D_1,\ldots, D_d)$ has codimension $\geq 2$ -- and hence,  $\codim (D_1,\ldots, D_d)= 2$ (otherwise the resolution would imply that $(D_1,\ldots, D_d)$ is Cohen--Macaulay of type one, which means it is a Gorenstein ideal; the latter being self-dual would imply that $(D_1,\ldots, D_d)=(x_1,\ldots, x_n)$, that is, $\Psi$ would be the Koszul matrix of $(x_1,\ldots, x_n)$, a contradiction for degree reasons (at least!)).
	
	The reader is referred to \cite[The proof of Theorem 2.19 (i)]{SymbPowBir2014} for a direct proof of the equality $\codim (D_1,\ldots, D_d)= 2$.
\end{proof}

\begin{Conjecture}\rm 
	The symmetric algebra of the ideal $(D_1,\ldots, D_d)$ is Cohen--Macaulay of dimension $d+1$ and type two.
\end{Conjecture}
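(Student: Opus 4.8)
The plan is to work from the explicit presentation of the symmetric algebra. Set $S:=R[\TT]$ with $\TT=(T_1,\dots,T_d)$. By Theorem~\ref{inversion_factors_resolution} the truncation $R(-(d^2-1))^d\stackrel{\Psi}{\rar}R(-(d(d-1)-1))^d\rar R$ of (\ref{resolucaodosDs}) is a minimal presentation of $I=(D_1,\dots,D_d)$, so $\sym{I}=S/\mathcal{J}$ with $\mathcal{J}=I_1(\TT\,\Psi)$, generated by the $d$ bilinear forms $g_i=\sum_k T_k\,\Psi_{k,i}\,(1\le i\le d)$. The relation $\Psi\,\xx^t=\boldsymbol0$ recorded in (\ref{resolucaodosDs}) yields at once the syzygy $\sum_i x_i\,g_i=0$; together with the surjection $\sym{I}\twoheadrightarrow\Ree{I}$ and $\dim\Ree{I}=d+1$, this already shows that $g_1,\dots,g_d$ is not a regular sequence, and reduces the conjecture to the three assertions: $\dim\sym{I}=d+1$ exactly, $\sym{I}$ is Cohen--Macaulay, and $\mathrm{type}\,\sym{I}=2$.

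For the dimension and Cohen--Macaulayness I would deploy the approximation-complex machinery (the $\mathcal{Z}$- and $\mathcal{M}$-complexes of Herzog--Simis--Vasconcelos; see \cite[Sec.~3.2]{GradedBook} and \cite{SimisBook}) built on the sequence $D_1,\dots,D_d$. The depth input is supplied by (\ref{resolucaodosDs}): $\pd_R R/I=3$, $\depth R/I=d-3$, and although $R/I$ is not Cohen--Macaulay it misses being so in the mildest possible way (one checks $\Ext^3_R(R/I,R)\cong k$ up to a degree shift), while the Fitting-type ideal $I_{d-1}(\Psi)=I_1({\rm adj}(\Psi))=(\xx)(D_1,\dots,D_d)$ has codimension $3$ (proof of Theorem~\ref{inversion_factors_resolution}). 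One then verifies a sliding-depth condition on the Koszul homologies $H_i(D_1,\dots,D_d;R)$: this is the step where the genericity of $L$ is to be spent, bounding the codimensions of the ideals of minors of $\Psi$ through its description as the Jacobian dual of $L$ evaluated at the minors, in the vein of \cite{AHA} and \cite[Theorem~3.1]{dual1993}---the present instance being precisely the boundary case $\beg\syz(I)=\dim R$ flagged in Section~\ref{Sec2}. Granting the acyclicity criterion, the approximation complex resolves $\sym{I}$ over $S$ and displays it as Cohen--Macaulay of dimension $d+1$.

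Fixing the type to be exactly $2$ is the delicate point, since a naive count off the $\mathcal{Z}$-complex would predict type $1$. I expect the correct statement to be that $I$ is, in general, not of linear type and that the $\mathcal{Z}$-complex is not acyclic---its tail homology a torsion module of small dimension---so that the true minimal $S$-resolution of $\sym{I}$ emerges as a mapping cone between the $\mathcal{Z}$-complex and a short resolution of that homology, the extra free summand at the tail being the source of the second generator of the canonical module. A concrete certificate would be to compute $\omega_{\sym{I}}=\Ext^{d-1}_S(\sym{I},\omega_S)$ and show it is minimally generated by two elements, plausibly by exhibiting it as an extension of $\omega_{\Ree{I}}$; this would be consistent with the rigidity noted above, the resolution (\ref{resolucaodosDs}) of $R/I$ being a Hilbert--Burch complex augmented by a single twisted copy of the top two terms of the Koszul complex on $\xx$. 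Should one instead manage to prove $I$ of linear type---as happens for the sub-Hankel ideal in \cite{MarSim2016} and \cite{CRS}---then $\sym{I}=\Ree{I}$ and the problem collapses to showing this Rees algebra is Cohen--Macaulay of type $2$, although the same codimension estimates would still be needed.

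The main obstacle is thus twofold. First, carrying the sliding-depth/acyclicity verification through in the extremal regime $\beg\syz(I)=\dim R$, where the standard hypotheses lie exactly at their limit, so that the full genericity of $L$---used to control the minors of the evaluated Jacobian dual $\Psi$---seems unavoidable; this is the kind of codimension estimate that is routine for linear presentations when $\dim R=3$ (\cite{linpres2018}) but is not available in general. Second, isolating the exact value $2$ of the type rather than merely bounding it from one side, which demands a precise understanding of the non-linear-type locus of $I$ and of the minimal generators of $\omega_{\sym{I}}$; it would be prudent to first corroborate the whole picture by explicit computation for $d=3$ and $d=4$.
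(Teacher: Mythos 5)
This statement is a \emph{conjecture} in the paper, not a theorem; the paper gives no proof. The only content following the conjecture is a one-line reduction: by \cite[Theorem 3.1]{dual1993} it suffices to check that $(D_1,\ldots,D_d)$ satisfies the condition $F_0$ on the codimension growth of the Fitting ideals of the matrix $\Psi$, together with a remark that this was verified by computer for a random $4\times 3$ general matrix in $d=3$. So there is no paper proof against which to match your argument.

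Your proposal is in the right circle of ideas and is honest about not closing the gaps, but it is a looser roadmap than the paper's own hint. You correctly set up $\sym I = S/I_1(\TT\Psi)$ and extract the syzygy $\sum_i x_i g_i=0$ from $\Psi\xx^t=\boldsymbol 0$, and you appeal to approximation complexes, sliding depth, and the Jacobian dual — the same machinery that underlies \cite{dual1993}. You even cite \cite[Theorem~3.1]{dual1993}, but only as one of several threads; you never land on the crisp reduction the authors actually record, namely that verifying $F_0$ for $\Psi$ would \emph{directly} deliver Cohen--Macaulayness, the dimension $d+1$, and the type $2$ in one stroke. Instead you spend considerable effort on a speculative mapping-cone analysis to account for the type being $2$ rather than $1$; this is plausible intuition but is precisely the part that \cite[Theorem~3.1]{dual1993} would make unnecessary once $F_0$ is in hand. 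One small imprecision: you assert that one ``checks $\Ext^3_R(R/I,R)\cong k$'' — the paper does not claim this, and since the tail of (\ref{resolucaodosDs}) has rank one it is a guess you would need to justify. In short, both you and the paper stop short of a proof; the paper's single-sentence reduction is the cleaner strategy, and the genuinely open step is the one you correctly identify — controlling the Fitting codimensions of $\Psi$ (i.e.\ $F_0$) using only the genericity of $L$ in the extremal regime $\beg\syz(I)=\dim R$.
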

We note that, according to \cite[Theorem 3.1]{dual1993}, it suffices to show that $(D_1,\ldots, D_d)$ satisfies  condition $F_0$ for the codimension growth of the Fitting ideals of the matrix $\Psi$.
\begin{Remark}\rm
	For a $4\times 3$ general matrix in dimension $d=3$ we have verified this by computer assistance, using random coefficients. It is a tricky challenge to write such explicit examples without random coefficients because one has to carry around the assumption of  general forms, forcing the required dense open set to get smaller and smaller. 
\end{Remark}

\subsection{$3$-syzygy matrices in three variables}


In this part we discuss some  properties of a three generated ideal  $I\subset R=k[x,y,z]$ with free resolution of the form
\begin{equation} \label{3syzygy_resolution}
	0\rar R\stackrel{\psi}\lar  R^3\stackrel{\phi}\lar  R^3 \lar R \lar R/I\rar 0,
\end{equation}
with shifts to be specified in the graded case.

Although these ideals have quite a bit of history behind us, some recent interest has been ignited through the case of the so-called $3$-{\em syzygy curves} by A. Dimca and collaborators.

Resolution (\ref{3syzygy_resolution}) seems to be quite ubiquitous under several disguises. Here are some examples:

\begin{enumerate}
	\item {\sc The self-dual case}  The resolution of a complete intersection of three elements.
	
	This is the only case where $I$ has codimension three.
	
	\item {\sc Pseudo-regular sequence} (\cite[Corollary 1.4]{Simis-triply}) Three polynomials $f_1,f_2,f_3$ in $k[x,y,z]$ generating an ideal of codimension two, such that
	$$\frac{f_1}{\gcd(f_1,f_2) \, \gcd(f_1,f_3)}, 
	\frac{f_2}{\gcd(f_1,f_2) \, \gcd(f_2,f_3)},
	\frac{f_3}{\gcd(f_1,f_3) \, \gcd(f_2,f_3)}$$
	is a regular sequence, generate an ideal with a free resolution as (\ref{3syzygy_resolution}).
	
	A notable example is the gradient ideal of an ordinary plane cusp singularity. 
	
	Actually, this result extends to the case where $R$ is a Noetherian factorial ring.
	
	\item {\sc Plane Cremona transformations} (\cite[Theorem 2.12]{HasSim2012}) Consider a simple plane Cremona map of degree $5\leq d\leq 7$ with base ideal $I\subset k[x,y,z]$. If $I$ is not saturated then its minimal free resolution is of the form (\ref{3syzygy_resolution}) (for suitable shifts).
	
	For more elaborated examples of bisimple Cremona transformations, whose base ideals have a free resolution as (\ref{3syzygy_resolution}), see \cite[Proposition 6.2.12]{GradedBook}.
	
	\item {\sc $3$-syzygy plane curve} (\cite{Dimca2024} and its references) 
	This is a general name for some classes of projective plane curves whose Jacobian (gradient) ideal has a free resolution as (\ref{3syzygy_resolution}), plus some specifics about the shifts.
	Note that, differently from the previous section, here the  equation defining the gradient  may not have a determinantal expression.
	An extended notion for hypersurfaces is discussed in \cite{Dimca_et_al2021}.
\end{enumerate}
We next discuss homological aspects of the maps in (\ref{3syzygy_resolution}), assuming without loss of generality that $I$ has height two.

Denote representative matrices of the maps in (\ref{3syzygy_resolution}) by the same respective symbols.
Up to change of coordinates and elementary operations, we may assume that the entries of $\psi$ form a regular sequence, say, $\{g_1,g_2,g_3\}$.
Then, by dualizing (\ref{3syzygy_resolution}) into $R$ gives a matrix decomposition $\phi=AK$, for some $3\times 3$ matrix $A$ over $R$, where
\begin{equation}\label{Koszul}
K:= \begin{bmatrix}
	0&       -g_3&      g_2\\
	g_3&      0 &   -g_1\\
	-g_2 & g_1 & 0
\end{bmatrix},
\end{equation}
the matrix of Koszul relations of $\{g_1,g_2,g_3\}$.

By the Buchsbaum--Eisenbud exactness criterion, $I_2(AK)=I_2(\phi)$ has height at least two.
One can pose a naive question as to what class of ideals admit such a resolution for a given regular sequence $\{g_1,g_2,g_3\}$.
This is a sort of inverse question,  tantamount to asking what are the $3\times 3$ matrices $A$ such that $I_2(AK)$ has height two. Once $AK$ is in place, in order to reach to the corresponding ideal $I$ one takes, as is well-nown, the $2\times 2$ minors of two columns of $AK$ divided by their gcd. 

We base our search on the following lemma of general nature, possibly established elsewhere before.

\begin{Lemma}\label{basic_lemma}
Let $M$ denote a $3\times 3$ matrix over  a commutative ring $S$ and let $\{g_1,g_2,g_3\}\subset S$ be any three elements with respective  matrix $G$ of Koszul relations mimicking \eqref{Koszul}. Then 
$$I_2(MG)=(g_1,g_2,g_3)(f_1,f_2,f_3),$$
where $f_1,f_2,f_3$ denote the maximal minors of the $4\times 3$ matrix $\left[\begin{matrix} 
	M \\ 
	\hline 
	g_1 \:\: g_2 \:\: g_3  \end{matrix}\right]$.
\end{Lemma}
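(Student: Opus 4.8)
The plan is to prove the identity $I_2(MG) = (g_1,g_2,g_3)(f_1,f_2,f_3)$ by computing both sides explicitly in terms of the entries of $M = (m_{ij})$ and the $g_i$. First I would write out the product $MG$ column by column: since $G$ has columns $(0, g_3, -g_2)^t$, $(-g_3, 0, g_1)^t$, $(g_2, -g_1, 0)^t$, the $i$-th row of $MG$ is the cross-product-like expression $(\,-m_{i2}g_3 + m_{i3}g_2,\; m_{i1}g_3 - m_{i3}g_1,\; -m_{i1}g_2 + m_{i2}g_1\,)$. Equivalently, denoting by $r_i = (m_{i1},m_{i2},m_{i3})$ the $i$-th row of $M$ and by $\mathbf{g} = (g_1,g_2,g_3)$, the $i$-th row of $MG$ is $-\,r_i \times \mathbf{g}$ (in the formal cross-product sense over $S$). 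This immediately shows $I_1(MG) \subseteq (g_1,g_2,g_3)$, and more usefully gives a clean handle on the $2\times 2$ minors.

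The key computational step is the identity for a $2\times 2$ minor of $MG$ formed from rows $i,j$ and two of the three columns. Using the bilinearity and antisymmetry of the cross product, a minor built from rows $i,j$ expands as a combination of the $2\times 2$ minors of the $2\times 3$ matrix $\binom{r_i}{r_j}$ times linear forms in the $g$'s, and crucially the classical vector identity $(r_i\times \mathbf g)\wedge(r_j\times\mathbf g)$ relates to $\det(r_i,r_j,\mathbf g)\cdot(r_i\times r_j)$ — over a commutative ring this is the Cauchy--Binet / Plücker-type identity $(a\times b)\times(c\times d) = \det(a,b,d)\,c - \det(a,b,c)\,d$ and its companions. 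Carrying this out, each $2\times 2$ minor of $MG$ turns out to be (up to sign) $\pm\,\det(r_i,r_j,\mathbf g)$ times a coordinate of the cross product $r_i \times r_j$, hence a product of one generator of $(f_1,f_2,f_3)$ — namely $\det(r_i, r_j, \mathbf g)$ is, up to sign, one of the maximal minors of the $4\times 3$ matrix $\left[\begin{smallmatrix} M \\ \mathbf g\end{smallmatrix}\right]$ (the one omitting the row not among $i,j$) — with an entry of $\mathbf g$ absorbed appropriately; reconciling signs and bookkeeping gives $I_2(MG) \subseteq (g_1,g_2,g_3)(f_1,f_2,f_3)$.

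For the reverse inclusion I would exhibit each product $g_k f_\ell$ explicitly as an $S$-linear combination of the $2\times 2$ minors of $MG$. Here the $f_\ell$ are the four maximal minors of the $4\times 3$ matrix, and one uses the Laplace/cofactor expansions together with the fact that the rows of $MG$ already lie in the span dictated by $G$; concretely, multiplying out $\mathbf g \cdot (\text{adjugate-type combinations})$ recovers the $g_k f_\ell$. A cleaner route for this direction: localize and note it suffices to check the equality of ideals after inverting one $g_i$ at a time (and separately handle the locus $g_1=g_2=g_3=0$, where both sides are contained in $(g_1,g_2,g_3)=0$ trivially after passing to the quotient) — but since we want an honest identity of ideals, not just up to radical, the explicit combination is safer, so I would grind out the four identities $g_k f_\ell = \sum (\text{minors of } MG)$ for a spanning set of pairs $(k,\ell)$.

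The main obstacle I anticipate is the sign and indexing bookkeeping: matching the six $2\times 2$ minors of $MG$ (three choices of row pair times, effectively, the structure of $G$) against the twelve products $g_k f_\ell$, keeping the $(-1)^{i+j}$ cofactor signs consistent with the definition of the $f_\ell$ as \emph{signed} maximal minors, is where errors creep in. A way to tame this is to not pick coordinates at all: observe that $MG = -\,\widehat{M}\,$ where $\widehat{M}$ is the matrix whose $i$-th row is $r_i \times \mathbf g$, note $G$ itself is (up to sign) the matrix of the map $S^3 \to S^3$, $v \mapsto v\times \mathbf g$, and then use the functorial identity $\Lambda^2(\text{of a composite})$ together with $\det\big[\begin{smallmatrix}M\\ \mathbf g\end{smallmatrix}\big]$-type expansions. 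Either way, the proof is ultimately a finite, if fiddly, verification, with no deep input needed beyond the classical algebra of $2\times 2$ and $3\times 3$ determinants over a commutative ring.
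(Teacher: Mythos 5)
Your cross-product approach is in spirit the same computation the paper does via the adjugate, but the vector identity you invoke is misstated, and the misstatement matters. You claim $(r_i\times\mathbf g)\times(r_j\times\mathbf g)$ is proportional to $r_i\times r_j$; it is not. Using the very identity you quote, $(a\times b)\times(c\times d)=\det(a,b,d)\,c-\det(a,b,c)\,d$, with $a=r_i$, $b=\mathbf g$, $c=r_j$, $d=\mathbf g$, the first determinant vanishes and one gets
$(r_i\times\mathbf g)\times(r_j\times\mathbf g)=\det(r_i,r_j,\mathbf g)\,\mathbf g$,
a multiple of $\mathbf g$, not of $r_i\times r_j$. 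Had you followed your stated identity, the $2\times 2$ minors of $MG$ would come out as $\det(r_i,r_j,\mathbf g)$ times $2\times 2$ minors of $M$, which is not of the form $g_kf_\ell$ and does not give the claimed ideal. The "absorb an entry of $\mathbf g$ appropriately" remark is where you quietly paper over this; there is no $g_k$ to absorb from the (incorrect) right-hand side. The corrected identity gives directly that the $2\times 2$ minor of $MG$ formed from rows $\{i,j\}$ and omitting column $k$ equals $\pm\det(r_i,r_j,\mathbf g)\,g_k=\pm f_\ell\,g_k$ where $\{i,j,\ell\}=\{1,2,3\}$; this simultaneously proves both inclusions (the $2\times 2$ minors of $MG$ are, up to sign, exactly the twelve products $g_kf_\ell$), so the separate work you propose for the reverse inclusion is unnecessary.

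The paper's route avoids all of this bookkeeping: it uses the multiplicativity of the adjugate, ${\rm adj}(MG)={\rm adj}(G)\,{\rm adj}(M)$, together with the elementary fact that for a $3\times 3$ matrix $A$ one has $I_2(A)=I_1({\rm adj}(A))$. Since ${\rm adj}(G)=(g_ig_j)_{i,j}$ when $G$ is the Koszul matrix of $g_1,g_2,g_3$, the product ${\rm adj}(G)\,{\rm adj}(M)$ has $(i,j)$-entry $\pm g_i f_j$, and the equality $I_2(MG)=(g_1,g_2,g_3)(f_1,f_2,f_3)$ drops out in one line with no sign-chasing. Your approach is essentially the same computation encoded in cross-product notation (for $3\times 3$ matrices, coordinates of $u\times v$ are the $2\times 2$ minors, which is what $I_1({\rm adj})=I_2$ is saying); what the adjugate buys is a bookkeeping-free, coordinate-free formulation that sidesteps exactly the sign pitfalls you were worried about, and that indeed tripped you up.
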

\begin{proof}
 Letting $\Delta_{i,j}$ stand for the (non-signed) $(i,j)$-cofactor of $M$, one has: 
\begin{eqnarray*}
	{\rm adj}(MG)&=& {\rm adj}(G){\rm adj}(M)\\
	&=& \left[\begin{matrix}
		g_1^2&g_1g_2&g_1g_3\\
		g_1g_2&g_2^2&g_2g_3\\
		g1g_3&g_2g_3&g_3^2
	\end{matrix}\right]
	\left[\begin{matrix}
		\Delta_{1,1}&-\Delta_{2,1}&\Delta_{3,1}\\
		-\Delta_{1,2}&\Delta_{2,2}&-\Delta_{3,2}\\
		\Delta_{1,3}&-\Delta_{2,3}&\Delta_{3,3}
	\end{matrix}\right]
	=\left[\begin{matrix}
		g_1f_1&-g_1f_2&g_1f_3\\
		-g_2f_1&g_2f_2&-g_2f_3\\
		g_3f_1&-g_2f_2&f_3f_3
	\end{matrix}\right].
\end{eqnarray*}
Hence, $I_2(MG)=I_1({\rm adj}(MG))=(g_1,g_2,g_3)(f_1,f_2,f_3).$
\end{proof}

The following result gives a criterion for  an $A$ as required in  the previous discussion.

\begin{Proposition}\label{reverse_criterion} {\rm (Vasconcelos)}
	Let $A=(a_{i,j})$ denote a $3\times 3$ matrix over $R=k[x,y,z]$ and let $B= \left[\begin{matrix} 
		A \\ 
		\hline 
		g_1 \:\: g_2 \:\: g_3  \end{matrix}\right]$, where $g_1,g_2,g_3$ is a regular sequence in $R$.
		If $I_3(B)\subset R$ has height two, then $I_2(AK)\subset R$ has height two, where $K$ is as in \eqref{Koszul}.
\end{Proposition}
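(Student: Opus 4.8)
The plan is to combine Lemma~\ref{basic_lemma} with the hypothesis and a codimension count. By Lemma~\ref{basic_lemma} applied with $M=A$, $G=K$ and $S=R$, we have the identity of ideals
$$I_2(AK)=(g_1,g_2,g_3)\,(f_1,f_2,f_3),$$
where $f_1,f_2,f_3$ are the maximal minors of $B=\left[\begin{matrix} A \\ \hline g_1\:\:g_2\:\:g_3\end{matrix}\right]$, i.e. $I_3(B)=(f_1,f_2,f_3)$. So the task reduces to bounding $\Ht\big((g_1,g_2,g_3)(f_1,f_2,f_3)\big)$ from below by $2$, knowing that $\Ht(f_1,f_2,f_3)=2$ and that $g_1,g_2,g_3$ is a regular sequence (hence $\Ht(g_1,g_2,g_3)=3$ since $R=k[x,y,z]$ has dimension $3$, so in fact $(g_1,g_2,g_3)$ is $\fm$-primary).

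The key step is the observation that for two ideals $\fa,\fb$ in a Noetherian ring one has $V(\fa\fb)=V(\fa)\cup V(\fb)$, hence
$$\Ht(\fa\fb)=\min\{\Ht\fa,\Ht\fb\}.$$
First I would invoke this with $\fa=(g_1,g_2,g_3)$ and $\fb=(f_1,f_2,f_3)$: since $\Ht(g_1,g_2,g_3)=3$ and $\Ht(f_1,f_2,f_3)=\Ht I_3(B)=2$ by hypothesis, we get $\Ht I_2(AK)=\min\{3,2\}=2$. That is exactly the assertion. (One should note that the minors $f_i$ are not all zero precisely because $\Ht I_3(B)=2>0$, so $(f_1,f_2,f_3)$ is a genuine height-two ideal and the product is nonzero.)

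The only point that requires any care — and the one I would single out as the main (minor) obstacle — is making sure Lemma~\ref{basic_lemma} is applied with the correct dictionary between the signed Koszul matrix $K$ of \eqref{Koszul} and the matrix $G$ in the lemma's statement, which "mimics" \eqref{Koszul}; the cofactor bookkeeping in the lemma's proof already absorbs the signs, so $I_2(AK)$ is literally $(g_1,g_2,g_3)(f_1,f_2,f_3)$ with $f_i$ the unsigned maximal minors of $B$, and no sign discrepancy survives into the ideal-theoretic conclusion. Beyond that, the argument is a one-line height computation, so I would keep the write-up short: cite the lemma, recall $\Ht(\fa\fb)=\min\{\Ht\fa,\Ht\fb\}$, and conclude. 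It is worth remarking parenthetically that this also shows $\Ht I_2(AK)$ cannot exceed $2$ (since $\Ht(f_1,f_2,f_3)=2$), so "height two" is exact, not merely a lower bound — which is consistent with the Buchsbaum--Eisenbud count that forces $I_2(\phi)$ to have height at most $2$ in resolution \eqref{3syzygy_resolution}.
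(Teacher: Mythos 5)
Your plan---apply Lemma~\ref{basic_lemma} directly over $R$ and finish with $\Ht(\fa\fb)=\min\{\Ht\fa,\Ht\fb\}$---is a genuine simplification of the paper's route (the paper first applies the lemma in the generic setting and then specializes). But there is a gap in the step ``$I_3(B)=(f_1,f_2,f_3)$''. The matrix $B$ is $4\times 3$ and so has \emph{four} maximal minors, not three. From the proof of Lemma~\ref{basic_lemma}, the $f_j$ are exactly the minors obtained by Laplace-expanding along the row $(g_1,g_2,g_3)$, that is, the three minors of $B$ that fix the bottom row; the fourth maximal minor, $\det A$, is not among them. Hence $(f_1,f_2,f_3)$ is the subideal of $I_3(B)=(f_1,f_2,f_3,\det A)$ of minors fixing the last row---a priori a proper subideal---and $\Ht(f_1,f_2,f_3)=2$ cannot be read directly from the hypothesis $\Ht I_3(B)=2$.

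The conclusion is still correct and the gap is repairable with one extra observation: the Pl\"ucker-type relation $(g_1,g_2,g_3)\det A\subset(f_1,f_2,f_3)$, obtained by bordering $B$ with a repeated column and expanding the resulting singular $4\times 4$ determinant. Then any prime $P$ containing $(f_1,f_2,f_3)$ either contains $(g_1,g_2,g_3)$, forcing $P=\fm$ since $(g_1,g_2,g_3)$ is $\fm$-primary, or contains $\det A$ and therefore all of $I_3(B)$. As $\fm\supset I_3(B)$ too, one gets $V(f_1,f_2,f_3)=V(I_3(B))$, hence $\Ht(f_1,f_2,f_3)=\Ht I_3(B)=2$, and your height-of-product argument now finishes. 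The paper's own proof relies on precisely this relation (it is the remark that $(x,y,z)\det M\subset(f_1,f_2,f_3)$) but runs it through the generic case and a specialization; once the radical computation above is supplied, your direct route is the shorter one.
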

\begin{proof}  
	Apply Lemma~\ref{basic_lemma} to the case where $M=(T_{i,j})$ is the $3\times 3$ generic matrix over $k$ and $\{g_1,g_2,g_3\}=\{x,y,z\}$, with $S=k[x,y,z, T_{i,j}\,|\, 1\leq i,j\leq 3]$, to get:
	$$I_2(M \widetilde{K})=(x,y,z)(f_1,f_2,f_3),$$
	where $\widetilde{K}$ is the Koszul matrix of $\{x,y,z\}$ --
	an equality holding over $S$.
	
	Here the matrix $\widetilde{B}:=\left[\begin{matrix} 
		M \\ 
		\hline 
		x \:\: y \:\: z  \end{matrix}\right]$ is generic as well.
		Therefore, since $(f_1,f_2,f_3)$ is the subideal of maximal minors fixing the last column, its minimal primes are $(x,y,z)$ and $I_3(\widetilde{B})$ -- this follows, e.g., by noting that $(x,y,z)\det M \subset (f_1,f_2,f_3)$.
		Therefore, these are the minimal primes of $I_2(M\widetilde{K})\subset S$ as well.
		Thus 
		$$ \sqrt{(I_2(M\,\widetilde{K})}= I_3(\widetilde{B})\cap (x,y,z) S.$$
		In particular, suitable powers of $I_3(\widetilde{B})$ and $ \fm S$ intersect in $I_2(\widetilde{A}\,\widetilde{K})$.
		But this property is obviously maintained through the specialization $T_{i,j}\mapsto a_{i,j}$ and $x\mapsto g_1, y\mapsto g_2, z\mapsto g_3$.
		This gives
		$$I_3(B)^r \cap (g_1,g_2,g_3)^s \subset I_2(AK),$$
	over $R=k[x,y,z]$,	for suitable integers $r,s$.
		But since $(g_1,g_2,g_3)$ is $(x,y,z)$-primary, a sufficiently high power of $I_3(B)$ will land on $(x,y,z)$, so for suitable powers the intersection is a power of $I_3(B)$ alone.
		Then, $\Ht I_2(AK)\geq 2$.
\end{proof}

We pose:

\begin{Question}\rm
	Is the family of $3\times 3$ matrices as in Proposition~\ref{reverse_criterion} closed under certain properties, or else just a scattered bunch of members?
\end{Question}

\section{Structured sparse square matrices}\label{Sec3}

\subsection{Preliminaries}

Let $m\geq 3$  be an integer and let $\XX=(x_{i,j})_{1\leq i,j\leq m}$ be the $m\times m$ generic matrix as before. Let $0\leq r\leq s\leq m-2$ be integers.   We denote by $\mathfrak{G}_{m,r,s}=(g_{i,j})$ the $m\times m$ linear section of the $m\times m$ generic matrix $\XX$ such that 

\begin{equation}\nonumber
	g_{i,j}=\left\{\begin{array}{llc}
		x_{i,j},& \mbox{if $r+2\leq i+j\leq 2m-s$}\\
		0,&\mbox{otherwise}
	\end{array}\right.
\end{equation} 
Thus, e.g., for $m=4$, one has the following sections.
{\small
	$$\mathfrak{G}_{4,0,0}=\left[\begin{matrix}
		x_{1,1}&x_{1,2}&x_{1,3}&x_{1,4}\\
		x_{2,1}&x_{2,2}&x_{2,3}&x_{2,4}\\
		x_{3,1}&x_{3,2}&x_{3,3}&x_{3,4}\\
		x_{4,1}&x_{4,2}&x_{4,3}&x_{4,4}
	\end{matrix}\right],\quad \mathfrak{G}_{4,0,1}=\left[\begin{matrix}
		x_{1,1}&x_{1,2}&x_{1,3}&x_{1,4}\\
		x_{2,1}&x_{2,2}&x_{2,3}&x_{2,4}\\
		x_{3,1}&x_{3,2}&x_{3,3}&x_{3,4}\\
		x_{4,1}&x_{4,2}&x_{4,3}&0
	\end{matrix}\right]$$
	
	$$
	\mathfrak{G}_{4,0,2}=\left[\begin{matrix}
		x_{1,1}&x_{1,2}&x_{1,3}&x_{1,4}\\
		x_{2,1}&x_{2,2}&x_{2,3}&x_{2,4}\\
		x_{3,1}&x_{3,2}&x_{3,3}&0\\
		x_{4,1}&x_{4,2}&0&0\\
	\end{matrix}\right],\quad
	\mathfrak{G}_{4,1,1}=\left[\begin{matrix}
		0&x_{1,2}&x_{1,3}&x_{1,4}\\
		x_{2,1}&x_{2,2}&x_{2,3}&x_{2,4}\\
		x_{3,1}&x_{3,2}&x_{3,3}&x_{3,4}\\
		x_{4,1}&x_{4,2}&x_{4,3}&0
	\end{matrix}\right]$$
	
	$$
	\mathfrak{G}_{4,1,2}=\left[\begin{matrix}
		0&x_{1,2}&x_{1,3}&x_{1,4}\\
		x_{2,1}&x_{2,2}&x_{2,3}&x_{2,4}\\
		x_{3,1}&x_{3,2}&x_{3,3}&0\\
		x_{4,1}&x_{4,2}&0&0\\
	\end{matrix}\right],\quad
	\mathfrak{G}_{4,2,2}=\left[\begin{matrix}
		0&0&x_{1,3}&x_{1,4}\\
		0&x_{2,2}&x_{2,3}&x_{2,4}\\
		x_{3,1}&x_{3,2}&x_{3,3}&0\\
		x_{4,1}&x_{4,2}&0&0
	\end{matrix}\right]$$
Let $R_{m,r,s}$ denote the polynomial ring over $k$ on the nonzero entries of $\mathfrak{G}_{m,r,s}$. 	Clearly,  $\dim R_{m,r,s}=m^2-\mathfrak{o}(r)-\mathfrak{o}(s),$  where $\mathfrak{o}(u)={u+1\choose 2}$  for any integer $u\geq 1.$ 
The special case $\mathfrak{G}_{m,0,s}$ has been thoroughly discussed in \cite{Degen-Gen}.

Now, throughout the discussion $r$ and $s$ will be fixed. Therefore, in order to use a lighter notation, if no confusion arises, we set $\mathfrak{G}_m:=\mathfrak{G}_{m,r,s}$ and $R:=R_{m,r,s}$.

Note that the assumption $r\leq s$ can be turned around by easy column and row transpositions.

	\begin{Proposition}\label{mgres} 
		The ideal $I_{m-1}(\mathfrak{G}_m)\subset R$ of submaximal minors has height $4.$ In particular, $I_{m-1}(\mathfrak{G}_m)$ is a Gorenstein ideal with  minimal graded free resolution 
		{\small\begin{equation}\label{resI}
				0\to R(-2m)\to R(-m-1)^{m^2}\to R(-m)^{2m^2-2}\to R(-m+1)^{m^2}\to R
		\end{equation}}
		and the multiplicity  $e( R/I_{m-1}(\mathfrak{G}_m))$ of $ R/I_{m-1}(\mathfrak{G}_m)$ is $\frac{m^2(m+1)(m-1)}{12}.$
	\end{Proposition}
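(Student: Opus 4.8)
The plan is to compute the three numerical invariants in turn, all hinging on establishing first that $\Ht I_{m-1}(\mathfrak{G}_m)=4$. The upper bound $\Ht I_{m-1}(\mathfrak{G}_m)\leq 4$ is automatic from the generic determinantal bound $\codim I_{m-1}(\XX_{m\times m})=(m-(m-1)+1)^2=4$, which only goes down under specialization. For the reverse inequality I would localize: it suffices to show that on the punctured spectrum $V(I_{m-1}(\mathfrak{G}_m))$ has codimension at least $4$, equivalently that for each prime $\mathfrak p$ of height $\leq 3$, some $(m-1)$-minor lies outside $\mathfrak p$. The natural way to do this for a sparse linear section is to exhibit, after inverting one well-chosen variable (or a short list of them), enough of the generic-matrix structure to force the drop locus to be small; concretely, one shows that away from the vanishing of a suitable set of entries the matrix $\mathfrak{G}_m$ is, up to invertible row/column operations over the localized ring, close enough to generic that $I_{m-1}$ has the expected codimension there, and then handles the finitely many "boundary'' strata $x_{i,j}=0$ by an induction on $m$ (or on the pair $(r,s)$, using the remark that $r\le s$ may be rearranged). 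This codimension computation is the main obstacle: the sparsity pattern is genuinely non-generic, and one must check the height does not drop on any of the coordinate-type strata; I expect the cleanest route is a perturbation/deformation argument showing $\mathfrak{G}_m$ is a flat specialization of the generic matrix preserving $\codim I_{m-1}$, or an explicit exhibition of a regular sequence of length $4$ inside $I_{m-1}(\mathfrak{G}_m)$ using cofactors along the "full'' anti-diagonal band.

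Granting $\Ht I_{m-1}(\mathfrak{G}_m)=4$, the Gorenstein conclusion and the stated resolution \eqref{resI} follow from the theory of generic perfection: $I_{m-1}$ of a generic $m\times m$ matrix is a perfect (Cohen--Macaulay) Gorenstein ideal of codimension $4$ resolved by the Gulliksen--Neg{\aa}rd complex, which has the Betti sequence $1,\ m^2,\ 2m^2-2,\ m^2,\ 1$ with the shifts displayed. Since $\mathfrak{G}_m$ is a specialization of $\XX$ under the substitution sending certain entries to $0$, and since we have just checked that the codimension is preserved (equal to $4$ on both sides), the Gulliksen--Neg{\aa}rd complex specializes to a free resolution of $R/I_{m-1}(\mathfrak{G}_m)$; acyclicity of the specialization is exactly the statement that $I_{m-1}$ remains perfect of the same codimension, which is the content of generic perfection (Hochster's theorem, or \cite[Chapter 3]{SimisBook} / the determinantal references). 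That the shifts are as in \eqref{resI} is read off the Gulliksen--Neg{\aa}rd complex over the standard grading where every $g_{i,j}$ has degree $1$: the generators of $I_{m-1}$ sit in degree $m-1$, the last free module is $R(-2m)$, and the middle terms interpolate linearly, giving precisely $R(-m+1)^{m^2}\leftarrow R(-m)^{2m^2-2}\leftarrow R(-m-1)^{m^2}\leftarrow R(-2m)$.

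For the multiplicity, I would compute the Hilbert series of $R/I_{m-1}(\mathfrak{G}_m)$ directly from the graded resolution \eqref{resI}. The $K$-polynomial is
\[
K(t)=1-m^2 t^{m-1}+(2m^2-2)t^{m}-m^2 t^{m+1}+t^{2m},
\]
so the Hilbert series is $K(t)/(1-t)^{N}$ with $N=\dim R=m^2-\mathfrak o(r)-\mathfrak o(s)$, and the multiplicity $e(R/I_{m-1}(\mathfrak{G}_m))$ equals $K(t)/(1-t)^{c}$ evaluated at $t=1$, where $c=\codim=4$ — that is, $e=\big(K(t)/(1-t)^4\big)\big|_{t=1}$. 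Thus the task reduces to the routine but slightly delicate calculus of dividing $K(t)$ by $(1-t)^4$ and taking the limit $t\to 1$; this is a single-variable computation (equivalently, evaluate the fourth finite difference, or take $\tfrac{1}{4!}\tfrac{d^4}{dt^4}$ of the degree-$\leq 3$ Taylor behaviour), and after simplification it yields $\tfrac{m^2(m+1)(m-1)}{12}$. One sanity check worth recording: for the generic case this must agree with the classical degree of the submaximal determinantal variety of an $m\times m$ matrix, and indeed the formula $\tfrac{m^2(m^2-1)}{12}$ matches, so the sparsity affects only $\dim R$ and the embedding dimension, not the degree — which is the expected behaviour since $e$ is computed from the $K$-polynomial and the codimension, both of which are specialization-invariant here.
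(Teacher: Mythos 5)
Your proposal is sound in outline and in fact follows the same strategy as the paper, but it leaves the one genuinely substantive step -- the equality $\Ht I_{m-1}(\mathfrak{G}_m)=4$ -- as a menu of possible approaches rather than an argument. You correctly observe that $\leq 4$ is automatic and that everything else (Gorenstein, the displayed Betti numbers and shifts via the Gulliksen--Neg{\aa}rd complex, and the multiplicity read off the graded resolution) is routine once the codimension is pinned down; your discussion of each of those downstream steps is accurate, and the $K$-polynomial computation you sketch does indeed simplify to $m^2(m-1)(m+1)/12$. But you do not actually prove the lower bound $\geq 4$: you name three candidate routes (localization on coordinate strata, a flat deformation from the generic matrix preserving codimension, or an explicit length-$4$ regular sequence of cofactors) and acknowledge yourself that ``this codimension computation is the main obstacle,'' without carrying any of them out. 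Since the height-$4$ statement is part of what the proposition asserts -- and is the only nontrivial ingredient -- this is a real gap.

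For comparison, the paper closes precisely this gap by appealing to an earlier result, \cite[Proposition 2.4]{DetBook2024}, which establishes that $I_{m-1}(\mathfrak{G}_m)$ is a codimension-preserving specialization of the generic submaximal minors. That is exactly the content of your second proposed route; the paper simply cites the completed form of it. Once that is in hand, the Gorenstein property, the resolution \eqref{resI}, and the multiplicity all follow as you describe (and as the paper states). To turn your plan into a proof you would either need to reproduce the argument of \cite[Proposition 2.4]{DetBook2024}, or supply one of the alternatives you list -- for instance, exhibiting an explicit regular sequence of four submaximal minors inside the nonzero band of $\mathfrak{G}_m$, which is plausible but not at all automatic given the sparsity pattern and would require care on the boundary strata $x_{i,j}=0$.
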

	\begin{proof}  By \cite[Proposition 2.4]{DetBook2024}, the ideal $I_{m-1}(\mathfrak{G}_m)$ is a specialization of the ideal of the generic submaximal minors. Hence, $I_{m-1}(\mathfrak{G}_m)$ is also a height $4$ Gorenstein ideal with resolution as in \eqref{resI}. The multiplicity of $R/I_{m-1}(\mathfrak{G}_m)$ is a standard calculation off the resolution.
	\end{proof}
	
\subsection{Related rational maps}

In this part we discuss some natural rational maps attached to the matrix invariants so far.

\subsubsection{The cofactor map}

Throughout, let  $\Delta(\mathfrak{G}_m)_{j,i}$  denote the  $(i,j)$-cofactor of the matrix $\mathfrak{G}_m=\mathfrak{G}_{m,r,s}$.
We wish to think of these cofactors as coordinate forms
defining a suitable rational map.
First we develop the  ground material to discuss the structure of the homogeneous coordinate ring of such a map.
As it happens, we accomplish this by taking a slight divergence from the previous linear sections as follows:

$\bullet$ First, having in mind related rational map, we switch to the $m\times m$ generic matrix 	$\YY=(y_{i,j})_{1\leq i,j\leq m}$ whose entries are the ``dual'' variables to $(x_{i,j})$; we then set $S:=k[y_{i,j}|1\leq i,j\leq m]$ throughout.

$\bullet$ Second, we now consider  integers $1\leq r\leq s\leq m$ and deal with two ladders sitting each on a suitable $(m-1)\times (m-1)$ submatrix of the $m\times m$ generic matrix $\YY$ above; we choose these submatrices to be, one obtained by deleting the first row and column, the other by deleting the last row and column. 

The ladders are pretty much like the ones $\mathfrak{G}_{m-1,r,s}$ above, with either $s=0$ or $r=0$.
Since they are ladders sitting on different matrices, we rename them anew:	
	
	$$\mathfrak{L}_{m,r}:=\left[\begin{matrix}
		&&&&&y_{2,r+1}&\cdots&y_{2,m}\\
		&&&&y_{3,r}&y_{3,r+1}&\cdots&y_{3,m}\\
		&&&&\vdots&\vdots&&\vdots\\
		&&y_{r,3}&\cdots&y_{r,r}&y_{r,r+1}&\cdots&y_{r,m}\\
		&y_{r+1,2}&y_{r+1,3}&\cdots&y_{r+1,r}&y_{r+1,r+1}&\cdots&y_{r+1,m}\\
		&\vdots&\vdots&&\vdots&\vdots&&\vdots\\
		&y_{m,2}&y_{m,3}&\cdots&y_{m,r}&y_{m,r+1}&\cdots&y_{m,m}
	\end{matrix}\right],$$

	$$\widetilde{\mathfrak{L}}_{m,s}:=\left[\begin{matrix}
		y_{1,1}&\cdots&y_{1,m-s}&y_{1,m-s+1}&\cdots&y_{1,m-2}&y_{1,m-1}\\
		\vdots&&\vdots&\vdots&&\vdots&\vdots\\
		y_{m-s,1}&\cdots&y_{m-s,m-s}&y_{m-s,m-s+1}&\cdots&y_{m-s,m-2}&y_{m-s,m-1}\\
		y_{m-s+1,1}&\cdots&y_{m-s+1,m-s}&y_{m-s+1,m-s+1}&\cdots&y_{m-s+1,m-2}\\
		\vdots&&\vdots&\vdots&&\\
		y_{m-2,1}&\cdots&y_{m-2,m-s}&y_{m-2,m-s+1}\\
		y_{m-1,1}&\cdots&y_{m-1,m-s}\end{matrix}\right].$$
	We will assume that $\widetilde{\mathfrak{L}}_{m,0}=\mathfrak{L}_{m,0}=\emptyset$ and $I_{m}(\emptyset)=0.$
	
	If no confusion arises,  denote by $I_{m-r}(\mathfrak{L}_{m,r}),$ (respectively,  $I_{m-s}(\widetilde{\mathfrak{L}}_{m,s})$)  the ideal of $S$ generated by the $(m-r)$-minors with entries inside the ladder $\mathfrak{L}_{m,r}$ (respectively, $\widetilde{\mathfrak{L}}_{m,s}$).
	
	For every $2\leq  u\leq r+1 $  and $m-s\leq v\leq m-1$ let 
	$${\bf i}_{u}:=\{r-u+3,r-u+4,\ldots,m\}, \quad {\bf j}_{u}=\{u,u+1,\ldots,m\}$$
	$$\tilde{{\bf i}}_{v}:=\{1,2,\ldots,2m-s-1-v\}, \quad \tilde{{\bf j}}_{v}=\{1,2,\ldots,v\}$$
	The submatrix  $\YY_{{\bf i}_u,{\bf j}_u}$  of $\YY$ with row (column) indices ${\bf i}_u$ (${\bf j}_u$) coincides with the  $(m+u-r-2)\times(m-u+1)$ submatrix contained in $\mathfrak{L}_{m,r}$ with ladder corners  $y_{r-u+3,u},y_{r-u+4,m},y_{m,u}, y_{m,m}.$ Quite similarly,   $\YY_{\tilde{\bf i}_v,\tilde{\bf j}_v}$  coincides with the $(2m-s-1-v)\times v$ submatrix contained in $\widetilde{\mathfrak{L}}_{m,s}$ with ladder corners   $y_{1,1},y_{1,v},y_{2m-s-1-v,1}, y_{2m-s-1-v,v}.$ 
	
	They become esential generating components of  our ladder ideals: 
	{\small
	\begin{equation}\label{ladderidealasasum}
		I_{m-r}(\mathfrak{L}_{m,r})= (I_{m-r}(\YY_{{\bf i}_u,{\bf j}_u}))_{2\leq u\leq r+1} \; \mbox{and}\;
	I_{m-s}(\widetilde{\mathfrak{L}}_{m,s})= (I_{m-s}(\YY_{\tilde{\bf i}_{v},\tilde{\bf j}_{v}}))_{m-s\leq v\leq m-1}.
	\end{equation}
}

	\begin{Example}\rm For $m=5,$ $r=3$ and $s=2$ one has
		$$\mathfrak{L}_{5,3}=\left[\begin{matrix}
			&&y_{2,4}&y_{2,5}\\
			&y_{3,3}&y_{3,4}&y_{3,5}\\
			y_{4,2}&y_{4,3}&y_{4,4}&y_{4,5}\\
			y_{5,2}&y_{5,3}&y_{5,4}&y_{5,5}
		\end{matrix}\right],\quad 
		\widetilde{\mathfrak{L}}_{5,2}=
		\left[
		\begin{array}{cccccc}
			y_{1,1}&y_{1,2}&y_{1,3}&y_{1,4}\\
			y_{2,1}&y_{2,2}&y_{2,3}&y_{2,4}\\
			y_{3,1}&y_{3,2}&y_{3,3}&y_{3,4}\\
			y_{4,1}&y_{4,2}&y_{4,3}
		\end{array}
		\right]$$

		$$\YY_{{\bf i}_2,{\bf j}_2}=
		\left[\begin{matrix}
			y_{4,2}&y_{4,3}&y_{4,4}&y_{4,5}\\
			y_{5,2}&y_{5,3}&y_{5,4}&y_{5,5}
		\end{matrix}\right],\,
		\YY_{{\bf i}_3,{\bf j}_3}=
		\left[\begin{matrix}
			y_{3,3}&y_{3,4}&y_{3,5}\\
			y_{4,3}&y_{4,4}&y_{4,5}\\
			y_{5,3}&y_{5,4}&y_{5,5}
		\end{matrix}\right],\, \YY_{{\bf i}_4,{\bf j}_4}=
		\left[\begin{matrix}
			y_{2,4}&y_{2,5}\\
			y_{3,4}&y_{3,5}\\
			y_{4,4}&y_{4,5}\\
			y_{5,4}&y_{5,5}
		\end{matrix}\right]$$
		
		$$\YY_{\tilde{\bf i}_{3},\tilde{\bf j}_{3}}= 
		\left[
		\begin{array}{cccccc}
			y_{1,1}&y_{1,2}&y_{1,3}\\
			y_{2,1}&y_{2,2}&y_{2,3}\\
			y_{3,1}&y_{3,2}&y_{3,3}\\
			y_{4,1}&y_{4,2}&y_{4,3}
		\end{array}
		\right],\, 
		\YY_{\tilde{\bf i}_{4},\tilde{\bf j}_{4}}=\left[
		\begin{array}{cccccc}
			y_{1,1}&y_{1,2}&y_{1,3}&y_{1,4}\\
			y_{2,1}&y_{2,2}&y_{2,3}&y_{2,4}\\
			y_{3,1}&y_{3,2}&y_{3,3}&y_{3,4}\\
		\end{array}
		\right],$$
		with decompositions $$I_2(\mathfrak{L}_{5,3})=(I_2(\YY_{{\bf i}_2,{\bf j}_2}), I_2(\YY_{{\bf i}_3,{\bf j}_3}), I_2(\YY_{{\bf i}_4,{\bf j}_4}))\quad \mbox{and}\quad I_2(\widetilde{\mathfrak{L}}_{5,2})=(I_2(\YY_{\tilde{\bf i}_{3},\tilde{\bf j}_{3}}), I_2(\YY_{\tilde{\bf i}_{4},\tilde{\bf j}_{4}}).$$
		
	\end{Example}

Recall that, throughout, the ground ring  was denoted $R$ and its ``dual'' was denoted $S$.
	
	\begin{Theorem}\label{ratmapminorssubmax}
		Let
		$$\mathfrak{F}_{m,r,s}: \pp^{m^2-\mathfrak{o}(r)-\mathfrak{o}(s)-1}=\Proj(R)\dasharrow \pp^{m^2-1}=\Proj(S),$$
		be the rational map whose  $(i,j)$-coordinate  is $\Delta(\mathfrak{G}_m)_{j,i}.$
		One has$:$
		\begin{enumerate}
			\item[\rm (a)] $\mathfrak{F}_{m,r,s}$ is a birational map onto its image.
			\item[\rm (b)] Both $I_{m-r}(\mathfrak{L}_{m,r})$ and $I_{m-s}(\widetilde{\mathfrak{L}}_{m,s})$ are contained in the homogeneous defining ideal of the image of $\mathfrak{F}_{m,r,s}.$
		\end{enumerate}	
	\end{Theorem}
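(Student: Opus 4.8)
The plan is to establish the two assertions by reducing them, via specialization, to the known generic picture and to the resiliency of cofactor relations developed in Section~\ref{Sec1}. For part (b), the key observation is that the cofactors $\Delta(\mathfrak{G}_m)_{j,i}$ are precisely the entries of ${\rm adj}(\mathfrak{G}_m)$, and that the sparse shape of $\mathfrak{G}_m=\mathfrak{G}_{m,r,s}$ forces many entries of $\mathfrak{G}_m$ to vanish in exactly the block positions needed to invoke Proposition~\ref{Image_map_minors}. Concretely, I would first translate the conditions ``$g_{i,j}=x_{i,j}$ iff $r+2\le i+j\le 2m-s$'' into the statement that, for the index sets ${\bf i}_u,{\bf j}_u$ and $\tilde{\bf i}_v,\tilde{\bf j}_v$ defined just before the theorem, the complementary submatrices $\mathfrak{G}_{m;\,\mathfrak{c}({\bf j}_u),\mathfrak{c}({\bf i}_u)}$ and $\mathfrak{G}_{m;\,\mathfrak{c}(\tilde{\bf j}_v),\mathfrak{c}(\tilde{\bf i}_v)}$ are identically zero — this is a purely combinatorial check on the antidiagonal-strip shape. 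Then the map $\delta\colon S\to R$, $y_{i,j}\mapsto \Delta(\mathfrak{G}_m)_{j,i}$ of \eqref{cofactor_map} is exactly the coordinate description of $\mathfrak{F}_{m,r,s}$ up to the usual transpose bookkeeping, so $\ker\delta$ is the homogeneous defining ideal of the image. Applying Proposition~\ref{Image_map_minors} to each pair $({\bf i}_u,{\bf j}_u)$ (with the relevant rank $p$ of $\mathfrak{G}_{m;\,{\bf j}_u,\mathfrak{c}({\bf i}_u)}$ equal to $r$, so that $v-p+1=(m-u+1)-\text{(something)}$ collapses to the minor size $m-r$) and symmetrically to each $(\tilde{\bf i}_v,\tilde{\bf j}_v)$ yields $I_{m-r}(\YY_{{\bf i}_u,{\bf j}_u})\subset\ker\delta$ and $I_{m-s}(\YY_{\tilde{\bf i}_v,\tilde{\bf j}_v})\subset\ker\delta$ for all admissible $u,v$; summing over $u$ and over $v$ and using the decompositions \eqref{ladderidealasasum} gives (b).

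For part (a), the cleanest route is the Jacobian-dual / Cramer criterion for birationality onto the image (\cite[Theorem 2.18]{GradedBook}, as already used in the proof of Proposition~\ref{determinant_is_homaloidal}): it suffices to exhibit, inside the Jacobian dual matrix of the linear system $\{\Delta(\mathfrak{G}_m)_{j,i}\}$, a maximal submatrix whose determinant does not vanish after evaluation $y_{i,j}\mapsto\Delta(\mathfrak{G}_m)_{j,i}$. The linear relations among the cofactors come for free from the adjugate identity $\mathfrak{G}_m\,{\rm adj}(\mathfrak{G}_m)={\rm adj}(\mathfrak{G}_m)\,\mathfrak{G}_m=(\det\mathfrak{G}_m)\,\mathbb{I}_m$, which furnishes a structured syzygy matrix whose entries are (up to sign) the nonzero entries $x_{i,j}$ of $\mathfrak{G}_m$; reading this syzygy matrix in the dual variables produces the required submatrix $B$ of the Jacobian dual, and one identifies a single ``leading'' product of cofactors — e.g. the power $\Delta(\mathfrak{G}_m)_{1,1}^{\,?}$ times the extremal corner cofactors — that survives modulo the lower-order terms because those terms lie in a sufficiently high power of the ideal generated by a proper subset of the variables while the leading term does not. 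This is the same non-vanishing-by-support argument that closes the proof of Proposition~\ref{determinant_is_homaloidal}.

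The main obstacle is part (a): one must produce the explicit invertible $r$- and $s$-dependent choice of Jacobian-dual submatrix $B_1$ and then verify the non-vanishing of $\det B_1$ after evaluation in a way that is uniform in $m,r,s$. Unlike the hollow symmetric case $\mathfrak{H}_m$, here the adjugate of a square sparse section need not be symmetric and the relevant $(m-1)\times(m-1)$ minor regularity hypothesis (condition (b) of Theorem~\ref{2-minors_vs_dual_variety}, or the analogue needed to keep $\det\mathfrak{G}_m$ regular and to control the rank $p=r$) must be checked for the specific antidiagonal strip; I expect this to require a careful but elementary book-keeping argument, possibly by first treating the corner cases $r=s$ and $s=m$, $r=0$ and then interpolating. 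The specialization step from the generic ladder situation — i.e. that the relations and the birational witness descend from $\XX$ to $\mathfrak{G}_m$ — is routine via the remark following Proposition~\ref{Image_map_minors}, but making the birationality witness genericity-stable (so that it specializes without collapsing) is the delicate point, exactly as flagged in the ``general linear matrix'' discussion of Section~\ref{Sec2}.
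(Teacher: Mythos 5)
Your treatment of part (b) is essentially the paper's own argument: identify the cofactor map $\delta$ of \eqref{cofactor_map} with $\mathfrak{F}_{m,r,s}$, check that $\mathfrak{G}_{\mathfrak{c}({\bf j}_u),\mathfrak{c}({\bf i}_u)}$ (and its counterpart for the $\tilde{\bf i}_v,\tilde{\bf j}_v$) vanishes, invoke Proposition~\ref{Image_map_minors}, and sum over $u$ (resp.\ $v$) via the decompositions \eqref{ladderidealasasum}. One small slip: the rank $p$ of $\mathfrak{G}_{{\bf j}_u,\mathfrak{c}({\bf i}_u)}$ is $r-u+2$, not $r$. That $(m-u+1)\times(r-u+2)$ block carries a full antidiagonal $x_{u,r-u+2},\ldots,x_{r+1,1}$ of pairwise distinct variables, and all other entries are distinct variables or zeros, so the rank is the maximal value $r-u+2$. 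Then $v-p+1=(m-u+1)-(r-u+2)+1=m-r$, giving the intended minor size.

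For part (a) you take a genuinely different route. You propose the explicit Jacobian-dual witness of \cite[Theorem~2.18]{GradedBook}, as in the proof of Proposition~\ref{determinant_is_homaloidal}, and you yourself flag producing and evaluating a non-vanishing $\det B_1$ uniformly in $m,r,s$ as the main obstacle; indeed here ${\rm adj}(\mathfrak{G}_m)$ is not symmetric and there is no single obvious leading monomial, so this is where the plan stalls. The paper avoids the explicit determinant entirely: Proposition~\ref{mgres} yields the minimal graded free resolution of $I_{m-1}(\mathfrak{G}_m)$, from which the base ideal of $\mathfrak{F}_{m,r,s}$ has linear rank $m^2-1$, the maximum possible; and \cite[Proposition~2.3]{DetBook2024} gives the Krull dimension of the cofactor algebra $k[\boldsymbol{\Delta}]$. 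These two facts satisfy the hypotheses of the characteristic-free birationality criterion \cite[Theorem~3.2]{AHA}, and (a) follows at once. So the missing ingredient in your plan is precisely the maximal-linear-rank observation, which is already available from Proposition~\ref{mgres} and renders the Jacobian-dual bookkeeping unnecessary.
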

	\begin{proof} (a) As a fallout of  \cite[Proposition 2.3]{DetBook2024}, the Krull dimension of the $k$-algebra
	$k[\Delta(\mathfrak{G}_m)_{j,i}|1\leq i,j\leq m]$ is $m^2-\mathfrak{o}(r)-\mathfrak{o}(s)-1$, and by Proposition \ref{mgres}, we know that the base ideal $I_{m-1}(\mathfrak{G}_m)$ of $\mathfrak{F}_{m,r,s}$ has linear rank $m^2-1.$ 
	Thus, the assertion follows from \cite[Theorem 3.2]{AHA}.
	
		(b) Write $\mathfrak{G}:=\mathfrak{G}_{m,r,s}$ for lightness. With the notation of Setup~\ref{Setup_cofactor}, for every $2\leq u\leq r+1$, the $(m-u+1)\times(r-u+2)$ submatrix $\mathfrak{G}_{{\bf j}_u,\mathfrak{c}({\bf i}_u)}$ of $\mathfrak{G}$ contains an anti-diagonal formed by the variables $x_{u,r-u+2},x_{u+1,r-u+1},\ldots,x_{r+1,1}.$ Thus, since the entries of $\mathfrak{G}_{{\bf j}_u,\mathfrak{c}({\bf i}_u)}$ are mutually distinct variables or zeros, it follows that $\rk \mathfrak{G}_{{\bf j}_u,\mathfrak{c}({\bf i}_u)}=r-u+2.$ Also, $\mathfrak{G}_{\mathfrak{c}({\bf j}_u),\mathfrak{c}({\bf i}_u)}=\boldsymbol0_{(u-1)\times(r-u+2) }.$ Hence, by   Proposition~\ref{Image_map_minors}, the ideal	$$I_{m-u+1-(r-u+2)+1}(\YY_{{\bf i}_u,{\bf j}_u})=I_{m-r}(\YY_{{\bf i}_u,{\bf j}_u})$$
	is contained in  the defining ideal of the image of $\mathfrak{F}_{m,r,s}.$ Therefore,  \eqref{ladderidealasasum} implies that $I_{m-r}(\mathfrak{L}_{m,r})$ is also contained in  the defining ideal of the image of $\mathfrak{F}_{m,r,s}$, as stated. 
	
	The argument for  $I_{m-s}(\widetilde{\mathfrak{L}}_{m,s})$ is similar.
\end{proof}

	\subsubsection{The polar map}
	
For the few geometric notions used throughout, such as the Hessian matrix and the Hessian determinant, as well as the polar map, we refer to \cite[Chapter 3]{DetBook2024}.
The notion of a homaloidal form has been mentioned before in Subsection~\ref{lin_syz_II}.
	
	\begin{Theorem}\label{Hessian_and_homaloid}
		Let $f_{m,r,s}\in R$ be the determinant of the matrix  $\mathfrak{G}_m=\mathfrak{G}_{m,r,s}.$ Then$:$
		\begin{enumerate}
			\item[\rm(a)] If $r\neq s$ then $f_{m,r,s}$ has vanishing Hessian.
			\item[\rm(b)] If $r=s$ then $f_{m,r,r}$ is homaloidal.
		\end{enumerate}
	\end{Theorem}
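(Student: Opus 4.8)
The plan is to rephrase both statements through the polar (gradient) map $P_{f}\colon \mathbb{P}^{N-1}\dasharrow\mathbb{P}^{N-1}$ of $f:=f_{m,r,s}=\det\mathfrak{G}_{m}$, where $N=\dim R=m^{2}-\mathfrak{o}(r)-\mathfrak{o}(s)$, using the classical dictionary (over a field of characteristic zero, the relevant setting for these notions): $f$ has vanishing Hessian precisely when $P_{f}$ is not dominant, i.e. when the partials $\partial f/\partial x_{i,j}$ are algebraically dependent over $k$, while $f$ is homaloidal precisely when $P_{f}$ is a Cremona map. The first move is to identify the coordinates of $P_{f}$: since $\mathfrak{G}_{m}$ is a coordinate sparse linear section, Proposition~\ref{GolMar} gives $\partial f/\partial x_{i,j}=\pm\,\Delta(\mathfrak{G}_{m})_{i,j}$ for every nonzero slot $(i,j)$. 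Hence $P_{f}=\pi\circ\mathfrak{F}_{m,r,s}$, where $\mathfrak{F}_{m,r,s}$ is the cofactor map of Theorem~\ref{ratmapminorssubmax} and $\pi\colon\mathbb{P}^{m^{2}-1}\dasharrow\mathbb{P}^{N-1}$ is the linear projection discarding the coordinates indexed by the zero slots of $\mathfrak{G}_{m}$ (the zero pattern being symmetric in the two indices, $\pi$ keeps exactly the nonzero-slot coordinates, up to signs).

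For part (a) I may assume $r<s$, since the $180^{\circ}$ rotation $(i,j)\mapsto(m+1-i,m+1-j)$ is a change of coordinates carrying $\mathfrak{G}_{m,r,s}$ to a relabelling of $\mathfrak{G}_{m,s,r}$ and multiplying $f$ by $\pm 1$. Now I exhibit one explicit algebraic relation among the partials that survives $\pi$: inside the ladder $\widetilde{\mathfrak{L}}_{m,s}$ take the $(m-s)\times(m-s)$ minor on rows $1,\dots,m-s$ and columns $r+1,\dots,r+m-s$. This placement is legitimate exactly because $r<s$ (so $r+m-s\le m-1$); the smallest entry-index of this minor sums to $1+(r+1)=r+2$ and the largest to $(m-s)+(r+m-s)\le 2m-s$, so every entry $y_{i,j}$ occurring in it sits at a nonzero slot of $\mathfrak{G}_{m,r,s}$. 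Being an $(m-s)$-minor of $\widetilde{\mathfrak{L}}_{m,s}$, it lies in the defining ideal of the image of $\mathfrak{F}_{m,r,s}$ by Theorem~\ref{ratmapminorssubmax}(b); since it involves only nonzero-slot variables, it descends to a nonzero element of the defining ideal of the image of $P_{f}$. Hence $k[\partial f/\partial x_{i,j}]$ has Krull dimension $<N$, so the partials are algebraically dependent and the Hessian of $f_{m,r,s}$ vanishes.

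For part (b), with $r=s$, I would first record the ``defect-free'' side of the same combinatorics: for any generating minor of $I_{m-r}(\mathfrak{L}_{m,r})$ or of $I_{m-r}(\widetilde{\mathfrak{L}}_{m,r})$, the very constraints that confine the minor to the ladder force one of its entry-indices onto a zero slot of $\mathfrak{G}_{m,r,r}$ (a direct count bounds the relevant corner index-sum by $r+1$, resp.\ from below by $2m-r+1$), so none of the Theorem~\ref{ratmapminorssubmax}(b) relations survives $\pi$. Birationality itself I would get from the factorization $P_{f}=\pi|_{Z}\circ\mathfrak{F}_{m,r,r}$ with $Z=\overline{\mathfrak{F}_{m,r,r}(\mathbb{P}^{N-1})}$: since $\mathfrak{F}_{m,r,r}$ is birational onto $Z$ by Theorem~\ref{ratmapminorssubmax}(a), it suffices to prove $\pi|_{Z}\colon Z\dasharrow\mathbb{P}^{N-1}$ is birational, equivalently that each zero-slot cofactor $\Delta(\mathfrak{G}_{m})_{i,j}$ is a rational function of the nonzero-slot ones; then the latter generate the same field as all cofactors, which is the full function field of $\Proj R$, and $P_{f}$ is a Cremona map. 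The balanced hypothesis enters here: it is what turns the adjugate relation $\mathfrak{G}_{m}\,{\rm adj}(\mathfrak{G}_{m})=f\,\mathbb{I}_{m}$ (equivalently, the subfamily of its linear syzygies that survive the projection) into a generically nonsingular linear system over $\operatorname{Frac}(R)$ in the missing cofactors. Alternatively one can bypass the cofactor map and apply the birationality criterion of \cite{AHA} directly to the gradient ideal $J_{f}$, once one checks that the linear syzygies of $I_{m-1}(\mathfrak{G}_{m})$ (maximal by Proposition~\ref{mgres}) restrict to a rank-$(N-1)$ system on the $N$ generators of $J_{f}$, again using $r=s$, and that $k[\partial f/\partial x_{i,j}]$ has Krull dimension $N$.

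The step I expect to be the real obstacle is precisely this last point in (b): showing that discarding the $2\,\mathfrak{o}(r)$ coordinates attached to the two zero corners does not collapse the image $Z$, i.e.\ that the surviving adjugate/ladder relations determine all the missing cofactors (or, in the homological variant, that the restricted linear syzygy matrix genuinely has rank $N-1$). The careful bookkeeping of exactly which entries of $\mathfrak{G}_{m}\,{\rm adj}(\mathfrak{G}_{m})=f\,\mathbb{I}_{m}$ remain usable after the projection, and the verification that they assemble into a generically invertible system \emph{precisely} when $r=s$, is the delicate part; part (a), by contrast, is settled outright by the single explicit ladder minor above.
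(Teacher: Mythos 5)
Part (a) of your proposal is correct and follows essentially the paper's route: assume $r<s$, choose an $(m-s)\times(m-s)$ submatrix of $\widetilde{\mathfrak{L}}_{m,s}$ whose entry index-sums all fall in the window $[r+2,\,2m-s]$, observe via Proposition~\ref{GolMar} and Theorem~\ref{ratmapminorssubmax}(b) that its determinant, written in the partials $\partial f/\partial x_{i,j}$, vanishes, and conclude algebraic dependence, hence vanishing Hessian. Your explicit choice (rows $1,\dots,m-s$, columns $r+1,\dots,r+m-s$) is a clean instance of the minor the paper uses, and the index bookkeeping you give checks out.

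Part (b) is where the proposal falls short of a proof. You correctly recast the problem as showing that the projection $\pi|_Z$ is birational, i.e.\ that every zero-slot cofactor $\Delta(\mathfrak{G}_m)_{i,j}$ is a rational function of the nonzero-slot ones, and you correctly identify that this is the crux. But you then stop, invoking only a vague statement that the surviving adjugate relations form a ``generically nonsingular linear system in the missing cofactors''; nothing in the proposal establishes that nonsingularity, nor is the alternative route (checking that $k[\partial f/\partial x_{i,j}]$ has Krull dimension $N$ and that the linear syzygies restrict to rank $N-1$) carried out — that dimension count is exactly the content of what is being proved. The paper closes this gap by an induction on a chain of subfields $K_u = k(\Delta_{i,j}\mid r+2-u\le i+j\le 2m-r+u)$: for each missing cofactor $\Delta_{i,j}$ with $i+j=r+1-u$, one picks a specific $(m-r)$-minor $H$ of $\widetilde{\mathfrak{L}}_m$ containing the single ``new'' variable $y_{i,j}$, Laplace-expands $H=y_{i,j}H_1+H_2$, evaluates at $\boldsymbol\Delta$ to get $\Delta_{i,j}H_1(\boldsymbol\Delta)+H_2(\boldsymbol\Delta)=0$, and — this is the nontrivial input — uses Corollary~\ref{Defining_ideal_image} (which in turn rests on Theorems~\ref{r=s_is_Gorenstein} and~\ref{is_a_domain}) to guarantee $H_1(\boldsymbol\Delta)\ne 0$, so that $\Delta_{i,j}\in K_u$. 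None of this machinery — the filtration, the Laplace trick, and above all the nonvanishing of $H_1(\boldsymbol\Delta)$ via the prime-defining-ideal result — appears in your proposal, and without it part (b) is not established.
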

	\begin{proof} (a) In this case, $r<s.$ Consider for example the following $(m-s)\times(m-s)$ submatrix of the ladder $\widetilde{\mathfrak{L}}_{m,s}$
	
	$$M=\left[\begin{matrix}
		y_{s,1}&\cdots&y_{s,s}\\
		\vdots&&\vdots\\
		y_{m-1,1}&\cdots&y_{m-1,s}
	\end{matrix}\right]$$
	Thus, 
	$$
	\det\left[\begin{matrix}
		\frac{\partial f_{m,r,s} }{\partial x_{s,1}}&\cdots&\frac{\partial f_{m,r,s} }{\partial x_{s,s}}\\
		\vdots&&\vdots\\
		\frac{\partial f_{m,r,s} }{\partial x_{1,m-1}}&\cdots&\frac{\partial f_{m,r,s} }{\partial x_{s,m-1}}
	\end{matrix}\right]=
	\det\left[\begin{matrix}
		\Delta(\mathfrak{G}_m)_{s,1}&\cdots&\Delta(\mathfrak{G}_m)_{s,s}\\
		\vdots&&\vdots\\
		\Delta(\mathfrak{G}_m)_{1,m-1}&\cdots&\Delta(\mathfrak{G}_m)_{m-1,s}
	\end{matrix}\right]=0
	$$
	where the first equality follows from Proposition \ref{GolMar} and the second equality follows from  Theorem~\ref{ratmapminorssubmax} and the fact that 
	$$M=\left[\begin{matrix}
		y_{1,s}&\cdots&y_{1,m-1}\\
		\vdots&&\vdots\\
		y_{m-s,s}&\cdots&y_{m-s,m-1}
	\end{matrix}\right]$$
	is a $(m-s)\times (m-s)$ submatrix of  $\widetilde{\mathfrak{L}}_{m,s}.$

	(b) To simplify the notation, we will write $f=f_{m,r,r}$ and $\Delta_{i,j}=\Delta(\mathfrak{G}_{m,r,r})_{i,j}.$ 
	
	We prove the field equality
	$$k(\partial f/\partial x_{i,j}\,|\; r+2\leq i+j\leq 2m-r)=k(\Delta_{i,j}\, |\,1\leq i,j\leq m).$$
	Then the result follows from Theorem~\ref{ratmapminorssubmax}(a).
	
	To this effect, for every integer $0\leq u\leq r $ define 
	$$\mathcal{I}_{u}:=\{(i,j)\,|\,r+2-u\leq i+j\leq 2m-r+u\}\quad\mbox{and}\quad K_u:=k(\Delta_{i,j};\,(i,j)\in \mathcal{I}_u).$$
	
	Since $K_0=k(\partial f/\partial x_{i,j}\,|\; r+2\leq i+j\leq 2m-r)$ and $K_r=k(\Delta_{i,j}\, |\,1\leq i,j\leq m)$, it suffices to show that $K_u=K_{u+1}$ for every $0\leq u\leq r-1.$
	
	Induct on $u$.
	By definition, we have $K_{u}\subset K_{u+1}.$ Moreover, also by definition, to show that the inclusion $K_{u}\subset K_{u+1}$ is an equality it is enough to show that $\Delta_{i,j}\in K_u$ for every $(i,j)\in \mathcal{I}_{u+1}\setminus \mathcal{I}_u.$ But, by symmetry, its is enough to show that $\Delta_{i,j}\in K_u$ for every $(i,j)\in \mathcal{I}_{u+1}$ such that $i+j=r+2-u-1.$
	
	Let $1\leq i,j\leq m$ be integers such that $i+j=r+2-u-1.$ Set $H\in I_{m-r}(\widetilde{\mathfrak{L}}_m)S$ for  the determinant of the  following $(m-r)\times (m-r)$ submatrix of $\mathfrak{L}_m:$
	
	$$\left[\begin{matrix}
		y_{i,j}&y_{i,r+2-i}&\cdots&y_{i,r+2-i+s}&\cdots&y_{i,r+2-i+\delta}\\
		y_{r+2-j,j}&y_{r+2-j,r+2-i}&\cdots&y_{r+2-j,r+2-i+s}&\cdots &y_{r+2-j,r+2-i+\delta}\\
		\,\,\,\,\vdots&\,\,\,\,\vdots&\cdots&\,\,\,\,\vdots&\,\,\,\,\cdots&\,\,\,\,\vdots\\
		y_{r+2-j+t,j}&y_{r+2-j+t,r+2-i}&\cdots&y_{r+2-j+t,r+2-i+s}&\cdots &y_{r+2-j+t,r+2-i+\delta}\\
		\,\,\,\,\vdots&\,\,\,\,\vdots&\cdots&\,\,\,\,\vdots&\cdots&\,\,\,\,\vdots\\
		y_{r+2-j+\delta,j}&y_{r+2-j+\delta,r+2-i}&\cdots&y_{r+2-j+\delta,r+2-i+s}&\cdots&y_{r+2-j+\delta,r+2-i+\delta}
	\end{matrix}\right]$$
	where $\delta:=m-r-2.$  With the exception of  $y_{i,j}$, all other entries of this matrix belong to the set of variables $\{y_{i,j}\,|(i,j)\in\mathcal{I}_{u}\}$. Thus, by  Laplace, we can write $H$ as 
	$$H=y_{i,j}H_1+H_2$$
	where $H_1$ and $H_2$ are homogeneous polynomials of degrees $m-r-1$ and $m-r,$ respectively, that depend only on the variables belonging to the set $\{y_{i,j}\,|(i,j)\in\mathcal{I}_{u}\}.$ 
	Thus, $H_1(\boldsymbol\Delta), H_2(\boldsymbol\Delta)\in K_{u}$, where $\boldsymbol\Delta=\{\Delta_{j,i}\,|(i,j)\in\mathcal{I}_{u}\}.$ In particular,  since
	$$\Delta_{i,j}H_{1}(\boldsymbol\Delta)+H_2(\boldsymbol\Delta)=0 \quad (\mbox{because $H\in I_{m-r}(\widetilde{\mathfrak{L}}_m)S$ and by Theorem~\ref{ratmapminorssubmax}(b)})$$
	and $H_1(\boldsymbol\Delta)\neq 0$ by Corollary~\ref{Defining_ideal_image} below -- whose proof runs independently --  we have  $\Delta_{i,j}\in K_u.$ 
	This establishes the sought equality $K_u=K_{u+1}.$ 
	\end{proof}
	
		\subsection{The balanced case: $r=s$}
		
			Throughout assume that $r=s$.
		In the previous subsection we have stated that $f_{m,r,r}$ is homaloidal, but the argument depended in knowing the initial degree of the homogeneous defining ideal of the image. This brings us to the next part.
		
		\subsubsection{The structure of the image}

	We will draw upon the following result,  a straightforward adaptation of \cite[Theorem 1]{Sturmfels1990}.
	
	\begin{Proposition}\label{anti-diagonals}
		Let $\mathcal{G}=(x_{i,j})$ denote an $m\times n$ generic matrix over a field $k$ and let  $1\leq t\leq \min{m,n}$.
		Let $\prec$ stand for the reverse lexicographic term order on $k[x_{i,j}]$ induced by the following order of the variables:
		
		$\displaystyle x_{1,1}> x_{1,2}>\cdots > x_{1,n}>x_{2,1}>\cdots > x_{2,n}>\cdots >x_{m,1}> \cdots > x_{m,n}.$
		
		Then:
		
		{\rm (i)} The initial term of any  $t\times t$-minor with respect to $\prec$ is the product of the entries along its main anti-diagonal.
		
		{\rm (ii)} The set of $t\times t$-minors of $\mathcal{G}$ is a Gr\"obner basis of the ideal $I_t(\mathcal{G})$ with respect to $\prec$.
	\end{Proposition}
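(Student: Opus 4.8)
The plan is to prove (i) by a direct inspection of the term order $\prec$ and then to deduce (ii) from (i) by a Hilbert function comparison, the latter being where the argument of \cite{Sturmfels1990} is adapted. For (i): under degree reverse lexicographic order two monomials of the same degree are compared by the \emph{smallest} variable (in the displayed order $x_{1,1}>x_{1,2}>\cdots>x_{m,n}$) in which their exponents differ, the larger monomial being the one carrying the smaller exponent there; thus $\prec$ systematically prefers monomials whose support avoids the entries with large row index, and among those, large column index. Fix rows $i_1<\cdots<i_t$ and columns $j_1<\cdots<j_t$; the terms of the associated $t$-minor are the monomials $\pm\prod_{k=1}^t x_{i_k,j_{\sigma(k)}}$ with $\sigma$ a permutation of $\{1,\dots,t\}$. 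I would induct on $t$: in row $i_t$ the available variables are $x_{i_t,j_1}>x_{i_t,j_2}>\cdots>x_{i_t,j_t}$, so a $\prec$-maximal term must pick $x_{i_t,j_1}$ in that row (any other choice introduces one of the strictly smaller variables $x_{i_t,j_2},\dots,x_{i_t,j_t}$), i.e.\ $\sigma(t)=1$; removing row $i_t$ and column $j_1$ and applying the inductive hypothesis to the minor on rows $i_1<\cdots<i_{t-1}$ and columns $j_2<\cdots<j_t$ yields the remaining factors $x_{i_k,j_{t+1-k}}$, $1\le k\le t-1$. Hence $\operatorname{in}_\prec$ of the minor is $\prod_{k=1}^t x_{i_k,j_{t+1-k}}$, the product along the main anti-diagonal.

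For (ii), let $A_t$ be the squarefree monomial ideal of $k[x_{i,j}]$ generated by the main anti-diagonal products of the $t\times t$ submatrices of $\mathcal G$. By (i), $A_t\subseteq\operatorname{in}_\prec(I_t(\mathcal G))$, hence $\dim_k(R/I_t(\mathcal G))_d=\dim_k(R/\operatorname{in}_\prec(I_t(\mathcal G)))_d\le\dim_k(R/A_t)_d$ for every $d$, and the $t$-minors form a Gr\"obner basis exactly when this is an equality for all $d$. To obtain the reverse inequality I would use the standard monomial theory of determinantal rings (Hodge; Doubilet--Rota--Stein; De Concini--Eisenbud--Procesi): $R/I_t(\mathcal G)$ carries a straightening law, with $k$-basis the standard monomials, i.e.\ the products of minors along standard bitableaux with fewer than $t$ columns, so $\dim_k(R/I_t(\mathcal G))_d$ is the number of such bitableaux with $d$ boxes. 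Since $\prec$ is a term order and, by (i), $\operatorname{in}_\prec$ of a minor is its anti-diagonal product, the leading term of a standard monomial is the product of the anti-diagonal products of its factors; the Knuth correspondence, run with the convention matching anti-diagonals (equivalently, applied after reversing the column labels), shows that these leading terms are pairwise distinct and are precisely the monomials of $R$ not divisible by any anti-diagonal $t$-product. Therefore $\dim_k(R/A_t)_d=\dim_k(R/I_t(\mathcal G))_d$ for all $d$, so $\operatorname{in}_\prec(I_t(\mathcal G))=A_t$ and (ii) follows. Equivalently, one may simply transport \cite[Theorem 1]{Sturmfels1990} through the column-reversal automorphism $x_{i,j}\mapsto x_{i,n+1-j}$, which leaves $I_t(\mathcal G)$ invariant, turns anti-diagonals into diagonals, and sends $\prec$ to a diagonal term order of the kind treated there.

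The steps in (i) and the Hilbert function reduction in (ii) are routine bookkeeping; the genuine content is the combinatorial bijection between the monomials avoiding all anti-diagonal $t$-products and the standard bitableaux with fewer than $t$ columns, which is the heart of \cite{Sturmfels1990}. The one point demanding care is that the reverse lexicographic convention used in (i) and the Knuth correspondence convention used in (ii) be chosen compatibly, so that the monomial ideal whose standard monomials one counts is indeed $A_t$ and not its main-diagonal analogue; this compatibility is exactly what the substitution $j\mapsto n+1-j$ encodes, which is why the statement is only a straightforward adaptation of the classical one.
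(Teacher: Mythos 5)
Your proposal is correct, and its closing suggestion (transport Sturmfels' theorem through the column-reversal automorphism $x_{i,j}\mapsto x_{i,n+1-j}$, noting that this leaves $I_t(\mathcal G)$ invariant, sends anti-diagonals to main diagonals, and pushes $\prec$ forward to a diagonal term order) is precisely the route the paper takes: the authors apply $f$ to the Gr\"obner-basis statement of \cite{Sturmfels1990}, then compare Hilbert functions of $I_t(\mathcal G)$, $\operatorname{in}_{\prec'}(I_t(\mathcal G))$, $\operatorname{in}_{\prec}(I_t(\mathcal G))$ and $f(\operatorname{in}_{\prec'}(I_t(\mathcal G)))$ to force the inclusion $\operatorname{in}_\prec(\mathfrak D_t)\subseteq\operatorname{in}_\prec(I_t(\mathcal G))$ to be an equality. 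You supply more than the paper in one respect: you give a direct inductive proof of part (i), whereas the paper folds (i) into the transport by reading $\operatorname{in}_\prec$ off the image $f(\operatorname{in}_{\prec'}([\cdot]))$; your inductive step is sound because $x_{i_t,j_2},\dots,x_{i_t,j_t}$ are the $t-1$ smallest variables of the $t\times t$ block and rev lex promotes the unique term avoiding all of them, namely $\sigma(t)=1$. Your other route for (ii), redeveloping standard monomials and the Knuth correspondence, would also work but is much heavier than needed since it amounts to reproving Sturmfels' theorem; the automorphism trick is the economical path and is the one the paper uses. Minor observation: the paper writes $x_{i,m+1-j}$ for the reversal, which should be $x_{i,n+1-j}$ in the $m\times n$ case; your indexing is the correct one.
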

	\begin{proof}
		We borrow from \cite[Theorem 1]{Sturmfels1990}, the notation $[a_1,\ldots,a_t|b_1,\ldots,b_t]$ for the $t\times t$-minor with rows and columns indexed by $a_1<\cdots <a_t$ and $b_1<\cdots <b_t$, respectively.
		
		Letting $f:=k[x_{i,j}]\rar k[x_{i,j}]$ denote the $k$-algebra automorphism such that $f(x_{i,j})= x_{i,m+1-j}$, for every pair of indices $i,j$, it obtains
		\begin{eqnarray}
			f\,({\rm in}_{\prec'}([a_1,\ldots,a_t|b_1,\ldots,b_t]))&=&f\,(x_{a_1,b_1}\cdots x_{a_t,b_t})\nonumber\\
			&=&x_{a_1,m+1-b_1}\cdots x_{a_t,m+1-b_t}\nonumber\\
			&=&{\rm in}_{\prec}([a_1,\ldots,a_t|m+1-b_t,\ldots,m+1-b_1])\nonumber
		\end{eqnarray}
		Now, for any subset $\mathfrak{P}\subset k[x_{i,j}]$ write $f(\mathfrak{P})=\{f(p)| p\in \mathfrak{P}\}$.
		Writing $\mathfrak{D}_t$ for the set of all $t\times t$-minors of $\mathcal{G}$, set ${\rm in}_{\_}(\mathfrak{D}_t)$ for the monomial ideal generated by the initial terms of the elements of $\mathfrak{D}_t$ in a given term order.
		Then the above display tells us that
		$f({\rm in}_{\prec'}(\mathfrak{D}_t))={\rm in}_{\prec}(\mathfrak{D}_t).$ 
		On the other hand, the result in \cite{Sturmfels1990} says that the initial ideal ${\rm in}_{\prec'}(I_t(\mathcal{G)})$ coincides with ${\rm in}_{\prec'}(\mathfrak{D}_t)$.
		Moreover, the Hilbert functions of the four ideals 
		$$I_t(\mathcal{G}),\;{\rm in}_{\prec'}(I_{t}(\mathcal{G})),\;{\rm in}_{\prec}(I_{t}(\mathcal{G})),\;f\,({\rm in}_{\prec'}(I_{t}(\mathcal{G}))$$
		are the same.
		Therefore, the inclusion ${\rm in}_{\prec}(\mathfrak{D}_t)\subset {\rm in}_{\prec}(I_{t}(\mathcal{G}))$ must be an equality, as was to be shown.
	\end{proof}
	
	Since we are assuming that $r=s$, write $\mathfrak{L}_{m}:=\mathfrak{L}_{m,r}$ and   $\widetilde{\mathfrak{L}}_{m}:=\widetilde{\mathfrak{L}}_{m,r}$ for lighter reading.
	\begin{Proposition}\label{height} The ideal 
		$(I_{m-r}(\mathfrak{L}_m),I_{m-r}(\widetilde{\mathfrak{L}}_m))$ of $S$ has height $2{r+1\choose 2}.$
	\end{Proposition}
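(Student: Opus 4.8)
The plan is to compute the height of the ideal $\mathfrak{I} := (I_{m-r}(\mathfrak{L}_m), I_{m-r}(\widetilde{\mathfrak{L}}_m)) \subset S = k[y_{i,j}]$ by passing to its initial ideal with respect to the reverse lexicographic order $\prec$ of Proposition~\ref{anti-diagonals}, and then by analyzing the minimal primes of the resulting squarefree monomial ideal. First I would note that $\mathfrak{L}_m$ and $\widetilde{\mathfrak{L}}_m$ are one-sided ladders living in disjoint regions of the generic matrix $\YY$: the ladder $\mathfrak{L}_m$ avoids the first row and first column (its entries $y_{i,j}$ all have $i \geq 2$, $j \geq 2$, and are confined to $i+j \geq r+2$), while $\widetilde{\mathfrak{L}}_m$ avoids the last row and last column (entries with $i \leq m-1$, $j \leq m-1$, confined to $i+j \leq 2m-r$). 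By the decompositions in \eqref{ladderidealasasum}, each of the two ideals is generated by minors of nested generic submatrices, so by the Sturmfels result (via Proposition~\ref{anti-diagonals}) each is generated by a Gr\"obner basis whose initial terms are the anti-diagonal products of those minors. Since the variable supports of the two ladders overlap only in the ``middle band'' and the orders agree, the union of the two Gr\"obner bases is a Gr\"obner basis for $\mathfrak{I}$, so $\mathrm{in}_\prec(\mathfrak{I})$ is the squarefree monomial ideal generated by all anti-diagonal monomials of the $(m-r)$-minors inside $\mathfrak{L}_m$ together with those inside $\widetilde{\mathfrak{L}}_m$.

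Next I would translate the height computation into a combinatorial count. Because height is preserved under passing to the initial ideal, $\Ht \mathfrak{I} = \Ht \mathrm{in}_\prec(\mathfrak{I})$, and for a squarefree monomial ideal the height equals the minimum size of a vertex cover of the associated hypergraph (equivalently, $n - $ the maximum dimension of a face of the Stanley--Reisner complex, where $n$ is the number of variables). The edges are the anti-diagonal supports of the $(m-r)$-minors in each ladder. A vertex cover must hit every such anti-diagonal. The claim is that the minimum cover has size exactly $2\binom{r+1}{2} = r(r+1)$. For the ladder $\widetilde{\mathfrak{L}}_m$ alone, the anti-diagonal monomials of $(m-r)$-minors: an $(m-r)\times(m-r)$ submatrix has an anti-diagonal of length $m-r$, and by the shape of $\widetilde{\mathfrak{L}}_m$ (an ``upper-left staircase'' truncating a $(m-1)\times(m-1)$ corner), the minors of size $m-r$ are exactly those meeting the ladder; the minimal number of variables meeting all such anti-diagonals should come out to $\binom{r+1}{2}$ — this is the standard height formula for a ladder determinantal ideal, and indeed $\Ht I_{m-r}(\widetilde{\mathfrak{L}}_{m}) = \binom{r+1}{2}$ can be read off either from \cite{Conca1995} or from the specialization philosophy in Proposition~\ref{mgres}-style arguments (noting $\mathfrak{o}(r) = \binom{r+1}{2}$, which is not a coincidence — the deleted triangular region has exactly $\mathfrak{o}(r)$ entries). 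The same holds for $\mathfrak{L}_m$ by symmetry. The key point is then that the two cover problems are \emph{independent}: any anti-diagonal of a minor in $\mathfrak{L}_m$ lies entirely in rows $\geq 2$ and columns $\geq 2$ and with index-sum $\geq r+2$, while any anti-diagonal of a minor in $\widetilde{\mathfrak{L}}_m$ lies in rows $\leq m-1$, columns $\leq m-1$, index-sum $\leq 2m-r$ — and one must check these two regions can be covered disjointly, so the optimal covers add: $\binom{r+1}{2} + \binom{r+1}{2} = 2\binom{r+1}{2}$.

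For the \emph{lower bound} $\Ht \mathfrak{I} \geq 2\binom{r+1}{2}$, the cleanest route I see is to exhibit, inside the Stanley--Reisner complex of $\mathrm{in}_\prec(\mathfrak{I})$, an explicit independent set (a face) of size $n - 2\binom{r+1}{2}$: take a set $T$ of variables containing no complete anti-diagonal of any relevant minor, of maximal size. A natural candidate is the union of two ``fat diagonal strips'' — one hugging the boundary of the region forbidden to $\mathfrak{L}_m$, one hugging the region forbidden to $\widetilde{\mathfrak{L}}_m$ — whose complements are exactly the two triangular blocks of sizes $\mathfrak{o}(r)$ each; one then verifies combinatorially that no $(m-r)$-anti-diagonal of either ladder lies inside $T$. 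For the \emph{upper bound} $\Ht \mathfrak{I} \leq 2\binom{r+1}{2}$, it suffices to produce a prime of that height containing $\mathfrak{I}$: the most transparent choice is a monomial prime generated by the $2\binom{r+1}{2}$ variables in an explicit minimum vertex cover — e.g., all $y_{i,j}$ with $i+j \leq r+1$ together with all $y_{i,j}$ with $i+j \geq 2m-r+1$ — and to check that every $(m-r)$-minor inside $\mathfrak{L}_m$ (resp. $\widetilde{\mathfrak{L}}_m$) has an entry with $i+j \geq 2m-r+1$ (resp. $i+j \leq r+1$) on each of its anti-diagonals. Actually the honest statement is subtler: one should double-check that the correct cover is by index-\emph{sums} rather than by whole rows/columns, since minors of a ladder near a corner can be concentrated away from the extreme rows; this is where I expect the combinatorics to require care, and it is the main obstacle. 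Once the monomial-prime cover is pinned down, containment gives $\Ht \mathfrak{I} \leq 2\binom{r+1}{2}$, and together with the lower bound via the explicit face we conclude $\Ht \mathfrak{I} = 2\binom{r+1}{2}$, as claimed. \qed
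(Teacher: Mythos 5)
Your proposal follows the same overall route as the paper (pass to initial ideals via Proposition~\ref{anti-diagonals}, then exploit a disjointness phenomenon), but the pivotal disjointness is misidentified and the upper-bound half of the argument is left genuinely open. The two ladders $\mathfrak{L}_m$ and $\widetilde{\mathfrak{L}}_m$ do \emph{not} have disjoint, or even approximately disjoint, variable supports: they share a large central band of the matrix. What the paper's proof isolates (its items (A) and (B)) is a sharper and different fact, namely that the \emph{anti-diagonal supports of the $(m-r)$-minors} are disjoint: an anti-diagonal coming from an $(m-r)$-minor inside $\mathfrak{L}_m$ involves only $y_{i,j}$ with $i+j>m+2$, while one inside $\widetilde{\mathfrak{L}}_m$ involves only $y_{i,j}$ with $i+j<m+2$. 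Your index-sum bounds ``$\geq r+2$'' and ``$\leq 2m-r$'' describe the ladders themselves, not the anti-diagonals, and those two ranges overlap for every relevant $r$, so as written the claimed ``independence'' does not follow. Relatedly, your inference ``the supports overlap only in the middle band, hence the union of the two Gr\"obner bases is a Gr\"obner basis of $\mathfrak{I}$'' is a non-sequitur: what would make the union a Gr\"obner basis is disjointness of the \emph{leading terms}, which is precisely the point you have not established.

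The paper also avoids the two pieces that you flag as requiring care. It never claims the union of Gr\"obner bases is a Gr\"obner basis of $\mathfrak{I}$; it uses only the automatic containment $\mathrm{in}_{\prec}(\mathfrak{I}) \supseteq \bigl(\mathrm{in}_{\prec}(I_{m-r}(\mathfrak{L}_m)),\ \mathrm{in}_{\prec}(I_{m-r}(\widetilde{\mathfrak{L}}_m))\bigr)$. Once the two monomial ideals on the right live in disjoint sets of variables, the height of their sum is the sum of their heights (no vertex-cover or Stanley--Reisner analysis needed), and each summand has height ${r+1\choose 2}$ by \cite[Theorem 4.6 and Corollary 4.7]{HT}; this gives $\Ht\mathrm{in}_{\prec}(\mathfrak{I})\geq 2{r+1\choose 2}$. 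For the upper bound the paper does not construct an explicit minimal prime at all; it just invokes the height inequality $\Ht(I+J)\leq \Ht I + \Ht J$ valid in a regular ring. Your proposed route of exhibiting a monomial prime of codimension $2{r+1\choose 2}$ and verifying $\mathfrak{I}$ lies inside it requires showing that \emph{every} diagonal (not just the anti-diagonal) of every $(m-r)$-minor of each ladder meets the chosen variable set; this is precisely the delicate combinatorics you yourself call the ``main obstacle,'' and it is entirely dispensable.
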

	\begin{proof} By \cite[Theorem 4.6 and Corollary 4.7]{HT}, $\Ht I_{m-r}(\mathfrak{L}_m) = \Ht I_{m-r}(\widetilde{\mathfrak{L}}_m)={r+1\choose 2}.$ 
		
		Let $\prec$ be the monomial order as in Proposition~\ref{anti-diagonals}.  By the latter and \cite[Lemma 4.5.1]{GMiller}, the set of $(m-r)$-minors of $\mathfrak{L}_m$ (resp., $\widetilde{\mathfrak{L}}_m$) is a Gr\"obner basis of the ideal $I_{m-r}(\mathfrak{L}_m)$  (resp., $I_{m-r}(\widetilde{\mathfrak{L}}_m)$) with respect to $\prec$.  
		
		Note that: 
		
		(A) The diagonal of the $(m-r)$-minors in $\mathfrak{L}_{m}$ involves  only  variables $y_{i,j}$ such that $i+j>m+2;$ 
		
		(B) The diagonal of the $(m-r)$-minors in $\widetilde{\mathfrak{L}}_{m}$ involves only  variables $y_{i,j}$ such that $i+j<m+2.$	
		
		From (A), (B)  and Proposition~\ref{anti-diagonals} we have
		
		$$\Ht ({\rm in}(I_{m-r}(\mathfrak{L}_m)),{\rm in}(I_{m-r}(\widetilde{\mathfrak{L}}_m)))=\Ht{\rm in}(I_{m-r}(\mathfrak{L}_m))+\Ht {\rm in}(I_{m-r}(\widetilde{\mathfrak{L}}_m)).$$
		
		Hence,
		$$\Ht ({\rm in}(I_{m-r}(\mathfrak{L}_m)),{\rm in}(I_{m-r}(\widetilde{\mathfrak{L}}_m)))=2{r+1\choose 2}.$$
		
		Since ${\rm in}(I_{m-r}(\mathfrak{L}_m),I_{m-r}(\widetilde{\mathfrak{L}}_m))\supset( {\rm in}(I_{m-r}(\mathfrak{L}_m)),{\rm in}(I_{m-r}(\widetilde{\mathfrak{L}}_m)))$ we have 
		
		$$\Ht{\rm in}(I_{m-r}(\mathfrak{L}_m),I_{m-r}(\widetilde{\mathfrak{L}}_m))\geq 2{r+1\choose 2}.$$
		Thus,
		$$2{r+1\choose 2}\geq \Ht (I_{m-r}(\mathfrak{L}_m),I_{m-r}(\widetilde{\mathfrak{L}}_m))=\Ht{\rm in}(I_{m-r}(\mathfrak{L}_m),I_{m-r}(\widetilde{\mathfrak{L}}_m))\geq 2{r+1\choose 2}.$$
		Consequently,  $\Ht (I_{m-r}(\mathfrak{L}_m),I_{m-r}(\widetilde{\mathfrak{L}}_m))=2{r+1\choose 2}.$
	\end{proof}

	\begin{Theorem}\label{r=s_is_Gorenstein}
		The ring  $S/(I_{m-r}(\mathfrak{L}_m),I_{m-r}(\widetilde{\mathfrak{L}}_m))$ is Gorenstein.
	\end{Theorem}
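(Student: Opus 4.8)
The plan is to decouple the two ladder determinantal ideals, establish Gorensteinness for each separately, and then glue using that the two ideals meet properly. Write $J_1:=I_{m-r}(\mathfrak{L}_m)$, $J_2:=I_{m-r}(\widetilde{\mathfrak{L}}_m)$ and $J:=J_1+J_2$. By \cite[Theorem 4.6 and Corollary 4.7]{HT} (already invoked in the proof of Proposition~\ref{height}) one has $\Ht J_1=\Ht J_2={r+1\choose 2}$, and, being one-sided ladder determinantal ideals, $J_1$ and $J_2$ are perfect, so $S/J_1$ and $S/J_2$ are Cohen--Macaulay (classical; alternatively this follows from the shellability of the anti-diagonal initial complexes in Proposition~\ref{anti-diagonals}). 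This is the point where the balanced hypothesis $r=s$ is essential: the ladders $\mathfrak{L}_m$ and $\widetilde{\mathfrak{L}}_m$ are obtained from an $(m-1)\times(m-1)$ square array by deleting a \emph{symmetric} staircase corner, and for the complementary minor size $m-r$ this is exactly a Gorenstein case of Conca's classification of ladder determinantal rings \cite[Theorem 4.9]{Conca1995}; hence both $S/J_1$ and $S/J_2$ are Gorenstein. (When ${r+1\choose 2}=3$ one recovers the codimension-three Gorenstein/Pfaffian situation used for a different ladder in Section~\ref{Sec1}.)

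Next I would use Proposition~\ref{height}, which gives $\Ht J=2{r+1\choose 2}=\Ht J_1+\Ht J_2$; that is, $J_1$ and $J_2$ meet properly. It is then standard that for two perfect modules over the polynomial ring $S$ whose defining ideals meet properly the higher Tor vanishes and the tensor product is again perfect; concretely $\Tor^S_i(S/J_1,S/J_2)=0$ for all $i>0$ (apply the acyclicity lemma to $F_\bullet\otimes_S G_\bullet$, whose length equals $\Ht J_1+\Ht J_2=\Ht J$ and whose homology is annihilated by $J$). Consequently the minimal graded free resolution of $S/J=S/J_1\otimes_S S/J_2$ is the tensor product $F_\bullet\otimes_S G_\bullet$ of the minimal graded free resolutions $F_\bullet$ of $S/J_1$ and $G_\bullet$ of $S/J_2$. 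Since $S/J_1$ and $S/J_2$ are Gorenstein, $F_\bullet$ and $G_\bullet$ are self-dual up to shift with a rank-one top free module; therefore so is $F_\bullet\otimes_S G_\bullet$, and $S/J$ is Gorenstein of codimension $2{r+1\choose 2}$.

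The main obstacle is the first step: being certain that these specific ladders, with the specific minor size $m-r$, really fall in the Gorenstein range of \cite[Theorem 4.9]{Conca1995}, and — if one wants the paper self-contained — re-deriving it (for instance by computing the canonical module through the straightening law on the anti-diagonals, or, in the low codimension cases, by exhibiting the alternating matrix whose Pfaffians generate $J_i$, as in Section~\ref{Sec1}). An alternative that avoids Conca in the gluing is to run everything through initial ideals: by the proof of Proposition~\ref{height} the anti-diagonal Gr\"obner bases of $J_1$ and $J_2$ have leading monomials supported on disjoint sets of variables, so their union is a Gr\"obner basis of $J$ and $S/{\rm in}_{\prec}(J)\cong(P'/{\rm in}_{\prec}J_1)\otimes_k(P''/{\rm in}_{\prec}J_2)\otimes_k P'''$ for suitable polynomial rings $P',P'',P'''$; Cohen--Macaulayness and then Gorensteinness descend from $S/{\rm in}_{\prec}(J)$ to $S/J$ because Betti numbers cannot decrease under Gr\"obner degeneration, reducing the whole statement to the Gorensteinness, for the anti-diagonal term order, of the initial complex of a single one-sided ladder determinantal ideal.
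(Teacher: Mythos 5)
Your proof is correct, and it reaches the conclusion by a route that is conceptually close to, but technically distinct from, the one in the paper. Both arguments rest on the same two ingredients: that each one-sided ladder ideal $J_1=I_{m-r}(\mathfrak{L}_m)$ and $J_2=I_{m-r}(\widetilde{\mathfrak{L}}_m)$ defines a Gorenstein quotient of codimension $\binom{r+1}{2}$ (which you attribute explicitly to Conca's classification \cite{Conca1995}, while the paper asserts it without citation), and Proposition~\ref{height} giving $\Ht(J_1+J_2)=\Ht J_1+\Ht J_2$. From there you invoke Tor-independence of two perfect quotients meeting properly over the polynomial ring (via the Peskine--Szpiro acyclicity lemma applied to $F_\bullet\otimes_S G_\bullet$), so that the tensor product of the minimal resolutions is the minimal resolution of $S/(J_1+J_2)$, and self-duality with rank-one top term is inherited from the factors. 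The paper instead introduces auxiliary variables $\mathbf{t}$ duplicating the common entries $\mathcal{Y}$ of the two ladders, so that the deformed ideals live in disjoint variable sets and the quotient becomes a genuine tensor product \emph{over the base field} $k$; it then recovers $S/(J_1+J_2)$ by specializing along the linear sequence $\mathbf{t}-\mathcal{Y}$, which the height count from Proposition~\ref{height} forces to be regular in the (Cohen--Macaulay) deformed ring. Your route avoids the auxiliary ring and the specialization step at the cost of invoking the Tor-vanishing lemma, and your alternative Gr\"obner-degeneration sketch (reusing the disjointness of anti-diagonal leading monomials already exploited in the proof of Proposition~\ref{height}) is a third valid path; all three are sound and buy essentially the same thing.
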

	\begin{proof} First, note that $\mathcal{Y}=\{y_{i,j}\,|\,2\leq i,j\leq m-1,\, r+3\leq i+j\leq 2m-r-1\}$ is  the set of entries common to the ladders $\mathfrak{L}_{m}$  and $\widetilde{\mathfrak{L}}_{m}$.
	
	This suggests a deformation procedure in order to separate variables.
	Thus, let $\tt$ be a set of new variables $\{t_{i,j}\,|\,2\leq i,j\leq m-1,\, r+3\leq i+j\leq 2m-r-1\}$ and define the following polynomial subrings of $S[\tt]$ 
	
	$\bullet$ $S_1$ is the polynomial ring over $k$ in the entries of  $\mathfrak{L}_m$ and the variables $y_{1,m},y_{m,1}.$
	
	$\bullet$  $S_2$ is the polynomial ring over $k$ in the entries of   $\widetilde{\mathfrak{L}}_{m}.$ 
	
	$\bullet$ $S_3$ is the polynomial ring over $k$ in the variables $\tt\cup (\widetilde{\mathfrak{L}}_{m}\setminus \mathcal{Y})$


	We introduce  the following ``deformation" $\widetilde{\mathfrak{L}}_{m}(\tt)$ of $\widetilde{\mathfrak{L}}_{m}$:

	$$\scriptsize{\left[\begin{matrix}
			\eta(y_{1,1})&\cdots&\eta(y_{1,m-r})&\eta(y_{1,m-r+1})&\cdots&\eta(y_{1,m-2})&\eta(y_{1,m-1})\\
			\,\,\,\,\,\,\vdots&&\,\,\,\,\,\,\vdots&\,\,\,\,\,\,\vdots&&\,\,\,\,\,\,\vdots&\,\,\,\,\,\,\vdots\\
			\eta(y_{m-r,1})&\cdots&\eta(y_{m-r,m-r})&\eta(y_{m-r,m-r+1})&\cdots&\eta(y_{m-r,m-2})&\eta(y_{m-r,m-1})\\
			\eta(y_{m-r+1,1})&\cdots&\eta(y_{m-r+1,m-r})&\eta(y_{m-r+1,m-r+1})&\cdots&\eta(y_{m-r+1,m-2})\\
			\,\,\,\,\,\,\vdots&&\,\,\,\,\,\,\vdots&\,\,\,\,\,\,\vdots&&\\
			\eta(y_{m-2,1})&\cdots&\eta(y_{m-2,m-r})&\eta(y_{m-2,m-r+1})\\
			\eta(y_{m-1,1})&\cdots&\eta(y_{m-1,m-r})\end{matrix}\right]}$$
	where $\eta$ is the isomorphism of $k$-algebras
	$$\eta:S_2\to S_3,\quad \mbox{ $y_{i,j}\mapsto t_{i,j}$ if $y_{i,j}\in\mathcal{Y}$ and $y_{i,j}\mapsto y_{i,j}$ if $y_{i,j}\in \widetilde{\mathfrak{L}}_{m}\setminus \mathcal{Y}.$}$$
	
	Since	
	$$\frac{S[\tt]}{(I_{m-r}(\mathfrak{L}_m),I_{m-r}(\widetilde{\mathfrak{L}}_{m}(\tt)))}\simeq\frac{S_1}{I_{m-r}(\mathfrak{L}_m)}\otimes_k \frac{S_3}{I_{m-r}(\widetilde{\mathfrak{L}}_{m}(\tt))}$$
	and $S_1/I_{m-r}(\mathfrak{L}_m)$ and $S_3/I_{m-r}(\widetilde{\mathfrak{L}}_{m}(\tt))$ are both Gorenstein of codimension ${r+1\choose 2}$, then 
	$$\frac{S[\tt]}{(I_{m-r}(\mathfrak{L}_m),I_{m-r}(\widetilde{\mathfrak{L}}_{m}(\tt)))}$$
	is also Gorenstein of dimension $m^2+|\tt|-2{m-1\choose 2}$ (where $|\tt|$ is the cardinality of the set $\tt$).
	
	Now, let  $\tt-\mathcal{Y}$ be the sequence $$\{t_{i,j}-y_{i,j}\,|\,2\leq i,j\leq m-1,\, r+3\leq i+j\leq 2m-r-1\}.$$
	
	Note that  $$\frac{S}{(I_{m-r}(\mathfrak{L}_m),I_{m-r}(\widetilde{\mathfrak{L}}_m))}\simeq \frac{S[\tt]}{(\tt-\mathcal{Y},I_{m-r}(\mathfrak{L}),I_{m-r}(\widetilde{\mathfrak{L}}_{m}(\tt)))}.$$
	The latter is Gorenstein because $S[\tt]/(I_{m-r}(\mathfrak{L}_m),I_{m-r}(\widetilde{\mathfrak{L}}_{m}(\tt)))$ is Gorenstein and, by Proposition~\ref{height},
	$$\dim \frac{S[\tt]}{(\tt-\mathcal{Y},I_{m-r}(\mathfrak{L}),I_{m-r}(\widetilde{\mathfrak{L}}_{m}(\tt)))}=\dim\frac{S[\tt]}{(I_{m-r}(\mathfrak{L}),I_{m-r}(\widetilde{\mathfrak{L}}_{m}(\tt)))}-|t|.$$
	Thus, 
	$S/(I_{m-r}(\mathfrak{L}_m),I_{m-r}(\widetilde{\mathfrak{L}}_m))$
	is Gorenstein too.
	\end{proof}
	
	\smallskip
	For the next theorem we make use of the following lemmata:
	
	\begin{Lemma}{\rm (Extension principle)}\label{lemmaextension} Let $S\subset T$ denote a ring extension of  $S$, and let $U$ stand for an invertible  $(m-r)\times(m-r)$ matrix over $T.$
		Then,
		
		$$I_{m-r}(\mathfrak{L}_{m,r})T=I_{m-r}(U\mathfrak{L}_{m,r})T\quad \mbox{and}\quad I_{m-r}(\widetilde{\mathfrak{L}}_{m,r})T=I_{m-r}(\widetilde{\mathfrak{L}}_{m,r}U)T,$$ 
		as ideals in $T$, where 
		
		{\small 
		$$U\mathfrak{L}_{m,r}=\left[\begin{matrix}
			&&&&y_{2,r+1}&\cdots&y_{2,m}\\
			&&&y_{3,r}&y_{3,r+1}&\cdots&y_{3,m}\\
			&&&\vdots&\vdots&&\vdots\\
			&y_{r,3}&\cdots&y_{r,r}&y_{r,r+1}&\cdots&y_{r,m}\\
			\hline
			&&&&&&\\
			&&&&U\, \YY_{{\bf i}_{2},{\bf j}_2}&&\\
			&&&&&&
		\end{matrix}\right]$$}
		
		and
		
		{\small
		$$\widetilde{\mathfrak{L}}_{m,r}U=\left[\begin{array}{ll|lllllllllllll}
			&&&y_{1,m-r+1}&\cdots&y_{1,m-2}&y_{1,m-1}\\
			&&&\vdots&&\vdots&\vdots\\
			&&&y_{m-r,m-r+1}&\cdots&y_{m-r,m-2}&y_{m-r,m-1}\\
			&\YY_{\tilde{\bf i}_{m-r},\tilde{\bf j}_{m-s}}\, U&&y_{m-r+1,m-r+1}&\cdots&y_{m-r+1,m-2}\\
			&&&\vdots&&\\
			&&&y_{m-2,m-r+1}\\
			&&\end{array}\right],$$	}
		as ladders over $T$.
	\end{Lemma}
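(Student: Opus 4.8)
The plan is to reduce both assertions to the elementary invariance $I_t(\mathcal M)=I_t(\mathcal U\mathcal M)=I_t(\mathcal M\mathcal V)$ for invertible $\mathcal U,\mathcal V$, recalled in Section~\ref{Sec1}, applied separately to each of the maximal rectangular subladders already isolated in \eqref{ladderidealasasum}. The first thing to note is that $U\mathfrak{L}_{m,r}$ has exactly the one-sided ladder shape of $\mathfrak{L}_{m,r}$: its top $r-1$ rows (indexed $2,\dots,r$) are untouched, while its bottom $m-r$ rows, which form the full rectangular block $\YY_{{\bf i}_2,{\bf j}_2}$, are replaced by $U\,\YY_{{\bf i}_2,{\bf j}_2}$ and therefore remain full. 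Since the combinatorial fact behind \eqref{ladderidealasasum} --- that the $(m-r)$-minors lying inside a one-sided ladder are generated by those of its maximal rectangular subladders --- depends only on the zero pattern, it gives equally $I_{m-r}(U\mathfrak{L}_{m,r})=\sum_{u=2}^{r+1} I_{m-r}\big((U\mathfrak{L}_{m,r})_{{\bf i}_u,{\bf j}_u}\big)$, so it suffices to compare the two ideals block by block.

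Fix $2\le u\le r+1$. The index set ${\bf i}_u=\{r-u+3,\dots,m\}$ of Setup~\ref{Setup_cofactor} splits into its $u-2$ ``top'' elements $\{r-u+3,\dots,r\}$ and its $m-r$ ``bottom'' elements $\{r+1,\dots,m\}$; restricting $U\mathfrak{L}_{m,r}$ to rows ${\bf i}_u$ and columns ${\bf j}_u$ leaves the top part equal to $\YY_{\{r-u+3,\dots,r\},{\bf j}_u}$ and turns the bottom part into $U\cdot\YY_{\{r+1,\dots,m\},{\bf j}_u}$. Hence
$$(U\mathfrak{L}_{m,r})_{{\bf i}_u,{\bf j}_u}=\widehat U_u\,\YY_{{\bf i}_u,{\bf j}_u},\qquad \widehat U_u:=\begin{pmatrix}\mathbb I_{u-2}&0\\ 0&U\end{pmatrix},$$
and $\widehat U_u$ is invertible over $T$ because $\det\widehat U_u=\det U$ is a unit of $T$. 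By invariance of ideals of minors under left multiplication by an invertible matrix, $I_{m-r}\big((U\mathfrak{L}_{m,r})_{{\bf i}_u,{\bf j}_u}\big)T=I_{m-r}(\YY_{{\bf i}_u,{\bf j}_u})T$; summing over $u$ and using the decomposition \eqref{ladderidealasasum} for both $\mathfrak{L}_{m,r}$ and $U\mathfrak{L}_{m,r}$ yields $I_{m-r}(\mathfrak{L}_{m,r})T=I_{m-r}(U\mathfrak{L}_{m,r})T$.

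The statement for $\widetilde{\mathfrak{L}}_{m,r}U$ is proved symmetrically. Here only the distinguished full columns $1,\dots,m-r$ are affected by the right multiplication by $U$, and they stay full, so $\widetilde{\mathfrak{L}}_{m,r}U$ again has the same lower-right ladder shape and \eqref{ladderidealasasum} decomposes $I_{m-r}(\widetilde{\mathfrak{L}}_{m,r}U)$ into the ideals $I_{m-r}\big((\widetilde{\mathfrak{L}}_{m,r}U)_{\tilde{\bf i}_v,\tilde{\bf j}_v}\big)$. A direct check gives $(\widetilde{\mathfrak{L}}_{m,r}U)_{\tilde{\bf i}_v,\tilde{\bf j}_v}=\YY_{\tilde{\bf i}_v,\tilde{\bf j}_v}\,\widehat U_v'$ with $\widehat U_v':=\begin{pmatrix}U&0\\ 0&\mathbb I_{v-m+r}\end{pmatrix}$ invertible over $T$, and one concludes as before with the right-multiplication version of the same invariance, summing over $m-r\le v\le m-1$.

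There is no serious obstacle in this argument; what requires attention is purely the bookkeeping in the middle step --- matching the index sets ${\bf i}_u,{\bf j}_u$ (resp.\ $\tilde{\bf i}_v,\tilde{\bf j}_v$) with the top/bottom (resp.\ left/right) splitting of $U\mathfrak{L}_{m,r}$ (resp.\ $\widetilde{\mathfrak{L}}_{m,r}U$), and verifying that the conjugating matrix is precisely the asserted block-diagonal $\widehat U_u$ (resp.\ $\widehat U_v'$). One should also keep in mind throughout that $U$ is $(m-r)\times(m-r)$ while the ladders are $(m-1)\times(m-1)$, so that $U\mathfrak{L}_{m,r}$ and $\widetilde{\mathfrak{L}}_{m,r}U$ are, as the displayed forms make clear, shorthand for left (resp.\ right) multiplication on the distinguished $m-r$ full rows (resp.\ columns) only --- which is exactly what preserves the ladder shape.
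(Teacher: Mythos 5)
Your proof is correct and follows essentially the same route as the paper's: for each maximal rectangular block $\YY_{{\bf i}_u,{\bf j}_u}$ (resp.\ $\YY_{\tilde{\bf i}_v,\tilde{\bf j}_v}$) one invokes invariance of minor ideals under multiplication by the invertible block-diagonal extension $\left[\begin{smallmatrix}\mathbb I_{u-2}&\\&U\end{smallmatrix}\right]$ (resp.\ $\left[\begin{smallmatrix}U&\\&\mathbb I_{v-m+r}\end{smallmatrix}\right]$) and then sums via the decomposition \eqref{ladderidealasasum}. You spell out somewhat more explicitly than the paper why $(U\mathfrak{L}_{m,r})_{{\bf i}_u,{\bf j}_u}=\widehat U_u\,\YY_{{\bf i}_u,{\bf j}_u}$ and why \eqref{ladderidealasasum} applies to the modified ladder, but the underlying argument is identical.
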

	\begin{proof}
		For any indices $2\leq u\leq r+1$ and $m-s\leq v\leq m-1$, one has
		$$I_{m-r}(Y_{{\bf i}_{u},{\bf j}_u})T=I_{m-r}\left(\left[\begin{matrix}\mathbb{I}_{u-2}&\\&U\end{matrix}\right]Y_{{\bf i}_{u},{\bf j}_u}\right)T$$
		and 
		$$I_{m-r}(\YY_{\tilde{\bf i}_{v},\tilde{\bf j}_{v}})T=I_{m-r}\left(\YY_{\tilde{\bf i}_{v},\tilde{\bf j}_{v}}\left[\begin{matrix}U&\\&\mathbb{I}_{v-m+r}\end{matrix}\right]\right)T.$$
		
		Then the decompositions in  \eqref{ladderidealasasum} imply
		the equalities
		$$I_{m-r}(\mathfrak{L}_{m,r})T=I_{m-r}(U\mathfrak{L}_{m,r})T\quad \mbox{and}\quad I_{m-r}(\widetilde{\mathfrak{L}}_{m,r})T=I_{m-r}(\widetilde{\mathfrak{L}}_{m,r}U)T$$
		as claimed.
	\end{proof}

	\begin{Lemma}\label{regular_over_quotieni}
		Let $A$ be the  common $(m-r-1)\times (m-r-1)$ submatrix   of $\mathfrak{L}_{m,r}$ and $\widetilde{\mathfrak{L}}_{m,r}$ that has  $\{y_{i,j}\,|\, 2\leq j\leq m-r, i+j=m+1\}$ as the entries of its anti-diagonal. 
		Then $\delta:=\det A$  is regular over $S/(I_{m-r}(\mathfrak{L}_{m,r}),I_{m-r}(\widetilde{\mathfrak{L}}_{m.r}))$.
	\end{Lemma}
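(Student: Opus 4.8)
The plan is to leverage the fact that, writing $I_0:=(I_{m-r}(\mathfrak{L}_{m}),I_{m-r}(\widetilde{\mathfrak{L}}_{m}))$, the ring $S/I_0$ is Cohen--Macaulay --- indeed Gorenstein by Theorem~\ref{r=s_is_Gorenstein} --- of codimension $2{r+1\choose 2}$ by Proposition~\ref{height}. A Cohen--Macaulay quotient has no embedded primes and is equidimensional, so $\delta$ is a nonzerodivisor on $S/I_0$ if and only if $\delta$ lies outside every minimal prime, which is in turn equivalent to the strict inequality $\Ht(I_0+(\delta))\geq 2{r+1\choose 2}+1$. Hence the whole task is to force this height up by one.

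To do so I would degenerate to the reverse-lexicographic (``anti-diagonal'') term order $\prec$ of Proposition~\ref{anti-diagonals}. Since $A$ is an $(m-r-1)\times(m-r-1)$ submatrix of the generic matrix $\YY$, that proposition gives that ${\rm in}_{\prec}(\delta)$ is the product of the entries on the main anti-diagonal of $A$, namely $\prod_{k=1}^{m-r-1}y_{r+k,\,m+1-r-k}$, a squarefree monomial in the variables $y_{i,j}$ with $i+j=m+1$; in particular $\delta\notin I_{m-r}(\mathfrak{L}_{m})$ and $\delta\notin I_{m-r}(\widetilde{\mathfrak{L}}_{m})$, both being generated in degree $m-r$ and prime. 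Exactly as in the proof of Proposition~\ref{height} (its items (A) and (B)), the leading terms of the minors generating $I_{m-r}(\mathfrak{L}_{m})$ involve only variables with $i+j>m+2$, while those generating $I_{m-r}(\widetilde{\mathfrak{L}}_{m})$, and likewise ${\rm in}_{\prec}(\delta)$, involve only variables with $i+j<m+2$. Because height is preserved under Gröbner degeneration, because ${\rm in}(I_0+(\delta))\supseteq {\rm in}\,I_{m-r}(\mathfrak{L}_{m})+{\rm in}\,I_{m-r}(\widetilde{\mathfrak{L}}_{m})+({\rm in}_{\prec}\delta)$, and because the first summand lives in a set of variables disjoint from the last two (so heights add), one gets
$$\Ht(I_0+(\delta))\ \geq\ \Ht\,{\rm in}\,I_{m-r}(\mathfrak{L}_{m})\ +\ \Ht\bigl({\rm in}\,I_{m-r}(\widetilde{\mathfrak{L}}_{m})+({\rm in}_{\prec}\delta)\bigr)\ =\ {r+1 \choose 2}+\Ht\bigl({\rm in}\,I_{m-r}(\widetilde{\mathfrak{L}}_{m})+({\rm in}_{\prec}\delta)\bigr).$$
Thus it suffices to show ${\rm in}_{\prec}(\delta)$ is a nonzerodivisor modulo the monomial ideal ${\rm in}\,I_{m-r}(\widetilde{\mathfrak{L}}_{m})$, i.e. that none of the variables $y_{r+k,\,m+1-r-k}$ lies in a minimal prime of minimal height ${r+1\choose 2}$ of ${\rm in}\,I_{m-r}(\widetilde{\mathfrak{L}}_{m})$.

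The minimal primes of this squarefree monomial ideal are generated by the cells of a minimal transversal meeting the main anti-diagonal of every $(m-r)\times(m-r)$ submatrix of $\widetilde{\mathfrak{L}}_{m}$, and the key combinatorial observation is that no single $(m-r)\times(m-r)$ submatrix can carry all of $y_{r+1,m-r},\dots,y_{m-1,2}$ on its anti-diagonal --- that would require $m-r$ rows among the $m-r-1$ indices $r+1,\dots,m-1$; feeding this into the standard description of the minimal covers for one-sided ladder determinantal ideals (Herzog--Trung, Conca) shows each such variable lies in every facet of the associated complex, hence in no minimal prime of minimal height. I expect this last step to be the real obstacle: the reduction above is purely formal, but deciding which cells are redundant in the minimal covers of the ladder anti-diagonal hypergraph is the combinatorial heart of the matter, and it is precisely what dictated the choice of $A$ in the statement. (An alternative that bypasses the Gröbner bookkeeping is to localize at $\delta$ and invoke the Extension Principle of Lemma~\ref{lemmaextension} with an invertible matrix built from $A^{-1}$: on the open set $D(\delta)$ both ladders acquire an identity block where $A$ sat, and a Schur-complement computation rewrites the two ideals in the remaining variables, where the proper-intersection count becomes transparent.)
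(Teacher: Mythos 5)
Your overall strategy coincides with the paper's: use Theorem~\ref{r=s_is_Gorenstein} to reduce regularity of $\delta$ to a height inequality (Cohen--Macaulayness is all that is used; Gorensteinness is a bonus), and then argue via the anti-diagonal Gröbner degeneration of Proposition~\ref{anti-diagonals}, exactly as in the proof of Proposition~\ref{height}. However, the step you flag as ``the real obstacle'' is where your argument genuinely breaks down, and the paper has a much sharper elementary observation that you missed.

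You correctly note that ${\rm in}_{\prec}(\delta)$ is the squarefree monomial supported on the anti-diagonal $\{y_{i,j}\mid i+j=m+1,\ 2\le j\le m-r\}$ of $A$. You then invoke the coarse partition from items (A) and (B) of Proposition~\ref{height}: leading monomials of minors of $\mathfrak{L}_m$ sit in the band $i+j>m+2$, while those of $\widetilde{\mathfrak{L}}_m$ (and ${\rm in}(\delta)$ as well) sit in the band $i+j<m+2$. Under this partition ${\rm in}(\delta)$ and ${\rm in}\,I_{m-r}(\widetilde{\mathfrak{L}}_m)$ live in an \emph{overlapping} set of variables, and your reduction forces you into a genuinely hard combinatorial question about minimal primes of a ladder initial ideal. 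Your proposed observation --- that no single $(m-r)\times(m-r)$ submatrix carries all of $y_{m-1,2},\dots,y_{r+1,m-r}$ on its anti-diagonal --- is also not the right one: it is trivially true because $A$ is only $(m-r-1)\times(m-r-1)$, but what is needed is that \emph{none} of these variables appears on the anti-diagonal of \emph{any} $(m-r)$-minor of either ladder, a much stronger statement. Without that you cannot conclude that ${\rm in}(\delta)$ avoids every minimal prime of minimal height, and the combinatorics of minimal covers for ladder anti-diagonal hypergraphs is not ``purely formal.''

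The paper's proof bypasses all of this with one short computation hiding inside its stated claim. Let a $(m-r)\times(m-r)$ submatrix of $\widetilde{\mathfrak{L}}_{m,r}$ have rows $a_1<\dots<a_{m-r}$ and columns $b_1<\dots<b_{m-r}$; the ladder constraint forces the bottom-right corner to satisfy $a_{m-r}+b_{m-r}\le 2m-r-1$. If some anti-diagonal entry satisfied $a_k+b_{m-r+1-k}= m+1$, then
$a_{m-r}+b_{m-r}\ \ge\ a_k+(m-r-k)+b_{m-r+1-k}+(k-1)\ =\ (m+1)+(m-r-1)\ =\ 2m-r,$
a contradiction. Dually, for $\mathfrak{L}_{m,r}$ the top-left corner satisfies $a_1+b_1\ge r+3$, and the same index-shifting gives $a_k+b_{m-r+1-k}\ge m+2$. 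So the anti-diagonal monomials of the minors of the two ladders live respectively in $\{i+j\ge m+2\}$ and $\{i+j\le m\}$, and the band $i+j=m+1$ --- precisely where ${\rm in}(\delta)$ lives --- is missed by both initial ideals. Consequently the three monomial ideals ${\rm in}\,I_{m-r}(\mathfrak{L}_m)$, ${\rm in}\,I_{m-r}(\widetilde{\mathfrak{L}}_m)$, $({\rm in}\,\delta)$ involve pairwise disjoint sets of variables, their heights simply add, and $\Ht(\delta, I_{m-r}(\mathfrak{L}_m),I_{m-r}(\widetilde{\mathfrak{L}}_m))\ge 1+2\binom{r+1}{2}$, which is what the Cohen--Macaulay reduction requires. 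Incidentally, this also shows that the bounds stated in (A) and (B) of Proposition~\ref{height} are slightly off by one (they should read $i+j\ge m+2$ and $i+j\le m$), though this does not affect Proposition~\ref{height} itself since the bands are disjoint either way.

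Finally, your parenthetical ``alternative'' of localizing at $\delta$ and using the Extension Principle (Lemma~\ref{lemmaextension}) cannot be used to \emph{prove} this lemma: that localization argument is exactly what the paper deploys afterwards in Theorem~\ref{is_a_domain}, and it relies on the present lemma to descend from $S[\delta^{-1}]$ back to $S$. Using it here would be circular.
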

	\begin{proof}
		By Theorem~\ref{r=s_is_Gorenstein}, $S/(I_{m-r}(\mathfrak{L}_m),I_{m-r}(\widetilde{\mathfrak{L}}_m))$ is in particular  Cohen-Macaulay.
		Therefore, it suffices to show that  
		$$\Ht(\delta,I_{m-r}(\mathfrak{L}_{m,r}),I_{m-r}(\widetilde{\mathfrak{L}}_{m,r}))=1+\Ht(I_{m-r}(\mathfrak{L}_{m,r}),I_{m-r}(\widetilde{\mathfrak{L}}_{m,r}).$$
		
		As in the proof of the Proposition \ref{height}, let $\prec$ be the monomial order in Proposition~\ref{anti-diagonals}. Once more, drawing upon the fact that the set of $m-r$-minors of $\mathfrak{L}_{m,r}$ (resp., $\widetilde{\mathfrak{L}}_{m,r}$) is a Gr\"obner basis of the ideal $I_{m-r}(\mathfrak{L}_{m,r})$  (resp., $I_{m-r}(\widetilde{\mathfrak{L}}_{m,r})$) with respect to $\prec$, and that no anti-diagonal of any $m-r$-minor of $\mathfrak{L}_{m,r}$ (resp., of $\widetilde{\mathfrak{L}}_{m,r}$)  involves the variables $y_{i,j}$ such that $2\leq j\leq m-r$ and $i+j=m+1$, then Proposition~\ref{anti-diagonals} yields
		\begin{eqnarray}
			\Ht(\delta,I_{m-r}(\mathfrak{L}_{m,r}),I_{m-r}(\widetilde{\mathfrak{L}}_{m,r}))&\geq& 1+ \Ht {\rm in}(I_{m-r}(\mathfrak{L}_{m,r}))+ \Ht (I_{m-r}(\widetilde{\mathfrak{L}}_{m,r}))\nonumber\\
			&=&1+\Ht(I_{m-r}(\mathfrak{L}_{m,r}),I_{m-r}(\widetilde{\mathfrak{L}}_{m,r}).\nonumber
		\end{eqnarray}
		The reverse inequality
		always holds.
	\end{proof}
	
	\begin{Theorem}\label{is_a_domain}
		The ring $S/(I_{m-r}(\mathfrak{L}_{m,r}),I_{m-r}(\widetilde{\mathfrak{L}}_{m,r}))$ is a domain.
	\end{Theorem}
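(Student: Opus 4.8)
The plan is to reduce the statement to a localization in which the defining equations become linear, using the nonzerodivisor supplied by Lemma~\ref{regular_over_quotieni} together with the Extension Principle of Lemma~\ref{lemmaextension}. Write $B:=S/(I_{m-r}(\mathfrak{L}_{m,r}),I_{m-r}(\widetilde{\mathfrak{L}}_{m,r}))$ and let $\delta=\det A$ be the $(m-r-1)$-minor singled out in Lemma~\ref{regular_over_quotieni}, which is a regular element on $B$. Consequently the localization map $B\to B[\delta^{-1}]$ is injective, and since a subring of a domain is a domain, it suffices to prove that $B[\delta^{-1}]$ is a domain.

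To analyze $B[\delta^{-1}]$, set $T:=S[\delta^{-1}]$, over which the matrix $A$ is invertible. Because $A$ is the common submatrix of $\mathfrak{L}_{m,r}$ and $\widetilde{\mathfrak{L}}_{m,r}$ described in Lemma~\ref{regular_over_quotieni}, one chooses the invertible matrix $U$ supported on the rows occupied by $A$ in $\mathfrak{L}_{m,r}$ (respectively on the columns occupied by $A$ in $\widetilde{\mathfrak{L}}_{m,r}$) so that in $U\,\mathfrak{L}_{m,r}$ and in $\widetilde{\mathfrak{L}}_{m,r}\,U$ the block corresponding to $A$ is normalized to an identity block; by Lemma~\ref{lemmaextension} this does not change the ideals $I_{m-r}(\mathfrak{L}_{m,r})T$ and $I_{m-r}(\widetilde{\mathfrak{L}}_{m,r})T$. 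After this normalization, every $(m-r)$-minor of the transformed ladder that meets the full identity block collapses to a $1\times1$ minor, i.e. to a single entry of the complementary block minus a bilinear expression in the border entries. A routine Gaussian-elimination bookkeeping then shows that, after a triangular (hence invertible) change of the remaining variables, the ideal $(I_{m-r}(U\mathfrak{L}_{m,r}),I_{m-r}(\widetilde{\mathfrak{L}}_{m,r}U))T$ is generated by a subset of the variables. Therefore $B[\delta^{-1}]$ is a localization of a polynomial ring over $k$, in particular a domain, and the theorem follows from the first paragraph.

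The main obstacle is the middle step: one must verify that the \emph{same} matrix $U$ obtained from $A^{-1}$ simultaneously normalizes both ladders — this is precisely why Lemma~\ref{lemmaextension} is phrased with a left multiplication on $\mathfrak{L}_{m,r}$ and a right multiplication on $\widetilde{\mathfrak{L}}_{m,r}$, with $A$ positioned so that both operations act on it — and that after normalization every remaining $(m-r)$-minor of either ladder already lies in the ideal generated by the newly solved-for variables. The combinatorial input needed here is exactly the description of which variables occur on the anti-diagonals of the $(m-r)$-minors, as in Proposition~\ref{anti-diagonals} and its uses in Proposition~\ref{height} and Lemma~\ref{regular_over_quotieni}; tracking these anti-diagonals is what guarantees that inverting $\delta$ leaves no determinantal relation beyond the linear ones, so that the localized quotient is visibly a polynomial ring localization.
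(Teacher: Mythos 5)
Your opening reduction is correct and matches the paper's: since Lemma~\ref{regular_over_quotieni} shows $\delta=\det A$ is a nonzerodivisor on $B:=S/(I_{m-r}(\mathfrak{L}_{m,r}),I_{m-r}(\widetilde{\mathfrak{L}}_{m,r}))$, the map $B\to B[\delta^{-1}]$ is injective and it suffices to prove $B[\delta^{-1}]$ is a domain. The gap is in the middle step. You claim that after normalizing the block $A$ via Lemma~\ref{lemmaextension}, Gaussian elimination and a triangular change of coordinates turn $(I_{m-r}(\mathfrak{L}_{m,r}),I_{m-r}(\widetilde{\mathfrak{L}}_{m,r}))S[\delta^{-1}]$ into an ideal generated by a subset of the variables, so that $B[\delta^{-1}]$ would be a localization of a polynomial ring. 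This is not what happens for $r\ge 2$. The Schur-complement reduction only linearizes those $(m-r)$-minors that run through the full $A$-block (giving $I_1(D-CBA^{-1})$ and $I_1(\widetilde{C}-\widetilde{B}A^{-1}\widetilde{D})$, which you can indeed use to eliminate the variables in the blocks $D$ and $\widetilde{C}$). But by Lemma~\ref{lemmaextension} what remains after that elimination is precisely the sum $I_{m-r}(\mathfrak{L}_{m-1,r-1})+I_{m-r}(\widetilde{\mathfrak{L}}_{m-1,r-1})$ over the smaller polynomial ring $S'$ (the variables not in $D$ or $\widetilde{C}$). These are genuine $(m-r)$-minor ideals of smaller ladders, nonzero and of degree $m-r>1$ whenever $r\ge 2$, so they are emphatically not ``no determinantal relation beyond the linear ones.'' In other words, one localization does not land you in a polynomial ring; it lands you in a smaller instance of the very same problem. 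The paper's proof then closes the argument by induction on $m$ — localizing produces the isomorphism
\[
\frac{S[\delta^{-1}]}{
	(I_{m-r}(\mathfrak{L}_{m,r}),I_{m-r}(\widetilde{\mathfrak{L}}_{m,r}))S[\delta^{-1}]}\simeq \frac{S'[\delta^{-1}]}{(I_{m-r}(\mathfrak{L}_{m-1,r-1}),I_{m-r}(\widetilde{\mathfrak{L}}_{m-1,r-1}))S'[\delta^{-1}]},
\]
and the inductive hypothesis supplies the domain property of the right-hand side. Your proposal has no induction and no mechanism to deal with the leftover determinantal relations, so as written it does not prove the theorem.

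A secondary, smaller issue: you insist that the \emph{same} matrix $U$ must simultaneously normalize both ladders. That is neither true nor needed. In the paper one takes two different matrices ($U$ acting on the left of $\YY_{{\bf i}_2,{\bf j}_2}$, involving $CA^{-1}$, and $V$ acting on the right of $\YY_{\tilde{\bf i}_{m-r},\tilde{\bf j}_{m-r}}$, involving $A^{-1}\widetilde D$). Lemma~\ref{lemmaextension} is stated for an arbitrary invertible $U$, applied independently to each ladder, so no compatibility between the two normalizing matrices is required.
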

	\begin{proof}
		We induct on $m$.
		
		Decompose the matrices $\YY_{{\bf i}_2,{\bf j}_2}$ and $\YY_{\tilde{\bf i}_{m-r},\tilde{\bf j}_{m-r}}$ as:
		
		$$\YY_{{\bf i}_2,{\bf j}_2}=
		\left[\begin{matrix}
			A&B\\
			C&D
		\end{matrix}\right]
		\quad\mbox{and}\quad
		\YY_{\tilde{\bf i}_{m-r},\tilde{\bf j}_{m-r}}=
		\left[\begin{matrix}
			\widetilde{C}&\widetilde{B}\\
			\widetilde{D}&A
		\end{matrix}\right],$$
		where $A$ is as in Lemma~\ref{regular_over_quotieni}.
		Set again $\delta:=\det A$.
		
		Now, introduce the following invertible $(m-r-1)\times(m-r-1)$ matrices $U$ and $V$  over the ring extension $T:=S[\delta^{-1}]\subset k(\YY)$, contained in the purely trascendental quotient field of $S=k[\YY]$:
		
		$$U=\left[\begin{matrix}
			I_{m-r-1}&\boldsymbol{0}\\
			-CA^{-1}& 1
		\end{matrix}\right]
		\quad\mbox{and}\quad
		V=\left[\begin{matrix}
			1&\boldsymbol{0}\\
			-A^{-1}\widetilde{D}& I_{m-r-1}
		\end{matrix}\right]  $$
		Thus
		
		$$U\, \YY_{{\bf i}_2,{\bf j}_2}= \left[\begin{matrix}
			A&B\\
			0&D-CBA^{-1}
		\end{matrix}\right]
		\quad\mbox{and}\quad
		\YY_{\tilde{\bf i}_{m-r},\tilde{\bf j}_{m-r}}\, V=
		\left[\begin{matrix}
			\widetilde{C}-\widetilde{B}A^{-1}\widetilde{D}&\widetilde{B}\\
			\boldsymbol{0}&A
		\end{matrix}\right]$$
		
		Note that $I_{m-r}(\YY_{{\bf i}_2,{\bf j}_2})=I_1(D-CBA^{-1})$ and $I_{m-r}(\YY_{\tilde{\bf i}_{m-r},\tilde{\bf j}_{m-r}})=I_1(\widetilde{C}-\widetilde{B}A^{-1}\widetilde{D}).$
		
		From this and Lemma~\ref{lemmaextension} one has
		$$I_{m-r}(\mathfrak{L}_{m,r})S[\delta^{-1}]=(I_1(D-CBA^{-1}),I_{m-r}(\mathfrak{L}_{m-1,r-1}))S[\delta^{-1}]$$
		and
		$$I_{m-r}(\widetilde{\mathfrak{L}}_{m,r})S[\delta^{-1}]=(I_1(\widetilde{C}-\widetilde{B}A^{-1}\widetilde{D}),I_{m-r}(\widetilde{\mathfrak{L}}_{m-1,r-1}))S[\delta^{-1}]$$
		Hence,
		
		$$\frac{S[\delta^{-1}]}{
			(I_{m-r}(\mathfrak{L}_{m,r}),I_{m-r}(\widetilde{\mathfrak{L}}_{m,r}))S[\delta^{-1}]}\simeq \frac{S'[\delta^{-1}]}{(I_{m-r}(\mathfrak{L}_{m-1,r-1}),I_{m-r}(\widetilde{\mathfrak{L}}_{m-1,r-1}))S'[\delta^{-1}]}$$
		where $S'\subset S$ is the sub-polynomial ring over $k$ in the variables of the set 
		$$\{y_{i,j}\,|\,1\leq i,j\leq m\}\setminus \{y_{i,j}\,|\,y_{i,j}\in D\,\,\mbox{or}\,\, y_{i,j}\in\widetilde{C}\}$$ and 
		
		$$\mathfrak{L}_{m-1,r-1}:=\left[\begin{matrix}
			&&&&&y_{2,r+1}&\cdots&y_{2,m}\\
			&&&&y_{3,r}&y_{3,r+1}&\cdots&y_{3,m}\\
			&&&&\vdots&\vdots&&\vdots\\
			&&y_{r,4}&\cdots&y_{r,r}&y_{r,r+1}&\cdots&y_{r,m}\\
			&y_{r+1,3}&y_{r+1,4}&\cdots&y_{r+1,r}&y_{r+1,r+1}&\cdots&y_{r+1,m}\\
			&\vdots&\vdots&&\vdots&\vdots&&\vdots\\
			&y_{m-1,3}&y_{m-1,4}&\cdots&y_{m-1,r}&y_{m-1,r+1}&\cdots&y_{m-1,m}
		\end{matrix}\right],$$

		$$\widetilde{\mathfrak{L}}_{m-1,r-1}:=
		\left[\begin{matrix}
			y_{1,2}&\cdots&y_{1,m-r}&y_{1,m-r+1}&\cdots&y_{1,m-2}&y_{1,m-1}\\
			\vdots&&\vdots&\vdots&&\vdots&\vdots\\
			y_{m-r,2}&\cdots&y_{m-r,m-r}&y_{m-r,m-r+1}&\cdots&y_{m-r,m-2}&y_{m-r,m-1}\\
			y_{m-r+1,2}&\cdots&y_{m-r+1,m-r}&y_{m-r+1,m-r+1}&\cdots&y_{m-r+1,m-2}\\
			\vdots&&\vdots&\vdots&&\\
			y_{m-3,2}&\cdots&y_{m-3,m-r}&y_{m-2,m-r+1}\\
			y_{m-2,2}&\cdots&y_{m-2,m-r}\end{matrix}\right].$$
		
		By this isomorphism and the inductive step, the ring
		$$\frac{S[\delta^{-1}]}{
			(I_{m-r}(\mathfrak{L}_{m,r}),I_{m-r}(\widetilde{\mathfrak{L}}_{m,r}))S[\delta^{-1}]}$$
		is a domain. 
		
		Now apply Lemma~\ref{regular_over_quotieni} to conclude that 
		$S/(I_{m-r}(\mathfrak{L}_{m,r}),I_{m-r}(\widetilde{\mathfrak{L}}_{m.r}))$ is a domain.
	\end{proof}
	
	\begin{Corollary}\label{Defining_ideal_image}
		The defining ideal of the image of the birational map $\mathfrak{F}_{m,r,r}$ is generated by $I_{m-r}(\mathfrak{L}_{m,r})$ and $I_{m-r}(\widetilde{\mathfrak{L}}_{m,r}).$
	\end{Corollary}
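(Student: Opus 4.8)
The plan is to set $\mathcal{J}:=\bigl(I_{m-r}(\mathfrak{L}_{m,r}),I_{m-r}(\widetilde{\mathfrak{L}}_{m,r})\bigr)\subset S$ and to write $\mathcal{P}\subset S$ for the homogeneous defining ideal of the image of $\mathfrak{F}_{m,r,r}$, that is, the kernel of the graded $k$-algebra map $S\to R$ sending $y_{i,j}$ to $\Delta(\mathfrak{G}_m)_{j,i}$; thus $S/\mathcal{P}\cong k[\Delta(\mathfrak{G}_m)_{j,i}\mid 1\le i,j\le m]\subset R$, and in particular $\mathcal{P}$ is prime since $R$ is a domain. The inclusion $\mathcal{J}\subset\mathcal{P}$ is precisely Theorem~\ref{ratmapminorssubmax}(b) in the balanced case $r=s$, so everything reduces to the reverse inclusion, and the cleanest route is a height comparison: if $\mathcal{J}\subset\mathcal{P}$ are two prime ideals of the affine $k$-domain $S$ with $\Ht\mathcal{J}=\Ht\mathcal{P}$, then $\mathcal{J}=\mathcal{P}$, because $S/\mathcal{J}\twoheadrightarrow S/\mathcal{P}$ is then a surjection of affine $k$-domains of the same Krull dimension, hence an isomorphism.

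The next step is to evaluate the two heights. By Theorem~\ref{is_a_domain} the ring $S/\mathcal{J}$ is a domain, so $\mathcal{J}$ is prime, and by Proposition~\ref{height} one has $\Ht\mathcal{J}=2\binom{r+1}{2}=2\mathfrak{o}(r)$. For $\mathcal{P}$ I would use birationality: by Theorem~\ref{ratmapminorssubmax}(a) the map $\mathfrak{F}_{m,r,r}$ is birational onto its image $X=\Proj(S/\mathcal{P})$, so $\dim X=\dim\Proj(R)=\dim R-1=m^2-2\mathfrak{o}(r)-1$ (using $\dim R_{m,r,s}=m^2-\mathfrak{o}(r)-\mathfrak{o}(s)$ together with $r=s$). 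Hence $\dim S/\mathcal{P}=\dim X+1=m^2-2\mathfrak{o}(r)$, and since $S$ is a polynomial ring in $m^2$ variables over $k$ we get $\Ht\mathcal{P}=m^2-\dim S/\mathcal{P}=2\mathfrak{o}(r)$. This equals $\Ht\mathcal{J}$, so $\mathcal{J}=\mathcal{P}$ by the previous paragraph, which is the assertion.

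Since every ingredient is already available, there is no substantial obstacle here; the only point demanding a little care is the dimension bookkeeping — checking that the two ``$-1$'' shifts incurred in passing between a graded domain and its $\Proj$ cancel correctly — and confirming that ``the image of $\mathfrak{F}_{m,r,r}$'' in both Theorem~\ref{ratmapminorssubmax} and the present corollary means $\Proj(S/\mathcal{P})$ with $\mathcal{P}=\ker(S\to R)$, which it does, as the proof of Theorem~\ref{ratmapminorssubmax}(b) runs through Proposition~\ref{Image_map_minors}, i.e.\ through exactly this kernel. Finally, one should note that there is no circularity: Theorem~\ref{ratmapminorssubmax}, Theorem~\ref{is_a_domain}, Proposition~\ref{height}, Theorem~\ref{r=s_is_Gorenstein} and Lemma~\ref{regular_over_quotieni} are all proved without reference to this corollary, even though Theorem~\ref{Hessian_and_homaloid}(b) invokes it.
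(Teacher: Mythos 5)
Your argument matches the paper's proof essentially step for step: inclusion $\mathcal{J}\subset\mathcal{P}$ from Theorem~\ref{ratmapminorssubmax}(b), equality of heights from Proposition~\ref{height} together with the dimension count for the image, and primality of $\mathcal{J}$ from Theorem~\ref{is_a_domain}. The only cosmetic difference is that you derive $\dim\Proj(S/\mathcal{P})=m^2-2\mathfrak{o}(r)-1$ via birationality (part (a) of Theorem~\ref{ratmapminorssubmax}), whereas the paper implicitly reads the same number off the Krull-dimension computation already made in the proof of that theorem; both routes are valid and the bookkeeping you spell out, including the non-circularity remark, is correct.
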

	\begin{proof}
		The image of $\mathfrak{F}_{m,r,r}$ has dimension $m^2-2{r+1 \choose 2}-1$, hence its defining ideal has codimension $2{r+1 \choose 2}$.
		The result now follows from Proposition~\ref{height} and Theorem~\ref{is_a_domain}.
	\end{proof}

\subsubsection{The dual variety}

In this part we deal with the dual variety of the determinant of the matrix $\mathfrak{G}_{m,r,s}$ in the balanced case assuming moreover that $r=s=m-2$.

Recall that
$$\mathfrak{G}_{m,m-2,m-2}=
\left[\begin{matrix}
	0&0&0&\cdots&0&0&x_{1,m-1}&x_{1,m}\\
	0&0&0&\cdots&0&x_{2,m-2}&x_{2,m-1}&x_{2,m}\\
	0&0&0&\cdots&x_{3,m-3}&x_{3,m-2}&x_{3,m-1}&0\\
	\vdots&\vdots&\vdots&\iddots&\vdots&\vdots&\vdots&\vdots\\
	0&0&x_{m-3,3}&\cdots&0&0&0&0\\
	0&x_{m-2,2}&x_{m-2,3}&\cdots&0&0&0&0\\
	x_{m-1,1}&x_{m-1,2}&x_{m-1,3}&\cdots&0&0&0&0\\
	x_{m,1}&x_{m,2}&0&\cdots&0&0&0&0\end{matrix}\right],$$
	with $m+2(m-1)=3(m-1)+1$ independent variables, whose display order is along rows, from left to right, so that, in particular, $\{x_{m-1,1}, x_{m,2}, x_{m,1}\}$ are last.
	Let $\xx:=\xx_{m,m-2,m-2}$ denote this ordered set of variables.
	We let $R=k[\xx]$ as before denote the corresponding ground polynomial ring.
	
	Note that this matrix has a built-in recurrence facet, namely, for every $2\leq t\leq m-1$,  the $t\times t$ submatrix from the top right corner of $\mathfrak{G}_{m,m-2,m-2}$ is $\mathfrak{G}_{t,t-2,t-2}.$ 
	Thus, setting
 $f_m=f_{m,m-2,m-2}:=\det \mathfrak{G}_{m,m-2,m-2}$, 
 Laplace expansion along the last row yields
\begin{equation}\label{fexpansion}
	f_{m}=(-1)^{m+1}x_{m,1}f_{m-1}+x_{m-1,1}x_{m,2}f_{m-2},
\end{equation}
with the understanding that $f_{m-1}=f_{m-1,m-3,m-3}$ and $f_{m-2}=f_{m-2,m-4,m-4}$.

Let $V(f_{m})^*$ denote the dual variety over ${\rm Proj}(R)$ in its natural embedding, as in Remark~\ref{definition_dual_variety}.

\begin{Proposition}\label{dim_dual_equal_case}
With the above notation, one has $\dim V(f_m)^*=2m-2$.
\end{Proposition}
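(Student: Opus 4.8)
The plan is to reduce the statement to a Krull--dimension computation and then resolve it through the adjugate. Observe first that $f_m$ is irreducible: up to reversing the columns it is the determinant of a tridiagonal matrix with independent variable entries, and by the Laplace expansion \eqref{fexpansion} it is linear in $x_{m,1}$ with coefficients $\pm f_{m-1}$ and $x_{m-1,1}x_{m,2}f_{m-2}$, which are coprime by induction on $m$; in particular $f_m$ is reduced, so $V(f_m)^*$ really is the dual variety and $\dim V(f_m)^*=\dim S_{f_m}-1$, where $S_{f_m}\subset R/(f_m)$ is generated by the residues of $\partial f_m/\partial x_{i,j}$. Since $\mathfrak{G}_{m,m-2,m-2}$ is a coordinate sparse linear section in which every variable occupies exactly one slot, Proposition~\ref{GolMar} gives $\partial f_m/\partial x_{i,j}=\pm\,\Delta(\mathfrak{G}_{m,m-2,m-2})_{i,j}$ for each of the $3m-2$ \emph{band} positions $(i,j)$ (those with $m\le i+j\le m+2$). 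Thus $\dim V(f_m)^*$ equals the dimension of the closure of the image of the Gauss map $\gamma\colon V(f_m)\dasharrow\pp^{3m-3}$ carrying a matrix $A$ in the $(3m-2)$-dimensional linear family $R_1$ cut out by the band support to its vector of band cofactors. On the (dense) domain of $\gamma$ inside $V(f_m)$ one has $\rk A=m-1$, so $\gamma$ factors as $A\mapsto{\rm adj}(A)\mapsto(\mbox{band entries of }{\rm adj}(A))$, with ${\rm adj}(A)$ of rank $1$.

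For the upper bound, this factorization places ${\rm adj}(A)$ on the cone over the Segre variety $\Sigma=\pp^{m-1}\times\pp^{m-1}\hookrightarrow\pp^{m^2-1}$ of rank-$\le1$ matrices, and the band-cofactor vector of $A$ is the image of ${\rm adj}(A)$ under the coordinate projection $\pi\colon\pp^{m^2-1}\dasharrow\pp^{3m-3}$ onto the band coordinates. Hence $V(f_m)^*\subseteq\overline{\pi(\Sigma)}$ and $\dim V(f_m)^*\le\dim\Sigma=2m-2$. (Equivalently: the $2\times2$ minors of $\YY$ supported entirely on band positions are exactly $\mu_i:=y_{i,m-i}y_{i+1,m-i+1}-y_{i,m-i+1}y_{i+1,m-i}$ for $1\le i\le m-1$; by Theorem~\ref{adjugate_vs_2minors} and Proposition~\ref{GolMar} they lie in $\ker(\partial)$, and one checks they form a regular sequence, so $\codim\ker(\partial)\ge m-1$.)

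For the lower bound, the bipartite ``band graph'' --- rows and columns of the matrix as vertices, band positions as edges --- is connected: rows $i$ and $i+1$ are joined through column $m+1-i$, and columns $j-1$ and $j$ through row $m+1-j$, which links everything. Connectedness forces $\pi|_\Sigma$ to be generically injective, so $\dim\overline{\pi(\Sigma)}=\dim\Sigma=2m-2$, and it suffices to show $\gamma$ dominates $\overline{\pi(\Sigma)}$, equivalently that the rational map $\kappa\colon V(f_m)\dasharrow\Sigma$, $A\mapsto([\ker A],[\ker A^{t}])$ --- defined where $\rk A=m-1$, a dense open by Proposition~\ref{mgres} --- is dominant. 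This is the step I expect to be the main obstacle, and I would establish it by a dimension count on the incidence variety $W=\{([A],[w],[v])\in\pp(R_1)\times\Sigma:Aw=0,\ A^{t}v=0\}$. Over the dense locus of $(w,v)\in\Sigma$ with all coordinates nonzero the fibre of $W\to\Sigma$ is $\pp(U_{w,v})$ with $U_{w,v}=\{A\in R_1:Aw=A^{t}v=0\}$, and the $2m$ linear conditions cutting out $U_{w,v}$ (one per row, one per column) admit, again by connectedness of the band graph, exactly one relation, namely $\sum_i v_i r_i=\sum_j w_j c_j$ where $r_i,c_j$ are the row and column conditions; hence $\dim U_{w,v}=m-1$ and the generic fibre of $W\to\Sigma$ has dimension $m-2$. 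On the other hand $W$ is birational to $V(f_m)$ over the rank-$(m-1)$ locus (the fibre of $W\to V(f_m)$ over such $A$ is the single point $([A],[\ker A],[\ker A^{t}])$), so the component of $W$ dominating $V(f_m)$ has dimension $3m-4=\dim V(f_m)$; combining this with the fibre computation, the irreducibility of $V(f_m)$, and the fact that a generic rank-$(m-1)$ member of the family has kernel and cokernel with all coordinates nonzero (so the generic fibre of $W\to\Sigma$ is nonempty over $\Sigma$), one concludes that $W\to\Sigma$, hence $\kappa$, is dominant. This yields $\dim V(f_m)^*=2m-2$; the whole argument is characteristic-free, resting only on the adjugate identity \eqref{adjugate_relation}.
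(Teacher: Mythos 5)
Your approach is genuinely different from the paper's. The paper invokes Segre's criterion relating $\dim V(f_m)^*$ to the rank of the Hessian matrix modulo $f_m$ (\cite[Theorem 2.7]{DetBook2024}) and then establishes $\rank H(f_m)\,({\rm mod}\,f_m)\geq 2m$ by constructing an explicit $2m\times 2m$ Hessian submatrix whose determinant is not divisible by $f_m$, inducting on $m$ through the Laplace recurrence \eqref{fexpansion} applied directly to the Hessian block structure. You instead factor the Gauss map through the rank-one Segre variety via the adjugate and run a dimension count on an incidence correspondence. Your upper bound $\dim V(f_m)^*\le 2m-2$ via the Segre projection is complete and clean, and both the generic injectivity of $\pi|_\Sigma$ and the computation $\dim U_{w,v}=m-1$ from connectedness of the band graph are correct.

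The gap in the lower bound is precisely where you flag it, but you then pass over it as settled. To conclude that the component $W'$ of $W$ dominating $V(f_m)$ also dominates $\Sigma$, you need the claim that a generic rank-$(m-1)$ band matrix $A$ has $\ker A$ and $\ker A^{t}$ with all coordinates nonzero; equivalently, that no single row or column of ${\rm adj}(A)$ vanishes identically on $V(f_m)$. You assert this as a ``fact'' without argument. It does not follow from $\codim I_{m-1}(\mathfrak{G}_m)=4$, since the vanishing of one row of ${\rm adj}(A)$ only forces degeneracy of an $m\times(m-1)$ submatrix of $A$ -- a locus that a priori could be a divisor on $V(f_m)$. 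Your dimension count shows that the closure of $W\times_\Sigma\Sigma^\circ$ and the component $W'$ over $V(f_m)$ have the same dimension $3m-4$, but identifying them requires exactly the nonemptiness of their intersection, i.e.\ the claim in question. It is very likely true and should be provable by a separate induction on $m$ exhibiting a band matrix of rank $m-1$ with, say, zero row and column sums (it is easy to do for $m=3$), but as written the proof is incomplete. The paper's Hessian-minor construction avoids the issue entirely by never parametrizing the kernel. (Separately, the remark that the $\mu_i$ form a regular sequence is also asserted without argument, but that is only an alternate route to the upper bound, which you already have.)
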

\begin{proof}
Let $H(f_m)$
denote the Hessian matrix of $f_m\in  R=k[\xx]$.
	
By \cite[Theorem 2.7]{DetBook2024}, one is to show that the rank of $H(f_m)$ module $f_m$ is $ 2m.$ 
 Since $\mathfrak{G}_{m,m-2,m-2}$ is a coordinate sparse section, by  \cite[Proposition 2.14]{DetBook2024}, it suffices to show that $\rank H(f_m)\,({\rm mod}f_m)\geq 2m,$ i.e., to find a $2m\times 2m$ submatrix $A$ of  $H(f_m)$ such that $f_m$ does not divide  $\det A$ as elements of $R$.
 
We induct on $m.$ For $m=2$,  $\mathfrak{G}_{m,m-2,m-2}$ is the $2\times 2$ generic matrix, in which  case the result is trivial.  

Suppose that $m\geq 3$. Let $D=\xx\setminus \{x_{m,1}\}$. 
Drawing upon \eqref{fexpansion} one sees that the $3(m-1)\times 3(m-1)$ submatrix $\left(\frac{\partial^2f_m}{\partial x_{i,j}\partial x_{i',j'}}\right)_{x_{i,j},x_{i',j'}\in D}$ of the Hessian matrix $H(f_m)$ has the following format 
\begin{equation}\label{format_sub}
	\left[
	\begin{array}{cc|ccccc}
		&x_{m,1}H(f_{m-1})+x_{m,2}x_{m-1,1}H (f_{m-2})&x_{m-1,1}\, \Theta (f_{m-2})^t&x_{m,2}\, \Theta (f_{m-2})^t\\ [4pt]
		\hline\\ [-8pt]
		&x_{m-1,1}\,\Theta (f_{m-2})&0&f_{m-2}\\
		&x_{m,2}\, \Theta (f_{m-2}) &f_{m-2}&0\\
	\end{array}
	\right].
\end{equation}
where  $\Theta (f_{m-2})$ stands for the Jacobian matrix of $f_{m-2}$ with respect to the variables of the set $D\setminus\{x_{m-1,1}, x_{m,2}\}.$

Now by the inductive step, there is a  $2(m-1)\times 2(m-1)$ submatrix $A_{m-1}$ of $H(f_{m-1}))$ such that $f_{m-1}$ does not divide $\det A_{m-1}.$ 
Then, from \eqref{format_sub}, there is a suitable $2(m-1)\times 2(m-1)$ submatrix $B_{m-1}$ of $H(f_{m-2})$, and a suitable $1\times 2(m-1)$  submatrix $C_{m-1}$ of  $\Theta (f_{m-2})$, such that the following is a $2m\times 2m$ submatrix of $H(f_m)$
$$A=\left[
\begin{array}{cc|ccccc}
	&&&x_{m-1,1}C_{m-1}^t&\\
	&x_{m,1}A_{m-1}+x_{m,2}x_{m-1,1}B_{m-1}&\\
	&&&x_{m,2}C_{m-1}^t&\\
	\hline
	&x_{m-1,1}C_{m-1}&0&&f_{m-2}\\
	&x_{m,2}C_{m-1}&f_{m-2}&&0\\
\end{array}
\right].
$$
The proof will be over pending the following

\smallskip

{\sc Claim.} $f_m$ does not divide $\det A.$

Indeed, supposing otherwise, let
$$\det A =hf_m=h((-1)^{m+1}x_{m,1} f_{m-1}+x_{m-1,1}x_{m,2}f_{m-2}),$$
for certain $h\in R.$
But, since 
$$A\equiv\left[
\begin{array}{cc|ccccc}
	&&&\boldsymbol0&\\
	&x_{m,1}A_{m-1}&\\
	&&&\boldsymbol0&\\
	\hline
	&\boldsymbol0&0&&f_{m-2}\\
	&\boldsymbol0&f_{m-2}&&0\\
\end{array}
\right]\mod (x_{m-1,1}, x_{m,2})$$ 
it follows that 
$$(-1)^{m+1}x_{m,1}hf_{m-1}\equiv -x_{m,1}^{2(m-1)}f_{m-2}^2\det A_{m-1}\mod (x_{m-1,1}, x_{m,2}).$$
Since neither side of this  modular equality involves either $x_{m-1,1}$ or $ x_{m,2}$, it becomes an equality over $R$.
But, then it is clearly an absurd because $f_{m-1}$ is an irreducible polynomial that does not divide any factor of the product $x_{m,1}^{2(m-1)}f_{m-2}^2\det A_{m-1}.$ 
\end{proof}

The above result can be extended to a suitable deformation of $\mathfrak{G}_{m,m-2,m-2}$ in the following sense.
Namely, let $L=(\ell_{i,j})$ denote a {\em coordinate} sparse section  of the $m\times m$ generic matrix $\XX$ specializing coordinate wise to $\mathfrak{G}_{m,m-2,m-2}.$

Let $\tilde{\XX}\subset \XX$ denote the subset of nonzero entries of $L$, and let as above $\xx$ denote the set of nonzero entries of  $\mathfrak{G}_{m,m-2,m-2}$. 
Consider the ideal $\fn:=(x_{i,j}\,|\,x_{i,j}\in \tilde{\XX}\setminus \xx)\subset k[\tilde{\XX}]$.

\begin{Proposition}
	With the above notation, if $L \, ({\rm mod}\, \fn) = \mathfrak{G}_{m,m-2,m-2}$ then
	$f:=\det L$ is a nonzero irreducible polynomial and $\dim V(f)^*=2m-2$.
\end{Proposition}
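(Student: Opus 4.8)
The plan is to deduce both assertions from the corresponding facts about $\mathfrak{G}_{m,m-2,m-2}$ (irreducibility of $f_m=\det\mathfrak{G}_{m,m-2,m-2}$ and $\dim V(f_m)^*=2m-2$, the latter being Proposition~\ref{dim_dual_equal_case}) by a specialization/semicontinuity argument. First I would observe that $f=\det L$ lies in the polynomial ring $k[\tilde\XX]$ and that the substitution $x_{i,j}\mapsto 0$ for $x_{i,j}\in\tilde\XX\setminus\xx$ sends $f$ to $f_m$. Since $f_m\neq 0$, necessarily $f\neq 0$. For irreducibility, I would argue that if $f=gh$ were a nontrivial factorization in $k[\tilde\XX]$, then reducing mod $\fn$ would give $f_m=\bar g\,\bar h$; since $f_m$ is irreducible (and in fact a nonzero polynomial of positive degree), one of $\bar g,\bar h$, say $\bar h$, is a unit in $k$. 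Here one must be slightly careful: a non-unit can specialize to a unit. I would control this by a degree bookkeeping argument: since $L$ is a \emph{coordinate} sparse section, every monomial of $f=\det L$ is squarefree of degree $m$ in the variables $\tilde\XX$, and the specialization mod $\fn$ loses exactly the monomials containing a variable of $\tilde\XX\setminus\xx$. A standard multidegree argument — grading $k[\tilde\XX]$ so that $f$ is multihomogeneous with respect to the row (or column) gradings, and noting that $f_m$ has the same total degree $m$ as $f$ in $\xx$ alone — forces any factorization of $f$ to specialize to a genuine factorization of $f_m$ with the same degree vectors, so that $\bar h$ being a unit forces $h$ to be a unit as well. (Alternatively, one can invoke that $f$ is the determinant of a matrix whose nonzero entries are distinct variables and use the criterion that such a determinant is irreducible whenever its "minimal" specialization is; this is how \eqref{fexpansion}-type recursions are usually handled.)

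For the statement on the dual variety, by \cite[Theorem 2.7]{DetBook2024} it suffices to show $\rank H(f)\,({\rm mod}\,f) = 2m$, where $H(f)$ is the Hessian of $f$. Since $L$ is a coordinate sparse section specializing to $\mathfrak{G}_{m,m-2,m-2}$, and by \cite[Proposition 2.14]{DetBook2024} it is enough to exhibit a $2m\times 2m$ submatrix $A$ of $H(f)$ with $f\nmid\det A$ in $k[\tilde\XX]$. Here the key point is that reduction mod $\fn$ commutes with taking second partial derivatives with respect to the variables $\xx$ (the ``surviving'' variables): if $x_{i,j},x_{i',j'}\in\xx$, then $\partial^2 f/\partial x_{i,j}\partial x_{i',j'}$ reduces mod $\fn$ to $\partial^2 f_m/\partial x_{i,j}\partial x_{i',j'}$. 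Hence the submatrix of $H(f)$ with rows and columns indexed by $\xx$ specializes to $H(f_m)$. By Proposition~\ref{dim_dual_equal_case} there is a $2m\times 2m$ submatrix $A_0$ of $H(f_m)$ with $f_m\nmid\det A_0$; I would take $A$ to be the corresponding submatrix of $H(f)$ indexed by the same rows and columns (all in $\xx$). Then $\det A \equiv \det A_0 \pmod{\fn}$, and if $f\mid\det A$, writing $\det A = qf$ and reducing mod $\fn$ would give $f_m\mid\det A_0$ (using $f\bmod\fn=f_m$), a contradiction. Therefore $\rank H(f)\,({\rm mod}\,f)\geq 2m$; combined with the reverse inequality $\rank H(f)\,({\rm mod}\,f)\leq 2m$ coming from the general bound for coordinate sparse sections (as in the proof of Proposition~\ref{dim_dual_equal_case}, since $L$ has no more than $3(m-1)+1$ nonzero variables up to the structure... more precisely one uses that $f$ has the same Hessian-rank ceiling), one concludes $\rank H(f)\,({\rm mod}\,f)=2m$ and hence $\dim V(f)^*=2m-2$.

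The main obstacle, I expect, is the subtlety in the irreducibility step: controlling the possibility that a nontrivial factor of $f$ collapses to a unit under the specialization mod $\fn$. The clean way around this is the multidegree argument — equipping $k[\tilde\XX]$ with the $\Z^m$-grading by row-degrees (each entry $\ell_{i,j}$ having degree $e_i$), under which $f$ is homogeneous of multidegree $(1,1,\dots,1)$, and noting that this multidegree is already ``witnessed'' within $\xx$ (because $\mathfrak{G}_{m,m-2,m-2}$ still has at least one nonzero entry in every row and every column). Any factorization $f=gh$ must then be multihomogeneous, the multidegrees of $g$ and $h$ adding to $(1,\dots,1)$, so each of $g,h$ is homogeneous of multidegree a $0/1$-vector; the one specializing to a unit must have multidegree $0$, hence be a constant. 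A secondary, more routine obstacle is pinning down the reverse inequality $\rank H(f)({\rm mod}\,f)\le 2m$ for the deformed matrix $L$; this should follow verbatim from the argument already used for $\mathfrak{G}_{m,m-2,m-2}$ together with \cite[Proposition 2.14]{DetBook2024}, since that bound depends only on the coordinate sparse shape and the recursive Laplace structure \eqref{fexpansion}, both of which are inherited by $L$ (as $L\bmod\fn=\mathfrak{G}_{m,m-2,m-2}$ and $L$ is coordinate sparse, its determinant satisfies the analogue of \eqref{fexpansion} with the extra variables of $\tilde\XX\setminus\xx$ inserted in the appropriate slots).
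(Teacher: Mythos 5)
Your proposal is correct and takes essentially the same route as the paper: write $f=f_m+T$ with $T\in\fn$, observe that the $\xx$-indexed block of $H(f)$ reduces modulo $\fn$ to $H(f_m)$, and then invoke Proposition~\ref{dim_dual_equal_case} (together with the coordinate-sparse upper bound from \cite[Proposition 2.14]{DetBook2024}) for the Hessian rank. The one place you over-engineer is the irreducibility step: since $f$ and any potential factors are homogeneous in the ordinary standard grading, a nontrivial factor has positive degree and can only specialize under the graded map $x_{i,j}\mapsto 0$ to $0$ or to another positive-degree form, never to a unit — so the $\Z^m$-multidegree bookkeeping is unnecessary, and this one-line observation is exactly what the paper's terse "since $f_m$ is irreducible, so is $f$" is relying on.
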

\begin{proof}  
	By hypothesis, $f_m:=f_{m,m-2,m-2}$ is the image of $f$ by the natural surjection $k[\tilde{\XX}]\surjects k[\tilde{\XX}]/\fn.$ Thus, since $f_m$ is irreducible, so is $f$.
	
		For the dual variety, as above, by \cite{Segre_Hessiano} it is equivalent to show that $\rank H(f)\,({\rm mod} f)=2m$.

Write $f=f_m+T$.
where $T\subset \fn.$
Thus,
$$ \left(\frac{\partial^2 f}{\partial x_{i,j}\partial x_{i',j'}}\right)_{x_{i,j},x_{i',j'}\in\xx}\equiv H(f_m) \mod \fn.$$
Therefore, to conclude that $\rank H(f) \,({\rm mod\,}f)\geq 2m$ it is enough to show that the rank of the modular Hessian matrix $H(f_m) \,({\rm mod\,}f_m)$ is at least $2m$.
But, this follows from Proposition~\ref{dim_dual_equal_case} and \cite{Segre_Hessiano}.
\end{proof}

 \section{Hilbert--Burch like matrices}\label{HB}
 
 In this section we consider an $n\times (n-1)$  matrix $\phi$ with homogeneous entries in a standard graded polynomial ring $R$ over an infinite field.
 Upon the usual condition that the ideal $I_{n-1}(\phi)$ have height $2$ (maximal possible), we are dealing with the syzygy matrix of  a perfect ideal of height $2$.
 We recall a few features of these ideals.

 \subsection{$3\times 2$ matrices having homogeneous  columns}

 Binary $3\times 2$ matrices have been the subject of many authors (see \cite{BuJou}, \cite{Cox07}, \cite{CHW}, \cite{Syl1}, to mention a few).
 We henceforth take up the ternary case.
 Thus,  assume that  $R=k[x,y,z]$, endowed with the standard grading in which $\deg x=\deg y=\deg z=1$.
 The case where the entries of the matrix are linear forms has been considered, even for $m\times (m-1)$ matrices, with arbitrary $m$ (see, e.g., \cite{SymbPowBir2014}, \cite{Lan}, \cite{linpres2018}).
 For entries of higher degrees, one or another among the standing hypotheses of \cite[Theorem 1.2]{MorUl1996} are not available.
 Thus, believing that most is known in this low dimension, may be delusional. 
 Even in this narrow environment, one topic that seems to require further non-trivial work is the structure of the associated algebras, such as the symmetric and Rees algebras, as well as the special fiber (fiber cone)

 To move on, let $\phi=(a_{i,j})_{1\leq i\leq 3\atop {1\leq j\leq 2}}$ denote a $3\times 2$ matrix over $R$, where $1\leq \deg a_{i,1}:=d_1\leq d_2:=\deg a_{i,2}$, for all $i$.
 Thus, the ideal $I:=I_2(\phi)\subset R$ of $2$-minors is a homogeneous ideal equigenerated in degree $d:=d_1+d_2$.
 We assume in addition that this ideal has codimension at least two (hence, exactly two). Therefore, $I$ is a perfect ideal, i.e., $R/I$ is Cohen--Macaulay.
 
Since obviously $I\subset I_1(\varphi)$, then $I_1(\varphi)$ has codimension $\geq 2$. 
If  $I_1(\varphi)$ has codimension $3$ then $I$ is an ideal of linear type (\cite[Lemma 3.1]{NejadSimis2011}) -- and actually must have a syzygy whose coordinates form a regular sequence (\cite[Theorem 2.1]{Toh2013}). Thus, the structure of the ring $R/I$ and its main associated algebras is well-known (the symmetric algebra of $I$ is a complete intersection).

Therefore,  assume henceforth that $I_1(\varphi)$ has codimension two. In particular, every minimal prime of $R/I_1(\varphi)$ is a minimal prime of $R/I$ but typically not the other way around.

In addition, set $\fm=(x,y,z)$ and $\xx=\{x,y,z\}$.
Mapping a polynomial ring $S=R[T_1, T_2, T_3]$ onto the Rees algebra $R[It]\subset R[t]$ of $I$, let $\mathcal{I}\subset S$ denote the corresponding kernel -- often called the homogeneous defining ideal of  $R[It]$.
Notable is the fact that, since $I$ is equigenerated, then $R[It]$ admits a natural bigrading induced by the standard bigrading of the polynomial ring $S=k[x,y,z,T_1,T_2,T_3]$.
Finally, let $C(\mathcal{I})$ stand for the homogeneous ideal of $R$ generated by the $\xx$-coefficients of every element of $\mathcal{I}$.

One is interested in the following questions:
 \begin{Questions}\label{q2}
	\rm
	
	(a) When is $C(\mathcal{I})=\fm$?
	
	(b) When is $C(\mathcal{I})$ an $\fm$--primary ideal?
	
	(c) When is the analytic spread $\ell(I)$ of $I$ maximum ($=3$)?
	
	(d) When is  the rational map
	${\mathbb P}^2 \dasharrow {\mathbb P}^2$ defined by generators of $I$ 
	birational?
\end{Questions}

Trivially, affirmative (a) $\Rightarrow$ affirmative (b). Also, since $I$ is equigenerated, then $\ell(I)$ is the Krull  dimension of the $k$-algebra $k[I]\simeq k[It]$, hence $\ell(I)\leq 2$ implies a nonzero strict polynomial relation, i.e., $C(\mathcal{I})=(1)$.
Therefore, affirmative (b) $\Rightarrow$ affirmative  (c). Moreover, clearly affirmative (d) $\Rightarrow$ affirmative (c).
Finally, affirmative (a) is a good deal in the direction of proving that $R[It]$ is regular locally in
codimension one. Recall that the latter is equivalent to birationality when $\dim R/I=0$, but certainly not in the present context with $\dim R/I=1$.

We now focus on the degrees $d_1\leq d_2$.

\smallskip

$\bullet$ {\large ${d_1=1}$}

 Thus,  the entries of the first column of $\phi$ are linear forms.
 By our standing assumption on the codimension of  $I_1(\varphi)$, the three forms are $k$-linearly dependent.
 By an elementary row operation (over $k$) -- which implies a $k$-linear change of the generators of $I$ -- we may assume that the first column has the shape $(\ell_1,\ell_2, 0)^t$, where $\ell_1,\ell_2$ are $k$-independent linear forms.
 
 Since $(\ell_1,\ell_2)$ is a prime ideal, we must have  $I_1(\varphi)=(\ell_1,\ell_2)$.
 In particular $I_1(\varphi)$ is an unmixed ideal. Further, by a change of variables, we may as well assume that $\ell_1=x, \ell_2=y$.

 Thus, we may assume that the matrix is of the form
 \begin{equation}\label{linear_syzyzy}
 	\left[\begin{matrix} x & \gamma_1 \\
 		y & \gamma_2 \\
 		0 & \gamma_3
 	\end{matrix}\right] ,
 \end{equation}
 where $\gamma_1, \gamma_2,\gamma_3\in (x,y)$  since $I_1(\phi)=(x,y)$.
Thus, $I=(x\gamma_3,y\gamma_3, x\gamma_2-y\gamma_1).$
Since we are assuming that $I$ has codimension two, then $\gcd (\gamma_3, x\gamma_2-y\gamma_1)=1$.
Moreover, we will assume that one at least among $\gamma_1, \gamma_2, \gamma_3$ involves effectively $z$.

Note that $I$ is generically a complete intersection except at $(x,y)$ and $I:(x,y)^{\rm sat}=J:(x,y)^{\rm sat}$, where $J:=(\gamma_3, x\gamma_2-y\gamma_1)\supset I$ admits same associated primes as $I$.

By \cite[Lemma 1.2 and Proposition 1.6]{De_Jonq} and the terminology there, one has:
 \begin{Proposition}
 	With the above notation and assumptions$:$
 	\begin{enumerate}
 	\item[{\rm (i)}]	 The rational map $\mathfrak{R}: \pp^2\dasharrow \pp^2$ defined by the generators of $I$ is confluent with the identity map of $\pp^1$.
 	\item [{\rm (ii)}]	 
 	$\mathfrak{R}$ is birational if and only if $\gamma_1, \gamma_2, \gamma_3$ are $z$-monoids, in which case the map is a de Jonqui\`eres map.
 \end{enumerate}
 \end{Proposition}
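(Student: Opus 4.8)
The plan is to deduce both items from the theory of de Jonqui\`eres Cremona transformations of \cite{De_Jonq}, after matching our normalized matrix to the framework there. Recall that a form $g\in R$ of degree $e$ is a \emph{$z$-monoid} when it has the shape $g=z\,\ell(x,y)+b(x,y)$ with $\ell\in k[x,y]_{e-1}$ and $b\in k[x,y]_{e}$ witnessing that $p:=(0:0:1)$ is a point of multiplicity $e-1$ of $V(g)\subset\pp^2$, and that a plane Cremona map is \emph{of de Jonqui\`eres type} (with center $p$) when it preserves the pencil of lines through $p$. The three minimal generators of $I$ are $x\gamma_3,\ y\gamma_3,\ q$ with $q:=x\gamma_2-y\gamma_1$, and the first two share the common factor $\gamma_3$ (here one uses that $\{x\gamma_3,y\gamma_3,q\}$ is $k$-linearly independent, since $\gcd(\gamma_3,q)=1$). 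Hence, writing $\pi,\pi'\colon\pp^2\dasharrow\pp^1$ for the linear projections from $p$ in the target and in the source respectively (each recording the first two coordinates), one has $(x\gamma_3:y\gamma_3)=(x:y)$ away from $V(\gamma_3)$, i.e. $\pi\circ\mathfrak{R}=\pi'$ as rational maps. By the definition used in \cite{De_Jonq} this is precisely the statement that $\mathfrak{R}$ is confluent with the identity of $\pp^1$; this is (i), and it places us under the hypotheses of \cite[Lemma~1.2]{De_Jonq}.

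For (ii) I would first use \cite[Lemma~1.2]{De_Jonq} to describe $\mathfrak{R}$ over the base pencil: it carries the line $\ell_t=\{x=ty\}$ of the pencil through $p$ into the line $\{x=ty\}$ of the target pencil, and on $\ell_t$, in the affine coordinate $z$, it is the self-map $z\longmapsto \frac{t\,\gamma_2(t,1,z)-\gamma_1(t,1,z)}{\gamma_3(t,1,z)}=\frac{q(t,1,z)}{\gamma_3(t,1,z)}$. A rational self-map of $\pp^2$ confluent with $\mathrm{id}_{\pp^1}$ is birational exactly when its restriction to the general fibre of $\pi'$ is birational: if $\mathfrak{R}^{-1}$ exists then $\pi'\circ\mathfrak{R}^{-1}=\pi$, so $\mathfrak{R}^{-1}$ is again confluent with $\mathrm{id}_{\pp^1}$ and restricts to the fibrewise inverse, while conversely fibrewise birationality together with acting as the identity on the base forces generic degree one. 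So $\mathfrak{R}$ is birational iff the displayed fraction is a nonconstant linear fractional transformation of $z$ for general $t$. Now the standing hypothesis $\gcd(\gamma_3,q)=1$ prevents any line of the pencil from lying inside $V(\gamma_3)\cap V(q)$, so $\gamma_3(t,1,z)$ and $q(t,1,z)$ are coprime in $k(t)[z]$ for general $t$; hence that fraction has degree one iff $\deg_z\gamma_3\le1$ and $\deg_zq\le1$, with at least one of these $z$-degrees equal to $1$. The first condition says $\gamma_3$ is a $z$-monoid; from $\deg_zq\le1$, comparing coefficients of $z^{j}$ ($j\ge2$) in $x\gamma_2-y\gamma_1$ and using $\gcd(x,y)=1$ one gets $\gamma_1=\gamma_1^{\circ}+xh$ and $\gamma_2=\gamma_2^{\circ}+yh$ with $\gamma_1^{\circ},\gamma_2^{\circ}$ of $z$-degree $\le1$ and $h$ homogeneous of degree $d_2-1$, and the $R$-invertible column operation $\mathrm{col}_2\mapsto\mathrm{col}_2-h\,\mathrm{col}_1$ changes neither $I$ nor its three generators (hence neither $\mathfrak{R}$) while replacing $\gamma_1,\gamma_2$ by $\gamma_1^{\circ},\gamma_2^{\circ}$; in this presentation, using coprimality and the hypothesis that some $\gamma_i$ involves $z$, one checks that $\gamma_1,\gamma_2,\gamma_3$ are $z$-monoids and the fibre transformation is a genuine linear fractional automorphism of the general line of the pencil. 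Conversely, when $\gamma_1,\gamma_2,\gamma_3$ are $z$-monoids the displayed fraction is visibly a nonconstant linear fractional transformation of $z$, so $\mathfrak{R}$ is birational, and since it then preserves the pencil through $p$ and induces the identity on its parameter it is a de Jonqui\`eres transformation. This is the content of \cite[Proposition~1.6]{De_Jonq}.

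I expect the real obstacle in (ii) to be the precise correspondence between birationality and the $z$-monoid shape: one must be sure that the coprimality $\gcd(\gamma_3,x\gamma_2-y\gamma_1)=1$ together with the requirement that some $\gamma_i$ involve $z$ excludes exactly the degenerate configurations in which the vertical fraction on a general line of the pencil drops $z$-degree or becomes constant, and one must track that the $z$-monoid normalization of $\gamma_1,\gamma_2$ is achieved by an $R$-invertible column operation leaving $\mathfrak{R}$ literally unchanged, so that ``$\mathfrak{R}$ birational'' is genuinely a condition on $\gamma_3$ and on $q=x\gamma_2-y\gamma_1$, the $\gamma_i$ entering only through their admissible re-choices. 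This is exactly the bookkeeping codified in \cite[Lemma~1.2 and Proposition~1.6]{De_Jonq}, so the efficient route is to verify that our normalized $\phi$ meets the hypotheses of those two statements and invoke them, rather than rerun the degree analysis by hand.
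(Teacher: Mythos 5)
Your overall route — (i) read off confluence from the common factor $\gamma_3$ in the first two minors, (ii) pass to the fibrewise picture over the pencil through $p$ and then fold the result into \cite[Lemma~1.2 and Proposition~1.6]{De_Jonq} — is the same as the paper's, whose proof is in fact nothing more than the citation to those two statements. So your final move (``verify the hypotheses and invoke the reference'') is literally what the paper does, and your argument for (i) is complete and correct.

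The extra degree-by-degree reconstruction you give for (ii) is essentially right but is sketchy at exactly the two places you yourself flag as delicate. First, the column operation $\mathrm{col}_2 \mapsto \mathrm{col}_2 - h\,\mathrm{col}_1$ that you build from the $z^j$-coefficients ($j\ge 2$) of $q$ only truncates $\gamma_1,\gamma_2$ to $\deg_z \le 1$; it does not by itself force $\deg_z\gamma_i = 1$, and no column operation changes $\gamma_3$ at all (the $(3,1)$-entry is $0$), so $\deg_z\gamma_3$ is rigid. Writing ``one checks that $\gamma_1,\gamma_2,\gamma_3$ are $z$-monoids'' therefore hides a genuine bookkeeping issue about whether ``$z$-monoid'' in \cite{De_Jonq} is the strict ($\deg_z = 1$) or relaxed ($\deg_z\le 1$) notion, and the standing assumption ``some $\gamma_i$ involves $z$'' is not stable under such column operations; one should really re-express everything in terms of the invariants $\deg_z\gamma_3$ and $\deg_z q$ before matching to the $z$-monoid language. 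Second, ``visibly a nonconstant linear fractional transformation'' in the converse is not visible: with $\gamma_3 = za_3+b_3$ and $q = zc+e$ the fibre map is $z\mapsto (cz+e)/(a_3z+b_3)$, and one must show the determinant $cb_3 - ea_3$ is nonzero, which is not automatic from the $z$-monoid shape but does follow from the height-two hypothesis $\gcd(\gamma_3,q)=1$ (if $cb_3 = ea_3$ one produces a nonconstant common factor of $\gamma_3$ and $q$). These are exactly the points ``codified'' in the cited lemmas, so your final recommendation to defer to them is the right call; the only weakness is in presenting the hand-checked version as if those two steps were routine.
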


\begin{Remark}\label{Rees_algebra_downgraded}\rm
In addition to the above proposition, one has a full description of the presentation ideal of the Rees algebra of the ideal $I$ involving a so-called downgraded sequence of biforms (\cite[Theorem 2.6]{De_Jonq}).
In the case of cubics -- i.e., when $\deg \gamma_1= \deg \gamma_2=\deg \gamma_3=2$ -- the defining ideal of the Rees algebra has been previously noted in \cite{Vasc-Hong}.
\end{Remark}

$\bullet$ {\large ${d_1\geq 2}$} (hence, $d_2\geq 2$)

 Since the rational map defined by the $2$-minors of $\phi$ is  no longer automatically confluent with the identity map of $\pp^1$, in particular there is no obvious analogue of a  downgraded sequence as mentioned in Remark~\ref{Rees_algebra_downgraded} and, consequently, no predictability for the minimal number of generators of the homogeneous defining  ideal of the Rees algebra of the ideal $I$.

{\sc Sylvester forms.}
We resort to the more encompassing notion of a Sylvester form. 

For the basic facts on this notion we refer to either \cite[Section 4.1]{Syl2} or \cite[Section 2.1]{SimToh2015}. 
In particular, the references \cite{Syl1}, \cite{HasSim2012}, \cite{SimToh2015}, \cite{BuSiTo2016}, \cite{LinShen} contain an enormous amount of results as  to whether (or when), given a reasonably structured ideal, the homogenous defining ideal of its Rees algebra is generated via iterated Sylvester forms.

We emphasize that, though parts of the above references deal quite a bit with the case where $R/I$ is Artinian (i.e., $I$ is $\fm$-primary), here we stick to the case where $\dim R/I=1$, which seems to be a significant difference.

The following examples illustrate this line of approach.

\begin{Example}\label{deg4}\rm
Let
\[ \varphi = \left[ \begin{matrix}
	x^2 & y^2 \\
	y^2 & xy \\
	xz & x^2
\end{matrix}\right].
\]
\end{Example}
Clearly, $I_1(\varphi)=(x^2,xy,xz,y^2)$ has an embedded component, while its codimension two component is $(x,y^2)$. 
Let $t,u,v$ denote presentation variables over $R$ for the Rees algebra of $I=I_2(\phi)$.
Let $\mathcal{I}\subset R[t,u,v]$ as before denote the corresponding presentation ideal.
Since the forms $f:=x^2t+y^2u+xzv, g:=y^2t+xyu+x^2v$ are induced by syzygies they belong to $\mathcal{I}$, their Sylvester form with respect to $(x,y^2)$ is $\det \Psi$, where $\Psi$ is the $2\times 2$ matrix below:
$$\left[f\; g\right]=\left[\begin{matrix}
	xt+zv & u\\yu+xv & t
\end{matrix}
\right] \cdot \left[\begin{matrix}
	x\\
	y^2
\end{matrix}
\right].$$
As one verifies, the ideal $(f,g, \det\Psi)$ is the ideal of $2$-minors of the matrix
\[ \left[ \begin{matrix}
-t & yu+xv\\
	u  &-xt-zv \\
	x & y^2
\end{matrix} \right].
\]
Thus,  this ideal is Cohen--Macaulay.
A run with \cite{M2} shows that the ideal $(f,g, \det\Psi)$ is prime (though $R[t,u,v]/(f,g, \det\Psi)$ is not normal) hence must be the presentation ideal $\mathcal{I}$ of the Rees algebra.

\begin{Example}\label{deg4_bis}\rm
Let
\[ \varphi = \left[ \begin{matrix}
x^2 & yz \\
xy & y^2 \\
y^2 & xz
\end{matrix} \right].
\]
\end{Example}
In this case, the codimension two component of $I_1(\phi)$ is $(x,y)$.
The Sylvester form of $f,g$ with respect to $(x,y)$ is 
$$\det\Psi=
 \left[\begin{matrix} zt+yu & zv \\
	xu+yv & xt \end{matrix}\right]
=xzt^2+xytu-xzuv-yzv^2.
$$
Clearly, $\mathcal{J}:=(f,g,xzt^2+xytu-xzuv-yzv^2)\subset \mathcal{I}$ is not a prime ideal since it is contained in $(x,y)R[t,u,v]$.
A run of \cite{M2} reveals that $\mathcal{J}$ is nevertheless radical and Cohen--Macaulay, $\mathcal{J}:(x,y)$ contains the $(1,3)$-form $h:=zt^3+yt^2u-(x+z)tuv-yu^2v+zv^3$ and, in fact, $\mathcal{I}=(\mathcal{J},h )$.
As it comes out, $\mathcal{I}$ is not Cohen--Macaulay since the syzygy matrix of $\mathcal{J}$ extends by zeros to minimal syzygies of the former.

\begin{Remark}\rm
(i) Both Example~\ref{deg4} and Example~\ref{deg4_bis} have the same Hilbert function.

(ii) In  Example~\ref{deg4_bis}, $h$ is also a Sylvester form, this time around of $\{f, \det\Psi\}$ with respect to the forms $\{xz, y\}$. Note that $(y,z)$ is not a minimal prime of $I_1(\phi)$ but is one of  $I$.
\end{Remark}

Next is a list of questions, apparently open  even in the case where the entries of  $\phi$ are monomials and have standard `neighboring' degrees  $\left \lfloor{d/2}\right \rfloor$ and $\left \lceil{d/2}\right \rceil$, respectively, where $d$ is the common degree of the three minors.
Set $I=I_2(\phi)$ and assume as above that $I_1(\phi)$ has codimension two and, in addition, its codimension two primary component is a complete intersection (of two forms).
Finally, we keep the assumption that $d_2\geq d_1\geq 2$.

\begin{Question}\rm
When is the Rees algebra of $I$  Cohen--Macaulay? (It is always almost Cohen--Macaulay)
\end{Question}

\begin{Question}\rm
Is $\mathcal{I}$ minimally generated by the two syzygetic relations $\{f,g\}$ of $I$ and by iterated Sylvester forms starting with $\{f,g\}$?
 Moreover, do the minimal generating Sylvester forms have different bidegrees, that is, for each occurring bidegree there is only one minimal generator of this bidegree?
\end{Question}

\begin{Question}\rm
Is the rational map defined by the standard generators of $I_2(\phi)$  ever birational? (Possibly, almost never).
\end{Question}
We note that, if the last assertion of the previous question holds true, then the last question has a negative answer (see \cite[Theorem 3.2.22]{GradedBook})

\begin{Remark}
	\rm There is a subtle balance involving the various data that have impact on the above questions. Besides the standing assumption that  the codimension two primary component of $I_1(\phi)$ is a complete intersection, one has to look at the minimal primes of $I$ and the minimal primes of the presentation ideal of the symmetric algebra -- these are the ones that may be used to get Sylvester forms. There are easy examples in degrees $d_1=2,d_2=3$ in which $\mathcal{I}$ has a unique Sylvester form and, moreover, it has bidegree $(2,2)$, while it is Cohen--Macaulay.
\end{Remark}
 
 \subsection{Generating in two degrees}

In this part we take over the ``next'' case, namely, assume that the homogeneous height two perfect ideal is generated in two different degrees.
We state the preliminaries in terms of the associated Hilbert--Burch like matrix.

 \subsubsection{Setup}\label{Basic}

	Let $R$ be a standard graded polynomial ring $R=k[x_1,\ldots,x_d]$ over a field and let $n\geq 2$ be an integer with a partition $n=a+(n-a)$, where $1\leq a\leq n-1$. Given integers $\epsilon_1, \epsilon_2\geq 1$, introduce the $n\times (n-1)$ block matrix
	\begin{equation}\label{basic_matrix}
		\phi=\left[\begin{matrix}
			\,	\Phi_1\,\\
			\hline \,
			\,	\Phi_2 \,
		\end{matrix}\right],
	\end{equation}
	where $\Phi_1$ is an $a\times (n-1)$ submatrix  whose row entries have degree $\epsilon_1$ and $\Phi_2$ is an $(n-a)\times (n-1)$  matrix whose row entries  have degree $\epsilon_2$.
	We assume that the ideal $I_{n-1}(\phi)\subset R$ of maximal minors has codimension $\geq 2$ (hence, $2$).
	
	Set $I:=I_{n-1}(\phi)$. Since $n-a\leq n-1$,  the minimal graded free resolution of $I$ has the form 
	\begin{equation}\label{two-degree-resolution}
		0\to R(-D)^{n-1}\stackrel{\phi}\lar  R(-(D-\epsilon_2))^{n-a}\oplus R(-(D-\epsilon_1))^a\to I\to 0,
	\end{equation}
	where $D: =a\epsilon_1 + (n-a)\epsilon_2$.

To fix ideas, one might assume that $\epsilon_1\geq \epsilon_2$, so that the initial degree of $I$ turns out to be $D-\epsilon_1$. Howver, this is pretty much irrelevant in the present discussion.

\subsubsection{Minors fixing a submatrix}

Let $J\subset I$  stand for the subideal generated in  degree $D-\epsilon_1$. In other words, $J$ is the ideal generated by the maximal minors $f_1,\ldots,f_a$ of $\phi$ that fix the submatrix $\Phi_2$. 
A free resolution of $R/J$ was obtained formerly in \cite[Theorem D]{AnSi1986}, which holds regardless of grading considerations  (similar result for arbitrary matrices and sizes remains apparently open -- see \cite[Conjecture 6.4.15]{SimisBook}).

Here we make a digression, by emphasizing the role of the $R$-module $I/J$.
For this, assume that $J$ has codimension two as well, which is the typical assumption of  \cite[Theorem D]{AnSi1986}.
One can see in this case that $I/J$ is isomorphic to $\coker \mathcal{L}$ (more exactly, $I/J \simeq \coker \mathcal{L}(-(D-\epsilon_2))$ is a homogeneous isomorphism), hence its free $R$-resolution relates to the one of $R/J$, as in \cite[Thorem D]{AnSi1986}. For the reader's convenience we reformulate the argument of  \cite[Theorem D, (ii) $\Rightarrow$ (i)]{AnSi1986} in the present environment.

\begin{Proposition}\label{resij_implies_resrj} Let 
	$$0\to \bigoplus_{j} R(-j)^{\beta_{r,j}}\stackrel{\psi_{r-1}}\lar\cdots\stackrel{\psi_2}\lar\bigoplus_j R(-j)^{\beta_{2,j}}\stackrel{\psi_1}\lar R(-D)^{n-1}\stackrel{\Phi_2}\lar R^{n-a}(-(D-\epsilon_2))\to I/J\to 0$$
	be a minimal graded free resolution of the $R$-module $I/J.$ Then, the minimal graded free resolution of $R/J$ is
	{\small\begin{equation}\nonumber
			0\to \bigoplus_{j} R(-j)^{\beta_{r,j}}\stackrel{\psi_{r-1}}\lar\cdots\stackrel{\psi_2}\lar\bigoplus_j R(-j)^{\beta_{2,j}}\stackrel{\Phi_1\psi_1}\lar R(-(D-\epsilon_1))^{a}\stackrel{\mathfrak{f}}\to R\to  R/J\to 0,
	\end{equation}}
	where $\mathfrak{f}=\left[\begin{matrix}
		f_1& \cdots&f_a\end{matrix}\right].$
	In particular, $\depth R/J=\depth I/J$.
\end{Proposition}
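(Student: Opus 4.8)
The plan is to build the resolution of $R/J$ by splicing the given resolution of $I/J$ with the short exact sequence $0 \to J \to R \to R/J \to 0$, using the snake lemma / horseshoe-type comparison. First I would record the defining short exact sequences: on one hand $0 \to J \to I \to I/J \to 0$, and on the other $0 \to I \to R \to R/I \to 0$ and $0 \to J \to R \to R/J \to 0$. The key structural input is that $J$ is generated by $f_1,\ldots,f_a$, so there is a surjection $R(-(D-\epsilon_1))^a \stackrel{\mathfrak{f}}{\to} J$, and the syzygies of $J$ are exactly what we must identify.

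The heart of the argument is to track how the map $\Phi_2\colon R(-D)^{n-1} \to R^{n-a}(-(D-\epsilon_2))$ onto $I/J$ interacts with the presentation $\mathfrak{f}$ of $J$. Concretely, I would argue that a column of $\phi$ (a syzygy of the full generating set of $I$) decomposes according to the block structure $\phi = \left[\begin{smallmatrix}\Phi_1\\ \Phi_2\end{smallmatrix}\right]$: the $\Phi_2$-part is precisely a syzygy of the generators of $I/J$ (equivalently, it records that the minors fixing $\Phi_2$ satisfy the relations coming from $\Phi_2$), while the $\Phi_1$-part applied to such a relation produces a syzygy of $f_1,\ldots,f_a$. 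This is the content of the replacement $\psi_1 \rightsquigarrow \Phi_1\psi_1$: one checks $\mathfrak{f}\,\Phi_1\psi_1 = 0$ using the Laplace/cofactor identities relating the two sets of minors (this is essentially the same manipulation underlying \cite[Theorem D]{AnSi1986}, and morally the adjugate relation $\phi$ against its complementary minors). Minimality of the resulting complex follows because the entries of $\Phi_1\psi_1$ lie in the maximal ideal — $\Phi_1$ has entries of positive degree $\epsilon_1$ — so no unit entries are introduced, and the higher part $\psi_2,\ldots,\psi_{r-1}$ is unchanged and hence still minimal.

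Then I would verify exactness of
$$0\to \bigoplus_{j} R(-j)^{\beta_{r,j}}\stackrel{\psi_{r-1}}\lar\cdots\stackrel{\psi_2}\lar\bigoplus_j R(-j)^{\beta_{2,j}}\stackrel{\Phi_1\psi_1}\lar R(-(D-\epsilon_1))^{a}\stackrel{\mathfrak{f}}\to R\to R/J\to 0.$$
Exactness at $R$ and at $R/J$ is clear by definition of $J$. Exactness at $R(-(D-\epsilon_1))^a$ amounts to: every syzygy of $(f_1,\ldots,f_a)$ is of the form $\Phi_1 \cdot z$ for $z$ a syzygy of the generators of $I/J$ presented by $\Phi_2$. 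This is where I expect the main obstacle: one must show the induced map $\ker(\mathfrak{f}) \to \operatorname{coker}(\psi_2)$ (i.e., onto the syzygies of $I/J$) is an isomorphism, not merely surjective. The cleanest route is a diagram chase comparing the complex above with the given resolution of $I/J$ spliced against $0\to J\to R\to R/J\to 0$: the quotient of the two complexes, after accounting for the shift, should be a split-exact piece supported in the first two spots, forcing the rest to agree. Finally, exactness further left is immediate since $\psi_2,\ldots,\psi_{r-1}$ form part of the given (exact) resolution of $I/J$ and are untouched. The equality $\depth R/J = \depth I/J$ then follows by reading off projective dimensions from the two resolutions via Auslander--Buchsbaum: both resolutions have the same length $r$, whence $\operatorname{pd} R/J = \operatorname{pd} I/J$ and the depth statement follows.
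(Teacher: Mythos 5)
Your proposal correctly isolates the two statements that need to be established, namely $\Ker(\mathfrak{f})=\Image(\Phi_1\psi_1)$ and $\Ker(\Phi_1\psi_1)=\Image(\psi_2)$, and you rightly identify the first as the substantive one. But you stop precisely where the work has to be done: you flag $\Ker(\mathfrak{f})\subseteq\Image(\Phi_1\psi_1)$ as ``the main obstacle'' and then offer only a sketch of a ``diagram chase'' whose conclusion you assert ``should be'' a split-exact piece. That is not an argument, and the ingredient it needs is not abstract splicing but the Hilbert--Burch property of $\phi$ itself: $\Image\phi$ is exactly the kernel of the row vector of all $n$ signed maximal minors. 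Given $\eta$ with $\mathfrak{f}\eta=0$, the padded column $\left[\begin{smallmatrix}\eta\\ \boldsymbol 0\end{smallmatrix}\right]$ is a syzygy of the full generating set of $I$, hence equals $\phi u$ for some $u$; reading off the block structure gives $\Phi_2 u=0$ (so $u\in\Image\psi_1$) and $\Phi_1 u=\eta$, which is $\eta\in\Image(\Phi_1\psi_1)$. The paper's proof is exactly this one-line chain of equivalences, and it is what replaces your unfinished comparison.

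A second, smaller gap: you claim that exactness ``further left'' is immediate because $\psi_2,\ldots,\psi_{r-1}$ are untouched. It is not immediate; you still need $\Ker(\Phi_1\psi_1)\subseteq\Ker(\psi_1)$. This requires an argument, short but necessary: if $\Phi_1\psi_1\eta=0$, then since also $\Phi_2\psi_1=0$ one gets $\phi\psi_1\eta=0$, and the injectivity of $\phi$ (again Hilbert--Burch) forces $\psi_1\eta=0$, i.e.\ $\eta\in\Image\psi_2$. Your observation that minimality is preserved because $\Phi_1$ has positive-degree entries is correct, and the depth statement via equal projective dimension and Auslander--Buchsbaum is fine.
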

\begin{proof} To prove this proposition it is sufficient to show that $$\ker(\mathfrak{f})=\Image(\Phi_1\,\psi_1)\quad \mbox{and}\quad\ker (\Phi_1\,\psi_1)=\Image\psi_2.$$ 
	
	The first follows from the equivalences
	\begin{eqnarray*}
		\eta\in\ker(\mathfrak{f})&\Leftrightarrow&
		\left[\begin{matrix}
			\eta\\\boldsymbol0_{(n-a)\times 1}\end{matrix}\right]\in\Image \phi\\
		&\Leftrightarrow& \phi\,\uu=\left[\begin{matrix}
			\eta\\\boldsymbol0_{(n-a)\times 1}\end{matrix}\right], \;\mbox{for some } \uu\in R^{n-1}\\
		&\Leftrightarrow& \left[\begin{matrix}
			\Phi_1\,\uu\\ \Phi_2\,\uu\end{matrix}\right]=\left[\begin{matrix}
			\eta\\\boldsymbol0_{(n-a)\times 1}\end{matrix}\right],\;\mbox{for some } \uu\in R^{n-1}\\
		&\Leftrightarrow&\Phi_1\,\uu=\eta\,\,\mbox{and}\,\, \Phi_2\,\uu=0,\;\;\mbox{for some } \uu\in R^{n-1}\\\
		&\Leftrightarrow&\Phi_1\,\uu=\eta\,\,\mbox{and}\,\,\uu\in\Image{\psi_1}, \;\mbox{for some } \uu\in R^{n-1}\\
		&\Leftrightarrow& \eta\in\Image(\Phi_1\,\psi_1)
	\end{eqnarray*}
	For the second, we have $\Phi_1\,\psi_1\,\psi_2=\boldsymbol0.$
	Hence, $\Image(\psi_2)\subset\ker(\Phi_1\,\psi_1).$ Now, consider $\eta\in\ker(\Phi_1\,\psi_1).$ Then, $\Phi_1\,\psi_1\,\eta=0.$ Since $\ker\Phi_2=\Image (\psi_1),$ we have also $\Phi_2\,\psi_1=0.$ Thus, $$\phi\,\psi_1\eta=\left[\begin{matrix}
		\Phi_1 \\\Phi_2\end{matrix}\right]\psi_1\eta=0.$$
		 Since $\ker\phi=\boldsymbol0$ then $\psi_1\eta=\boldsymbol0$, hence in particular, $\eta\in \Image(\psi_2).$ Thus, $\ker (\Phi_1\,\psi_1)=\Image(\psi_2)$ as desired.
\end{proof}

\subsubsection{Revisiting the Buchsbaum--Rim complex}

We recapitulate in the present environment how the  minimal graded free resolution of the $R$-module $I/J$ is  determined.

Quite generally, let $R$ be a Noetherian ring and $\psi: F\to G$ be a map of free modules over a ring $R.$  Write $s:=\rank F$ and $r:=\rank G$ and suppose that $s\geq r.$ The {\it Buchsbaum-Rim complex} of $\psi$ has the following well-known shape
	\begin{equation}\label{BuRim}
		0\to ({\rm Sym}_{s-r-1} (G))^{\ast}\otimes \wedge^{s}F\stackrel{\partial}\lar ({\rm Sym}_{s-r-2}(G))^{\ast}\otimes \wedge^{s-1}F \stackrel{\partial} \to\cdots\stackrel{\partial}\lar \wedge^{r+1} F\stackrel{\eta}\lar F\stackrel{\Psi}\lar G.
	\end{equation}
 By choosing bases of the  free modules, we can partly make explicit the nature of its differentials (see \cite{Eis}[Appendix 2] for further details):

\begin{Remark}\label{diferentials}\rm
	Let $\{e_1,\ldots,e_s\}$ denote a free basis of $F.$ Given indexes $1\leq i_1<\ldots<i_{r+1}\leq s$, with $e_{i_1}\wedge\cdots\wedge e_{i_{r+1}}\in \bigwedge^{r+1}F$, then the map $\eta$ is given by
		$$
		\eta(e_{i_1}\wedge \cdots\wedge e_{i_{r+1}})= \sum_{j=1}^{r+1}(-1)^{r+1-j}\det(\phi_j) e_{i_j},$$
		where $\phi_j$ is the $r\times r$ submatrix of $\psi$ with columns corresponding to the basis elements indexed by $i_1<\ldots< \hat{i_j} <\cdots <i_{r+1}$. 
		
Moreover, the  entries of  any differential $\partial$ are $\mathbb{Z}$-linear forms in the entries of $\psi.$
\end{Remark}

\begin{Proposition}\label{BR_main}{\rm (\cite[Section A2.6]{Eis})}
	The Buchsbaum--Rim complex of $\psi$ is a free resolution of $\coker\psi$ if and only if $\grade I_r(\psi)\geq s-r+1.$
\end{Proposition}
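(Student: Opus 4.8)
The plan is to apply the Buchsbaum--Eisenbud acyclicity criterion \cite{BE_Criterion1973} to the complex \eqref{BuRim}, written here as
$$0\to C_{s-r+1}\stackrel{\partial_{s-r+1}}\lar C_{s-r}\lar\cdots\lar C_2\stackrel{\eta}\lar C_1=F\stackrel{\psi}\lar C_0=G,$$
with $C_j={\rm Sym}_{j-2}(G)^{\ast}\otimes \bigwedge^{r+j-1}F$ for $2\leq j\leq s-r+1$ and $\partial_i\colon C_i\to C_{i-1}$. Recall that such a complex of free modules is acyclic --- equivalently, a free resolution of $\coker\psi$ --- if and only if, for every $1\leq i\leq s-r+1$, one has $\rank C_i=\rank\partial_i+\rank\partial_{i+1}$ (with $\partial_{s-r+2}=0$) together with $\grade I_{\rho_i}(\partial_i)\geq i$, where $\rho_i:=\rank\partial_i$. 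The first, purely numerical, family of conditions holds for the Buchsbaum--Rim complex independently of $\psi$, by the standard binomial bookkeeping recorded in \cite[Section A2.6]{Eis}; thus the whole content of the proposition reduces to the grade conditions.

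The essential structural input is that all the Fitting-type ideals entering those grade conditions share a common radical,
$$\sqrt{I_{\rho_i}(\partial_i)}=\sqrt{I_r(\psi)}\qquad (1\leq i\leq s-r+1).$$
One inclusion is a localization argument: if $\mathfrak p\not\supseteq I_r(\psi)$, then over $R_{\mathfrak p}$ some $r\times r$ minor of $\psi$ is a unit, so $\psi_{\mathfrak p}$ is a split surjection onto the free module $G_{\mathfrak p}$; since \eqref{BuRim} is built functorially from $\psi$ and commutes with localization, the localized complex is split exact (it resolves $\coker\psi_{\mathfrak p}=0$), whence each $I_{\rho_i}(\partial_i)$ contains a unit at $\mathfrak p$. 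Thus $V(I_{\rho_i}(\partial_i))\subseteq V(I_r(\psi))$, i.e.\ $\sqrt{I_r(\psi)}\subseteq\sqrt{I_{\rho_i}(\partial_i)}$. For the reverse inclusion one observes that if $\mathfrak p\supseteq I_r(\psi)$ then, over the residue field $k(\mathfrak p)$, the map $\psi$ drops rank below $r=\dim_{k(\mathfrak p)}\bigl(G\otimes k(\mathfrak p)\bigr)$, so the complex obtained from \eqref{BuRim} by tensoring with $k(\mathfrak p)$ cannot be acyclic (were it acyclic it would be split exact and force $\psi\otimes k(\mathfrak p)$ surjective); a rank count on this complex of vector spaces shows the failure is already witnessed by the vanishing of the size-$\rho_i$ minors of the relevant $\partial_i$, so $\mathfrak p\in V(I_{\rho_i}(\partial_i))$ and $\sqrt{I_{\rho_i}(\partial_i)}\subseteq\sqrt{I_r(\psi)}$. (Alternatively, this radical identity is part of the standard description of the Buchsbaum--Rim complex in \cite[Section A2.6]{Eis} and may simply be quoted.)

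Granting the radical equality, $\grade I_{\rho_i}(\partial_i)=\grade I_r(\psi)$ for every $i$, since grade depends only on the radical. Hence the list of grade conditions $\grade I_{\rho_i}(\partial_i)\geq i$ for $1\leq i\leq s-r+1$ is equivalent to the single inequality $\grade I_r(\psi)\geq s-r+1$ --- the binding index being $i=s-r+1$: indeed if $\grade I_r(\psi)\geq s-r+1$ then every condition holds, while conversely the top condition, $\grade I_{\rho_{s-r+1}}(\partial_{s-r+1})\geq s-r+1$, is exactly $\grade I_r(\psi)\geq s-r+1$. Together with the Buchsbaum--Eisenbud criterion this gives the asserted equivalence. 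The one genuinely delicate point is the radical identity $\sqrt{I_{\rho_i}(\partial_i)}=\sqrt{I_r(\psi)}$ --- in particular, that $\grade I_r(\psi)$ is the precise obstruction appearing at the top of the resolution rather than merely a lower bound for it; the remainder is formal manipulation of the acyclicity criterion.
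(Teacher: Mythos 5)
The paper offers no proof of this proposition, only the citation to \cite[Section A2.6]{Eis}, so there is no internal argument to compare against; your plan of reducing to the Buchsbaum--Eisenbud acyclicity criterion combined with a radical comparison between the ideals $I_{\rho_i}(\partial_i)$ and $I_r(\psi)$ is the standard route and is sound in outline, but two steps need repair. The assertion that the rank conditions ``hold for the Buchsbaum--Rim complex independently of $\psi$'' is incorrect: those conditions concern the \emph{actual} ranks of the differentials (equivalently, the vanishing $I_{\rho_i+1}(\partial_i)=0$), not the binomial bookkeeping of the free ranks, and for $\psi=0$ every $\partial_i=0$ and they fail. They do hold once $\grade I_r(\psi)\geq 1$ — invert a nonzerodivisor in $I_r(\psi)$, the localized complex splits, and rank is preserved by such a localization — but that is an argument you must supply rather than a numerical triviality.

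The more serious gap is the reverse radical inclusion $\sqrt{I_{\rho_i}(\partial_i)}\subseteq\sqrt{I_r(\psi)}$, which is exactly what the ``only if'' direction needs at the binding index $i=s-r+1$. The rank count you sketch over $k(\mathfrak p)$ with $\mathfrak p\supseteq I_r(\psi)$ shows, via the Euler characteristic, only that \emph{some} $\bar\partial_j$ with $j\geq 1$ has rank $<\rho_j$; it does not single out the top differential. For $j=1,2$ the drop is automatic ($\partial_1=\psi$, and the entries of $\partial_2=\eta$ are $r$-minors of $\psi$), but for $j\geq 3$ the entries of $\partial_j$ are merely $\mathbb Z$-linear in the entries of $\psi$ (Remark~\ref{diferentials}), so the containment of the $\rho_j$-minors in $\sqrt{I_r(\psi)}$ is a genuine structural fact about the Buchsbaum--Rim complex, usually established through the generic matrix and specialization. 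Your fallback of simply quoting the radical identity from \cite[Section A2.6]{Eis} is entirely acceptable here — the paper cites the whole proposition — but as a self-contained argument the proof stops short at precisely this point.
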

At our end, the  interest lies in the graded case of the map $\psi$. Precisely, we consider the following landscape, for whose details we claim no priority:
\begin{Proposition}\label{BRimparticularcase}
	Let $R=k[x_1,\ldots,x_d]$ be a standard graded polynomial ring over a field $k$ and let $\psi$  $R(-\boldsymbol{\delta})^{s}\stackrel{\psi} \to R^{r}$ denote a graded  map satisfying $\Ht I_{r}(\psi)=s-r+1$. Write $M:=\coker\psi$.
	Then$:$
	\begin{enumerate}
		\item[{\rm (i)}] The minimal graded free resolution of $M$ has the form
			\begin{eqnarray*}
				0\to R(-s\boldsymbol{\delta})^{\beta_{s-r-1}}\stackrel{\partial}\lar R(-(s-1)\boldsymbol{\delta})^{\beta_{s-r-2}} \stackrel{\partial}\lar\cdots \to R(-(r+2)\boldsymbol{\delta})^{\beta_1} \\
				\stackrel{\partial}\lar R(-(r+1)\boldsymbol{\delta})^{\beta_0}\stackrel{\eta}\lar R(-\boldsymbol{\delta})^{s}\stackrel{\psi}\lar R^{r} \to M\to 0.
			\end{eqnarray*}
		where $\beta_i={r-1+i\choose i}{s\choose i+r+1}$ for each $0\leq i\leq s-r-1.$
		\item[{\rm (ii)}] If, moreover, $r=s-2,$ then the minimal graded free resolution of $M$ has the form 
		\begin{equation*}
			0\to R(-4\boldsymbol{\delta})^{s-2} \stackrel{\psi^t}\lar R(-3\boldsymbol{\delta})^{s} \stackrel{\eta}\lar R(-\boldsymbol{\delta})^{s}\stackrel{\psi}\lar R^{s-2} \to M\to 0,
		\end{equation*}
		where $\eta$ is the skew-symmetric matrix
		\begin{equation*}
			\eta=\left[\begin{matrix}
				0&(-1)^{s+r-2}\delta_{1,2}&\ldots&(-1)^{r}\delta_{1,s}\\
				(-1)^{s+r-1}\delta_{1,2}&0&\ldots&(-1)^{r-1}\delta_{2,s}\\
				\vdots&\vdots&\ddots&\vdots\\
				(-1)^{s-1}\delta_{1,s}&(-1)^{s-2}\delta_{2,s}&\ldots&0
			\end{matrix}\right].
		\end{equation*}
Here the non-null entries of the  $j$th column of $\eta$ are the ordered signed $(s-2)$-minors of the submatrix of $\psi$  obtained by omitting its $j$th column. In particular, one has a matrix equality 
		\begin{equation*}
			\TT\eta=	\left[\begin{matrix}
				\Delta_1&\cdots&\Delta_s\end{matrix}\right]
		\end{equation*}
		where $\TT=\left[\begin{matrix}
			T_1&\cdots&T_{s}\end{matrix}\right]$ is a vector of indeterminates over $R$ and $\Delta_1,\ldots,\Delta_s$ are the ordered signed $(s-1)$-minors of the $s\times (s-1)$ matrix $\left[\begin{matrix}
			\TT^t&\psi^t\end{matrix}\right].$
	\end{enumerate}
\end{Proposition}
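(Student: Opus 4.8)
The plan is to derive everything from the Buchsbaum--Rim complex (\ref{BuRim}) applied to $\psi\colon R(-\boldsymbol\delta)^s\to R^r$, invoking Proposition~\ref{BR_main} once the grade hypothesis is in force. First I would observe that the hypothesis $\Ht I_r(\psi)=s-r+1$, together with $R$ being Cohen--Macaulay, gives $\grade I_r(\psi)=s-r+1$, so Proposition~\ref{BR_main} tells us the Buchsbaum--Rim complex is a free resolution of $M=\coker\psi$. To see that it is \emph{minimal}, I would note that every entry of every differential is a $\mathbb Z$-linear form in the entries of $\psi$ (Remark~\ref{diferentials}), and since $\psi$ is a graded map with all of $F$ concentrated in a single degree $\boldsymbol\delta$, each entry of $\psi$ lies in $R_{\boldsymbol\delta}\subset\fm$; hence every entry of every differential lies in $\fm$ and the complex is minimal. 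For part~(i), the graded shifts are forced: the term $\bigwedge^{r+1}F$ becomes $R(-(r+1)\boldsymbol\delta)^{\beta_0}$ because a basis element $e_{i_1}\wedge\cdots\wedge e_{i_{r+1}}$ has degree $(r+1)\boldsymbol\delta$, and inductively the $i$-th term ${\rm Sym}_{i}(G)^\ast\otimes\bigwedge^{r+1+i}F$ is placed in degree $(r+1+i)\boldsymbol\delta$ since $G$ sits in degree $0$; the ranks $\beta_i=\binom{r-1+i}{i}\binom{s}{i+r+1}$ are just $\rank\big({\rm Sym}_i(G)^\ast\big)\cdot\rank\big(\bigwedge^{r+1+i}F\big)=\binom{r-1+i}{i}\binom{s}{r+1+i}$, a standard computation.

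For part~(ii) I would specialize $r=s-2$. Then the complex has exactly the four nonzero terms $0\to R(-4\boldsymbol\delta)^{s-2}\to R(-3\boldsymbol\delta)^{s}\to R(-\boldsymbol\delta)^s\to R^{s-2}\to M\to 0$: the leftmost term is ${\rm Sym}_{s-r-1}(G)^\ast\otimes\bigwedge^s F={\rm Sym}_1(G)^\ast\otimes\bigwedge^s F$, which has rank $r\cdot 1=s-2$ and shift $s\boldsymbol\delta$ (wait --- I must be careful: $\bigwedge^s F$ has degree $s\boldsymbol\delta$, but the claimed shift is $4\boldsymbol\delta$; this forces $s\boldsymbol\delta$ and $4\boldsymbol\delta$ to agree only after identifying via the self-duality below, so I will instead present it as $R(-4\boldsymbol\delta)$ by dualizing the resolution). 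The cleaner route: since $r=s-2$, the middle map $\eta\colon\bigwedge^{r+1}F=\bigwedge^{s-1}F\to F$ can be identified, after choosing an isomorphism $\bigwedge^{s-1}F\cong F^\ast(\det F)$, with an $s\times s$ matrix, and the formula in Remark~\ref{diferentials} shows its $j$-th column has entries the signed $(s-2)$-minors $\det(\phi_\ell)$ of $\psi$ obtained by deleting columns; antisymmetry $\eta=-\eta^t$ is exactly the Plücker-type relation among these cofactors. Then the leftmost differential $\partial\colon {\rm Sym}_1(G)^\ast\otimes\bigwedge^s F\to\bigwedge^{s-1}F$ becomes, under the same identifications, the transpose $\psi^t$ (this is the self-duality of the Buchsbaum--Rim complex in the codimension-two case, mirroring the Hilbert--Burch phenomenon); checking this is the step I expect to be the main obstacle, since it requires carefully tracking the canonical isomorphisms ${\rm Sym}_1(G)^\ast\otimes\bigwedge^s F\cong G^\ast\otimes\det F$ and matching signs against the explicit differential.

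Finally, the matrix identity $\TT\eta=[\Delta_1\ \cdots\ \Delta_s]$ with $\Delta_1,\dots,\Delta_s$ the signed maximal minors of $\big[\TT^t\ \psi^t\big]$ is a formal consequence of Laplace expansion: each $\Delta_j$ is, by definition, the signed $(s-1)$-minor of the $s\times(s-1)$ matrix $\big[\TT^t\ \psi^t\big]$ omitting row $j$, and expanding along the column $\TT^t$ gives $\Delta_j=\sum_i\pm T_i\cdot(\text{$(s-2)$-minor of }\psi^t)$, where the $(s-2)$-minors appearing are precisely the entries of the $i$-th row of $\eta$ up to the prescribed signs; collecting these identities for $j=1,\dots,s$ is exactly the row-vector equation $\TT\eta=[\Delta_1\ \cdots\ \Delta_s]$. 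So this last part is bookkeeping once the form of $\eta$ in part~(ii) is established, and I would present it as an immediate corollary of the explicit description of $\eta$ together with cofactor expansion.
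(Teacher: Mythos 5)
Your treatment of part (i) and of the final matrix identity (Laplace expansion along the $\TT^t$ column) is essentially the paper's. The real issue is part (ii), where you correctly flag the identification of the leftmost Buchsbaum--Rim differential with $\psi^t$ as "the step I expect to be the main obstacle" and then leave it unresolved. The paper does not attempt this identification at all. Instead it sidesteps it: after choosing the signed basis $\{(-1)^{s-i}e_1\wedge\cdots\wedge\widehat e_i\wedge\cdots\wedge e_s\}$ for $\wedge^{s-1}F$ it reads off the explicit form of $\eta$ from Remark~\ref{diferentials} and observes that this matrix is \emph{skew-symmetric}. Then the equality $\psi\eta=\boldsymbol 0$ transposes to $\eta\psi^t=\boldsymbol 0$, so the sequence with $\psi^t$ replacing the unknown $\psi_1$ is automatically a complex. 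Finally Buchsbaum--Eisenbud acyclicity, using $\Ht I_{s-2}(\psi)=3$, shows that this new complex is exact (hence is \emph{the} minimal resolution by uniqueness). No tracking of canonical isomorphisms ${\rm Sym}_1(G)^\ast\otimes\det F\cong\cdots$ is needed, nor any Plücker-type input; the antisymmetry is a direct consequence of the sign convention in the chosen basis, not a hidden relation. Your route is not wrong in spirit, but it postpones the whole weight of the argument to a step you admit you cannot complete, whereas the paper's transposition trick makes that step moot.

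A secondary remark: you noticed the mismatch between the shifts $R(-4\boldsymbol\delta)$, $R(-3\boldsymbol\delta)$ in the statement and the shifts $R(-s\boldsymbol\delta)$, $R(-(s-1)\boldsymbol\delta)$ that part (i) actually yields when $r=s-2$. Your attempted repair (``dualizing the resolution'') does not produce the printed shifts either; the printed $3\boldsymbol\delta$, $4\boldsymbol\delta$ appear to be a misprint for $(s-1)\boldsymbol\delta$, $s\boldsymbol\delta$, which is both what your degree count gives and what the degrees of the entries of $\eta$ and $\psi^t$ force (entries of $\eta$ are $(s-2)$-minors of degree $(s-2)\boldsymbol\delta$, entries of $\psi^t$ have degree $\boldsymbol\delta$). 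You should not try to force agreement with the literal printed shifts; trust your own computation here.
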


\begin{proof}
	(i) Referring to (\ref{BuRim}, one has) $({\rm Sym}_{i}(G))^{\ast}\otimes \wedge^{i+r+1}F\simeq R^{{r-1\choose i} }\otimes R^{{s\choose i+r+1}}\simeq R^{\beta_i}.$ 
	Then Proposition~\ref{BR_main}  and Remark~\ref{diferentials} give the stated  minimal graded free resolution of $M$.
	
	(ii) As a special case of (i), the minimal graded free resolution of $M$ has the following shape 
	
	\begin{equation*}
		0\to R(-4\boldsymbol{\delta})^{s-2} \stackrel{\psi_1}\lar R(-3\boldsymbol{\delta})^{s} \stackrel{\eta}\lar R(-\boldsymbol{\delta})^{s}\stackrel{\psi}\lar R^{s-2} \to M\to 0.
	\end{equation*}
In order to make $\psi_1$ explicit, consider  respective free bases  $$\{(-1)^{s-1}e_2\wedge\cdots\wedge e_s,\ldots, (-1)^{s-i}e_1\wedge\cdots\wedge\widehat{e}_i\wedge\cdots\wedge e_s,\ldots,(-1)^{(s-s)}e_1\wedge\cdots\wedge e_{s-1}\}$$
	and $\{e_1,\ldots,e_s\}$ of  $ R(-(r+1)\boldsymbol{\delta})^s\simeq \wedge^{s-1} F(-(r+1)\boldsymbol{\delta})$   and $F(-\boldsymbol{\delta})=R(-\boldsymbol{\delta})^{s}$.
	Then, Remark~\ref{diferentials} implies the claimed format of $\eta$. Since  $\psi \eta=\boldsymbol0$ and $\eta$ is skew-symmetric then $\eta\psi^t=\boldsymbol0.$ Thus, we have the complex
	\begin{equation}\label{resMr=s-2}
		0\to R(-4\boldsymbol{\delta})^{s-2} \stackrel{\psi^t}\lar R(-3\boldsymbol{\delta})^{s} \stackrel{\eta}\lar R(-\boldsymbol{\delta})^{s}\stackrel{\psi}\lar R^{s-2} \to M\to 0.
	\end{equation}
Since $\Ht I_{r}(\psi)=3$,  the Buchsbaum-Eisenbud acyclicity criterion implies that \ref{resMr=s-2} is acyclic.
\end{proof}

\subsubsection{Back to Setup~\ref{Basic}}

We now rephrase some of the previous free resolutions in the case where $\Ht I_{n-a}(\Phi_2)=a$.

\begin{Corollary}\label{ResofJ}
	If $\Ht I_{n-a}(\Phi_2)=a$ then the respective minimal graded free resolution of $I/J$ and $R/J$ have the following form
	{\small
		\begin{eqnarray}
			0\to R(-((n-2)\epsilon_2+D))^{\beta_{a-2}}\to R(-((n-3)\epsilon_2+D))^{\beta_{a-3}}\to\cdots\to R(-((n-a+1)\epsilon_2+D))^{\beta_1} \nonumber\\\to R(-((n-a)\epsilon_2+D))^{\beta_0}\stackrel{\eta}\lar R(-D)^{n-1}\stackrel{\Phi_2}\lar R^{n-a}(-(D-\epsilon_2))\to I/J\to 0\nonumber
	\end{eqnarray}}
	and
	{\small
	\begin{eqnarray}\nonumber
		0\to R(-((n-2)\epsilon_2+D))^{\beta_{a-2}}\to R(-((n-3)\epsilon_2+D))^{\beta_{a-3}}\to\cdots\to R(-((n-a+1)\epsilon_2+D))^{\beta_1}\\ \nonumber
		\to R(-((n-a)\epsilon_2+D))^{\beta_0} \stackrel{\Phi_1\eta}\lar R(-(D-\epsilon_1))^{a}\to R\to R/J\to 0\nonumber
\end{eqnarray}}
	where   $\beta_i={n-d-1+i\choose i}{n-1\choose i+n-a+1}$ for  $0\leq i\leq a-2$.
\end{Corollary}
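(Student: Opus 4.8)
The plan is to obtain both resolutions by assembling ingredients already in place: the graded identification of $I/J$ with a cokernel of $\Phi_2$, the Buchsbaum--Rim resolution packaged in Proposition~\ref{BRimparticularcase}(i), and the transfer principle of Proposition~\ref{resij_implies_resrj}. The decisive point is that the standing hypothesis $\Ht I_{n-a}(\Phi_2)=a$ matches exactly the Buchsbaum--Rim acyclicity bound $\Ht I_r(\psi)=s-r+1$ once we set $s=n-1$, $r=n-a$ (so that $s-r+1=a$).

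First I would record the homogeneous isomorphism $I/J\simeq\coker\!\big(\Phi_2\colon R(-D)^{n-1}\to R(-(D-\epsilon_2))^{n-a}\big)$, which is the identification already noted before Proposition~\ref{resij_implies_resrj}: a short diagram chase with the Hilbert--Burch sequence \eqref{two-degree-resolution} shows that, after killing the summand $R(-(D-\epsilon_1))^a$ whose image under $\mathfrak g$ is $J$, the resulting surjection onto $I/J$ has kernel precisely $\Image\Phi_2$, since $\ker\mathfrak g=\Image\phi$ and the block of $\phi$ corresponding to the $\epsilon_2$-rows is $\Phi_2$. Then, twisting by $D-\epsilon_2$, the map $\Phi_2$ becomes a graded map $R(-\epsilon_2)^{n-1}\to R^{n-a}$, so Proposition~\ref{BRimparticularcase}(i) applies with $\boldsymbol\delta=\epsilon_2$, $s=n-1$, $r=n-a$ and delivers the Buchsbaum--Rim resolution of this cokernel, with $i$th free module $R(-(i+r+1)\epsilon_2)^{\beta_i}$ for $0\le i\le a-2$ and $\beta_i$ the binomial product stated there. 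Untwisting by $D-\epsilon_2$ turns $R(-(i+n-a+1)\epsilon_2)^{\beta_i}$ into $R(-((i+n-a)\epsilon_2+D))^{\beta_i}$ and turns the last two free modules of the complex into $R(-D)^{n-1}$ and $R^{n-a}(-(D-\epsilon_2))$; this is the asserted resolution of $I/J$. It is minimal, because every differential of the Buchsbaum--Rim complex has entries that are either $\mathbb Z$-linear forms in the entries of $\Phi_2$ or are $(n-a)$-minors of $\Phi_2$, all homogeneous of strictly positive degree since $\epsilon_2\ge 1$, hence inside the irrelevant maximal ideal.

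Finally I would feed this minimal resolution into Proposition~\ref{resij_implies_resrj}, with the map there called $\psi_1$ now being the Buchsbaum--Rim map $\eta\colon R(-((n-a)\epsilon_2+D))^{\beta_0}\to R(-D)^{n-1}$. That proposition replaces the tail $R(-D)^{n-1}\xrightarrow{\Phi_2}R^{n-a}(-(D-\epsilon_2))\to I/J\to 0$ by $R(-(D-\epsilon_1))^a\xrightarrow{\mathfrak f}R\to R/J\to 0$ and replaces the incoming map $\eta$ by $\Phi_1\eta$, which is precisely the stated resolution of $R/J$; minimality survives since $\Phi_1\eta$ has entries of degree $\epsilon_1+(n-a)\epsilon_2>0$ and $\mathfrak f$ has entries $f_i$ of degree $D-\epsilon_1=(a-1)\epsilon_1+(n-a)\epsilon_2>0$. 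I do not expect a genuine obstacle: the mathematical content is entirely contained in Propositions~\ref{BRimparticularcase} and \ref{resij_implies_resrj}, and the only real care needed is the shift bookkeeping — checking that the normalization and untwisting of $\Phi_2$ agree at both ends of the complex, and that the index shift $i+r+1$ is converted to $(i+n-a)\epsilon_2+D$ uniformly along the whole resolution.
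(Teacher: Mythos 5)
Your argument is correct and is essentially the paper's own proof: identify $I/J$ with (a twist of) $\coker\Phi_2$, apply the Buchsbaum--Rim resolution (via Proposition~\ref{BRimparticularcase}(i), itself a restatement of Proposition~\ref{BR_main}) with $s=n-1$, $r=n-a$, $\boldsymbol\delta=\epsilon_2$, untwist by $-(D-\epsilon_2)$, then pass from the resolution of $I/J$ to that of $R/J$ via Proposition~\ref{resij_implies_resrj}. The only difference is that you make explicit the identification $I/J\simeq\coker\Phi_2(-(D-\epsilon_2))$ and the minimality bookkeeping, both of which the paper leaves implicit.
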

\begin{proof} Since  $\Ht I_{n-a}(\Phi_2) =a=(n-1)-(n-a)+1$ then, by Proposition \ref{BR_main} the minimal graded free resolution of $\coker\Phi_2$ is
	
	{\small
		\begin{eqnarray}
			0\to R(-(n-1)\epsilon_2)^{\beta_{a-2}}\to R(-(n-2)\epsilon_2)^{\beta_{a-3}}\to\cdots\to R(-(n-a+2)\epsilon_2)^{\beta_1} \nonumber\\\to R(-(n-a+1)\epsilon_2)^{\beta_0}\to R(-\epsilon_2)^{n-1}\stackrel{\Phi_2}\lar R^{n-a}\to \coker \Phi_2\to 0\nonumber
	\end{eqnarray}}
			But, in this case, $I/J\simeq \coker\Phi_2(-(D-\epsilon_2)) .$ Hence, the minimal graded free resolution of $I/J$ it is as stated. The minimal graded free resolution of $R/J$ follows from the minimal graded free resolution of $I/J$ and Proposition~\ref{resij_implies_resrj}.
		\end{proof}
		
		
		The hypothesis $\Ht I_{n-a}(\Phi_2)=a$ in the above corollary imposes $1\leq a\leq d,$ where $d=\dim R$. Obviously, $J$ is a principal ideal if $a=1$ and a complete intersection if $a=2.$ Thus, the interest lies on the range $3\leq a\leq d.$ 
		In this range, if $\Ht I_{n-a}(\Phi_2)=a$ then the ideal $J$ is not perfect since $\dim R/J=d-2$, while $\depth R/J=d- {\rm hd}_R(R/J)\leq d-3$.
		
		We tacitly assume that $\Ht I_{n-a}(\Phi_2)=a$ for the rest of the section.
		Throughout will consider the extreme values $a=3$ and $a=d$ in some more detail.
		
		\smallskip

		{\Large $\bullet \;a=3$}
		
		\smallskip

		The free resolution of the ideal $J$ follows from Corollary~\ref{ResofJ}. We next lok at its symmetric algebra.
		Letting $S=R[T_1,T_2,T_3]=R[\TT]$ be a standard polynomial ring over  $R=k[x_1,\ldots,x_d]$,  we know that the symmetric algebra ${\rm Sym}_R(J)$ of $J$ over $R$ is presented over $S$ by the ideal  $I_1(\TT \Phi_1\,\eta)$ generated by the entries of the matrix product $[T_1\; T_2\: T_3]\, \Phi_1\,\eta$. 
		\begin{Proposition} Assume that $a=3.$ If $\Ht I_{n-3}(\Phi_2)=3$ then the symmetric algebra ${\rm Sym}_R(J)$ is a Cohen-Macaulay $S$-module with  minimal  graded free resolution 
			{\small
				\begin{eqnarray*}
					0 &\to & S(-((n-3)\epsilon_2+2\epsilon_1+2)) \oplus S(-((n-2)\epsilon_2+\epsilon_1+1)))^{n-3}\\ &\to& S(-((n-3)\epsilon_2+\epsilon_1+1)^{n-1} \to S\to {\rm Sym}_R(J)\to0.
			\end{eqnarray*}}
		\end{Proposition}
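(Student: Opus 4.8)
The plan is to build the resolution of $\mathrm{Sym}_R(J)$ from the presentation $I_1(\TT\,\Phi_1\,\eta)\subset S$ together with the Buchsbaum--Rim resolution of $\coker\Phi_2$ supplied by Proposition~\ref{BRimparticularcase}. Recall that $\mathrm{Sym}_R(J)=S/I_1(\TT\,\Phi_1\,\eta)$, where $\eta$ is the (skew-symmetric) first Buchsbaum--Rim differential associated to $\Phi_2$, and that in the case $a=3$ the relevant rank is $r=s-2$ with $s=n-1$, so by Proposition~\ref{BRimparticularcase}(ii) the module $\coker\Phi_2$ has a length-$3$ resolution $0\to R(-4\boldsymbol\delta)^{n-3}\stackrel{\Phi_2^t}\to R(-3\boldsymbol\delta)^{n-1}\stackrel{\eta}\to R(-\boldsymbol\delta)^{n-1}\stackrel{\Phi_2}\to R^{n-3}$, after inserting the appropriate shifts (here $\boldsymbol\delta=\epsilon_2$). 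First I would record the explicit matrix identity $\TT\,\eta=[\Delta_1\cdots\Delta_{n-1}]$ from the last display of Proposition~\ref{BRimparticularcase}(ii), so that $\TT\,\Phi_1\,\eta=\Phi_1^{\,\TT}\cdot[\Delta_1\cdots\Delta_{n-1}]$-type relations become available; this exhibits $I_1(\TT\,\Phi_1\,\eta)$ as the image of a map whose matrix is $\Phi_1\,\eta$ with the $\TT$-row folded in.

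Next I would assemble the candidate complex. The presentation map $S(-((n-3)\epsilon_2+\epsilon_1+1))^{n-1}\stackrel{\TT\Phi_1\eta}\to S$ has its syzygies governed by the combination of (i) the second syzygy $\Phi_2^t$ of $\coker\Phi_2$ (giving the $S(-((n-2)\epsilon_2+\epsilon_1+1))^{n-3}$ summand after shifting by the degree of the $\TT\Phi_1$ factor) and (ii) the ``Jacobian-dual'' syzygy coming from $\TT$ itself: since the generators are linear in $\TT$, the vector of partials in the $T_i$ yields one extra linear syzygy, contributing the $S(-((n-3)\epsilon_2+2\epsilon_1+2))$ summand (the bidegree bookkeeping: the generators have $x$-degree $(n-3)\epsilon_2+\epsilon_1$ and $\TT$-degree $1$, and the two syzygy types have the stated total degrees). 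The degrees are then pinned down by a routine count: $\Phi_1$ contributes $\epsilon_1$, $\eta$ contributes $3\epsilon_2$, and the base generators of $\coker\Phi_2$ sit in the Buchsbaum--Rim shifts $-\epsilon_2$ and $-3\epsilon_2$, which after the global twist by $-(D-\epsilon_2)$ coming from $I/J\simeq\coker\Phi_2(-(D-\epsilon_2))$ and by the $\TT$-degree produces exactly $(n-3)\epsilon_2+\epsilon_1+1$, $(n-2)\epsilon_2+\epsilon_1+1$ and $(n-3)\epsilon_2+2\epsilon_1+2$.

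To prove exactness and Cohen--Macaulayness I would apply the Buchsbaum--Eisenbud acyclicity criterion to the proposed three-term complex of free $S$-modules. The rank condition is immediate (ranks $n-3+1=n-2$, then $n-1$, then $1$ add up correctly for a resolution of a cyclic module with the stated last free module of rank $(n-3)+1=n-2$). The grade conditions reduce to estimating $\Ht_S$ of two Fitting ideals: $\grade I_1(\TT\Phi_1\eta)\ge 1$ (clear, since $J\neq 0$ forces the presentation ideal proper and nonzero), and $\grade$ of the ideal of maximal minors of the last map $\ge 2$. For the latter I would lift the matrix to the generic situation — replace $\Phi_2$ by a generic $(n-3)\times(n-1)$ matrix and $\Phi_1$ by a generic $3\times(n-1)$ matrix over a polynomial ring on all the entries together with $T_1,T_2,T_3$ — verify the height there by the usual genericity/catenarity argument (the relevant determinantal locus has the expected codimension), and then specialize, using that $\Ht I_{n-3}(\Phi_2)=3$ is exactly the hypothesis guaranteeing the specialization does not drop the needed height. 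Once acyclicity holds, $\mathrm{pd}_S\mathrm{Sym}_R(J)=2$, and since $\dim S=d+3$ while $\mathrm{Sym}_R(J)$ has dimension $d+1$ (it maps onto $R[Jt]$ whose dimension is $\ell(J)$, and one checks $\dim\mathrm{Sym}_R(J)=d+1$ from the resolution itself), the Auslander--Buchsbaum equality gives $\depth_S\mathrm{Sym}_R(J)=d+1=\dim$, i.e. Cohen--Macaulay.

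The main obstacle I anticipate is the height computation for the ideal of maximal minors of the last differential — i.e. showing $\grade\ge 2$ there — because the matrix mixes the $\TT$-block with $\Phi_1\eta$, and $\eta$ is itself built from the $(n-3)$-minors of $\Phi_2$, so the entries are not independent indeterminates and the naive "generic" lift must be set up carefully (one genericizes $\Phi_1$ and $\Phi_2$ separately, keeps $\TT$ generic, and must check that the composite loses no height). Everything else — the bidegree bookkeeping, the rank count, the structure of $\eta$ — is bookkeeping already packaged in Proposition~\ref{BRimparticularcase} and Proposition~\ref{resij_implies_resrj}, and I would present it as such rather than recompute it.
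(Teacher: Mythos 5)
Your proposal follows a genuinely different --- and significantly longer --- route than the paper's, and it contains a gap at the very step you yourself flag as the main obstacle.

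The paper's argument is short because of one observation you circle around without making: the last display of Proposition~\ref{BRimparticularcase}(ii), $\TT\eta=[\Delta_1\cdots\Delta_{n-1}]$, is a polynomial identity in the entries of $\TT$, so it remains valid under the substitution $\TT\mapsto\TT\Phi_1$ (a $1\times(n-1)$ row of $T$-linear forms). This exhibits the entries of $\TT\Phi_1\eta$ as precisely the signed $(n-2)$-minors of the $(n-1)\times(n-2)$ matrix $[(\TT\Phi_1)^t\,|\,\Phi_2^t]$; that is, $I_1(\TT\Phi_1\eta)$ is a Hilbert--Burch ideal. Once one knows $\Ht I_1(\TT\Phi_1\eta)=2$, the Hilbert--Burch theorem delivers the whole resolution \emph{and} perfection (hence Cohen--Macaulayness) in one stroke; the Buchsbaum--Eisenbud verification, the generic lift of the mixed matrix, and the delicate height bound for ``the last differential'' all evaporate.

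The remaining ingredient is exactly the grade-two bound you identify as your main obstacle, and here your sketch is not established. You propose genericizing $\Phi_1$, $\Phi_2$, $\TT$ separately and invoking an expected-codimension argument, but you do not carry it out, and since the entries of $\Phi_1\eta$ are built from $(n-3)$-minors of $\Phi_2$ the generic lift really does need justification. Your fallback --- ``one checks $\dim\mathrm{Sym}_R(J)=d+1$ from the resolution itself'' --- is circular: the resolution's acyclicity is what you are trying to prove, and it depends on the height estimate you have not yet supplied. The paper sidesteps all of this by computing $\dim\mathrm{Sym}_R(J)=d+1$ directly from the Huneke--Rossi formula $\sup_{P\in\spec{R}}\{\mu(J_P)+\dim R/P\}$ via a short case analysis over $\Ht P$ (using $\mu(J)=3$ and $\Ht J=2$), which immediately forces $\Ht I_1(\TT\Phi_1\eta)=2$. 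You should either run the Huneke--Rossi computation or give an independent, worked-out proof of the height bound; as it stands, that step of your argument is missing.
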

		\begin{proof} (i) By the Huneke-Rossi formula \cite[Theorem 2.6]{HR} we have $$\dim{\rm Sym}_R(J) = \sup_{P\in\spec{R}}\{\mu(J_P)+\dim R/P\}.$$
			If $\Ht P\geq 2$ then $$\mu(J_P)+\dim R/P\leq 3+(d-2)=d+1.$$
			If $\Ht P=1$ then
			$$\mu(J_P)+\dim R/P= 1+(d-1)=d.$$
			If $P=(0)$ then
			$$\mu(J_P)+\dim R/P=d+1.$$
			Thus, $\dim{\rm Sym}_R(J)=d+1.$

			It follows that $\Ht I_1(\TT \Phi_1\,\eta)=2.$ By Proposition~\ref{BRimparticularcase}(ii),  $I_1(\TT \Phi_1\,\eta)$ is the ideal generated by the $(n-2)$-minors of the $(n-1)\times(n-2)$ matrix $\left[\begin{array}{c|c}
				(\TT \Phi_1)^t&\Phi_2^{t}\end{array}\right].$ Hence, by the Hilbert-Burch theorem, $I_1(\TT \Phi_1\eta)$ is a perfect ideal of height $2$ with free resolution
			$$0\to S^{n-2}
			\stackrel{[(\TT \Phi_1)^t |  \Phi_2^{t}]}{\longrightarrow} S^{n-1}\to I_1(\TT \Phi_1\eta)\to 0.$$
			Inspection of the  degrees in the involved matrices gives the remaining degrees of  the minimal graded free resolution of ${\rm Sym}_R(J)$ as stated.
		\end{proof}

		{\Large $\bullet \;a=d$ and $\epsilon_2=1$}
		
		\smallskip
		
		We assume throughout that $\Phi_2\neq 0$, i.e., that $\phi$ has effectively nonzero linear entries.
		In other words, $a\leq n-1$, and hence in the present case, $n-d\geq              1$.
		
		We need the following lemma, which runs independently and is mostly well-known:
		
		\begin{Lemma}\label{BR_basic_equivalences}
			Let $R=k[x_1,\ldots,x_d]$ denote as previously a standard graded polynomial ring of a field $k$ and let  $R(-1)^{n-1} \stackrel{\psi}{\to} R^{n-d}$ be a graded linear map. The following assertions are equivalent:
			\begin{enumerate}
				\item[{\rm(i)}] $I_{n-d}(\psi)=(x_1,\ldots,x_d)^{n-d}.$
				\item[{\rm(ii)}]  $\Ht I_{n-d}(\psi)=d.$
				\item[{\rm(iii)}]  The Buchsbaum--Rim complex of $\psi$ is acyclic.
				\item[{\rm(iv)}] The Eagon--Northcott complex of $I_{n-d}(\psi)$ is acyclic.
			\end{enumerate}
		\end{Lemma}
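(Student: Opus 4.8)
The plan is to run through the four conditions and observe that almost everything is formal, concentrating the real work on the implication $(ii)\Rightarrow(i)$. Write $\fm=(x_1,\ldots,x_d)$, and regard $\psi$ as a map $F:=R(-1)^{n-1}\to G:=R^{n-d}$, so that in the notation of the Buchsbaum--Rim and Eagon--Northcott complexes $s:=\rank F=n-1$, $r:=\rank G=n-d$, whence $s-r+1=d$. Since $R$ is a polynomial ring, hence Cohen--Macaulay, $\grade I_{n-d}(\psi)=\Ht I_{n-d}(\psi)$, and this common value is always at most $\dim R=d$. The implication $(i)\Rightarrow(ii)$ is then immediate: $\sqrt{\fm^{n-d}}=\fm$ has height $d$. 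For $(ii)\Leftrightarrow(iii)$ I would invoke Proposition~\ref{BR_main} directly: the Buchsbaum--Rim complex of $\psi$ resolves $\coker\psi$ if and only if $\grade I_{n-d}(\psi)\geq s-r+1=d$, i.e. iff $\Ht I_{n-d}(\psi)=d$. For $(ii)\Leftrightarrow(iv)$ I would cite the classical acyclicity criterion for the Eagon--Northcott complex (e.g. \cite[Section A2.6]{Eis} or \cite{BrunsVett}), which says in the same way that the Eagon--Northcott complex attached to $\psi$ is acyclic — a free resolution of $R/I_{n-d}(\psi)$ — if and only if $\grade I_{n-d}(\psi)\geq s-r+1=d$.

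The substantive step is $(ii)\Rightarrow(i)$, which I would route through $(iv)$. One inclusion is free: each $(n-d)\times(n-d)$ minor of $\psi$ is a form of degree $n-d$ (the entries of $\psi$ are linear and $n-d\geq1$, since $\Phi_2\neq0$ in the standing setup), so $I_{n-d}(\psi)\subseteq\fm^{n-d}$. For the reverse inclusion, the point is that under $(ii)$ the Eagon--Northcott complex $\mathcal{E}_\bullet$ of $\psi$ is the \emph{minimal} graded free resolution of $R/I_{n-d}(\psi)$: all of its differentials have entries in $\fm$ — the first differential is given by the $(n-d)$-minors, which have positive degree $n-d$, and every subsequent differential is linear in the (linear) entries of $\psi$ up to sign. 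Its first free module is $\wedge^{n-d}F=R(-(n-d))^{\binom{n-1}{n-d}}$, so $I_{n-d}(\psi)$ is minimally generated by exactly $\binom{n-1}{n-d}$ forms, all of degree $n-d$. A minimal homogeneous generating set concentrated in the single degree $n-d$ is automatically $k$-linearly independent inside $R_{n-d}$ — otherwise one generator would lie in the ideal generated by the others.

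Finally I would close the argument with the numerical identity $\binom{n-1}{n-d}=\binom{n-1}{d-1}=\dim_k R_{n-d}$, the last equality being the count of monomials of degree $n-d$ in $d$ variables. Hence those $\binom{n-1}{n-d}$ minimal generators of $I_{n-d}(\psi)$ form a $k$-basis of $R_{n-d}$, so $R_{n-d}\subseteq I_{n-d}(\psi)$ and therefore $\fm^{n-d}=(R_{n-d})R\subseteq I_{n-d}(\psi)$; combined with the opposite inclusion this gives $I_{n-d}(\psi)=\fm^{n-d}$, which is $(i)$, and the cycle of implications is complete. I expect the only delicate point to be the assertion that $\mathcal{E}_\bullet$ is a \emph{minimal} resolution, which is exactly where one uses that $n-d\geq1$ (so that even the first differential, given by the maximal minors, has entries of positive degree); everything else is either the standard acyclicity criteria or the elementary observation that equal-degree minimal generators are linearly independent. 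An alternative finish, if one prefers to avoid quoting the first Betti number, would be to compute $\dim_k R/I_{n-d}(\psi)$ from the minimal Eagon--Northcott resolution and compare it with $\dim_k R/\fm^{n-d}=\binom{n-1}{d}$, using that $\fm^{n-d}$ itself arises as the ideal of maximal minors of a linear matrix of maximal height (its Eagon--Northcott resolution is linear); but the generator-count argument above is shorter.
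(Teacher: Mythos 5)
Your proposal is correct and follows essentially the same route as the paper: the equivalences among (ii), (iii), (iv) via the standard acyclicity criteria, and then (iv)$\Rightarrow$(i) by noting that the Eagon--Northcott complex is the minimal graded free resolution, so $I_{n-d}(\psi)$ has first Betti number $\binom{n-1}{n-d}=\binom{n-1}{d-1}=\mu(\fm^{n-d})$, forcing equality with $\fm^{n-d}$ inside which it sits. Your additional remark that the degree-$(n-d)$ minimal generators are $k$-linearly independent and hence span all of $R_{n-d}$ merely makes explicit the last step the paper leaves tacit.
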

		\begin{proof}
			(i) $\Rightarrow$ (ii) This is obvious.
			
			(ii) $\Leftrightarrow$ (iii) This is Theorem~\ref{BR_main}.
			
			(ii) $\Rightarrow$ (iv) This is well-known.
			
			(iv) $\Rightarrow$ (i) Since the Eagon--Northcott complex is a minimal free graded resolution of $I_{n-d}(\psi)$, the minimal number of generators of the latter is ${{n-1}\choose {n-d}}$, which is the minimal number of generators of $(x_1,\ldots,x_d)^{n-d}.$
		\end{proof}

		\smallskip
		
		Recall the notion of {\em chaos invariant} from \cite[Section 2]{linpres2018}.

		\begin{Proposition} With the notation and assumptions of {\rm Setup~\ref{Basic}}, assume that $a=d.$\label{saturation_equivs}
			The following are equivalent:
			\begin{enumerate}
				\item[{\rm(a)}] $J^{\rm sat}=I.$
				\item[{\rm(b)}] $\Ht I_{n-d}(\Phi_2)=d.$
				\item[{\rm(c)}] The Buchsbaum--Rim complex of $\Phi_2$ is a graded minimal free resolution of $I/J$. 
			\end{enumerate}
			Moreover, if $d=3$ and $\epsilon_1 =1$ then these conditions are equivalents to$:$
			\begin{enumerate}
				\item[{\rm(d)}] The chaos invariant  of $I$ is $\geq n-3.$
			\end{enumerate}
		\end{Proposition}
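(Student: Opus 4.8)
The plan is to prove the equivalences (a) $\Leftrightarrow$ (b) $\Leftrightarrow$ (c) directly, with Lemma~\ref{BR_basic_equivalences} as the main tool, and then — in the extra case $d=3,\ \epsilon_1=1$ — to match these with (d) by unwinding the definition of the chaos invariant from \cite[Section~2]{linpres2018}.

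The common pivot is the homogeneous isomorphism $I/J\simeq\coker\Phi_2$, up to a degree twist, which is available as soon as $\Ht I_{n-1}(\phi)=2$ (a standing hypothesis of the section). Indeed, the Hilbert--Burch complex $0\to R^{n-1}\stackrel{\phi}{\to}R^{n}\stackrel{\mathbf{g}}{\to}R\to R/I\to 0$ is then exact, with $\mathbf{g}=[\,g_1\ \cdots\ g_n\,]$ the row of signed maximal minors and $\mathbf{g}\,\phi=\mathbf{0}$; since $J=(g_1,\dots,g_a)$, reducing modulo $J$ shows that the composite $R^{n-a}=\langle e_{a+1},\dots,e_n\rangle\hookrightarrow R^{n}\stackrel{\mathbf{g}}{\to}I\twoheadrightarrow I/J$ is surjective with kernel exactly the image of $\Phi_2$ (the bottom block of $\phi$, read as a map $R^{n-1}\to R^{n-a}$). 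Hence $\Supp_R(I/J)=V\bigl(I_{n-a}(\Phi_2)\bigr)$, because $I_{n-a}(\Phi_2)$ is the zeroth Fitting ideal of $\coker\Phi_2$, and therefore $\dim(I/J)=d-\Ht I_{n-a}(\Phi_2)$. Moreover, since $\epsilon_2=1$, the matrix $\Phi_2\colon R(-1)^{n-1}\to R^{n-d}$ (here $n-a=n-d$) is a graded linear map, so Lemma~\ref{BR_basic_equivalences} applies verbatim to it.

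For (a) $\Leftrightarrow$ (b): because $d\ge 3$ the Cohen--Macaulay ring $R/I$ has positive depth, so $I=I^{\mathrm{sat}}$; as $J\subseteq I$ one always has $J^{\mathrm{sat}}\subseteq I^{\mathrm{sat}}=I$, hence $J^{\mathrm{sat}}=I$ is equivalent to $I\subseteq J^{\mathrm{sat}}$, i.e.\ to $\fm^{\,k}I\subseteq J$ for some $k$, i.e.\ to $I/J$ having finite length, i.e.\ to $\dim(I/J)\le 0$. By the pivot this says $\Ht I_{n-d}(\Phi_2)\ge d$; and since the ideal of $(n-d)\times(n-d)$ minors of the $(n-d)\times(n-1)$ matrix $\Phi_2$ has height at most $(n-1)-(n-d)+1=d$, this is exactly (b). For (b) $\Leftrightarrow$ (c): Lemma~\ref{BR_basic_equivalences} equates (b) with the acyclicity of the Buchsbaum--Rim complex of $\Phi_2$; by the pivot this complex then resolves $\coker\Phi_2\simeq I/J$ (up to twist), and it is automatically minimal because, by Remark~\ref{diferentials}, each of its differentials has entries that are $\Z$-linear forms in the entries of $\Phi_2$, or $(n-d)\times(n-d)$ minors of $\Phi_2$, hence homogeneous of positive degree, hence in $\fm$, since $\Phi_2\neq\mathbf{0}$ and $\epsilon_2=1$. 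This yields (c), and the implications reverse.

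Finally, when in addition $d=3$ and $\epsilon_1=1$, every entry of $\phi$ is linear and $I$ is equigenerated; Lemma~\ref{BR_basic_equivalences} also equates (b) with the equality $I_{n-3}(\Phi_2)=(x_1,x_2,x_3)^{n-3}$. It then remains to recognize this equality as ``the chaos invariant of $I$ is $\ge n-3$''. Here one must use the definition of the chaos invariant from \cite[Section~2]{linpres2018}: the task is to verify that, for a codimension-two perfect ideal $I\subset k[x_1,x_2,x_3]$ equigenerated and linearly presented by $\phi$, the chaos invariant attains the value $n-3$ exactly when the $(n-3)$-minors of the $(n-3)$-row subblock $\Phi_2$ generate the full power $\fm^{\,n-3}$. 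Reconciling this $\fm$-adic (Fitting-ideal) description with the syzygetic definition of the chaos invariant in \cite{linpres2018} is the step I expect to be the main obstacle; granting it, all four conditions (a), (b), (c), (d) are equivalent.
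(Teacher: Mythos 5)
For the equivalences (a)\,$\Leftrightarrow$\,(b)\,$\Leftrightarrow$\,(c), your argument is correct and follows essentially the same route as the paper. The paper simply cites that ``$\Phi_2$ is the syzygy matrix of the $R$-module $I/J$'' and that the zeroth Fitting ideal and the annihilator of $I/J$ have the same radical, then invokes Lemma~\ref{BR_basic_equivalences} for (b)\,$\Leftrightarrow$\,(c); you re-derive the pivot $I/J\simeq\coker\Phi_2$ from scratch via the Hilbert--Burch complex (the paper had this from Proposition~\ref{resij_implies_resrj} and Corollary~\ref{ResofJ}), and phrase (a)\,$\Leftrightarrow$\,(b) via $\dim(I/J)\le 0$, which is just the support/Fitting-ideal observation in slightly different clothing. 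Your extra note that the Buchsbaum--Rim differentials land in $\fm$ (hence minimality in (c)) is a small but correct addition the paper leaves implicit.

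For (d), you explicitly flag that you cannot match the Fitting-ideal characterization $I_{n-3}(\Phi_2)=\fm^{\,n-3}$ with the definition of the chaos invariant, and you leave the step open. This is a genuine gap in your proposal, though a modest one: the paper itself discharges (d)\,$\Leftrightarrow$\,(b) with a single sentence (``follows immediately from the definitions''), so all that is missing is the unpacking of the chaos invariant from \cite[Section~2]{linpres2018}. The rest of the proof is sound and compatible with the paper's approach; to complete it you would need to cite that definition and observe it coincides, in the present setup ($d=3$, $\epsilon_1=\epsilon_2=1$, so $\phi$ fully linear), with the statement that the $(n-3)$-minors of the linear subblock $\Phi_2$ attain the maximal possible height, which by Lemma~\ref{BR_basic_equivalences} is item~(b).
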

		\begin{proof} First, recall that the matrix $\Phi_2$ is the syzygy matrix of the $R$-module $I/J.$ Thus, since the annihilator $0:_RI/J$ and the zeroth Fitting ideal $I_{n-d}(\Phi_2)$ of $I/J$ have the same radical then $$\Ht 0:_RI/J=\Ht I_{n-d}(\Phi_2).$$
			With this equality and the fact that $J^{\rm sat}= I$ if and only if $I/J$ is $(x_1,\ldots,x_d)$-primary  as an $R$-module, the equivalence of (a) and (b) follows through.
			The equivalence of (b) and (c) follows from Lemma~\ref{BR_basic_equivalences}.
			
			The equivalence of (d) and (b) under the stated hypotheses follows immediately from the definitions.
		\end{proof}
		
		Given an integer $s\geq 2$, an ideal $I\subset R$ in a Noetherian ring satisfies the property $G_s,$} if, locally at at any prime ideal $P\supset I$ such that $\Ht P\leq s-1$, the minimal number of generators of $I$ is at most $\Ht P$. 
	Clearly, if $t\leq s$ then  $G_s \Rightarrow G_t$.
	If this property is satisfied for $s\geq \dim R+1$ (hence, for every $s\geq 1$) then the condition has been variously called  $G_{\infty}$ or $F_1$.
	The condition  was originally introduced in \cite{Art-Nag1972} and has since been explored by many authors.
	
	\begin{Corollary}
		Assume the equivalent conditions of {\rm Proposition~\ref{saturation_equivs}}. Then$:$
		\begin{enumerate}
			\item[{\rm (a)}] $J$ satisfies $G_{\infty}$ if and only if $I$ satisfies $G_d.$
			\item[{\rm(b)}] If $\epsilon_1=\epsilon_2=1$ then $J$ is a reduction of $I.$
			\item[{\rm(c)}] If $d=3$ then $J$ is of linear type if and only if $I$ satisfies $G_3.$
		\end{enumerate}
	\end{Corollary}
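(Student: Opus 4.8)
The plan is to deduce all three items from the single structural fact furnished by Proposition~\ref{saturation_equivs}: under the standing hypothesis $J^{\rm sat}=I$, so $I/J$ has finite length, whence $J_{\mathfrak p}=I_{\mathfrak p}$ for every prime $\mathfrak p\neq\fm$ and $\sqrt J=\sqrt I$. I will also use the combinatorics of $\phi$: taking $\Phi_1$ as the first $d$ rows, the Hilbert--Burch presentation $R^{n-1}\stackrel{\phi}\to R^n\stackrel{\mathbf g}\to I$ has $\mathbf g=(g_1,\dots,g_n)$ the vector of signed maximal minors, the $a=d$ minors fixing $\Phi_2$ being $g_1,\dots,g_d$, so $J=(g_1,\dots,g_d)$; splitting $\mathbf g=(\mathbf g_1\mid\mathbf g_2)$ accordingly, the syzygies $\mathbf g\,\phi=\mathbf 0$ read $\Phi_2^{\,t}\mathbf g_2^{\,t}=-\Phi_1^{\,t}\mathbf g_1^{\,t}$. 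For \textup{(a)}: since $J$ is $a=d$--generated, $\mu(J_{\mathfrak p})\le d=\Ht\mathfrak p$ at each maximal ideal, so $G_\infty$ for $J$ is automatic there; every other prime $\mathfrak p\supseteq J$ has $\Ht\mathfrak p\le d-1$, hence $\mathfrak p\neq\fm$ and $\mu(J_{\mathfrak p})=\mu(I_{\mathfrak p})$, and (using $\sqrt J=\sqrt I$ and $\Ht I=2$) these are precisely the primes that figure in the condition $G_d$ for $I$; therefore $G_\infty(J)\Leftrightarrow G_d(I)$.

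For \textup{(b)} I will prove the equivalent assertion $\overline J=\overline I$ (integral closures of ideals), which is what it means for $J$ to be a reduction of $I$. By the valuative criterion it suffices to verify $v(J)=v(I)$ for every valuation $v$ of $\mathrm{Frac}(R)$ nonnegative on $R$, where $v$ of an ideal is $\min v$ over any generating set; since $J\subseteq I$ only $v(I)\ge v(J)$ is in question. As $\epsilon_2=1$, Lemma~\ref{BR_basic_equivalences} upgrades the standing condition $\Ht I_{n-d}(\Phi_2)=d$ to $I_{n-d}(\Phi_2)=\fm^{n-d}$, so $\fm^{n-d}I\subseteq J$. If $v$ is not centered at $\fm$, some $x_\ell$ is a $v$--unit and $x_\ell^{\,n-d}g_j\in J$ forces $v(g_j)\ge v(J)$ for all $j$, hence $v(I)\ge v(J)$. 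If $v$ is centered at $\fm$, put $\mu=\min_\ell v(x_\ell)>0$, so $v(\fm^{\,k})=k\mu$; choose an $(n-d)\times(n-d)$ submatrix $M$ of $\Phi_2^{\,t}$ with $v(\det M)$ minimal, which equals $(n-d)\mu$ because the $(n-d)$--minors of $\Phi_2$ generate $\fm^{n-d}$. Restricting $\Phi_2^{\,t}\mathbf g_2^{\,t}=-\Phi_1^{\,t}\mathbf g_1^{\,t}$ to the rows of $M$ and letting $N$ be the matching rows of $\Phi_1^{\,t}$ gives $\mathbf g_2^{\,t}=-M^{-1}N\,\mathbf g_1^{\,t}$ over $\mathrm{Frac}(R)$; each entry of $M^{-1}N=({\rm adj}(M)\,N)/\det M$ has $v$--value at least $(n-d-1)\mu+\mu-(n-d)\mu=0$, since the cofactors of the linear matrix $M$ lie in $\fm^{\,n-d-1}$ and the entries of $N$ in $\fm$. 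Hence $v(g)\ge v(\mathbf g_1)=v(J)$ for every coordinate $g$ of $\mathbf g_2$, so again $v(I)\ge v(J)$; thus $v(I)=v(J)$ for all $v$ and $\overline I=\overline J$.

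For \textup{(c)}, $d=3$ forces $a=3$ and the standing hypothesis is precisely $\Ht I_{n-3}(\Phi_2)=3$, so the Proposition of the ``$a=3$'' part applies: ${\rm Sym}_R(J)$ is Cohen--Macaulay of dimension $d+1=4$, presented over $S=R[T_1,T_2,T_3]$ by the maximal minors of a Hilbert--Burch matrix. Now $J$ is of linear type iff ${\rm Sym}_R(J)$ is a domain; being Cohen--Macaulay it is unmixed, it is connected (a graded ring with a domain in degree $0$), and inverting the nonzero elements of $R$ turns it into the polynomial ring $\mathrm{Frac}(R)[t]$, so it is a domain exactly when $\Spec({\rm Sym}_R(J))$ is irreducible, i.e. when the Rees component $\Spec(\mathcal R(J))$ is its only component (the unique associated prime then having zero intersection with $R$ and a field as local ring, hence being $0$). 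By the Huneke--Rossi formula an extra component is supported over a prime $\mathfrak p\supseteq J$ with $\mu(J_{\mathfrak p})=\Ht\mathfrak p+1$; since $\mu(J)=3$, such a $\mathfrak p$ must be a minimal prime of $J$ (height $2$) with $\mu(J_{\mathfrak p})=3$, i.e. a point where $G_3$ fails for $J$. Hence $J$ is of linear type $\Leftrightarrow G_3(J)\Leftrightarrow G_3(I)$ by \textup{(a)}. (Alternatively, linear type $\Rightarrow G_\infty(J)$ holds in general, because then ${\rm gr}_J(R)={\rm Sym}_{R/J}(J/J^2)$ has dimension $\Ht\mathfrak p$ at each $\mathfrak p\supseteq J$.)

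The genuine difficulty is in \textup{(b)}, for valuations centered at $\fm$: the estimate only closes because the equality $I_{n-d}(\Phi_2)=\fm^{n-d}$ lets one pick the auxiliary minor with $v(\det M)$ equal to $v(\fm^{n-d})$ exactly, so that the cofactor bound comes out with exponent $0$ rather than something strictly positive. By comparison, the component--counting in \textup{(c)} and the local analyses in \textup{(a)} are routine once the Cohen--Macaulayness of ${\rm Sym}_R(J)$ and the equivalence $J^{\rm sat}=I$ are in hand.
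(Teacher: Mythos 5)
Your item (a) is the paper's argument verbatim; (b) and (c) take genuinely different routes, and both are essentially correct. For (b), the paper proves integral dependence directly: from the syzygy relation it derives, for each $(n-d)$-minor $\det B$ of $\Phi_2$, the finer inclusion $(\det B)(f_{d+1},\ldots,f_n)\subseteq\fm^{n-d}J$ (not merely $\subseteq J$), sums over $B$ using $I_{n-d}(\Phi_2)=\fm^{n-d}$, and then runs the Cayley--Hamilton determinantal trick against the faithful module $\fm^{n-d}$. You instead verify $\overline J=\overline I$ by the valuative criterion, which is cleaner and more conceptual; your $\fm$-centered computation, solving $\mathbf g_2^{\,t}=-M^{-1}N\,\mathbf g_1^{\,t}$ for a minor $M$ of minimal $v$-value, is a nice way to exploit $I_{n-d}(\Phi_2)=\fm^{n-d}$. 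Two quibbles: the step ``$I_{n-d}(\Phi_2)=\fm^{n-d}$, so $\fm^{n-d}I\subseteq J$'' silently uses $I_{n-d}(\Phi_2)=\mathrm{Fitt}_0(I/J)\subseteq\mathrm{Ann}(I/J)$, which should be stated; and your closing remark that the estimate ``only closes'' because $\epsilon_1=1$ is slightly misplaced --- the cofactor and $\det M$ bounds rest on $\epsilon_2=1$, while the entries of $N\subset\Phi_1^t$ only need to lie in $\fm$, i.e.\ $\epsilon_1\geq1$, so the exponent is $(\epsilon_1-1)\mu\geq 0$ in general. As with the paper's proof, $\epsilon_2=1$ is the operative hypothesis.

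For (c), the paper simply cites \cite[Prop.\ 5.5 and 9.1]{Trento} for ``$G_\infty(J)$ implies linear type'' given the earlier resolution data, and observes the converse is clear. You replace the citation with a direct component analysis off the Cohen--Macaulayness of $\mathrm{Sym}_R(J)$ from the $a=3$ Proposition, which is more self-contained. Your localization step ``$\mathcal T\cap R=0\Rightarrow\mathcal S(J)_{\mathcal T}$ is a field, hence the unique associated prime is $0$'' is correct and handles the domain-versus-irreducible subtlety cleanly (the key being $\Ass=\Min$ from CM). The one compressed step is the identification of extra minimal primes $\mathcal Q$ with primes $\mathfrak p=\mathcal Q\cap R$ satisfying $\mu(J_{\mathfrak p})=\Ht\mathfrak p+1$; this does follow from the fiber-dimension bound $\dim\mathcal S(J)/\mathcal Q\leq\mu(J_{\mathfrak p})+\dim R/\mathfrak p$ together with the equidimensionality forced by Cohen--Macaulayness, but you should make that explicit rather than attribute it to ``the Huneke--Rossi formula.''
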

	\begin{proof} (a) By hypothesis we have $J^{\rm sat}=I.$ Thus, $J_P=I_P$ for every prime ideal $P\neq (x_1,\ldots,x_d).$ With this and the assumption that $\mu(J)=d$ the stated equivalence follows through.
		
		(b) Letting $f_1,\ldots,f_n$ be the ordered signed $(n-1)$-minors of $\phi,$ one has $J=(f_1,\ldots,f_d).$
		Since the rows of $\phi$ are syzygies of  $f_1,\ldots,f_n$, we can write
		\begin{equation*}
			\left[\begin{matrix}
				f_{d+1}&\cdots&f_n
			\end{matrix}\right]\Phi_2=-\left[\begin{matrix}f_1&\cdots& f_d\end{matrix}\right]
			\Phi_1.
		\end{equation*}
Thus, for any  $(n-d)\times(n-d)$ submatrix $B$ of $\Phi_2$, one has
$$I_1(\left[\begin{matrix}
			f_{d+1}&\cdots&f_n
		\end{matrix}\right]B)\subset ( x_1,\ldots, x_d) J.$$
Bringing in the adjugate matrix, one can write $$I_1(\left[\begin{matrix}
			f_{d+1}&\cdots&f_n
		\end{matrix}\right] B\,{\rm adj}(B))\subset ( x_1,\ldots, x_d)^{n-d}J,$$
and hence, $(\det B)(f_{d+1},\ldots,f_n)\subset ( x_1,\ldots, x_d)^{n-d}J.$ Since $B$ is arbitrary, it obtains 
		\begin{equation*}\label{I_{n-d}IsubsetI_{n-d}J}
			I_{n-d}(\Phi_2)( f_{d+1},\ldots,f_n)=( x_1,\ldots, x_d)^{n-d}( f_{d+1},\ldots,f_n)\subset ( x_1,\ldots, x_d)^{n-d} J,
		\end{equation*}
where the equality follows from Lemma~\ref{BR_basic_equivalences} since $\Phi_2$ is assumed to be linear.

This inclusion is an expression of integral dependence.
In fact, let $M_1,\ldots,M_N$ denote the set of monomials of $R$  of degree $n-d.$ Then
 $$M_jf_i=L_{j,1} M_1+\cdots +L_{j,N} M_N, \quad 1\leq j\leq N, \, d+1\leq i\leq n,$$
for certain $k$-linear forms $L_{j,l}\in k[f_1,\ldots,f_d] \subset R$ over $f_1,\ldots,f_d$.
These relations can be read as
 $$\left(f_i\mathbb{I}_N- (L_{j,l})_{1\leq j,l\leq N}\right)\left[\begin{matrix}M_1&\cdots&M_N\end{matrix}\right]^t=\boldsymbol0,$$
or yet, 
		$$\det\left(f_i\mathbb{I}_N-(L_{j,l})_{1\leq j,l\leq N}\right)( x_1,\ldots,x_d)^{n-d}=0.$$
		Hence, 
		$$\det\left(f_i\mathbb{I}_N-(L_{j,l})_{1\leq j,l\leq N}\right)=0$$
which is an equation of integral dependence of $f_i$ over $J=(f_1,\ldots,f_d)$.  In particular, $J$ is a reduction of $I.$

		(c) Because of item (a), it follows from \cite[Proposition 5.5 and Proposition 9.1]{Trento} that the property $G_3$ implies the linear type property. The converse is clear.
	\end{proof}

  \bibliographystyle{amsalpha}


\noindent {\bf Addresses:}

\smallskip

\noindent {\sc Zaqueu Ramos} \hspace{9.5cm} {\sc Aron Simis}\\
Departamento de Matem\'atica, CCET \hspace{2cm} Departamento de Matem\'atica, CCEN\\ 
Universidade Federal de Sergipe  \hspace{3cm} Universidade Federal de Pernambuco\\ 
49100-000 S\~ao Cristov\~ao, SE, Brazil \hspace{4cm} 50740-560 Recife, PE, Brazil\\
\indent zaqueu@mat.ufs.br \hspace{7cm} aron.simis@ufpe.br\\

\end{document}